\newcommand{\Q}{\mathbb Q}
\newcommand{\N}{\mathbb N}
\newcommand{\Z}{\mathbb Z}
\newcommand{\C}{\mathbb C}
\newcommand{\p}{\mathfrak p}
\newcommand{\fra}{\mathfrak a}
\newcommand{\frp}{\mathfrak p}
\newcommand{\Gal}{\mathrm{Gal}}
\renewcommand{\epsilon}{\varepsilon}
\newcommand{\Kur}{K_{\mathrm{nr}}}
\newcommand{\Nur}{N_{\mathrm{nr}}}
\newcommand{\Zpnr}{\mathbb Z_p^\mathrm{nr}}
\newcommand{\Qpnr}{{\mathbb Q_p^\mathrm{nr}}}
\newcommand{\Tur}{T^\mathrm{nr}}
\newcommand{\Tnr}{\Tur}
\newcommand{\calM}{\mathcal M}
\newcommand{\calN}{\mathcal N}
\newcommand{\mm}{\mathfrak M}
\newcommand{\T}{\mathcal T}
\newcommand{\chinr}{\chi^{\mathrm{nr}}}
\newcommand{\rhonr}{\rho^{\mathrm{nr}}}
\newcommand{\rhoQpnr}{\rho^{\mathrm{nr}}_\Qp}
\newcommand{\BdR}{{B_\mathrm{dR}}}
\newcommand{\Bst}{B_\mathrm{st}}
\newcommand{\Bcris}{B_\mathrm{cris}}
\newcommand{\DdR}{D_\mathrm{dR}}
\newcommand{\Ucris}{U_\mathrm{cris}}
\newcommand{\Dcris}{D_\mathrm{cris}}
\newcommand{\Ind}{\mathrm{Ind}}
\newcommand{\Irr}{\mathrm{Irr}}
\newcommand{\comp}{\mathrm{comp}}
\newcommand{\norm}{\mathcal{N}}
\newcommand{\trace}{\mathcal{T}}
\newcommand{\id}{\mathrm{id}}
\newcommand{\sseq}{\subseteq}
\newcommand{\Qp}{{\Q_p}}
\newcommand{\Zp}{{\Z_p}}
\newcommand{\ZpG}{{\Z_p[G]}}
\newcommand{\QpG}{{\Q_p[G]}}
\newcommand{\tensor}{\otimes}
\newcommand{\tensorQp}{\otimes_\Qp}
\newcommand{\CEP}{C_{EP}^{na}(N/K, V)}
\newcommand{\lra}{\longrightarrow}
\newcommand{\calF}{\mathcal F}
\newcommand{\calL}{\mathcal{L}}
\newcommand{\calZ}{\mathcal{Z}}
\newcommand{\calI}{\mathcal{I}}
\newcommand{\frd}{\mathfrak d}
\newcommand{\frD}{\mathfrak D}
\newcommand{\frf}{\mathfrak f}
\newcommand{\OK}{\mathcal{O}_K}
\newcommand{\ON}{\mathcal{O}_N}
\newcommand{\OL}{\mathcal{O}_L}
\newcommand{\oo}{\mathcal{O}}
\newcommand{\DdRN}{\DdR^N}
\newcommand{\OKG}{{\OK[G]}}
\newcommand{\BdRG}{{\BdR[G]}}
\newcommand{\calO}{\mathcal O}
\newcommand{\Qpc}{\Q_p^c}
\newcommand{\QpcG}{\Q_p^c[G]}
\newcommand{\Hom}{\mathrm{Hom}}
\newcommand{\Nnr}{\calI_{N/K}(\rhonr)}
\newcommand{\tfW}{\tilde f_{3,W}}
\newcommand{\fW}{f_{3,W}}
\newcommand{\fFN}{f_{\calF,N}}
\newcommand{\zz}{\mathcal Z}
\newcommand{\Gl}{\mathrm{Gl}}
\newcommand{\Gm}{\mathbb{G}_m}
\newcommand{\Utwist}{U_{tw}(\rho^{\mathrm{nr}}_\Qp)}
\newcommand{\Ttriv}{T_\mathrm{triv}}
\newcommand{\Vtriv}{V_\mathrm{triv}}
\newcommand{\Nrd}{\mathrm{Nrd}}
\newtheorem{thm}{Theorem}[section]
\newtheorem{lemma}[thm]{Lemma}
\newtheorem{prop}[thm]{Proposition}
\theoremstyle{remark}
\newtheorem{remark}[thm]{Remark}
\newtheorem{remarks}[thm]{Remarks}
\theoremstyle{definition}
\title{The epsilon constant conjecture for higher dimensional unramified twists of $\Z_p^r(1)$}
\author{Werner Bley and Alessandro Cobbe}
\date{}
\begin{document}
\maketitle
\begin{abstract}
    Let $N/K$ be a finite Galois extension of $p$-adic number fields and let $\rhonr \colon G_K \lra \Gl_r(\Zp)$ be an
    $r$-dimensional unramified representation of the absolute Galois group $G_K$ which is the restriction of an unramified
    representation $\rhonr_\Qp \colon G_\Qp \lra \Gl_r(\Zp)$. In this paper we consider the $\Gal(N/K)$-equivariant
    local $\varepsilon$-conjecture for the $p$-adic representation $T = \Z_p^r(1)(\rhonr)$. For example, if
    $A$ is an abelian variety of dimension $r$ defined over $\Qp$ with good ordinary reduction, then the Tate module
    $T = T_p\hat A$ associated to the formal group $\hat A$ of $A$ is a $p$-adic representation of this form. We prove
    the conjecture for all tame extensions $N/K$ and a certain family of weakly and wildly ramified extensions $N/K$.
    This generalizes previous work of Izychev and Venjakob in the tame case and of the authors in the weakly and
    wildly ramified case.
  
\end{abstract}

\tableofcontents

\begin{section}{Introduction}

Let $p$ be a prime and $N/K$ a finite Galois extension of $p$-adic number fields with group $G := \Gal(N/K)$.
We write $G_K$ (resp. $G_N$) for the absolute Galois group of $K$ (resp. $N$) and for each finite extension $E/\Qp$ we let $F_E$ denote the arithmetic Frobenius automorphism. Let $V$ denote a $p$-adic representation of $G_K$ and let $T \sseq V$ be a $G_K$-stable $\Zp$-sublattice such that $V = \Qp \tensor_\Zp T$. 

As in \cite{IV} and \cite{BC2} we write $\CEP$ for the equivariant $\epsilon$-constant conjecture, see for example Conjecture 3.1.1
in \cite{BC2}. For more details  and some remarks on the history of the conjecture we refer the interested reader
to the introduction and Section 3.1 of \cite{BC2}.

In this manuscript we will consider $\CEP$ for higher dimensional unramified twists of $\Z_p^r(1)$ (which should be considered as the Tate module associated with $\Gm^r$). More precisely, by \cite[Prop.~1.6]{Cobbe18}, each matrix $U \in  \Gl_r(\Zp)$ gives rise to an unramified representation of $G_K$ by setting $\rhonr(F_K) := U$. We will be concerned with the module $T=\Z_p^r(1)(\rhonr)$, which by \cite[Prop.~1.11]{Cobbe18} can be considered  as the Tate module of a an $r$-dimensional Lubin-Tate formal group. 

We recall that for $r=1$ and representations $\rhonr$ which are restrictions of unramified extensions $\rhoQpnr : G_\Qp \lra \Z_p^\times$
Izychev and Venjakob in \cite{IV} have proven the validity of $\CEP$ for tame extensions $N/K$. The main result of \cite[Thm.~1]{BC2} shows 
that $\CEP$ holds for certain weakly and wildly ramified finite abelian extensions $N/K$. In this context we recall that $N/K$ is weakly ramified if the second ramification group in lower numbering is trivial. Generalizing these results we will show:

\begin{thm}\label{intro thm tame}
Let $N/K$ be a tame extension of $p$-adic number fields and let
\[\rhoQpnr \colon G_\Qp \lra \Gl_r(\Zp)\]
be an unramified representation of $G_\Qp$. Let $\rhonr$ denote the restriction of $\rhoQpnr$ to $G_K$. Then $\CEP$ is true for $N/K$ and $V = \Q_p^r(1)(\rhonr)$, if $\det(\rhonr(F_N) - 1) \ne 0$. 
\end{thm}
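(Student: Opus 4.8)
The plan is to reduce the equivariant $\epsilon$-conjecture for $T = \Z_p^r(1)(\rhonr)$ and a tame extension $N/K$ to two more tractable pieces: the \emph{unramified twist} behaviour and the \emph{already-known} case of $\Z_p(1)$ (equivalently $\Gm$) for tame extensions. Concretely, the representation $V = \Q_p^r(1)(\rhonr)$ is an unramified twist of $\Q_p^r(1)$, and since $\rhonr_\Qp$ is defined over $\Qp$, the whole situation is ``constant'' along the cyclotomic/unramified directions; I expect there to be an earlier proposition in the paper (the analogue of the twisting compatibility in \cite{IV} and \cite{BC2}) that says $\CEP$ for $T$ follows from $\CEP$ for $\Z_p(1)$ together with the good behaviour of the local $\epsilon$-factors, the Bloch--Kato exponential, and the cohomology under the unramified twist by $\rhonr$. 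So the first step is to invoke that twisting result and thereby replace $T$ by $\Z_p(1)$ at the cost of understanding the twist-bookkeeping.

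Next I would make the twist-bookkeeping explicit. The trivialisation $\DEPT$ is built from three ingredients: the local Galois cohomology $R\Gamma(K, T)$ (equivalently $R\Gamma(N,\Z_p(1))$ with its $G$-action), the de Rham / Bloch--Kato data $D_{\mathrm{dR}}$ and the exponential map, and the $\epsilon$-constant $\epsilon_{N/K}(V)$ attached to the Weil--Deligne representation. For an unramified twist by $\rhonr$, the de Rham piece is unramified so $\DdR(V) = \DdR(\Q_p(1)) \tensor (\text{unramified module})$ and the Bloch--Kato exponential is essentially unchanged; the cohomology of $T$ over $K$ is computed from that of $\Z_p(1)$ over $N$ by the usual Frobenius-twist Euler characteristic computation, which is exactly where the hypothesis $\det(\rhonr(F_N)-1)\neq 0$ enters — it guarantees that $H^0$ and $H^2$ of the twisted module vanish (no invariants/coinvariants), so that $R\Gamma$ is concentrated in degree $1$ and the relevant determinants are honest (invertible) rather than involving torsion correction terms. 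Here $F_N$ rather than $F_K$ appears because tameness forces one to pass to $N$ (or to the maximal unramified subextension) to see the Frobenius acting on the residue field in the right way.

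Then I would reduce to the known tame case for $\Z_p(1)$: the conjecture $\CEP$ for $\Z_p(1)$ and tame $N/K$ is the Izychev--Venjakob result (the $r=1$ case with trivial $\rhonr$), and, combining it with the explicit twist formula from the previous paragraph, one obtains $\CEP$ for $V = \Q_p^r(1)(\rhonr)$. The actual verification amounts to checking that the twist of the canonical trivialisation $\DEPT$ by $\rhonr$ equals the trivialisation predicted by the conjecture for the twisted representation — i.e. that the $\epsilon$-factor, the cohomological term, and the de Rham term each transform by the same unramified twisting operator. This is a compatibility of functors (derived tensor, determinant functor, and the local constant) with the unramified base-change $-\tensor_{\Zp[G]}\Zp[G](\rhonr)$, and it is essentially formal once the vanishing hypothesis is in force; the tame hypothesis keeps all the ramification-theoretic constants (different, Artin conductor, etc.) in the easy range where they are computed by Noether's theorem and the conductor--discriminant formula.

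The main obstacle I anticipate is \emph{not} the existence of the twisting formalism but its equivariant bookkeeping: making sure that the identifications are $G$-equivariant with the correct $\Z_p[G]$-module (in fact $\Z_p[G]$- or maximal-order) structure on every term, and that the determinant-functor manipulations do not secretly use that $G$ acts through a commutative quotient. In \cite{BC2} the abelian hypothesis was used precisely to have enough control on $\Z_p[G]$-lattices; here, for a general tame $G$, one has the advantage that $p \nmid e(N/K)$, so $\Z_p[G]$ behaves well (the relevant local ring is a product of maximal orders after adjoining enough roots of unity), and I would lean on that to push the $r=1$ tame argument through verbatim after the twist. So the real work is: (i) state and prove the unramified-twist compatibility of $\DEPT$ in the equivariant setting (this should occupy the bulk of the paper preceding the theorem), (ii) check the cohomological vanishing under $\det(\rhonr(F_N)-1)\neq 0$, and (iii) cite Izychev--Venjakob for the base case and assemble.
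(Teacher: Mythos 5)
Your central strategy --- reduce $\CEP$ for $T = \Z_p^r(1)(\rhonr)$ to $\CEP$ for $\Z_p(1)$ (or $\Z_p(1)^r$) via an ``unramified twist compatibility'' and then cite Izychev--Venjakob for the base case --- is not how the paper proceeds, and it founders on a genuine obstruction. Remark 1(c) of the paper (stated immediately after the theorem) makes precisely this point: twisting by $\rhonr$ commutes with taking $G_N$-cohomology \emph{only} when $\rhonr|_{G_N}=1$, in which case a reduction to Breuning's untwisted result is indeed easy. But $\rhonr|_{G_N}=1$ forces $\rhonr(F_N)=1$ and hence $\det(\rhonr(F_N)-1)=0$, so the hypothesis of the theorem excludes exactly the case where your reduction is formal. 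For $\rhonr|_{G_N}\ne 1$ the passage from $R\Gamma(N,\Z_p(1))$ to $R\Gamma(N,T)$ is not a twist: the complex $R\Gamma(N,T)$ is built out of the Lubin--Tate formal group $\calF=\calF_{pu^{-1}}$ attached to $u$ (\cite[Prop.~1.11, Thm.~3.15]{Cobbe18}), and the Bloch--Kato exponential for $V$ is the formal-group exponential $\exp_\calF$ (Section \ref{sectlog}), which is genuinely different from $\exp_{\Gm}$; similarly the de Rham and crystalline modules carry the period matrix $\Tur$ (Lemmas \ref{def v star}--\ref{basisV1}), and $\epsilon_D(N/K,V)$ picks up the nontrivial factor $\det(u)^{-d_K(s_K\chi(1)+m_\chi)}$ (Proposition \ref{eps coroll}). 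None of these reduce formally to the untwisted case. The paper therefore computes the individual terms of $R_{N/K}$ directly (Propositions \ref{propUcris}, \ref{eps coroll}, \ref{CNKgeneral}, \ref{tameH1H2}) and exhibits their cancellation in $\tilde R_{N/K}$, finishing with the Fr\"ohlich--Taylor comparison of norm resolvents and Galois Gauss sums.

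A secondary error: you assert that $\det(\rhonr(F_N)-1)\ne 0$ forces $H^0$ and $H^2$ to vanish so that $R\Gamma(N,T)$ is concentrated in degree $1$. In fact $H^0(N,T)=0$ always, and the hypothesis only makes $H^2(N,T)\cong \Z_p^r/(U_N-1)\Z_p^r$ \emph{finite}, of order $p^{v_p(\det(U_N-1))}$; it vanishes iff $U_N-1\in\Gl_r(\Z_p)$ (Lemma \ref{ct coh}), which is not assumed. What actually matters in the tame case is Proposition \ref{tame cohomology}: both $H^1(N,T)$ and $H^2(N,T)$ are $G$-cohomologically trivial. This cohomological triviality --- not the vanishing of $H^2$ --- is what allows the Euler characteristic of $M^\bullet(\calL)$ to be computed from its cohomology, which is the key step in Proposition \ref{tameH1H2}. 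The role of the finiteness hypothesis (F) is to make the relevant determinants and $K_0$-classes well-defined, not to eliminate $H^2$.
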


\begin{remarks}\label{intro rem 1}
\begin{enumerate}[(a)]
\item The condition $\det(\rhonr(F_N) - 1) \ne 0$ holds, if and only if $H^2(N, T)$ is finite (see Section \ref{sec cohomology}). 
It is also equivalent to $\left( \Z_p^r(\rhonr) \right)^{G_N} = 0$.
\item If $r=1$, then $\det(\rhonr(F_N) - 1) = 0$ if and only if $\rhonr |_{G_N} = 1$. If $r > 1$, then there are 'mixed' cases where
  both $\rhonr |_{G_N} \ne 1$ and  $\det(\rhonr(F_N) - 1) = 0$, see, e.g., \cite[Example 3.18]{Cobbe18}.
\item If $\rhonr |_{G_N}=1$, then twisting commutes with taking $G_N$-cohomology, so that
    $\CEP$ can be proved quite easily relying on the fact that the conjecture is known in the untwisted case by \cite{Breuning04}.
\end{enumerate}
\end{remarks}

In the weakly ramified setting we will prove the following theorem.

\begin{thm}\label{intro thm weak}
{Let $p$ be an odd prime.} Let $K/\Q_p$ be the unramified extension of degree $m$ and let $N/K$ be a weakly and wildly ramified finite
  abelian extension with cyclic ramification group. Let $d$ denote the inertia degree of $N/K$, let
    $\tilde d$ denote the order of $\rhonr(F_N) \text{ mod } p$ in  $\Gl_r(\Zp/p\Zp)$ and assume that $m$ and $d$ are relatively prime. Let
  \[
    \rhoQpnr \colon G_\Qp \lra \Gl_r(\Zp)
  \]
  be an unramified representation of $G_\Qp$ and let $\rhonr$ denote the restriction of $\rhoQpnr$ to $G_K$.
  Assume that $\det(\rhonr(F_N) - 1) \ne 0$ (Hypothesis (F)) and, in addition, that
one of the following three conditions holds:
\begin{enumerate}
\item[(a)] $\rhonr(F_N)-1$ is invertible modulo $p$ (Hypothesis (I)); 
\item[(b)] $\rhonr(F_N)\equiv 1\pmod{p}$ (Hypothesis (T));
\item[(c)] $\gcd(\tilde d,m)=1$ and $\det(\rhonr(F_N)^{\tilde d} - 1) \ne 0$.
\end{enumerate}
Then $\CEP$ is true for $N/K$ and $V = \Q_p^r(1)(\rhonr)$. 
\end{thm}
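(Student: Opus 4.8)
\emph{Proof strategy.} The argument combines the functorial formalism of the equivariant local $\varepsilon$-conjecture with the explicit descriptions obtained in the previous sections. Throughout we use that $\CEP$ is additive in the coefficient module, compatible with restriction to a subextension, and compatible with the projection $\Gal(N'/K)\twoheadrightarrow\Gal(N/K)$ attached to an intermediate field $N$, so that in particular the validity of the conjecture for a larger extension $N'/K$ implies $\CEP$. By the results of the preceding sections, $\CEP$ is equivalent to an explicit identity, in the relevant relative algebraic $K$-group, between the class of the fundamental line $\DEPT$ — built from $R\Gamma(N,T)$ and the de Rham module $\DdR^N(V)$ — and the class of the $\varepsilon$-isomorphism, which for our $T=\Zp^r(1)(\rhonr)$ is governed by the matrix $U:=\rhoQpnr(F_\Qp)\in\Gl_r(\Zp)$ together with the ramification and $\varepsilon$-data of $N/K$. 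Hypothesis (F) guarantees that $H^0(N,T)=0$ and $H^2(N,T)$ is finite, and that the unramified Euler factor $\det(\rhonr(F_N)-1)$ occurring in the statement is a unit in the relevant semisimple algebra, so that all terms are well defined; the hypothesis $\gcd(m,d)=1$ will enter through the decomposition $\mathbb{F}_{p^{md}}=\mathbb{F}_{p^m}\tensor_{\mathbb{F}_p}\mathbb{F}_{p^d}$ of residue fields, which lets one disentangle the unramified twist (which is crystalline of Hodge--Tate weight $0$, with crystalline Frobenius essentially $U$) from the wild ramification of $N/K$.

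\emph{Reduction of case (c) to case (b).} Assume (c), let $N'/N$ be the unramified extension of degree $\tilde d$, and observe that $N'$ is the compositum of $N$ with the unramified — hence abelian — extension $K_{d\tilde d}/K$ of degree $d\tilde d$. Therefore $N'/K$ is again a finite abelian, weakly and wildly ramified extension with the same cyclic ramification group, of inertia degree $d\tilde d$, which is coprime to $m$ because $\gcd(m,d)=\gcd(m,\tilde d)=1$; thus $N'/K$ belongs to the family of the theorem. Moreover $\rhonr(F_{N'})=\rhonr(F_N)^{\tilde d}\equiv 1\pmod p$, i.e. Hypothesis (T) holds for $N'/K$, while $\det(\rhonr(F_{N'})-1)=\det(\rhonr(F_N)^{\tilde d}-1)\ne 0$ is Hypothesis (F) for $N'/K$. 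Granting case (b) for $N'/K$, the projection compatibility $\Gal(N'/K)\twoheadrightarrow\Gal(N/K)$ gives $\CEP$.

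\emph{Cases (a), (b), and the main obstacle.} Under Hypothesis (I) one checks, by working with $T/pT$ and its Tate dual, that $\rhonr(F_N)-1$ being invertible modulo $p$ forces $H^0(N,T)=H^2(N,T)=0$; hence $R\Gamma(N,T)$ is concentrated in degree one, $\DEPT$ collapses to a shape in which the conjectural identity can be compared directly with the $\varepsilon$-isomorphism, and one concludes using the explicit descriptions of $H^1(N,T)$ and $\DdR^N(V)$ available in the weakly ramified setting, the transformation law of local $\varepsilon$-constants under the unramified twist $\rhonr$, and the validity of the conjecture for $\Zp^r(1)=\Zp(1)^{\oplus r}$ over $N/K$ (by \cite{BC2} and additivity). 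Under Hypothesis (T) one has $U\equiv 1\pmod p$, so $T$ and $\Zp^r(1)$ have the same reduction modulo $p$; one then compares both sides of $\CEP$ for $T$ and for $\Zp^r(1)$ relative to a common system of $\ZpG$-generators of the cohomology and of the de Rham module, reducing the statement to the vanishing of an explicit correction class that depends only on $U-1\in p\,M_r(\Zp)$ and on the $\varepsilon$-data of $N/K$, which is then checked. The principal obstacle is precisely this last comparison in case (b): unlike the rank-one situation of \cite{BC2}, the matrix $U$ cannot be diagonalised over $\Zp$, so it must be propagated simultaneously through $R\Gamma(N,T)$ (via local duality and a Koszul-type presentation of the complex), through the comparison with $\DdR^N(V)$, and through the $\varepsilon$-factor, and the resulting class in the relative $K$-group must be shown to vanish; Hypotheses (I) and (T) single out exactly the two regimes where this bookkeeping can be carried through — the relevant cohomology being then either trivial in degrees $0$ and $2$ or else a completely controlled perturbation of the untwisted one.
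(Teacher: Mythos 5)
Your reduction of case (c) to case (b) is correct and follows the same route as the paper. You replace $N$ by $N' = NE$ where $E/K'$ is unramified of degree $\tilde d$, verify that $N'/K$ remains in the admissible family (abelian, weakly and wildly ramified, same cyclic ramification group, inertia degree $d\tilde d$ coprime to $m$, Hypotheses (F) and (T) holding for $N'/K$), and then use the quotient functoriality (Proposition~\ref{functoriality}, part~(b)) for the normal subgroup $\Gal(N'/N)$ to descend $R_{N'/K}=0$ to $R_{N/K}=0$. This matches the paper's proof (the paper cites part~(a) of Proposition~\ref{functoriality} at this point, which appears to be a misprint for part~(b), since one is passing from the larger extension $N'/K$ to the quotient $N/K$, not to a subextension).

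For cases (a) and (b), however, there is a genuine gap. Your proposal reduces to ``the validity of the conjecture for $\Z_p^r(1)=\Z_p(1)^{\oplus r}$ over $N/K$ (by \cite{BC2} and additivity)'' in case~(a), and in case~(b) to ``comparing both sides of $\CEP$ for $T$ and for $\Z_p^r(1)$.'' Neither reduction is available. First, \cite{BC2} does not establish $\CEP$ for $\Z_p(1)$ over weakly and wildly ramified $N/K$; it treats $\Z_p(1)(\chinr)$ for a nontrivial unramified character $\chinr$, precisely because Hypothesis~(F) excludes the untwisted case. Second, there is no direct comparison morphism between $\CEP$ for $T=\Z_p^r(1)(\rhonr)$ and for $\Z_p^r(1)$: the relevant cohomology modules have different behaviour (e.g.\ $H^2(N,\Z_p(1))\cong\Z_p$ is infinite, while $H^2(N,T)$ is finite under (F)), so the two instances of the conjecture are not ``the same up to a correction class.'' Third, and most importantly, the twist $\rhonr$ does not split $T$ into a direct sum of rank-one lattices over $\Z_p$; the matrix $U$ need not be diagonalizable over $\Z_p$, so additivity is not applicable. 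The paper's actual proof (Sections~4--7) computes all constituent terms of $R_{N/K}$ directly for the $r$-dimensional twist: the crystalline term $\Ucris$ (Proposition~\ref{propUcris}), the epsilon-constant term (Proposition~\ref{eps coroll}), and the cohomological term via explicit generators of $\ker(f_4)$ and the block matrices $\calM$ and $\mm$ (Propositions~\ref{prop 117} and~\ref{prop 118}), then verifies that the resulting element of $Z(\Q_p[G])^\times$ lies in $\Nrd(\overline{\Z_p^{\mathrm{nr}}}[G]^\times)$. These computations --- in particular the construction of the generators $r_{i,k}$, $s_{i,j,k}$, $t_{i,1}$, $t_{i,2}$, the auxiliary matrix $E$, and the verification that the explicit representative is an integral unit --- are the real content of the proof and are entirely missing from your proposal.
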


\begin{remarks} 
\begin{enumerate}
	\item[(a)] Note that the conditions (a) and (b) concerning the reduction of $\rhonr(F_N)$ modulo $p$
  generalize the cases $\omega=0$ and $\omega>0$, which were studied separately in \cite{BC2}, and which exhaust all the possible cases when $r=1$. In the higher dimensional setting of the present paper, however, this is not true, even under the assumption $\det(\rhonr(F_N) - 1) \ne 0$. To deal with the remaining cases, our strategy of proof is to replace the field $N$ by its unramified extension of degree $\tilde d$ and to use functoriality with respect to change of fields (see Prop.~\ref{functoriality}). For technical reasons this forces us to require hypothesis (c).
	\item[(b)] By \cite[Lem.~1.1]{Cobbe18} we know that $\tilde d$ is a divisor of $p^st$ with $s = (r-1)r/2$ and $t = \prod_{i=1}^r(p^i-1)$.
\end{enumerate}
\end{remarks}

In a more geometrical setting, if $A/\Q_p$ is an abelian variety of dimension $r$ with good ordinary reduction, then by \cite[Prop.~1.12]{Cobbe18} the Tate module of the associated formal group $\hat{A}$ is isomorphic to $\Z_p^r(1)(\rhoQpnr)$ for an appropriate choice of $\rhoQpnr$. Here it is worth to remark that the converse is not true, i.e. not every module $\Z_p^r(1)(\rhoQpnr)$ comes from an abelian variety with good ordinary reduction. In this setting, by a result of Mazur \cite[Cor.~4.38]{Mazur72}, we know that  $\det(\rhonr(F_L) - 1) \ne 0$ is automatically satisfied for any finite extension $L/\Qp$, see Lemma \ref{Mazur lemma}.

\begin{thm}\label{intro thm tame geom}
Let $N/K$ be a tame extension of $p$-adic number fields and let $A/\Q_p$ be an $r$-dimensional abelian variety with good ordinary reduction. Let $\rhonr_\Qp$ be the unramified representation induced by the Tate module $T_p\hat A$ of the formal group $\hat A$ of $A$ and let $\rhonr$ be the restriction of $\rhonr_\Qp$ to $G_K$. Then $\CEP$ is true for $V = \Q_p\otimes_{\Z_p}T_p\hat A$.
\end{thm}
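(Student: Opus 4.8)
The plan is to deduce this as a direct corollary of Theorem \ref{intro thm tame}. The only thing that needs checking is that the hypothesis $\det(\rhonr(F_N) - 1) \ne 0$ is automatically satisfied in the geometric situation. So first I would recall, via the discussion preceding the statement, that by \cite[Prop.~1.12]{Cobbe18} the Tate module $T_p\hat A$ of the formal group $\hat A$ of an $r$-dimensional abelian variety $A/\Qp$ with good ordinary reduction is isomorphic to $\Z_p^r(1)(\rhonr_\Qp)$ for a suitable unramified representation $\rhonr_\Qp \colon G_\Qp \lra \Gl_r(\Zp)$. Hence $V = \Qp \tensor_{\Zp} T_p\hat A \cong \Q_p^r(1)(\rhonr)$ with $\rhonr = \rhonr_\Qp|_{G_K}$, which is exactly the shape of representation to which Theorem \ref{intro thm tame} applies.

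Next I would invoke Mazur's result \cite[Cor.~4.38]{Mazur72}, recorded here as Lemma \ref{Mazur lemma}, which guarantees that $\det(\rhonr(F_L) - 1) \ne 0$ holds for \emph{every} finite extension $L/\Qp$; in particular it holds for $L = N$. (The point is that for an abelian variety with good ordinary reduction, the Frobenius eigenvalues on the formal group are $p$-adic units which are not roots of unity, so no power of $F_N$ can fix a nonzero vector; equivalently $(\Z_p^r(\rhonr))^{G_N} = 0$, which by Remark \ref{intro rem 1}(a) is the same as the Hypothesis (F) condition.) Thus both hypotheses of Theorem \ref{intro thm tame} --- that $N/K$ is tame and that $\det(\rhonr(F_N)-1)\ne 0$ --- are met.

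Finally, applying Theorem \ref{intro thm tame} to $N/K$ and $V = \Q_p^r(1)(\rhonr)$ yields that $\CEP$ is true for $N/K$ and this $V$, which is precisely the assertion since $V \cong \Qp \tensor_{\Zp} T_p\hat A$. There is essentially no obstacle here beyond correctly matching up the two descriptions of $V$ and citing Mazur's theorem; the entire arithmetic content sits in Theorem \ref{intro thm tame}, and this statement is a clean specialization of it to the motivic setting that originally motivates the conjecture.
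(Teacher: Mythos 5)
Your proof is correct and follows the same route as the paper: reduce to Theorem \ref{intro thm tame} and invoke Lemma \ref{Mazur lemma} (Mazur's result) to verify Hypothesis (F) automatically in the geometric setting. The paper's proof is a one-line version of exactly this argument.
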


\begin{thm}\label{intro thm weak geom}
  Let $p$ be an odd prime and let $A/\Q_p$ be an $r$-dimensional abelian variety with good ordinary reduction.
  Let $K/\Qp$ be the unramified extension of degree $m$ and let $N/K$ be a weakly and wildly ramified finite
    abelian extension with cyclic ramification group. Let $\rhonr_\Qp$ be the unramified representation induced by
   the Tate module $T_p\hat A$ of the formal group $\hat A$ of $A$ and let $\rhonr$ be the restriction of $\rhonr_\Qp$ to $G_K$.
    Let $d$ denote the inertia degree of $N/K$ and let
    $\tilde d$ denote the order of $\rhonr(F_N) \text{ mod } p$ in  $\Gl_r(\Zp/p\Zp)$.
    Assume that $m$ and $d$ are relatively prime, and, in addition,  that one of the following conditions hold:
\begin{enumerate}
\item[(a)] $\rhonr(F_N)-1$ is invertible modulo $p$ (Hypothesis (I));
\item[(b)] $\rhonr(F_N)\equiv 1\pmod{p}$ (Hypothesis (T));
\item[(c)] $(m, \tilde d) = 1$.
\end{enumerate}
Then $\CEP$ is true for $V = \Q_p\otimes_{\Z_p}T_p\hat A$.
\end{thm}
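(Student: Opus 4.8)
The plan is to derive this statement as a corollary of Theorem~\ref{intro thm weak}. By \cite[Prop.~1.12]{Cobbe18} we may choose an unramified representation $\rhoQpnr\colon G_\Qp\lra\Gl_r(\Zp)$ with $T_p\hat A\cong\Z_p^r(1)(\rhoQpnr)$; writing $\rhonr$ for its restriction to $G_K$, we then have $V=\Q_p\tensor_{\Zp}T_p\hat A\cong\Q_p^r(1)(\rhonr)$, so Theorem~\ref{intro thm weak} applies to precisely this $V$ once its hypotheses are verified. The running assumptions made here ($p$ odd, $K/\Qp$ unramified of degree $m$, $N/K$ weakly and wildly ramified finite abelian with cyclic ramification group, and $\gcd(m,d)=1$) are identical to those of Theorem~\ref{intro thm weak}, so only the arithmetic conditions on $\rhonr(F_N)$ remain to be checked.

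The essential point is that in the geometric situation Hypothesis~(F) is automatically satisfied. Since $\rhoQpnr$ arises from the Tate module of the formal group of an abelian variety with good ordinary reduction, Mazur's result \cite[Cor.~4.38]{Mazur72}, in the form of Lemma~\ref{Mazur lemma}, yields $\det(\rhoQpnr(F_L)-1)\neq 0$ for \emph{every} finite extension $L/\Qp$. Taking $L=N$ gives Hypothesis~(F), so in cases (a) and (b) we are reduced verbatim to the corresponding cases of Theorem~\ref{intro thm weak} and are done.

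For case (c) the only additional input required, beyond $\gcd(\tilde d,m)=1$, is the condition $\det(\rhonr(F_N)^{\tilde d}-1)\neq 0$ appearing in Theorem~\ref{intro thm weak}(c). I would argue as follows: let $N'/N$ be the unramified extension of degree $\tilde d$. Since $\rhoQpnr$ is unramified and the arithmetic Frobenius of $N'$ equals $F_N^{\tilde d}$, we have $\rhoQpnr(F_{N'})=\rhonr(F_N)^{\tilde d}$; applying Lemma~\ref{Mazur lemma} to the finite extension $N'/\Qp$ then forces $\det(\rhonr(F_N)^{\tilde d}-1)=\det(\rhoQpnr(F_{N'})-1)\neq 0$. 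Hence all hypotheses of Theorem~\ref{intro thm weak}(c) are satisfied, and $\CEP$ follows.

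There is no genuine obstacle here: the substantive work lies entirely in Theorem~\ref{intro thm weak}, and the geometric statement amounts to bookkeeping once that theorem is available. The only points deserving a word of care are the identification of $\rhonr(F_N)^{\tilde d}$ with the arithmetic Frobenius of $N'$ (needed in order to invoke Mazur's result) and the observation that the change-of-field step via Prop.~\ref{functoriality}, used inside the proof of Theorem~\ref{intro thm weak} to pass from $N$ to $N'$, imposes no further constraint in the geometric case, again because Lemma~\ref{Mazur lemma} provides $\det(\rhoQpnr(F_L)-1)\neq 0$ uniformly in $L$.
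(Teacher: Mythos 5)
Your proposal is correct and follows essentially the same route as the paper's proof, which also simply invokes Theorem~\ref{intro thm weak} together with Lemma~\ref{Mazur lemma} to dispose of Hypothesis~(F) and of the condition $\det(\rhonr(F_N)^{\tilde d}-1)\neq 0$ in case (c). Your spelled-out identification of $\rhonr(F_N)^{\tilde d}$ with the twist matrix over the unramified extension of $N$ of degree $\tilde d$ is exactly the point the paper leaves implicit.
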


To conclude this introduction we reference forthcoming work of Nickel \cite{NickelPreprint}
and a forthcoming joint paper of Burns and Nickel \cite{BuNi}
where an Iwasawa theoretic approach
to $\CEP$ is developed. In a little more detail, Nickel formulates an Iwasawa theoretic analogue of $C_{EP}^{na}(N/K, \Qp(1))$,
call it $C_{EP}^{na}(N_\infty/K, \Qp(1))$ for the purpose of this introduction,
for the extension $N_\infty/K$ where $N_\infty/N$ is the unramified
$\Zp$-extension of $N$. Then, in a second paper, Burns and Nickel show that $C_{EP}^{na}(N_\infty/K, \Qp(1))$
holds if and only if $C_{EP}^{na}(E/F, \Qp(1))$ holds
for all finite Galois extensions $E/F$ such that $K \sseq F \sseq E \sseq N_\infty$. Furthermore they prove a certain twist
invariance of the conjecture. If $\chinr_\Qp$ is a one-dimensional unramified character, they show that
$C_{EP}^{na}(N_\infty N'/K, \Qp(1))$ holds if and only if $C_{EP}^{na}(E/F, \Qp(1)(\chinr))$ holds
for all finite Galois extensions $E/F$ such that $K \sseq F \sseq E \sseq N_\infty N'$
where $N'/N$ is a certain unramified extension of degree dividing $p-1$. It will be very interesting to see how this
Iwasawa theoretic approach will carry over
to the higher dimensional case.

{\bf Notations: }We will mostly rely on the notation of \cite{BC2} and \cite{Cobbe18}. In particular, $N/K$ will always denote a finite Galois extension,
  $N_0$ will be the completion of the maximal unramified extension $N^\mathrm{nr}$ of $N$ and $\widehat{N_0^\times}$ the $p$-completion of $N_0^\times$. Let $N_1$ be the maximal unramified subextension of $N/\Qp$.
  We will denote by $e_{N/K}$ and $d_{N/K}$ the ramification index and the inertia degree of $N/K$, $\oo_N$ will be the ring of integers of $N$ and $U_N$
  will be its group of units. We also set $\Lambda_N=\prod_r \widehat{N_0^\times}(\rhonr)$, $\Upsilon_N=\prod_r \widehat{U_{N_0}}(\rhonr)$ and $\zz=\Z_p^r(\rhonr)$
  and we will mostly use an additive notation for the (twisted) action of the absolute Galois group $G_N$. The elements fixed by the action of $G_N$
  will be denoted by $\Lambda_N^{G_N}$, $\Upsilon_N^{G_N}$ and $\zz^{G_N}$, respectively.

Let $\varphi$ be the absolute Frobenius automorphism, let $F_N$ be the Frobenius automorphism of $N$ and let $F=F_K$ be the Frobenius of $K$.

For an $r$-dimensional formal group $\calF$, we denote by $\calF(\p_N^{(r)})$ the group structure on $\prod_r\p_N$ induced by $\calF$.

For any ring $R$ we denote by $M_r(R)$ the ring of $r\times r$ matrices with coefficients in $R$ and by $\Gl_r(R)$ the group of invertible matrices. A unity matrix will always be denoted simply by $1$. We also write $Z(R)$ for the centre of $R$.

If $\Lambda$ and $\Sigma$ are unital rings and $\Lambda \lra \Sigma$ a ring homomorphism, then we write
  $K_0(\Lambda, \Sigma)$ for the relative algebraic $K$-group defined by Swan \cite[p.~215]{Swan}. If $\Sigma = L[G]$ for
  a finite group $G$ and a field extension $L/\Qp$ we write $\Nrd_\Sigma \colon K_1(\Sigma) \lra Z(\Sigma)^\times$ for the map
  on $K_1$ induced by the reduced norm map. We will only be concerned with cases where $\Nrd_\Sigma$ is an isomorphism.
  In this case we set $\hat\partial^1_{\Lambda, \Sigma} := \partial^1_{\Lambda, \Sigma} \circ \Nrd_\Sigma^{-1}$ where
  $\partial^1_{\Lambda, \Sigma} \colon K_1(\Sigma) \lra K_0(\Lambda, \Sigma)$ is the canonical map. If there is no danger of
  confusion we will often abbreviate $\hat\partial^1_{\Lambda, \Sigma}$ to $\hat\partial^1$.

For any $\Zp$-module $X$ and any ring extension $R/\Zp$ we set $X_R := R \tensor_\Zp X$.

\begin{subsection}{Plan of the manuscript}

We will start recalling some results on the cohomology of $\Z_p^r(1)$ which are proved in \cite{Cobbe18}. We will also formulate a finiteness hypothesis (F), which we will assume throughout the paper, and we will show some basic consequences of (F). After a short digression on the formal logarithm and exponential function in higher dimension in Section \ref{sectlog}, we can start our study of the conjecture $\CEP$.

As in \cite{BC2}, which was motivated by the work in \cite{IV},  we define an element

\begin{equation}\label{defRNK}
\begin{split}  R_{N/K}&= C_{N/K} + \Ucris+rm\hat\partial^1_{\ZpG, \BdR[G]}(t) - m \Utwist \\
&\qquad\qquad- r U_{N/K} + \hat\partial^1_{\ZpG, \BdR[G]}(\epsilon_D(N/K,V))
\end{split}\end{equation}
in the relative algebraic $K$-group  $K_0(\ZpG,\QpG)$. The conjecture $\CEP$ is then equivalent to the vanishing of 
$R_{N/K}$.

Actually the element $R_{N/K}$ as defined in (\ref{defRNK}) differs from \cite[(17)]{BC2} by the term
  $m \Utwist$. This new term emerges from the computation of the cohomological term $C_{N/K}$,
  which was slightly incorrect
in \cite{BC2}, and has to be compensated in the definition of $R_{N/K}$. For more details on this issue we refer the
reader to Remark \ref{Remark Error}.

We will explicitly compute the terms
$C_{N/K}$, $\Ucris$ and  $\hat\partial^1_{\ZpG, \BdR[G]}(\epsilon_D(N/K,V))$ in the definition of $R_{N/K}$ and then use these results to 
prove $\CEP$ when $N/K$ is tame ({Theorem \ref{intro thm tame}}) and, under some additional hypotheses, 
also when $N/K$ is weakly and wildly ramified ({Theorem \ref{intro thm weak}}). 
This generalizes previous work for $r = 1$ of Izychev and Venjakob in \cite{IV} in the tame case and the authors in \cite{BC2} in the weakly ramified case.

\end{subsection}

\end{section}

\begin{section}{\texorpdfstring{The cohomology of $\Z_p^r(1)(\rhonr)$}{The cohomology of Zpr(1)(rho nr)}}\label{sec cohomology}

  Let $u \in \Gl_r(\Zp)$ and let $\rhonr = \rho_u \colon G_\Qp \lra \Gl_r(\Zp)$ denote the unramified representation attached to
  $u$ by \cite[Prop.~1.6]{Cobbe18}. By \cite[Sec.~13.3]{Hazewinkel78} there is a unique $r$-dimensional Lubin-Tate formal group
  $\calF = \calF_{pu^{-1}}$ attached to the parameter $pu^{-1}$. As in \cite[Prop.~1.10]{Cobbe18} we can construct an
  isomorphism $\theta \colon \calF \lra \Gm^r$ defined over the completion $\overline{\Qpnr}$ of $\Qpnr$ such that
  \begin{equation}\label{theta iso}
    \theta(X) = \epsilon^{-1} X + \ldots \text{ and } \theta^\varphi \circ \theta^{-1} = u^{-1}
  \end{equation}
  where $\epsilon \in \Gl_r(\overline{\Zpnr})$ has the defining property $\varphi(\epsilon^{-1})\epsilon = u^{-1}$.
  In the following we set $T := \Z_p^r(1)(\rhonr)$ and for future reference we recall that $T$ is isomorphic to the
  $p$-adic Tate module $T_p\calF$ of $\calF$ by \cite[Prop.~1.11]{Cobbe18}.

  Let $N/\Qp$ be a finite field extension and let $N_0 = \overline{N^{nr}}$ denote the completion of the maximal unramified extension of $N$.
  Following \cite{Cobbe18} we define
\[
\Lambda_N := \prod_r \widehat{N_0^\times}(\rhonr), \quad \calZ := \Z_p^r(\rhonr).
\]
Then, by \cite[Cor.~3.16]{Cobbe18}, we have
\begin{eqnarray*}
H^1(N, T) &\cong& \Lambda_N^{G_N} \cong \calF(\frp_N^{(r)}) \times \calZ^{G_N}, \\
H^2(N, T) &\cong& \calZ / (F_N - 1)\calZ, \\
H^i(N, T) &=& 0 \text{ for } i \ne 1,2.
\end{eqnarray*}

\begin{remark}\label{cohomology iso}
  We point out that the above isomorphisms are induced by the explicit representative $C^\bullet_{N, \calF}$ of $R\Gamma(N, T)$ constructed in
  \cite[Thm.~3.15]{Cobbe18}. In the formulation of $\CEP$, however, we will use the identification of the cohomology modules resulting from
  the use of continuous cochain cohomology. We will address this problem in Section \ref{Preliminary results}.
  
\end{remark}

For each finite field extension $N/\Qp$ we set
\[
U_N := \rho_u(F_N) = \rhonr(F_N) = u^{d_N}
\]
and in the sequel always assume the following finiteness hypothesis.

\bigskip

\noindent{\bf Hypothesis (F):} $\det(U_N - 1) \ne 0$.

\bigskip

\noindent This hypothesis clearly implies (and, in fact, is equivalent to)
\begin{eqnarray*}
H^1(N, T) &\cong& \calF(\frp_N^{(r)}) \\
H^2(N, T) &\cong& \calZ / (F_N - 1)\calZ = \Z_p^r / (U_N-1)\Z_p^r \text{ is finite}.
\end{eqnarray*}
The elementary divisor theorem immediately implies
\[
  \# (\calZ / (F_N - 1)\calZ) = p^\omega \text{ with } \omega := v_p(\det(U_N - 1)),
\]
where $v_p$ denotes the normalized $p$-adic valuation.

We first study the case when $N/K$ is tame. 

\begin{prop}\label{tame cohomology}
Let $N/K$ be a finite Galois extension with Galois group $G = \Gal(N/K)$.   
Assume that Hypothesis (F) holds.  
If $N/K$ is tame, then both $H^1(N, T)$ and  $H^2(N, T)$
are $G$-cohomologically trivial.
\end{prop}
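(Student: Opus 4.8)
The plan is to check cohomological triviality one Sylow subgroup at a time. Recall that under Hypothesis (F) we have $H^1(N,T) \cong \calF(\frp_N^{(r)})$ and $H^2(N,T) \cong \calZ/(U_N-1)\calZ = \Z_p^r/(U_N-1)\Z_p^r$ (finite), and both are $\Z_p$-modules; hence for any $\ell$-subgroup $H \le G$ with $\ell \ne p$ one has $\hat H^i(H, H^j(N,T)) = 0$ automatically, Tate cohomology being killed by $|H|$, a unit in $\Z_p$. So by the reduction of cohomological triviality to Sylow subgroups it suffices to show that $\hat H^i(P, H^j(N,T)) = 0$ for all $i$ and all $p$-subgroups $P \le G$. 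Here I would invoke tameness: since $I = I_{N/K}$ has order prime to $p$ we get $P \cap I = 1$, so $P$ embeds into the cyclic group $G/I$ and is therefore cyclic, and $N/N^P$ is unramified of $p$-power degree with $\Gal(N/N^P)$ generated by a Frobenius. Writing $N' = N^P$, $P = \Gal(N/N') = \langle \phi\rangle$ and $p^a = |P|$, the claim must now be proved in this unramified cyclic situation.

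\textbf{The term $H^2(N,T)$.} This part I expect to be routine. On $\calZ/(U_N-1)\calZ$ the generator $\phi$ acts through the matrix $V := \rhonr(F_{N'}) \in \Gl_r(\Z_p)$, with $V^{p^a} = \rhonr(F_N) = U_N$. Identifying $\Z_p[P]$ with $\Z_p[Y]/(Y^{p^a}-1)$ via $Y \leftrightarrow \phi$, one checks that
\[
0 \lra \Z_p[P]^r \xrightarrow{\ Y - V\ } \Z_p[P]^r \lra H^2(N,T) \lra 0
\]
is an exact sequence of $\Z_p[P]$-modules (here $Y$ stands for $Y$ times the identity matrix): surjectivity and the identification of the cokernel with $\Z_p^r/(V^{p^a}-1)\Z_p^r = H^2(N,T)$ are immediate, and injectivity of $Y-V$ follows because its determinant, the characteristic polynomial $\chi_V(Y)$, is a non-zero-divisor in $\Z_p[P]$. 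For the latter point I would pass to $\Z_p[P] \hookrightarrow \Q_p[P] = \prod_{j=0}^{a}\Q_p(\zeta_{p^j})$, where $\chi_V(Y)$ has components $\chi_V(\zeta_{p^j})$; if one of them vanished, some $p^a$-th root of unity would be an eigenvalue of $V$, so $1$ would be an eigenvalue of $V^{p^a} = U_N$, contradicting Hypothesis (F). A free resolution of length $1$ then makes $H^2(N,T)$ $P$-cohomologically trivial.

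\textbf{The term $H^1(N,T)$.} This is the step I expect to be the main obstacle. I would argue by dévissage, using crucially that $\calF_{pu^{-1}}$ is defined over $\Z_p$, so that its formal group law and its formal logarithm $\lambda_\calF$ have coefficients in $\Q_p$, are fixed by $P$, and the $\Z_p[P]$-module structure on $\calF(\frp_N^{(r)})$ is the coordinatewise one. For $n$ large, $\lambda_\calF$ gives a $P$-equivariant isomorphism from $\calF$ on $\prod_r\frp_N^n$ onto the additive group $\prod_r\frp_N^n$; since $N/N'$ is unramified this group is $\Z_p[P]$-isomorphic to $\oo_N^r$ (multiplication by a power of a uniformizer of $N'$), and $\oo_N$ is $\oo_{N'}[P]$-free by the normal integral basis theorem in the unramified case, hence $\Z_p[P]$-free, hence $P$-cohomologically trivial. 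The finite quotient $Q := \calF(\frp_N^{(r)}) / \calF(\prod_r\frp_N^n)$ carries a finite filtration with graded pieces $P$-equivariantly isomorphic to $(\frp_N^m/\frp_N^{m+1})^r \cong k_N^r$ (the formal group law being additive modulo terms of higher order), and $k_N$ is $k_{N'}[P]$-free by the normal basis theorem for finite fields, hence $\mathbb{F}_p[P]$-free, hence $\Z_p[P]$-cohomologically trivial; thus $Q$ is $P$-cohomologically trivial. From $0 \to \calF(\prod_r\frp_N^n) \to \calF(\frp_N^{(r)}) \to Q \to 0$ it follows that $H^1(N,T)$ is $P$-cohomologically trivial.

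\textbf{Conclusion and main difficulty.} Combining the two cases with the Sylow reduction yields the proposition. The points that will need genuine care all lie in the $H^1$ step: one must verify that the isomorphism $H^1(N,T) \cong \calF(\frp_N^{(r)})$ of \cite[Cor.~3.16]{Cobbe18} is $G$-equivariant for the coordinatewise action (the twist by $\rhonr$ being hidden in the choice of $\calF$ rather than in the Galois action), and that the reduction through the logarithm is compatible with this identification. Finally, it is worth stressing that Hypothesis (F) is indispensable throughout: it is exactly what removes the summand $\calZ^{G_N}$ from $H^1(N,T)$ and what makes $\chi_V(Y)$ a non-zero-divisor in the treatment of $H^2(N,T)$, and without it neither module is cohomologically trivial in general.
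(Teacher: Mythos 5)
Your proof is correct, but it takes a genuinely different route from the paper's. The paper cites \cite[Thm.~3.3 and Lemma~2.2]{Cobbe18}, which in effect (cf.\ the four-term exact sequence (\ref{fFN}) deduced from the same source) reduces cohomological triviality of both $H^1(N,T)\cong\calF(\frp_N^{(r)})$ and $H^2(N,T)\cong\calZ/(F_N-1)\calZ$ to that of the single module $M:=\calZ/(F_N-1)\calZ$; this is then dispatched via \cite[Lemma~2.3]{EllerbrockNickel18} and a Herbrand-quotient computation over the cyclic quotient $G/I$. You instead reduce directly to Sylow subgroups, note that tameness forces every $p$-subgroup $P\le G$ to be cyclic with $N/N^P$ unramified of degree $|P|$, and then handle the two cohomology groups separately over such $P$: for $H^2$ a length-one $\Z_p[P]$-free resolution via $Y-V$, with injectivity resting on Hypothesis~(F) through the observation that no $p$-power root of unity can be an eigenvalue of $V$; for $H^1=\calF(\frp_N^{(r)})$ a d\'evissage via the formal logarithm (Lemma~\ref{logar}, Proposition~\ref{log exp iso}) and the filtration isomorphism (\ref{fil iso}), together with the normal integral basis theorem for $\oo_N$ over $\oo_{N'}$ and the normal basis theorem for $k_N/k_{N'}$. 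Your argument is more self-contained --- it avoids the black-box input from \cite{Cobbe18} and \cite{EllerbrockNickel18} --- but it is correspondingly longer, and in effect it re-derives, for $p$-subgroups, the equivalence between cohomological triviality of $H^1$ and of $H^2$ that the paper imports from \cite{Cobbe18}. The caveat you flag at the end is real and worth recording explicitly: the $G$-equivariance of $H^1(N,T)\cong\calF(\frp_N^{(r)})$ for the coordinatewise $G$-action on $\prod_r\frp_N$ is exactly what makes the untwisted formal logarithm applicable, and it holds because the construction in \cite{Cobbe18} absorbs the $\rhonr$-twist into the choice of the Lubin--Tate group $\calF_{pu^{-1}}$ rather than into the Galois action on points; this is the same identification that underlies the exact sequence (\ref{fFN}).
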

\begin{proof} 
By \cite[Thm.~3.3 and Lemma 2.2]{Cobbe18} it suffices to show that $\calZ / (F_N - 1)\calZ = \Z_p^r / (U_N - 1)\Z_p^r$
is cohomologically trivial.

We set $M := \calZ / (F_N - 1)\calZ$ and write $I = I_{N/K}$ for the inertia group.
By \cite[Lemma 2.3]{EllerbrockNickel18} it suffices to show that $H^i(G/I, M^I) = 0$ and   $H^i(I, M) = 0$ for all $i \in \Z$.
Since $M$ is a (finite) $p$-group and $p \nmid \#I$ we get $H^i(I, M) = 0$. Hence it suffices to show that
 $H^i(G/I, M^I) = 0$ for all $i \in \Z$. Since $G/I$ is cyclic and $M$ finite, a standard Herbrand quotient argument shows that 
it is then enough to prove that $H^{-1}(G/I, M^I) = 0$.
Note that $M^I = M$. The long exact cohomology sequence attached to the short exact sequence 
\[
0 \lra \calZ \xrightarrow{F_N - 1} \calZ \lra M \lra 0
\]
of $G/I$-modules yields the exact sequence
\[
H^{-1}(G/I, \calZ ) \lra H^{-1}(G/I,  M) \lra H^{0}(G/I, \calZ ).
\]
With $F_N = F_K^{d_{N/K}}$ one has $U_N = U_K^{d_{N/K}}$ and
\[
1 -U_N = ( 1 + U_K + \ldots + U_K^{d_{N/K}-1})(1 - U_K).
\]
Since $U_N - 1$ is invertible, the same is true for   $1 - U_K$, hence $\calZ^{G/I} = 0$ and $H^{0}(G/I, \calZ ) = 0$.
To show that $H^{-1}(G/I, \calZ ) = 0$ we note that the above identity also implies  that $1 + U_K + \ldots + U_K^{d_{N/K}-1}$ is invertible, 
and hence the kernel of the norm map is trivial. Consequently, $H^{-1}(G/I, \calZ ) = 0$.
\end{proof}

Because of Proposition \ref{tame cohomology} the tame case is much more accessible to proofs of  conjecture $\CEP$ than the wild case.
Conversely, the following lemma shows that in the generic wild case the cohomology modules are not cohomologically trivial.

\begin{lemma}\label{ct coh}
  Assume that Hypothesis (F) holds. Then the following are equivalent:
\begin{enumerate}
\item[(i)] $H^2(N,T)$ is  trivial.
\item[(ii)] $U_N - 1 \in \Gl_r(\Zp)$.
\end{enumerate}
If $N/K$ is wildly ramified, then this is also equivalent to
\begin{enumerate}
\item[(iii)] $H^1(N,T)$ is cohomologically trivial.
\item[(iv)] $H^2(N,T)$ is cohomologically trivial.
\end{enumerate}
\end{lemma}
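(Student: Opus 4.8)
The plan is to prove the cycle of equivalences by first nailing down the elementary "(i) $\Leftrightarrow$ (ii)" over $\Z_p$, and then deducing the cohomological statements using the structural isomorphisms for $H^1(N,T)$ and $H^2(N,T)$ recalled from \cite{Cobbe18} together with the description of $G$-cohomological triviality for $p$-groups. First I would observe that under Hypothesis (F) we have $H^2(N,T) \cong \calZ/(F_N-1)\calZ = \Z_p^r/(U_N-1)\Z_p^r$, a finite group of order $p^\omega$ with $\omega = v_p(\det(U_N-1))$; hence $H^2(N,T)$ is trivial precisely when $\omega = 0$, i.e. when $\det(U_N-1) \in \Z_p^\times$, which by the elementary divisor theorem (or Cramer's rule) is equivalent to $U_N - 1 \in \Gl_r(\Z_p)$. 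This gives "(i) $\Leftrightarrow$ (ii)" with no hypothesis on ramification.

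Next, assuming $N/K$ is wildly ramified, I would show the chain (ii) $\Rightarrow$ (iii), (iv) $\Rightarrow$ (i) is forced, together with the easy implications among (iii), (iv), (i). For (ii) $\Rightarrow$ (iv): if $U_N - 1$ is invertible over $\Z_p$ then $M := \calZ/(F_N-1)\calZ = 0$, so $H^2(N,T) = 0$ is trivially cohomologically trivial; and by the isomorphism $H^1(N,T) \cong \calF(\frp_N^{(r)})$ from \cite[Cor.~3.16]{Cobbe18} combined with the fact that the formal group module $\calF(\frp_N^{(r)})$ is cohomologically trivial (this is the standard fact used for Lubin--Tate modules, as in the tame case argument of Proposition \ref{tame cohomology} and the references there to \cite{Cobbe18}), I get (iii) as well. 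The implication (iv) $\Rightarrow$ (i) is trivial, and (iii) $\Leftrightarrow$ (iv) will follow once we know both hold or, more precisely, by running the argument so that the only genuine content is (i) $\Rightarrow$ (iii)/(iv) in the wild case, equivalently $\neg$(i) $\Rightarrow \neg$(iii) and $\neg$(iv).

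The main obstacle, and the step that uses wild ramification essentially, is showing that if $H^2(N,T)$ is \emph{nontrivial} then it is \emph{not} cohomologically trivial (and likewise for $H^1$). Here $M = \Z_p^r/(U_N-1)\Z_p^r$ is a nonzero finite $p$-group on which $G = \Gal(N/K)$ acts trivially (the twist is unramified and $G_N$ acts through $F_N$, so $M$ carries the trivial $G$-action). If $N/K$ is wildly ramified then $p \mid \#G$, so $G$ has a subgroup $H$ of order $p$; a nonzero $\Z_p[H]$-module with trivial $H$-action, such as $M$, has $\hat H^0(H,M) = M/pM \cdot(\text{something})$... more precisely $\hat H^{0}(H,M) = M^H/N_H M = M/pM \ne 0$ since $M$ is a nonzero finite $p$-group, so $M$ is not $H$-cohomologically trivial, hence not $G$-cohomologically trivial. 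This kills (iv), and via the exact sequence $0 \to \calZ \xrightarrow{F_N-1} \calZ \to M \to 0$ of $G$-modules (with $\calZ$ free over $\Z_p$, and by the analysis of $H^1(N,T)$ in terms of $\calF(\frp_N^{(r)})$ and $\calZ^{G_N}$) one transports the failure of cohomological triviality to $H^1(N,T)$, killing (iii). Assembling: in the wild case (i) $\Leftrightarrow$ (ii) $\Leftrightarrow$ (iii) $\Leftrightarrow$ (iv), and in general (i) $\Leftrightarrow$ (ii). I expect the only delicate bookkeeping to be the precise identification of the $G$-module structures on $H^1(N,T)$ and on $\calF(\frp_N^{(r)})$, for which I would lean on \cite[Thm.~3.15, Cor.~3.16]{Cobbe18} and on the cohomological triviality input already invoked in the proof of Proposition \ref{tame cohomology}.
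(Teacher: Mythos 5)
Your argument for the core step is correct and essentially matches the paper: the crucial content is that if $M := \calZ/(F_N-1)\calZ$ is nonzero, then the existence of a subgroup $P\le I$ of order $p$ acting trivially on the finite $p$-group $M$ forces nontrivial Tate cohomology (you compute $\hat H^0(P,M)=M/pM\ne 0$; the paper uses $H^1(P,M)=\Hom(P,M)\ne 0$ — either works). The (i) $\Leftrightarrow$ (ii) step is also fine. Two issues, though.

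First, a minor slip: you write that ``(iv) $\Rightarrow$ (i) is trivial''; it is not (a cohomologically trivial module can certainly be nonzero). You do later address the real implication $\neg$(i) $\Rightarrow \neg$(iv), so this is only a presentational slip, but as written it is a false assertion.

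Second, and this is a genuine gap: your route from the failure of c.t.\ for $M$ to the failure of c.t.\ for $H^1(N,T)$, and more generally the equivalence (iii) $\Leftrightarrow$ (iv), is not established. You invoke the short exact sequence $0\to\calZ\xrightarrow{F_N-1}\calZ\to M\to 0$, but this sequence relates $\calZ$ to $M=H^2(N,T)$; it says nothing about $H^1(N,T)\cong\calF(\frp_N^{(r)})$, which is a different object from $\calZ$. What you actually need is the four-term exact sequence (\ref{fFN}),
\[
0 \lra \calF(\frp_N^{(r)}) \lra \calI_{N/K}(\chinr) \lra \calI_{N/K}(\chinr) \lra \calZ/(F_N-1)\calZ \lra 0,
\]
together with the fact that the middle terms $\calI_{N/K}(\chinr)$ are $G$-cohomologically trivial; that is exactly the content of \cite[Thm.~3.3 and Lemma~2.2]{Cobbe18}, which the paper cites for (iii) $\Leftrightarrow$ (iv) and which the proof of Proposition~\ref{tame cohomology} also relies on. Your phrase ``the formal group module $\calF(\frp_N^{(r)})$ is cohomologically trivial (this is the standard fact \dots)'' is misleading as an unconditional statement: whether $\calF(\frp_N^{(r)})$ is c.t.\ is precisely what the lemma is deciding, and it reduces to the c.t.\ of $M$ only via the four-term sequence above. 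Replace your three-term sequence with (\ref{fFN}) (or cite the Cobbe18 lemmas directly, as the paper does) and the argument closes.
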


\begin{proof}
  The equivalence of (i) and (ii) is clear. The equivalence of (iii) and (iv) follows from 
  \cite[Thm.~3.3 and Lemma 2.2]{Cobbe18}. To see the equivalence of (i) and (iv) in the wildly ramified case
  it suffices to note that $I$ acts trivially on
$M := H^2(N,T) =  \calZ / (F_N - 1)\calZ$. If $P$ denotes a subgroup of $I$ of order $p$, then one obviously has
\[
H^1(P, M) = \Hom(P, M) = 0 \iff M = 0.
\]

\end{proof}
\end{section}

\begin{section}{Formal logarithm and exponential function in higher dimensions}\label{sectlog}
  In this section we prove some results which are probably well-known, but for which we could not find a precise reference in the literature.
  Throughout this subsection we let $L$ be a finite extension of $\Qp$ and $v_L$ the normalized valuation of $L$.
  
  The statement and the proof of the following lemma generalize \cite[IV.1, Prop.~1]{Froehlich68} to a higher dimensional setting.
  We set $X := (X_1, \ldots, X_r)$.

\begin{lemma}\label{logar}
  Let $\calF$ be an $r$-dimensional commutative formal group defined over $\Z_p$. Then there exists a unique isomorphism
  $\log_\calF:\calF\to \mathbb{G}_a^r$ defined over $\Q_p$, so that the Jacobian $J_{\log_\calF}(X)$ satisfies $J_{\log_\calF}(0)=1$.
  Furthermore, $J_{\log_\calF}(X)\in M_r(\Z_p[[X]])$ and $\log_\calF(x)$ converges for all $x=(x_1,\dots,x_r) \in L^{(r)}$ satisfying
  $\min\{v_L(x_1),\dots,v_L(x_r)\}>0$.
\end{lemma}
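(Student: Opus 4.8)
The plan is to construct $\log_\calF$ directly from the logic of the classical one-dimensional argument, carried over to the multidimensional setting. First I would recall that an $r$-dimensional commutative formal group $\calF$ over $\Z_p$ is given by a tuple $F(X,Y) = (F_1(X,Y),\dots,F_r(X,Y))$ of power series with $F(X,0) = X$, $F(0,Y) = Y$. Differentiating the associativity/commutativity identities in the usual way, one produces the invariant differentials: the matrix $\omega(X) := J_{2}F(X,0)^{-1}$, where $J_2 F$ denotes the Jacobian of $F$ with respect to the second variable, has entries in $\Z_p[[X]]$ (invertibility follows since $J_2 F(0,0) = 1$), and it satisfies the invariance property $\omega(F(X,Y)) \cdot J_1 F(X,Y) = \omega(X)$, where $J_1 F$ is the Jacobian with respect to the first variable. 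Because we work over $\Z_p$, which has no nontrivial differential obstructions in the relevant degrees, the closed $1$-form ``$\omega(X)\,dX$'' is exact: each row integrates termwise to a power series. Concretely, set $\log_\calF(X)$ to be the unique $r$-tuple of power series over $\Q_p$ with zero constant term whose Jacobian equals $\omega(X)$; this is well-defined by formal integration (divide the degree-$n$ part of each entry by $n$). Then $J_{\log_\calF}(0) = \omega(0) = 1$, and by construction $J_{\log_\calF}(X) = \omega(X) \in M_r(\Z_p[[X]])$.

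Next I would check that $\log_\calF$ is a homomorphism of formal groups to $\mathbb{G}_a^r$, i.e. $\log_\calF(F(X,Y)) = \log_\calF(X) + \log_\calF(Y)$. Both sides, viewed as functions of $X$ for fixed $Y$, have the same Jacobian with respect to $X$: the left side has Jacobian $J_{\log_\calF}(F(X,Y))\cdot J_1 F(X,Y) = \omega(F(X,Y)) J_1 F(X,Y) = \omega(X)$ by the invariance property, and the right side obviously has Jacobian $\omega(X)$; since both sides agree at $X = 0$ (giving $\log_\calF(Y)$ on each side, using $F(0,Y) = Y$), they coincide. Uniqueness of $\log_\calF$ with the normalization $J_{\log_\calF}(0) = 1$ is immediate: if $\ell$ is another such isomorphism, then $\ell \circ \log_\calF^{-1}$ is an automorphism of $\mathbb{G}_a^r$ over $\Q_p$ with trivial linear term, hence the identity, because an endomorphism of $\mathbb{G}_a^r$ is given by an $r\times r$ matrix acting linearly (no higher-order terms survive additivity), and trivial linear term forces it to be the identity. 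That $\log_\calF$ is an isomorphism (not merely a homomorphism) follows because its linear term is the identity matrix, which is invertible, so it has a formal inverse $\exp_\calF$ over $\Q_p$.

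For the convergence statement, I would argue that since $\log_\calF(X) = \sum_{n\ge 1} a_n(X)$ with $a_n$ the homogeneous degree-$n$ part and $a_n$ obtained by integrating a degree-$(n-1)$ form with $\Z_p$-coefficients, the coefficients of $a_n$ lie in $\tfrac{1}{n}\Z_p$, hence have $p$-adic valuation at least $-v_p(n)$. For $x \in L^{(r)}$ with $c := \min_i v_L(x_i) > 0$, a monomial of degree $n$ in $x$ has valuation at least $nc - e\,v_p(n)$ where $e = e_{L/\Q_p}$; since $v_p(n) \le \log n / \log p$ grows only logarithmically while $nc$ grows linearly, $nc - e\,v_p(n) \to \infty$, so the series converges in $L^{(r)}$. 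This is the standard estimate and presents no real difficulty.

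The main obstacle I expect is justifying the exactness of the invariant differential in the multivariable setting carefully — that is, showing that the system of power series $\omega_{ij}(X)$ really is the Jacobian of an $r$-tuple of power series, which amounts to verifying the symmetry (closedness) conditions $\partial_k \omega_{ij} = \partial_j \omega_{ik}$. Over $\Q_p$ one can integrate formally without worrying, but one then needs that the result has Jacobian exactly $\omega$, which requires precisely this closedness; the cleanest route is to derive closedness from the invariance property $\omega(F(X,Y)) J_1 F(X,Y) = \omega(X)$ by differentiating at $Y = 0$, or alternatively to avoid the issue entirely by first proving $\log_\calF$ exists and is a homomorphism via a Hensel/successive-approximation construction degree by degree (at each stage choosing the homogeneous piece so that additivity holds to the next order, which is forced and consistent because $\calF$ is commutative and associative), and only afterwards reading off that its Jacobian has $\Z_p$-coefficients from the relation $J_{\log_\calF}(F(X,Y)) J_1 F(X,Y) = J_{\log_\calF}(X)$ specialized appropriately. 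I would likely present the successive-approximation construction as the backbone since it makes uniqueness and the homomorphism property transparent, and then extract the integrality of the Jacobian as a corollary.
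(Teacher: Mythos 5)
Your proposal is correct but takes a genuinely different route from the paper's. The paper proves existence by a black box: it cites \cite[II.2, Thm.~1, Cor.~1]{Froehlich68} for the existence of \emph{some} isomorphism $g \colon \calF \to \mathbb{G}_a^r$ over $\Q_p$, then normalizes by composing with the linear map $J_g(0)^{-1}$. You instead build the logarithm from scratch via the invariant differential $\omega(X) = J_{\calF(X,\cdot)}(0)^{-1}$ and formal integration. This is more self-contained but, as you correctly flag, it shifts the real work to verifying that $\omega$ is closed (so that the term-by-term antiderivative actually has Jacobian $\omega$), either by differentiating the invariance identity at $Y=0$ or by a degree-by-degree successive approximation. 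The two approaches converge at the key matrix identity $J_{\log_\calF}(X)\, J_{\calF(X,\cdot)}(0) = 1$: in the paper this is \emph{derived} from the homomorphism property and then used to deduce $J_{\log_\calF}(X) \in M_r(\Z_p[[X]])$ via invertibility of $J_{\calF(X,\cdot)}(0)$ over $\Z_p[[X]]$ (a local-ring argument on the determinant); in your construction it holds essentially by definition. Your uniqueness argument (automorphisms of $\mathbb{G}_a^r$ over a $\Q$-algebra are linear) is the same as the paper's. On convergence, your bound that the degree-$n$ coefficients lie in $\frac{1}{n}\Z_p$ is slightly weaker than the paper's $\frac{1}{\gcd(n_1,\dots,n_r)}\Z_p$ (obtained by comparing with each partial derivative separately), but it is still correct and suffices for the same logarithmic-versus-linear growth estimate. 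In short: you trade the external reference for a bit more internal machinery (closedness of $\omega$), which is a reasonable tradeoff if one wants a proof from first principles, whereas the paper economizes by leaning on Fröhlich.
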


\begin{proof}
  By \cite[II.2, Thm.~1, Cor.~1]{Froehlich68} there exists an isomorphism $g:\calF\to \mathbb{G}_a^r$ defined over $\Q_p$.
  It is then clear that the Jacobian $J_g(0)$ is an invertible matrix. We also note that $J_g(0)^{-1}X$ defines an isomorphism $g_1:\mathbb{G}_a^r\to \mathbb{G}_a^r$. Thus the composition $\log_\calF=g_1\circ g:\calF\to \mathbb{G}_a^r$ is an isomorphism satisfying
  our normalization $J_{\log_\calF}(0)=J_{g_1}(g(0))J_g(0)=J_g(0)^{-1}J_g(0)=1$.

  To prove uniqueness we assume that $f:\calF\to \mathbb{G}_a^r$ is another isomorphism with $J_f(0)=1$.
  Then
  \[
    J_{\log_\calF\circ f^{-1}}(0)=J_{\log_\calF\circ f^{-1}}(f(0))=J_{\log_\calF}(0)J_{f^{-1}}(f(0))=J_{\log_\calF}(0)J_{f}(0)^{-1}=1.
  \]
  It is easy to see that the isomorphisms $\mathbb{G}_a^r\to \mathbb{G}_a^r$ over $\Z_p$ are in one to one
  correspondence with the invertible matrices in $M_r(\Z_p)$. Hence we deduce that $\log_\calF\circ f^{-1}$ is the identity map,
  i.e., $f=\log_\calF$. 

To show that $J_{\log_\calF}(X)\in M_r(\Z_p[[X]])$, we write
\[
  \log_\calF(\calF(X,Y))=\log_\calF(X)+\log_\calF(Y).
\]
We view both sides as formal series in the variables $Y$, calculate the Jacobians and evaluate at $Y=0$:
\[
  J_{\log_\calF}(\calF(X,0))J_{\calF(X,\cdot)}(0)=0+J_{\log_\calF}(0).
\]
As a consequence we obtain
\[
  J_{\log_\calF}(X)J_{\calF(X,\cdot)}(0)=1.
\]
We let $\fra$ denote the ideal of $\Zp[[X]]$ which is generated by $X_1, \ldots, X_r$ and note that $p\Zp[[X]] + \fra$ is the maximal
ideal of the local ring $\Zp[[X]]$. By the axioms of formal groups it follows that
$J_{\calF(X,\cdot)}(0) = 1 + M$ with a matrix $M\in M_r(\Z_p[[X]])$ with coefficients in $\fra$. Hence
$\det(J_{\calF(X,\cdot)}(0)) \equiv 1 \pmod{\fra}$ and we deduce that $\det(J_{\calF(X,\cdot)}(0))$ is a unit in $\Zp[[X]]$.
It follows that $J_{\calF(X,\cdot)}(0)$ is invertible in $M_r(\Zp[[X]])$, so that its inverse $J_{\log_\calF}(X)$ has integral coefficients.

Hence a general term of any component $\log_{\calF,i}$ of $\log_\calF$ is of the form $\frac{a}{m}\prod_{i=1}^r X_i^{n_i}$,
with $m=\gcd(n_1,\dots,n_r)$ and $a\in\Z_p$. If we set $n=\sum_{i=1}^r n_i$, then
\[
  v\left(\frac{a}{m}\prod_{i=1}^r x_i^{n_i}\right)\geq\sum_{i=1}^r n_iv(x_i)-v(m) \ge n\min\{v_L(x_1),\dots,v_L(x_r)\}-(\log_p n)v_L(p).
\]
This last expression tends to infinity when the total degree $n$ tends to infinity.
\end{proof}

As usual we write $\exp_\calF$ for the inverse of $\log_\calF$. To obtain information on the convergence of $\exp_\calF$ 
we will need the following lemma whose proof is inspired by the proof of \cite[Lemma~IV.5.4]{Sil}.

\begin{lemma}\label{inverseiso}
  Let $f,g\in\Q_p[[X]]^r$ be polynomials without constant term such that $f(g(X))=X$ for $X=(X_1,\dots,X_r)$.
  Assume that $J_g(X)\in M_r(\Z_p[[X]])$ and $J_g(0)=1$. Then for all $s\in\N$ and for all $i, n_1,\dots,n_s\in\{1,\dots,r\}$ we have
  \[
    \frac{\partial^s f_i}{\partial X_{n_1}\cdots\partial X_{n_s}}(0)\in\Z_p.
  \]
\end{lemma}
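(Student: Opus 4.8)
The plan is to argue by induction on $s$. The key identity is the chain rule applied to $f(g(X)) = X$: differentiating once gives $J_f(g(X))\, J_g(X) = 1$, and evaluating at $X = 0$ gives $J_f(0) = J_g(0)^{-1} = 1$, which settles the case $s = 1$ (in fact it shows $\partial f_i/\partial X_j(0) \in \Z_p$, indeed equals $\delta_{ij}$). For the inductive step, the idea is to differentiate the relation $J_f(g(X))\, J_g(X) = 1$ repeatedly with respect to the variables $X_{n_1}, \ldots, X_{n_{s-1}}$ using the Leibniz rule, then evaluate at $X = 0$. Since $g$ has no constant term, $g(0) = 0$, so every term produced is a sum of products involving (a) partial derivatives of the entries of $J_f$ at $0$ of order $\le s-1$, composed with derivatives of $g$ — and here one must track the chain rule through $g$, which is where the hypothesis $J_g(X) \in M_r(\Z_p[[X]])$ enters to guarantee that all the relevant partial derivatives of the components of $g$ at $0$ are $p$-integral — and (b) the single new unknown quantity, the order-$s$ derivative $\frac{\partial^s f_i}{\partial X_{n_1}\cdots \partial X_{n_s}}(0)$, which appears multiplied by $J_g(0) = 1$.

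Carrying this out, one isolates the order-$s$ derivative of $f_i$ and expresses it as a $\Z_p$-linear combination (with coefficients built from lower-order derivatives of $f$ at $0$, which are $p$-integral by the inductive hypothesis, and from derivatives of the entries of $J_g$ at $0$, which lie in $\Z_p$ by hypothesis) set equal to zero or to a constant coming from differentiating the right-hand side $1$ (which vanishes for $s \ge 2$). Concretely: differentiating $\sum_k \frac{\partial f_i}{\partial X_k}(g(X))\, \frac{\partial g_k}{\partial X_\ell}(X) = \delta_{i\ell}$ a total of $s-1$ further times and evaluating at $0$, the only term not manifestly in $\Z_p$ by induction is the one where all $s-1$ derivatives fall on the factor $\frac{\partial f_i}{\partial X_k}(g(X))$ via the chain rule in a way that produces $\frac{\partial^s f_i}{\partial X_{k}\partial(\cdots)}(0)$ times $\frac{\partial g_k}{\partial X_\ell}(0) = \delta_{k\ell}$; summing over $k$ this yields exactly $\frac{\partial^s f_i}{\partial X_{n_1}\cdots \partial X_{n_s}}(0)$ plus $\Z_p$-terms equal to $0$, giving the result.

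The main obstacle is purely bookkeeping: organizing the higher-order multivariate Leibniz/Faà di Bruno expansion so that one can cleanly see that (i) exactly one term carries the top-order derivative of $f$, with coefficient $J_g(0) = 1$, and (ii) every other term is a product of a lower-order derivative of $f$ at $0$ (integral by induction) with derivatives of the entries of $J_g$ at $0$ (integral by hypothesis). It is cleanest to phrase the induction not on $f_i$ directly but on the vector identity $J_f(g(X)) = J_g(X)^{-1}$, noting that $J_g(X)^{-1} \in M_r(\Z_p[[X]])$ since $\det J_g(X) \equiv \det J_g(0) = 1 \pmod{\fra}$ is a unit in $\Z_p[[X]]$ (the same argument as in Lemma \ref{logar}); then the order-$(s-1)$ derivatives at $0$ of the entries of $J_f(g(X))$ are $p$-integral, and a final application of the chain rule through $g$ — using $g(0)=0$ and induction — identifies these with the order-$s$ derivatives of $f$ at $0$ up to $\Z_p$-corrections. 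I would present the chain-rule step in the vector/matrix form to keep the indices manageable and relegate the explicit combinatorial identity to a one-line remark.
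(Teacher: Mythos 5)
Your proposal is correct and follows essentially the same route as the paper: differentiate the chain-rule relation $J_f(g(X))\,J_g(X)=1$ a total of $s-1$ further times, evaluate at $X=0$ using $g(0)=0$ and $J_g(0)=1$ to expose the order-$s$ derivative of $f$ with unit coefficient, and conclude by induction together with the integrality of the entries of $J_g$. The paper packages the bookkeeping you identify as the main obstacle into a stand-alone preliminary Claim (that the contraction of the order-$s$ derivatives of $f_i$ composed with $g$ against the $J_g$-factors is a $\Z_p[[X]]$-coefficient polynomial in lower-order derivatives), proved by its own induction, but the underlying computation is the same; your variant via inverting $J_g(X)$ in $M_r(\Z_p[[X]])$ is a harmless reorganization.
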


\begin{proof}
  In a first step we prove the following\\ \\
  \bf Claim: For all $s \in \N$ and all $n_1,\dots,n_s\in\{1,\dots,r\}$ the expression
  \begin{equation}
    \label{claim1}
    \sum_{m_1=1}^r\cdots\sum_{m_s=1}^r\frac{\partial^s f_i}{\partial X_{m_1}\cdots
      \partial X_{m_s}}(g(X))\frac{\partial g_{m_1}}{\partial X_{n_1}}\cdots \frac{\partial g_{m_s}}{\partial X_{n_s}}
  \end{equation}
  is a polynomial in
  $\frac{\partial^t f_i}{\partial X_{k_1}\cdots\partial X_{k_t}}(g(X))$ with $1\leq t\leq s-1$, $k_1,\dots,k_t\in\{1,\dots,r\}$
  and coefficients in $\Z_p[[X]]$.

  Indeed, the chain rule for $\frac{\partial}{\partial X_{n_{1}}}$ applied to $f_i(g(X))=X_i$ yields
  \begin{equation}\label{JfgI}
    \sum_{m_1=1}^r\frac{\partial f_i}{\partial X_{m_1}}(g(X))\frac{\partial g_{m_1}}{\partial X_{n_1}}=\delta_{i,n_1}
  \end{equation}
  and thus establishes the claim for $s=1$.
  
  For the inductive step we apply $\frac{\partial}{\partial X_{n_{s+1}}}$ to the expression in (\ref{claim1}) and again by the chain rule
  we obtain
  \[
  \begin{split}
    &\sum_{m_1=1}^r\cdots\sum_{m_s=1}^r\sum_{m_{s+1}=1}^r\frac{\partial^{s+1} f_i}{\partial X_{m_1}\cdots
      \partial X_{m_s}\partial X_{m_{s+1}}}(g(X))\frac{\partial g_{m_1}}{\partial X_{n_1}}\cdots \frac{\partial g_{m_{s}}}{\partial X_{n_{s}}}
    \frac{\partial g_{m_{s+1}}}{\partial X_{n_{s+1}}} \\
  &\quad =  \frac{\partial}{\partial X_{n_{s+1}}}\left(\sum_{m_1=1}^r\cdots\sum_{m_s=1}^r\frac{\partial^s f_i}{\partial X_{m_1}\cdots\partial X_{m_s}}(g(X))\frac{\partial g_{m_1}}{\partial X_{n_1}}\cdots \frac{\partial g_{m_s}}{\partial X_{n_s}}\right)\\
  &\qquad - \sum_{m_1=1}^r\cdots\sum_{m_s=1}^r\frac{\partial^s f_i}{\partial X_{m_1}\cdots\partial X_{m_s}}(g(X))\frac{\partial}{\partial X_{n_{s+1}}}\left(\frac{\partial g_{m_1}}{\partial X_{n_1}}\cdots \frac{\partial g_{m_s}}{\partial X_{n_s}}\right).
\end{split}
\]
Using the inductive hypothesis for the first term on the right hand side and the assumption $J_g(X) \in M_r(\Zp[[X]])$ for the second one
proves the above claim.

In order to prove the assertion of the lemma we again proceed by induction on $s$. For $s=1$ we specialize (\ref{JfgI}) at $X=0$
and obtain from $g(0) = 0$
\[
  \sum_{m_1=1}^r\frac{\partial f_i}{\partial X_{m_1}}(0)\frac{\partial g_{m_1}}{\partial X_{n_1}}(0)=\delta_{i,n_1}.
\]
Since $J_g(0) = 1$ this implies $\frac{\partial f_i}{\partial X_{n_1}}(0) = \delta_{i,n_1} \in \Zp$.

For the inductive step we specialize (\ref{claim1}) at $X=0$ and since $J_g(0)=1$ we simply obtain
\[
  \frac{\partial^s f_i}{\partial X_{n_1}\cdots\partial X_{n_s}}(0).
\]
By the above claim and the inductive hypothesis this is an element in $\Zp$.
\end{proof}

\begin{lemma}\label{expconvergence}
The isomorphism $\exp_\calF$  converges for all $x=(x_1,\dots,x_r) \in L^{(r)}$ satisfying $\min\{v_L(x_1),\dots,v_L(x_r)\}>v_L(p)/(p-1)$.
\end{lemma}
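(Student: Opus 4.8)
The plan is to reduce convergence of $\exp_\calF$ to an integrality statement for its Taylor coefficients, which follows from Lemma~\ref{inverseiso}, and then to run the standard $p$-adic estimate on the general term. First I would apply Lemma~\ref{inverseiso} with $f=\exp_\calF$ and $g=\log_\calF$. These are mutually inverse isomorphisms of formal groups defined over $\Qp$, hence lie in $\Qp[[X]]^r$, have no constant term, and satisfy $f(g(X))=X$; moreover $J_g(X)=J_{\log_\calF}(X)\in M_r(\Zp[[X]])$ and $J_g(0)=1$ by Lemma~\ref{logar}. Thus all iterated partial derivatives $\frac{\partial^s (\exp_\calF)_i}{\partial X_{n_1}\cdots\partial X_{n_s}}(0)$ lie in $\Zp$.

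Next I would translate this into a bound on the coefficients. Write $(\exp_\calF)_i(X)=\sum_{\mathbf n}c_{i,\mathbf n}X^{\mathbf n}$, where $\mathbf n=(n_1,\dots,n_r)$ runs over multi-indices of total degree $|\mathbf n|=n_1+\cdots+n_r\ge 1$ (there is no constant term) and $X^{\mathbf n}=X_1^{n_1}\cdots X_r^{n_r}$. Since $c_{i,\mathbf n}=\frac{1}{\mathbf n!}\frac{\partial^{|\mathbf n|}(\exp_\calF)_i}{\partial X_1^{n_1}\cdots\partial X_r^{n_r}}(0)$ with $\mathbf n!=n_1!\cdots n_r!$, the previous step gives $v_p(c_{i,\mathbf n})\ge -v_p(\mathbf n!)$. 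As $\mathbf n!$ divides $|\mathbf n|!$ (the multinomial coefficient $\binom{|\mathbf n|}{n_1,\dots,n_r}$ is a positive integer), the classical bound $v_p(m!)\le\frac{m-1}{p-1}$ for $m\ge 1$ yields $v_p(c_{i,\mathbf n})\ge -\frac{|\mathbf n|-1}{p-1}$, hence $v_L(c_{i,\mathbf n})\ge -\frac{v_L(p)(|\mathbf n|-1)}{p-1}$ since $v_L=v_L(p)\,v_p$ on $\Qp$.

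Finally, set $\mu:=\min\{v_L(x_1),\dots,v_L(x_r)\}$ and assume $\mu>v_L(p)/(p-1)$ (note this already forces $\mu>0$, so $x^{\mathbf n}$ makes sense). Then the general term of $(\exp_\calF)_i(x)$ satisfies
\[
  v_L\!\left(c_{i,\mathbf n}\,x^{\mathbf n}\right)\ \ge\ -\frac{v_L(p)(|\mathbf n|-1)}{p-1}+|\mathbf n|\,\mu\ =\ \frac{v_L(p)}{p-1}+|\mathbf n|\left(\mu-\frac{v_L(p)}{p-1}\right),
\]
which tends to $+\infty$ as $|\mathbf n|\to\infty$. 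Hence the series defining $\exp_\calF(x)$ converges, proving the lemma. There is no serious obstacle here: the essential input, namely the integrality of the partial derivatives of the inverse series, has already been isolated in Lemma~\ref{inverseiso}; the only point requiring a little care is the passage from that statement to the denominator bound $c_{i,\mathbf n}\in\frac{1}{\mathbf n!}\Zp$ together with the estimate for $v_p(\mathbf n!)$, after which convergence is immediate.
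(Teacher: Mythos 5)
Your proof is correct and follows essentially the same approach as the paper: both derive from Lemma~\ref{inverseiso} (applied with $f=\exp_\calF$, $g=\log_\calF$) that the iterated partials at the origin lie in $\Zp$, hence each coefficient lies in $\frac{1}{\mathbf{n}!}\Zp$, and then conclude by a factorial valuation bound. The only cosmetic difference is that you pass through $\mathbf{n}!\mid|\mathbf{n}|!$ and the single estimate $v_p(m!)\le\frac{m-1}{p-1}$, whereas the paper bounds $v_p(m_i!)$ for each $i$ separately (a marginally sharper, but equivalent for convergence, inequality referred to Silverman's Lemma~IV.6.3(b)).
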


\begin{proof}
By Lemma \ref{logar} and Lemma \ref{inverseiso} we have
\[
  \frac{\partial^s \exp_{\calF,i}}{\partial X_{n_1}\cdots\partial X_{n_s}}(0)\in\Z_p
\]
for any $s\in\N$ and $i, n_1,\dots,n_s\in\{1,\dots,r\}$. It follows that each component $\exp_{\calF,i}$ of $\exp_\calF$ is of the form
\[
  \sum_{m_1=0}^\infty\cdots\sum_{m_r=0}^\infty \frac{a_{m_1,\dots,m_r}}{m_1!\cdots m_r!}X_1^{m_1}\cdots X_r^{m_r}
\]
for some $a_{m_1,\dots,m_r}\in\Z_p$. As in the proof of \cite[Lemma IV.6.3 (b)]{Sil} we can show that
\[
  v_L\left(\frac{a_{m_1,\dots,m_r}}{m_1!\cdots m_r!}x_1^{m_1}\cdots x_r^{m_r}\right) \ge \sum_{i=0, m_i\ne 0}^r 
  \left( v_L(x_i) + (m_i-1) \left(v_L(x_i) - \frac{v_L(p)}{p-1}\right) \right)
\]
which under our assumption tends to infinity as the total degree tends to infinity.
\end{proof}

We summarize our discussion in the next proposition.

\begin{prop}\label{log exp iso}
  Let $L$ be a finite extension of $\Qp$ with normalized valuation $v_L$. Let $n > \frac{v_L(p)}{p-1}$ be an integer. Then
  the formal logarithm induces an isomorphism
  \[
    \log_\calF \colon \calF((\frp_L^n)^{(r)}) \lra \Gm^r((\frp_L^n)^{(r)})
  \]
  with inverse induced by $\exp_\calF$.
\end{prop}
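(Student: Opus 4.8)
The plan is to unpack the proposition into four assertions and to verify them in turn: (i) $\log_\calF$ and $\exp_\calF$ both converge on $(\frp_L^n)^{(r)}$; (ii) each of them maps $(\frp_L^n)^{(r)}$ into $(\frp_L^n)^{(r)}$; (iii) the two induced maps are mutually inverse; and (iv) $\log_\calF$ is a homomorphism for the relevant group structures. Assertion (i) is already contained in Lemma~\ref{logar} (since $n>v_L(p)/(p-1)>0$) and in Lemma~\ref{expconvergence}. I would also record at the outset that $(\frp_L^n)^{(r)}$ is a subgroup of $\calF(\frp_L^{(r)})$, of $\mathbb{G}_a^r(\frp_L^{(r)})$ and of $\Gm^r(\frp_L^{(r)})$: each of the three group laws in play has coefficients in $\Zp$, vanishes at the origin and has linear part $X+Y$, so for $x,y\in(\frp_L^n)^{(r)}$ every coordinate of the group law, and of the inversion series, has valuation $\ge n$. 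Moreover, since $n>v_L(p)/(p-1)$, the classical $p$-adic logarithm and exponential, applied coordinatewise, are mutually inverse isomorphisms between $\mathbb{G}_a^r((\frp_L^n)^{(r)})$ and $\Gm^r((\frp_L^n)^{(r)})$, so it is harmless to take either as the target.

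The essential step is (ii), for which I would re-examine the term-by-term valuation estimates appearing in the proofs of Lemmas~\ref{logar} and~\ref{expconvergence}. For $\exp_\calF$ this is painless: by Lemma~\ref{inverseiso} each component of $\exp_\calF$ is a series $\sum_{m_1,\dots,m_r\ge0}(a_{m_1,\dots,m_r}/\prod_j m_j!)\prod_j X_j^{m_j}$ with all $a_{\bullet}\in\Zp$, whose linear part is $X_i$ because $J_{\exp_\calF}(0)=J_{\log_\calF}(0)^{-1}=1$; the inequality displayed in the proof of Lemma~\ref{expconvergence} then shows that for $x\in(\frp_L^n)^{(r)}$ every monomial has valuation $\ge n$. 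For $\log_\calF$ one must be a little more careful. A general monomial of the $i$-th component has the shape $(c/m)\prod_j X_j^{n_j}$ with $c\in\Zp$, $m=\gcd(n_1,\dots,n_r)$ and total degree $d:=\sum_j n_j$; writing $a:=v_p(m)$ and using $m\mid d$ one has $p^{a}\mid d$, hence $d\ge p^{a}$, so that for $x\in(\frp_L^n)^{(r)}$
\[
  v_L\!\left(\frac{c}{m}\prod_j x_j^{n_j}\right)\ \ge\ dn-a\,v_L(p)\ \ge\ p^{a}n-a\,v_L(p)\ =\ n+\bigl((p^{a}-1)n-a\,v_L(p)\bigr)\ \ge\ n,
\]
the last inequality because $p^{a}-1\ge a(p-1)$ for every $a\ge0$, so that $(p^{a}-1)n-a\,v_L(p)\ge a\bigl((p-1)n-v_L(p)\bigr)\ge0$ by the hypothesis $n>v_L(p)/(p-1)$. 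Hence both $\log_\calF$ and $\exp_\calF$ carry $(\frp_L^n)^{(r)}$ into itself.

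It then remains to assemble (iv) and (iii). For (iv), I would use the identity $\log_\calF(\calF(X,Y))=\log_\calF(X)+\log_\calF(Y)$ of formal power series (already exploited in the proof of Lemma~\ref{logar}) together with the facts that $\calF(x,y)\in(\frp_L^n)^{(r)}$ for $x,y\in(\frp_L^n)^{(r)}$ and that, for power series over $\Qp$ which converge on and map $(\frp_L^n)^{(r)}$ into itself, the formal composite converges there and agrees with the composite of the associated maps; this is a routine $p$-adic convergence argument. Together these show that $\log_\calF$ is a homomorphism from $\calF((\frp_L^n)^{(r)})$ to $\mathbb{G}_a^r((\frp_L^n)^{(r)})$, hence, via the identification noted above, to $\Gm^r((\frp_L^n)^{(r)})$. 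For (iii), the same composition principle applied to the formal identities $\exp_\calF\circ\log_\calF=\id$ and $\log_\calF\circ\exp_\calF=\id$ shows that the induced maps on $(\frp_L^n)^{(r)}$ are mutually inverse bijections. Together with (ii), this yields the asserted isomorphism.

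The hard part, I expect, is step (ii) for $\log_\calF$: one must match the valuation estimate exactly with the hypothesis $n>v_L(p)/(p-1)$, and the cruder bound $v_p(m)\le\log_p d$ used in the proof of Lemma~\ref{logar} would only give the weaker condition $n\ge(\log_p2)\,v_L(p)$; the sharper input one needs is that $p^{v_p(m)}\mid d$, as exploited above. All the other ingredients are either already available in the excerpt or purely formal.
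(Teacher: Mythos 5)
Your proof is correct and takes essentially the same approach as the paper, whose own proof simply refers to the argument in \cite[Thm.~IV.6.4]{Sil} together with Lemmas~\ref{logar}, \ref{inverseiso} and \ref{expconvergence}; in particular, your sharper valuation estimate for $\log_\calF$ using $p^{v_p(m)}\mid d$ (rather than the cruder $v_p(m)\le\log_p d$ appearing in the proof of Lemma~\ref{logar}) is exactly the higher-dimensional analogue of the input Silverman's argument requires. Your observation that the natural target of $\log_\calF$ is the additive group $\mathbb{G}_a^r((\frp_L^n)^{(r)})$ rather than $\Gm^r((\frp_L^n)^{(r)})$, and that the two are identified via the coordinatewise $p$-adic logarithm once $n>v_L(p)/(p-1)$, correctly flags a harmless slip in the statement.
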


\begin{proof}
  Given the results of this section the proposition follows as in the proof of \cite[Thm.IV.6.4]{Sil}.
\end{proof}

\end{section}

\begin{section}{\texorpdfstring{Computation of the term $\Ucris$}{Computation of the term Ucris}}

  \begin{subsection}{Some preliminary results}

    We will apply the notation introduced and explained in \cite[Sec.~1.1]{BenBer}. In particular, $\Bcris, \Bst$ and $\BdR$ denote the $p$-adic period rings constructed by Fontaine. If $V$ is a $p$-adic representation of $G_K$, we put
\[\DdR^K(V) := \left( \BdR \tensor_\Qp V \right)^{G_K}, \quad \Dcris^K(V) := \left( \Bcris \tensor_\Qp V \right)^{G_K}.\]
The $K$-vector space $\DdR^K(V)$ is finite dimensional and filtered. The tangent space of $V$ over $K$ is defined by
\[t_V(K) := \DdR^K(V) / \mathrm{Fil}^0 \DdR^K(V).\]
Finally, we write $\exp_V \colon t_V(K) \lra H^1(K, V)$ for the exponential map of Bloch and Kato. Note here that  $H^1(K, V)$ is computed
using continuous cochain cohomology (see Remark \ref{cohomology iso}).

For any $\Qp$-vector space $W$ we write $W^* = \Hom_\Qp(W, \Qp)$ for its $\Qp$-linear dual. For convenience we usually write $t^*_V(K)$ instead of $t_V(K)^*$. 

We fix a matrix $\Tur\in \Gl_r(\overline\Zpnr)$ so that $\varphi(\Tur)(\Tur)^{-1}=u^{-1}$, which exists by \cite[Lemma~1.9]{Cobbe18}.

\begin{lemma}\label{def v star}
  Let $v_1^*,\dots,v_r^*$ denote the elements of the canonical $\Qp$-basis of $V^*(1)$. Then $e_i^*=\sum_{n=1}^r (\Tur)^{-1}_{i,n}\otimes v_n^*$,
  $i = 1, \ldots, r$, constitute a basis of $\Dcris^N(V^*(1))$ as an $N_1$-vector space and of $\DdR^N(V^*(1))$ as an $N$-vector space.
  In addition, each element $e_i^*$ is fixed   by the action of the Galois group $G_\Qp$.
\end{lemma}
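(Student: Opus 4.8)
The plan is to unwind the definitions and check the three assertions — that the $e_i^*$ form an $N_1$-basis of $\Dcris^N(V^*(1))$, an $N$-basis of $\DdR^N(V^*(1))$, and are $G_{\Qp}$-fixed — essentially by a direct computation with the period rings and the twist matrix $\Tur$. First I would recall the structure of $V = \Q_p^r(1)(\rhonr)$: it has a canonical $\Q_p$-basis, say $v_1, \ldots, v_r$, on which $G_K$ acts through the cyclotomic character twisted by $\rhonr$, so that $G_{\Qp}$ acts on the dual Tate twist $V^*(1)$ through the contragredient of the unramified part only (the cyclotomic twists cancel). Concretely, the unramified action on $V^*(1)$ is via $\varphi \mapsto u^{-1}$ acting on the row vector of $v_n^*$, while the inertia subgroup acts trivially. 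Then I would introduce $t \in \Bcris$, a $\Q_p$-basis of $\Q_p(1) \subseteq \Bcris$, with $\varphi(t) = pt$ and $g(t) = \chi_{\mathrm{cyc}}(g) t$.

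Next I would compute $\Dcris^N(V^*(1)) = (\Bcris \tensor_{\Qp} V^*(1))^{G_N}$ directly. Writing a general element in terms of the basis $\{t^{-1} \tensor v_n^*\}$ (so that the cyclotomic characters are neutralized and only the unramified twist by $u^{-1}$ and the Frobenius-action on $\widehat{\Qpnr} \subseteq \Bcris$ remain), the $G_N$-invariance condition becomes exactly the cocycle-type equation $\varphi(A) A^{-1} = u^{-1}$ solved by $A = \Tur$ (recall $\Tur \in \Gl_r(\overline{\Zpnr})$ with $\varphi(\Tur)(\Tur)^{-1} = u^{-1}$, guaranteed by \cite[Lemma 1.9]{Cobbe18}). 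This shows that the columns of $(\Tur)^{-1}$, i.e. the vectors $e_i^* = \sum_n (\Tur)^{-1}_{i,n} \tensor v_n^*$ (after clearing the $t^{-1}$ via the identification $t^{-1}\tensor v_n^* \leftrightarrow v_n^*$ in $V^*(1)$), span the invariants over the appropriate coefficient field, which for $\Dcris$ is $N_1 = N \cap \Qpnr$ (the maximal unramified subextension, equal to $(\widehat{\Qpnr})^{\varphi^{d_N}=1}$-type fixed field) and for $\DdR$ is $N$ itself, since $\BdR^{G_N} = N$ kills the Frobenius constraint and leaves a free $N$-module of rank $r$. I would verify linear independence from $\det(\Tur)^{-1} \ne 0$ and dimension count ($\dim_N \DdR^N(V^*(1)) = r$ because $V^*(1)$ is de Rham of dimension $r$, being unramified-twisted of Hodge–Tate weight $0$).

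Finally, for the $G_{\Qp}$-invariance: since the $v_n^*$ are the canonical $\Q_p$-basis of $V^*(1)$ and the entries $(\Tur)^{-1}_{i,n}$ lie in $\overline{\Zpnr} \subseteq \widehat{\Qpnr} \subseteq \Bcris$, I would check that $g$ acting on $e_i^* = \sum_n (\Tur)^{-1}_{i,n} \tensor v_n^*$ permutes things back to $e_i^*$: the $G_{\Qp}$-action on $\widehat{\Qpnr}$ factors through the quotient generated by $\varphi$, and the defining relation $\varphi(\Tur)(\Tur)^{-1} = u^{-1}$ together with the fact that $G_{\Qp}$ acts on $V^*(1)$ through $u^{-1}$ on the $v_n^*$ (via arithmetic Frobenius) makes the two twists cancel exactly, so $g(e_i^*) = e_i^*$ for all $g \in G_{\Qp}$; the inertia acts trivially on both factors so there is nothing to check there. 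The main obstacle I anticipate is purely bookkeeping: keeping straight the several conventions — arithmetic versus geometric Frobenius, the direction of the contragredient action on $V^*(1)$, and whether $u$ or $u^{-1}$ appears — so that the cocycle equation comes out as $\varphi(\Tur)(\Tur)^{-1} = u^{-1}$ and not its inverse or transpose; once the conventions from \cite{Cobbe18} and \cite{BenBer} are fixed consistently, each verification is a short matrix manipulation.
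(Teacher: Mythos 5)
Your proposal is correct and follows essentially the same route as the paper: use the defining semilinearity relation $\varphi(\Tur)(\Tur)^{-1}=u^{-1}$ to verify $\varphi(e_i^*)=e_i^*$ (with inertia acting trivially on both factors), then deduce linear independence from $\Tur\in\Gl_r(\Bcris)$ and conclude with the dimension bound $\dim_{N_1}\Dcris^N(V^*(1))\le r$, and similarly for $\DdR$. One small remark: for $V^*(1)$ the Tate twists already cancel, so the element $t$ you introduce plays no role here — you notice this yourself when you "clear the $t^{-1}$", but it is cleaner to omit $t$ entirely (it is needed only for $V$ and $V(-1)$, as in Lemmas \ref{defei} and \ref{basisV1}).
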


\begin{proof}
  The following proof is the $r$-dimensional generalization of the first part of the proof of \cite[Lemma 5.2.1]{BC2}.
  
By definition we have $\Tur=u\varphi(\Tur)$ and by induction we deduce  
$F_N(\Tur)=\varphi^{d_N}(\Tur)=u^{-d_N}\Tur$, and hence $u^{d_N}F_N(\Tur)=\Tur$. 

First of all recall that the completion $\overline\Zpnr$ of $\Zpnr$ is contained both in $\Bcris$ and $\BdR$.
{We now prove that the elements $e_i^*$ are fixed by the absolute Galois group $G_\Qp$, which will show that the $e_i^*$ are contained in both  $\Dcris^N(V^*(1))$ and $\DdR^N(V^*(1))$.}
We note that the inertia group $I_\Qp$ acts trivially on $V^*(1)$ and hence it remains to prove that $e_i^*$ is fixed by $\varphi$. We first need to calculate $\varphi(v_i^*)$. Here we use the definitions and the fact that the elements $v_i$ constitute the canonical basis of $\Q_p^r(1)(\rhonr_\Qp)$:
\[\varphi(v_i^*)(v_j)=v_i^*(u^{-1}v_j)=v_i^*\left(\sum_{k=1}^r (u^{-1})_{k,j}v_k\right)=(u^{-1})_{i,j}.\]
Hence
\[\varphi(v_i^*)=\sum_{j=1}^r (u^{-1})_{i,j}v_j^*,\]
and we conclude that
\[\begin{split}\varphi(e_i^*)&=\left(\sum_{n=1}^r (\Tur)^{-1}_{i,n}\otimes v_n^*\right)^{\varphi}=\sum_{n=1}^r \sum_{h=1}^r (\Tur)^{-1}_{i,h}u_{h,n}\otimes \sum_{k=1}^r\left(u^{-1}\right)_{n,k}v_{k}^*\\
&=\sum_{h=1}^r\sum_{k=1}^r (\Tur)^{-1}_{i,h} \delta_{h,k}\otimes v_k^*=\sum_{k=1}^r (\Tur)^{-1}_{i,k} \otimes v_k^*=e_i^*.\end{split}\]

Since $\Tur \in \Gl_r(\overline\Zpnr) \sseq \Gl_r(\Bcris)$ the elements $e_1^*,\dots,e_r^*$ are a $\Bcris$-basis of $\Bcris \tensor_\Qp V^*(1)$.
As $N_1$ is a subfield of $\Bcris$, we see that  $e_1^*,\dots,e_r^*$ are linearly independent over $N_1$.
This concludes the proof that the elements $e_i^*$ constitute a basis of $\Dcris^N(V^*(1))$ since $\dim_{N_1}\Dcris^N(V^*(1))\leq \dim_\Qp(V^*(1))=r$.

The proof that they are also a basis of the $N$-vector space  $\DdR^N(V^*(1))$ is similar.
\end{proof}

\begin{lemma}\label{defei}
  Let $v_1,\dots,v_r$ be the elements of the canonical $\Qp$-basis of $V$. The elements $e_i=\sum_{n=1}^r t^{-1} \Tur_{n,i}\otimes v_n$,
  $i = 1, \ldots, r$ constitute a basis of $\Dcris^N(V)$ as an $N_1$-vector space and of $\DdR^N(V)$ as an $N$-vector space. In addition, each element $e_i$ is fixed  by the action of the Galois group $G_\Qp$.
\end{lemma}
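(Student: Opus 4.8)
The plan is to follow, step by step, the argument used to prove Lemma~\ref{def v star}: this is the $r$-dimensional generalization of the second part of the proof of \cite[Lemma~5.2.1]{BC2}, obtained from the argument for $V^*(1)$ by interchanging the roles of $V$ and $V^*(1)$. The one new phenomenon is that $V=\Q_p^r(1)(\rhonr_\Qp)$ now carries the Tate twist $\Q_p(1)$, so that the period $t$ (with $\varphi(t)=pt$ in $\Bcris$ and $g(t)=\chi(g)t$ for $g\in G_\Qp$, $\chi$ the cyclotomic character) enters the formula defining $e_i$; one therefore has to verify that the factor $t^{-1}$ exactly cancels the cyclotomic twist contributed by $\Q_p(1)$.

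First I would spell out the $G_\Qp$-action on the canonical basis $v_1,\dots,v_r$ of $V$ straight from the definition of $\Q_p^r(1)(\rhonr_\Qp)$: the inertia group $I_\Qp$ acts by $g(v_j)=\chi(g)v_j$, since $\rhonr_\Qp$ is unramified, while a lift $\varphi$ of the arithmetic Frobenius acts by $\varphi(v_j)=\chi(\varphi)\sum_k u_{k,j}v_k$, i.e.\ through $u=\rhonr_\Qp(F_\Qp)$ twisted by $\chi$. This is the computation dual to the one for $\varphi(v_i^*)$ in the proof of Lemma~\ref{def v star}. I would also recall that the entries of $\Tur$ lie in $\overline{\Zpnr}$, so that $I_\Qp$ fixes them while $\varphi$ acts on them as the ring Frobenius with $\varphi(\Tur)=u^{-1}\Tur$, equivalently $\Tur=u\varphi(\Tur)$.

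Combining these with $g(t^{-1})=\chi(g)^{-1}t^{-1}$, I would then check that each $e_i$ is $G_\Qp$-fixed. For $g\in I_\Qp$ one gets at once $g(e_i)=\sum_n \chi(g)^{-1}t^{-1}\Tur_{n,i}\otimes\chi(g)v_n=e_i$, and for a Frobenius lift $\varphi$ one computes
\[
  \varphi(e_i)=\sum_n \chi(\varphi)^{-1}t^{-1}(u^{-1}\Tur)_{n,i}\otimes\Bigl(\chi(\varphi)\sum_k u_{k,n}v_k\Bigr)=\sum_k t^{-1}\Tur_{k,i}\otimes v_k=e_i,
\]
the cyclotomic factors cancelling and $u\cdot u^{-1}=1$, exactly as in the displayed identity for $\varphi(e_i^*)$ in the proof of Lemma~\ref{def v star}. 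Since $G_\Qp$ is topologically generated by $I_\Qp$ and a Frobenius lift, this proves that each $e_i$ is fixed by $G_\Qp$, and in particular $e_i\in\Dcris^N(V)\subseteq\DdR^N(V)$.

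Finally I would deduce that the $e_i$ form bases. As $t^{-1}\in\Bcris^\times$ and $\Tur\in\Gl_r(\overline{\Zpnr})\subseteq\Gl_r(\Bcris)$, the matrix $(t^{-1}\Tur_{n,i})_{n,i}$ lies in $\Gl_r(\Bcris)$, so $e_1,\dots,e_r$ is a $\Bcris$-basis of $\Bcris\tensor_\Qp V$; since $N_1\subseteq\Bcris$ they are linearly independent over $N_1$, and because $\dim_{N_1}\Dcris^N(V)\le\dim_\Qp V=r$ they form an $N_1$-basis of $\Dcris^N(V)$. Running the same argument with $\BdR$ and $N$ in place of $\Bcris$ and $N_1$ yields an $N$-basis of $\DdR^N(V)$. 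I do not expect any substantial obstacle: the computation is entirely forced by Lemma~\ref{def v star}, and the only point requiring attention is the bookkeeping of the cyclotomic twist, that is, making sure the $\chi$-factor coming from $t^{-1}$ annihilates the one coming from $\Q_p(1)$, which the displayed identity confirms.
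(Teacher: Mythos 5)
Your proof is correct and follows exactly the approach of the paper: spell out the Tate-twisted $G_\Qp$-action on the $v_j$, verify $G_\Qp$-invariance by separately checking inertia (where the $\chi$-factor from $t^{-1}$ cancels the one from $\Qp(1)$) and a Frobenius lift (where $\varphi(\Tur)=u^{-1}\Tur$ cancels $\rhonr_\Qp(\varphi)=u$), and then use $t^{-1}\Tur\in\Gl_r(\Bcris)$ together with the dimension bound $\dim_{N_1}\Dcris^N(V)\le r$ to get the basis assertion. The paper's own proof is terser, showing only the inertia step and referring to Lemma~\ref{def v star} for the rest, but the content is identical.
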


\begin{proof}
For $\sigma \in I_\Qp$ we compute
\[
\sigma(e_i) = \sum_{n=1}^r \sigma(t^{-1} T^{nr}_{n,i}) \tensor \sigma(v_n) = 
\sum_{n=1}^r \chi_{cyc}(\sigma^{-1})t^{-1} T^{nr}_{n,i} \tensor \chi_{cyc}(\sigma)v_n = e_i.
\]
Hence the elements $e_i$ are fixed by the inertia group and a  similar computation as in the proof of Lemma \ref{def v star} 
shows that $\varphi(e_i) = e_i$. The proof follows as above.
\end{proof}

\begin{lemma}\label{basisV1}
Let $\tilde v_1,\dots,\tilde v_r$ be the elements of the canonical $\Qp$-basis of $V(-1)$. The elements $\tilde e_i=\sum_{n=1}^r  \Tur_{n,i}\otimes \tilde v_n$ are a basis of $\Dcris^N(V(-1))$ as an $N_1$-vector space and of $\DdR^N(V(-1))$ as an $N$-vector space. In addition, each element $\tilde e_i$ is fixed  by the action of the Galois group $G_\Qp$.
\end{lemma}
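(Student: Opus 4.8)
The plan is to imitate the proofs of Lemma~\ref{def v star} and Lemma~\ref{defei} almost word for word; the present case is if anything slightly simpler, since $V(-1) = \Q_p^r(\rhonr)$ is an unramified representation of $G_\Qp$ carrying no Tate twist, so that no factor $t$ or $t^{-1}$ will intervene. First I would verify that each $\tilde e_i = \sum_{n=1}^r \Tur_{n,i}\otimes\tilde v_n$ is fixed by the inertia subgroup $I_\Qp$: since $V(-1)$ is unramified, $I_\Qp$ acts trivially on the canonical basis $\tilde v_1,\dots,\tilde v_r$, and since $\Tur \in \Gl_r(\overline{\Zpnr})$ while $I_\Qp$ fixes $\overline{\Zpnr}$ pointwise, we get $\sigma(\tilde e_i) = \tilde e_i$ for all $\sigma \in I_\Qp$.

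Next I would check that $\tilde e_i$ is fixed by the (absolute) Frobenius $\varphi$. Here one uses, on the one hand, the defining property $\varphi(\Tur) = u^{-1}\Tur$ of $\Tur$, i.e.\ $\varphi(\Tur_{n,i}) = \sum_{h} (u^{-1})_{n,h}\Tur_{h,i}$, and, on the other hand, that $\varphi$ acts on the canonical basis of $V(-1)$ through $\rhonr_\Qp(F_\Qp) = u$, i.e.\ $\varphi(\tilde v_n) = \sum_{m} u_{m,n}\tilde v_m$; substituting these and collapsing the resulting double sum with $\sum_{n} u_{m,n}(u^{-1})_{n,h} = \delta_{m,h}$ gives $\varphi(\tilde e_i) = \sum_{m} \Tur_{m,i}\otimes\tilde v_m = \tilde e_i$ --- the factor $u$ coming from the basis action cancels exactly the $u^{-1}$ coming from $\varphi(\Tur)$, just as in the computation of $\varphi(e_i^*)$ in Lemma~\ref{def v star} and of $\varphi(e_i)$ in Lemma~\ref{defei}. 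Since $G_\Qp$ is topologically generated by $I_\Qp$ together with a lift of $\varphi$, the two steps together show that each $\tilde e_i$ is fixed by $G_\Qp$, hence a fortiori by $G_N$; as $\overline{\Zpnr} \sseq \Bcris \cap \BdR$, this places the $\tilde e_i$ into $\Dcris^N(V(-1))$ and into $\DdR^N(V(-1))$.

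It then remains to see that they form bases. Because $\Tur \in \Gl_r(\overline{\Zpnr}) \sseq \Gl_r(\Bcris)$, the elements $\tilde e_1,\dots,\tilde e_r$ form a $\Bcris$-basis of $\Bcris \tensor_\Qp V(-1)$, hence in particular are linearly independent over the subfield $N_1 \sseq \Bcris$; the same argument with $\BdR$ in place of $\Bcris$ shows they are linearly independent over $N$. Combined with the general bounds $\dim_{N_1}\Dcris^N(V(-1)) \le \dim_\Qp V(-1) = r$ and $\dim_N \DdR^N(V(-1)) \le \dim_\Qp V(-1) = r$, this forces $\tilde e_1,\dots,\tilde e_r$ to be an $N_1$-basis of $\Dcris^N(V(-1))$ and an $N$-basis of $\DdR^N(V(-1))$.

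I do not expect a genuine obstacle here: the only point requiring a little care is the index bookkeeping in the $\varphi$-computation --- in particular pinning down the conventions so that $\varphi$ acts on $\tilde v_1,\dots,\tilde v_r$ by the matrix $u$ itself rather than by $u^{-1}$ or by its transpose, so that it cancels against $\varphi(\Tur) = u^{-1}\Tur$. Everything else is a transcription of the two preceding lemmas.
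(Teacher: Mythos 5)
Your proof is correct and is precisely what the paper's laconic ``Similar as above'' intends: the $\varphi$-computation cancels $\varphi(\Tur)=u^{-1}\Tur$ against the action of $\varphi$ on $V(-1)=\Q_p^r(\rhonr)$ by the matrix $u$, the inertia argument is trivial because $V(-1)$ is unramified, and the basis claim follows from $\Tur\in\Gl_r(\overline{\Zpnr})\subseteq\Gl_r(\Bcris)\cap\Gl_r(\BdR)$ together with the dimension bounds $\dim_{N_1}\Dcris^N(V(-1))\le r$ and $\dim_N\DdR^N(V(-1))\le r$. Your reading of the conventions (that $\varphi$ acts on the canonical basis of $V(-1)$ by $u$ rather than $u^{-1}$ or its transpose) agrees with the implicit usage in Lemmas~\ref{def v star} and~\ref{defei}, so there is no obstacle.
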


\begin{proof}
Similar as above.
\end{proof}
\end{subsection}

\begin{subsection}{\texorpdfstring{Computation of $\Ucris$}{Computation of Ucris}}
We recall that $V = \Q_p^r(1)(\rhonr)$ and $V^*(1) = \Q_p^r((\rhonr)^{-1})$. The following lemma (and its proof) is the analogue of \cite[Lemma 5.1.2]{BC2}.
\begin{lemma}\label{degenerate}
We have:
\begin{enumerate}[(a)]
\item $t_{V^*(1)}(N)=0$.
\item $H_f^1(N,V^*(1))=0$.
\item $H_f^1(N,V)= H^1_e(N,V) = H^1(N,V)$.
\end{enumerate}
\end{lemma}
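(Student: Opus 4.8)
The plan is to compute the relevant de Rham and Dieudonné data using the explicit bases provided by Lemmas \ref{def v star}, \ref{defei} and \ref{basisV1}, and then to read off the vanishing statements from the degeneracy of the Hodge filtration together with the Bloch--Kato exact sequences. For part (a), I would note that $V^*(1) = \Q_p^r((\rhonr)^{-1})$ is, up to the unramified twist, a copy of $\Q_p^r$ with trivial Hodge--Tate weights; concretely, the basis $e_1^*,\dots,e_r^*$ of $\DdR^N(V^*(1))$ of Lemma \ref{def v star} lies in $\mathrm{Fil}^0$ (the twist by $t$ has disappeared, so there is no pole of $t$ to push things into negative filtration, and no power of $t$ to push them into positive filtration), so $\mathrm{Fil}^0\DdR^N(V^*(1)) = \DdR^N(V^*(1))$ and hence $t_{V^*(1)}(N) = \DdR^N(V^*(1))/\mathrm{Fil}^0 = 0$. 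I would phrase this exactly as in the $r=1$ case of \cite[Lemma 5.1.2]{BC2}, just carrying the matrix $\Tur$ through.

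For part (b), the cleanest route is via the Bloch--Kato identifications. Since $V^*(1)$ is crystalline (it is unramified, being $\Q_p^r$ twisted by $\rhonr$, with $\Dcris^N$ of full dimension by Lemma \ref{def v star}), one has $\dim_{\Q_p} H^1_f(N, V^*(1)) = \dim_{\Q_p} t_{V^*(1)}(N) + \dim_{\Q_p} H^0(N, V^*(1))$. The first term is $0$ by part (a). The second term is $0$ because Hypothesis (F) forces $\left(\Z_p^r(\rhonr)\right)^{G_N} = 0$, hence also $H^0(N, V^*(1)) = \left(\Q_p^r((\rhonr)^{-1})\right)^{G_N} = 0$ (the eigenvalue condition $\det(U_N - 1) \neq 0$ is symmetric under $U_N \mapsto U_N^{-1}$, as $\det(U_N^{-1} - 1) = \det(U_N)^{-1}\det(1 - U_N) \cdot (-1)^r$). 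Therefore $H^1_f(N, V^*(1)) = 0$.

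For part (c), the chain $H^1_e \subseteq H^1_f \subseteq H^1_g \subseteq H^1$ always holds, so it suffices to show the two outer inclusions $H^1(N,V) \subseteq H^1_f(N,V)$ and $H^1_e(N,V) \supseteq H^1_f(N,V)$, i.e. that all four groups coincide. For the first, I would use the Tate-duality orthogonality $H^1_f(N,V)^\perp = H^1_f(N, V^*(1))$ under the local pairing $H^1(N,V) \times H^1(N, V^*(1)) \to \Q_p$; since part (b) gives $H^1_f(N,V^*(1)) = 0$, its orthogonal complement is everything, so $H^1_f(N,V) = H^1(N,V)$. For the equality $H^1_e(N,V) = H^1_f(N,V)$, one uses that $H^1_f/H^1_e$ injects into $\Dcris^N(V)/(1-\varphi)$ (or dually that $H^1_e(N,V)^\perp = H^1_g(N,V^*(1))$ and that $H^1_g = H^1_f$ here because $\Dcris$ and $\Dpst$ agree in the crystalline situation, both $V$ and $V^*(1)$ being crystalline); alternatively, and most quickly, $H^1_e(N,V)^\perp = H^1_g(N,V^*(1)) \supseteq H^1_f(N,V^*(1)) = H^1(N,V^*(1))$ forces $H^1_g(N,V^*(1)) = H^1(N,V^*(1))$ and, dually again, $H^1_e(N,V) = 0^\perp = H^1(N,V)$. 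So all three of $H^1_e, H^1_f, H^1$ for $V$ collapse to $H^1(N,V)$.

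The main obstacle I anticipate is bookkeeping rather than conceptual: one must be careful that the Bloch--Kato dimension formula and the various duality orthogonalities are being applied to a representation that is genuinely crystalline (not merely de Rham), and that the finiteness input from Hypothesis (F) is correctly transported between $V$, $V(-1)$ and $V^*(1)$ under the cyclotomic twist — the eigenvalues of Frobenius get multiplied by powers of $p$ (or its inverse) or by roots of unity, so one should check that the non-vanishing of $\det(U_N - 1)$ really does survive each twist relevant to the $H^0$ computations. Once that is pinned down, each of (a), (b), (c) is a one-line consequence, exactly parallel to \cite[Lemma 5.1.2]{BC2}.
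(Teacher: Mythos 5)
Your proposal is essentially the same approach as the paper's (which simply refers to the $r=1$ argument of \cite[Lemma 5.1.2]{BC2} and flags the extra input from Lemma \ref{lemmaforUcris 2}), and the main line of reasoning in each part is sound: part (a) reads off the Hodge filtration from the explicit unramified basis of Lemma \ref{def v star}; part (b) combines the Bloch--Kato dimension formula with the vanishing of $H^0(N,V^*(1))$ under Hypothesis (F); and for part (c) the two core tools are exactly the ones the paper uses, namely the duality $H^1_f(N,V)^\perp = H^1_f(N,V^*(1))$ to force $H^1_f(N,V)=H^1(N,V)$, and the fact that $1-\phi$ is an isomorphism on $\Dcris^N(V)$ to force $H^1_f/H^1_e \cong \Dcris^N(V)/(1-\phi)=0$.

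One small correction to the ``alternatively, and most quickly'' route in (c): you write $H^1_g(N,V^*(1)) \supseteq H^1_f(N,V^*(1)) = H^1(N,V^*(1))$, but part (b) gives $H^1_f(N,V^*(1))=0$, and $H^1(N,V^*(1))$ is certainly nonzero (by the local Euler characteristic formula $h^1 = h^0+h^2+[N:\Qp]r$), so that chain as written is incorrect. The valid version is the one you state just before it: $H^1_g(N,V^*(1))/H^1_f(N,V^*(1))$ is dual to $\Dcris^N(V)^{\phi=1}$, which vanishes since $1-\phi$ is injective on $\Dcris^N(V)$; hence $H^1_g(N,V^*(1))=0$ and, by duality, $H^1_e(N,V)=H^1(N,V)$. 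Equivalently, and more directly, $H^1_f(N,V)/H^1_e(N,V)\cong\Dcris^N(V)/(1-\phi)=0$ by Lemma \ref{lemmaforUcris 2}. Either way the conclusion stands; only that one intermediate chain of inclusions should be discarded.
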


\begin{proof}
Proofs are as for \cite[Lemma 5.1.2]{BC2}. Note that in the present manuscript we always assume that $\rhonr |_N \ne 1$. For the proof of 
part (c) we also need that by Lemma \ref{lemmaforUcris 2} below the endomorphism $1 - \phi$ of $\Dcris(V)$ is an isomorphism. 
\end{proof}

By the above lemma, \cite[Coroll. 3.16]{Cobbe18} and \cite[(30)]{BC2} the $7$-term exact sequence \cite[(5)]{BC2} degenerates into the two exact sequences 
\[
0 \lra \Dcris^N(V) \xrightarrow{1- \phi} \Dcris^N(V) \oplus t_V(N) \lra H^1(N, V) \lra 0
\]
and 
\[
0 \lra \Dcris^N(V^*(1))^* \xrightarrow{1- \phi^*} \Dcris^N(V^*(1))^* \lra 0.
\] 
As in \cite{BC2}, see in particular equation (32) of loc.cit., we obtain
\begin{equation}\label{yar 4}
\Ucris = \hat\partial_{\ZpG, \QpG}^1([\Dcris^N(V), 1 - \phi]) - \hat\partial_{\ZpG, \QpG}^1([\Dcris^N(V^*(1))^*, 1 - \phi^*]).
\end{equation}
Before computing $\Ucris$ we need an easy lemma from linear algebra.

\begin{lemma}\label{detUcris}
Let
\[
M=\begin{pmatrix}1&0&0&\cdots&0&B_1\\
A&1&0&\cdots&0&B_2\\
0&A&1&\cdots&0&B_3\\
\vdots&\vdots&\vdots&\ddots&\vdots&\vdots\\
0&0&0&\cdots&1&B_{n-1}\\
0&0&0&\cdots&A&1+B_n\end{pmatrix}
\]
be a block matrix, with $n^2$ square blocks of the same size. Then
\[\det(M)=\det\left(1+\sum_{i=0}^{n-1}(-A)^{i}B_{n-i}\right).\]
\end{lemma}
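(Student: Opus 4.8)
The plan is to prove the identity by induction on $n$, expanding the determinant along the last column. First I would fix notation: write $M = M_n$ for the block matrix in question, and let $M_n^{(k)}$ denote the matrix obtained by deleting the last column and the $k$-th row. Laplace (cofactor) expansion along the last column gives
\[
\det(M_n) = \sum_{k=1}^n (-1)^{k+n} B_k \det\bigl(\text{minor deleting row }k,\text{ column }n\bigr) + \det\bigl(\text{the }(1+B_n)\text{ contribution}\bigr),
\]
but it is cleaner to split the last column as $(B_1,\dots,B_{n-1},1+B_n)^{\mathrm t} = (B_1,\dots,B_{n-1},B_n)^{\mathrm t} + (0,\dots,0,1)^{\mathrm t}$ and use multilinearity of the determinant in block columns. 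The term coming from $(0,\dots,0,1)^{\mathrm t}$ is the determinant of the lower-triangular block matrix with $1$'s on the diagonal, hence equals $1$. For the term coming from $(B_1,\dots,B_n)^{\mathrm t}$, I would expand along that last column: the minor obtained by deleting row $k$ and the last column is block lower-triangular except for a single block super-diagonal of $A$'s, and a direct computation shows its determinant is $(-1)^{\,?}\det(A)^{\,n-k}$ up to a sign bookkeeping that I would track carefully; combined with the cofactor sign $(-1)^{k+n}$ this produces exactly the coefficient $(-A)^{n-k}$ inside the final determinant.

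The key steps, in order: (1) reduce to the case where all blocks commute with one another — here they do not in general, so instead I would argue directly with the block Leibniz/cofactor formula, being careful that $\det$ of a block matrix is not simply a polynomial in $\det$ of the blocks unless the relevant blocks commute; (2) since in our situation the only blocks appearing off the identity are powers of the single matrix $A$ together with the $B_i$'s, and the minors that arise only ever multiply strings $A\cdots A$ on one side, one can genuinely reduce each minor to $\pm\det(A^{n-k})$ times lower-order data; (3) assemble the signs. Alternatively, and perhaps more robustly, I would perform block row operations: use the $1$ in the $(1,1)$ block to clear the $A$ in the $(2,1)$ block (add $-A$ times block-row $1$ to block-row $2$), which replaces $B_2$ by $B_2 - A B_1$ and leaves a matrix of the same shape with one fewer row/column after deleting the first; iterating, the last entry becomes $1 + B_n - A B_{n-1} + A^2 B_{n-2} - \cdots = 1 + \sum_{i=0}^{n-1}(-A)^i B_{n-i}$, and $\det(M_n)$ equals the determinant of the resulting $1\times 1$ block, namely that expression. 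This block-elimination argument avoids sign gymnastics entirely and is the route I would actually write up.

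The main obstacle I anticipate is purely bookkeeping: making sure the block row operations are legitimate (they are, since one only ever adds a left-multiple of one block-row to another, which does not change the determinant of a block matrix), and tracking that after $n-1$ elimination steps the surviving $1\times 1$ block is exactly $1 + \sum_{i=0}^{n-1}(-A)^i B_{n-i}$ with the correct alternating signs — the sign on $B_{n-j}$ is $(-1)^j$ because each elimination step multiplies the accumulated correction by $-A$ on the left. I would include a one-line base case ($n=1$: $M_1 = (1+B_1)$, and the formula reads $\det(1+B_1)$, which matches) and then present the inductive elimination step. No deep input is needed; the only subtlety worth flagging explicitly is that we must use that the determinant of a block triangular matrix is the product of the determinants of the diagonal blocks, and that block row operations of the stated type preserve the determinant.
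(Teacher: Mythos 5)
Your proposal is correct and, in the version you say you would actually write up (the block row-operation / Gaussian elimination route), it is exactly the paper's proof: successively add $-A$ times block-row $k$ to block-row $k+1$ to clear the subdiagonal and accumulate $\sum_{i=0}^{n-1}(-A)^i B_{n-i}$ in the lower-right block, then take the determinant of the resulting upper block-triangular matrix. The only cosmetic slip is describing the reduction as "deleting the first row/column"; what actually happens is that the matrix becomes block upper triangular with identity diagonal blocks, but this does not affect the argument.
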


\begin{proof}
By Gaussian elimination we obtain the matrix
\[
\begin{pmatrix}1&0&0&\cdots&0&B_1\\
0&1&0&\cdots&0&B_2-AB_1\\
0&0&1&\cdots&0&B_3-AB_2+A^2B_1\\
\vdots&\vdots&\vdots&\ddots&\vdots&\vdots\\
0&0&0&\cdots&1&\sum_{i=0}^{n-2}(-A)^i B_{n-1-i}\\
0&0&0&\cdots&0&1+\sum_{i=0}^{n-1}(-A)^i B_{n-i}\end{pmatrix}.
\]
\end{proof}

We introduce the following notation. If $x \in Z(\QpG)$ we let ${}^*x \in Z(\QpG)^\times$ denote the invertible element which
on the Wedderburn decomposition $Z(\QpG) = \bigoplus_{i=1}^r F_i$ for suitable finite extensions $F_i/\Qp$ is given by $({}^*x_i)$ with
${}^*x_i = 1$ if $x_i = 0$ and ${}^*x_i = x_i$ otherwise.

We now  generalize \cite[Lemma 5.2.1 and Lemma 5.2.2]{BC2} and in this way explicitly compute the element $\Ucris$.
Recall that $F = F_K= \varphi^{d_K}$ is the Frobenius element of $K$. We write $I = I_{N/K}$ for the inertia subgroup of the
Galois extension $N/K$.

\begin{lemma}\label{lemmaforUcris 1}
The endomorphism $1-\phi^*$ of $\Dcris^N(V^*(1))^*$ is an isomorphism. Furthermore we have
\[\hat\partial_{\ZpG, \QpG}^1([\Dcris^N(V^*(1))^*\!, 1 - \phi^*]) = 
\hat\partial_{\ZpG, \QpG}^1({}^*(\det(1-u^{d_K}F^{-1})e_I))\]
in $K_0(\ZpG,\!\QpG)$.
\end{lemma}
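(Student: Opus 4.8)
The plan is to work out explicitly the semilinear Frobenius action on the basis $e_1^*,\dots,e_r^*$ of $\Dcris^N(V^*(1))$ provided by Lemma~\ref{def v star}, to pass to the $\Qp[G]$-linear structure on $\Dcris^N(V^*(1))^*$, and then to diagonalize $1-\phi^*$ into the block form handled by Lemma~\ref{detUcris}. First I would recall that $\Dcris^N(V^*(1))$ is free of rank $r$ over $N_1 = N_1$, the maximal unramified subextension of $N/\Qp$, and that $\phi$ acts $\varphi$-semilinearly, where $\varphi$ is the absolute Frobenius; since $e_i^*$ is fixed by $G_\Qp$ (hence by $\varphi$), the matrix of $\phi$ with respect to the $e_i^*$ is trivial, and the entire action is carried by $\varphi$ on the coefficient field $N_1$. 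Writing $N_1 = N_1$ of degree $d_{N_1/\Qp}$ over $\Qp$ and choosing a normal basis (or just tracking the $\varphi$-orbit of a uniformizer-free generator), $\phi$ becomes, on $\Dcris^N(V^*(1))$ as a $\Qp$-space, a cyclic shift; the dual map $\phi^*$ on $\Dcris^N(V^*(1))^*$ is the transpose shift.

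\textbf{Key steps.} The decisive point is to incorporate the $G = \Gal(N/K)$-module structure. Since $\rhonr$ is unramified and $V^*(1) = \Q_p^r((\rhonr)^{-1})$, the inertia group $I = I_{N/K}$ acts trivially on $V^*(1)$, so $\Dcris^N(V^*(1))^* \cong e_I \cdot \Qp[G] \otimes (\text{something})$, i.e. the $G$-action factors through $G/I$, which is cyclic generated by the image of $F = F_K$. Concretely $\Dcris^N(V^*(1))$ as a module over $\Qp[G/I] \cong \Qp[\langle \bar F\rangle]$ is free of rank one after extending scalars, and the Frobenius $\phi$ acts on the distinguished generator, relative to the basis $(e_i^*)$ twisted by powers of $\bar F$, through multiplication built out of $\varphi^{d_K}$ applied to $\Tur$; by the relation $u^{d_N} F_N(\Tur) = \Tur$ established in the proof of Lemma~\ref{def v star}, iterating $\varphi^{d_K}$ a total of $d_{N/K}$ times brings in the factor $u^{d_K}$ at each step. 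Thus with respect to a basis indexed by $\{1,\bar F,\dots,\bar F^{d-1}\}$ (with $d = d_{N/K}$), the endomorphism $1 - \phi^*$ has exactly the shape of the block matrix $M$ in Lemma~\ref{detUcris}, with $A$ the block of $u^{d_K}F^{-1}$ acting appropriately and $B_i = 0$ for $i < n$, $B_n$ contributing the remaining term. Applying Lemma~\ref{detUcris} then yields $\det(1 - \phi^*) = \det\bigl(1 - (u^{d_K})^{d} F^{-d}\cdot(\text{sign})\bigr)$, which, after untwisting the cyclotomic and noting $F^{-d}$ is trivial on the relevant central idempotent, collapses to $\det(1 - u^{d_K} F^{-1})$ read in $Z(\QpG)$ via the idempotent $e_I$. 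The appearance of ${}^*(\cdot)$ is forced because this reduced-norm preimage must be an invertible element of $Z(\QpG)^\times$: on Wedderburn components where the relevant determinant vanishes one replaces it by $1$, which is harmless since on those components $\Dcris^N(V^*(1))^* = 0$ and so $1-\phi^*$ is (vacuously) the identity there — and this is precisely where Hypothesis (F), which guarantees $\det(U_N-1)\neq 0$ and hence that the whole map $1-\phi^*$ is an isomorphism, enters. Finally one reads off the value of $\hat\partial^1_{\ZpG,\QpG}$ by functoriality of $\partial^1$ and the definition $\hat\partial^1 = \partial^1 \circ \Nrd^{-1}$.

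\textbf{Main obstacle.} The routine linear algebra is dispatched by Lemma~\ref{detUcris}; the genuine difficulty is bookkeeping the interaction between the $\varphi$-semilinear action of $\phi$ on the coefficient field $N_1$ and the $G/I$-action, i.e. correctly identifying which power of $\varphi$ corresponds to the generator $\bar F$ of $G/I$ and verifying that under this identification the semilinear shift assembles into the companion-type block matrix $M$ with the correct entries $A = u^{d_K}F^{-1}$ (rather than, say, $u F^{-1}$ or its inverse). A secondary subtlety is the descent from the $\Bcris$-level statement that $(e_i^*)$ is a basis to the precise $G$-equivariant $\QpG$-module description needed to apply the formalism of relative $K$-groups, and ensuring the passage to the $\Qp$-linear dual $(\cdot)^*$ does not introduce an unwanted inverse or transpose in the final determinant. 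I expect these transpositions and orientation checks — together with the verification that the dual map $1-\phi^*$ on $\Dcris^N(V^*(1))^*$ indeed acts the way the block matrix predicts — to be the step demanding the most care, the rest following the template of \cite[Lemma 5.2.1 and Lemma 5.2.2]{BC2} essentially verbatim in the $r$-dimensional setting.
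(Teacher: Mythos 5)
Your plan follows the same high-level architecture as the paper's proof (express the semilinear Frobenius in a $\Qp[G/I]$-adapted basis, pass to duals, reduce to the block determinant of Lemma~\ref{detUcris}, use Hypothesis~(F) for invertibility), but there is a genuine conceptual error in the first step that would derail the calculation if carried out literally. You assert that because $e_i^*$ is fixed by $G_\Qp$ (hence by the Galois element $\varphi$), ``the matrix of $\phi$ with respect to the $e_i^*$ is trivial.'' This conflates two different operators: the \emph{Galois} action of $\varphi \in G_\Qp$ on $\Bcris \otimes V^*(1)$ (which acts both on the period-ring coefficient $(\Tur)^{-1}$ and on the representation factor $v_n^*$, and these cancel, yielding $\varphi(e_i^*) = e_i^*$), and the \emph{crystalline} Frobenius $\phi$, which acts only through $\varphi$ on the $\Bcris$-coefficients and leaves $v_n^*$ alone. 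The correct computation, which the paper carries out, is $\phi(e_i^*) = \sum_\ell ((\Tur)^{-1}u\Tur)_{i,\ell}\, e_\ell^*$; the matrix of $\phi$ with respect to $\{e_i^*\}$ is the conjugate $(\Tur)^{-1}u\Tur$ of $u$, not the identity. It happens that the conjugating $\Tur$'s cancel in the final determinant, so one still lands on $\det(1-u^{d_K}F^{-1})$, but if you start from the wrong $\phi$-matrix the $u^{d_K}$ would never appear.

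A secondary mismatch: you propose to build the block matrix from a basis indexed by $\{1,\bar F,\dots,\bar F^{d-1}\}$ with $d=d_{N/K}$. This does not fit the module structure. The paper works over the group ring $\Qp[G/I]$ directly, using a $\Qp[G/I]$-basis $w_{i,j}=\varphi^{-j}(\theta)\,e_i^*$ with $j=0,\dots,d_K-1$ ($d_K = d_{K/\Qp}$, not $d_{N/K}$), obtained from a normal basis element $\theta$ of $N_1/\Qp$; the group-ring element $F^{-1}$ then appears in the top-right corner block of the $rd_K\times rd_K$ matrix of $1-\phi^*$ with respect to the dual basis $\psi_{i,j}$. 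The index set of size $d_{N/K}$ you describe would correspond to expanding the $G/I$-action rather than working over $\Qp[G/I]$, which is a different (and in the form you present it, not fully specified) bookkeeping; it also gives the wrong number of blocks for Lemma~\ref{detUcris} to produce the power $u^{d_K}$.

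To repair the argument: compute $\phi(e_i^*)$ correctly via $\varphi$-semilinearity of $\phi$ on the $\Tur$-coefficients (and not on the $v_n^*$'s), fix the normal basis element $\theta$ of $N_1/\Qp$ and index the $\Qp[G/I]$-basis by $j=0,\dots,d_K-1$, write out the dual map $\phi^*$ on the dual basis $\psi_{i,j}$, and observe that the resulting matrix for $1-\phi^*$ has the cyclic-companion shape of Lemma~\ref{detUcris} with $A=-(\Tur)^{-1}u\Tur$ and the only nonzero $B_i$ being $B_1=-F^{-1}(\Tur)^{-1}u\Tur$. Then Lemma~\ref{detUcris} gives $\det(1-F^{-1}(\Tur)^{-1}u^{d_K}\Tur)=\det(1-F^{-1}u^{d_K})$, and the isomorphy follows from $(1-F^{-1}u^{d_K})(1+F^{-1}u^{d_K}+\cdots+(F^{-1}u^{d_K})^{d_{N/K}-1})=1-U_N$ together with Hypothesis~(F). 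Your remarks on the role of the idempotent $e_I$ and the starred normalization ${}^*(\cdot)$ are correct and align with the paper.
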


\begin{proof} We have to compute $\phi(e_i^*)$.
  Using the $\varphi$-semilinearity of $\phi$ we compute
\[\begin{split}\phi(e_i^*)&=\sum_{n=1}^r\varphi((\Tur)^{-1})_{i,n}\otimes v_n^*=\sum_{n=1}^r ((\Tur)^{-1} u)_{i,n}\otimes v_n^*\\
&=\sum_{n=1}^r ((\Tur)^{-1} u\Tur(\Tur)^{-1})_{i,n}\otimes v_n^*=\sum_{n=1}^r\sum_{\ell=1}^r ((\Tur)^{-1} u \Tur)_{i,\ell}(\Tur)^{-1}_{\ell,n}\otimes v_n^*\\
&=\sum_{\ell=1}^r ((\Tur)^{-1} u\Tur)_{i,\ell}e_\ell^*.\end{split}\]

We fix a normal basis element $\theta$ of $N_1/\Q_p$. Then $w_{i,j}:=\varphi^{-j}(\theta) e_i^*$ for $i=1,\dots,r$ and $j=0,\dots,d_K-1$ is a $\Q_p[G/I]$-basis of $\Dcris^N(V^*(1))$. Let $\psi_{i,j}\in \Dcris^N(V^*(1))^*$ for $i=1,\dots,r$ and $j=0,\dots,d_K-1$ be the dual $\Q_p[G/I]$-basis.

For $0<i\leq r$, $0\leq j<d_K-1$ and $0\leq n\leq d-1$ we have 
\[\begin{split}&\phi^*(\psi_{i,j})(F^n w_{h,k})=\psi_{i,j}(\phi(F^n \varphi^{-k}(\theta) e_h^*))=\psi_{i,j}\!\left(\!F^n \varphi^{-k+1}(\theta) \sum_{\ell=1}^r ((\Tur)^{-1} u\Tur)_{h,\ell}e_\ell^*\right)\\
&\qquad=\sum_{\ell=1}^r ((\Tur)^{-1} u \Tur)_{h,\ell}\psi_{i,j}(F^n \varphi^{-k+1}(\theta) e_\ell^*)=\sum_{\ell=1}^r ((\Tur)^{-1} u\Tur)_{h,\ell}\psi_{i,j}(F^n w_{\ell,k-1})\end{split}\]
If $n=0$, $k=j+1$ and any $0<h\leq r$ this is equal to $((\Tur)^{-1} u\Tur)_{h,i}$; it is $0$ otherwise.
Hence
\[\phi^*(\psi_{i,j})=\sum_{h=1}^r ((\Tur)^{-1} u\Tur)_{h,i}\psi_{h,j+1}.\]
Analogously for $0<i\leq r$ and $j=d_K-1$ we have
\[
  \phi^*(\psi_{i,d_K-1})=\sum_{h=1}^r ((\Tur)^{-1} u \Tur)_{h,i}F^{-1}\psi_{h,0}.
\]
With respect to the $\Qp[G/I]$-basis $\psi_{i,j}$ the matrix associated to $1-\phi^*$ is given by
\[
\begin{pmatrix}1&0&0&\cdots&0&-F^{-1}(\Tur)^{-1} u\Tur\\
-(\Tur)^{-1} u\Tur&1&0&\cdots&0&0\\
0&-(\Tur)^{-1} u\Tur&1&\cdots&0&0\\
\vdots&\vdots&\vdots&\ddots&\vdots&\vdots\\
0&0&0&\cdots&1&0\\
0&0&0&\cdots&-(\Tur)^{-1} u\Tur&1\end{pmatrix}.
\]
By Lemma \ref{detUcris}, its determinant is $\det(1-F^{-1}(\Tur)^{-1} u^{d_K}\Tur)=\det(1-F^{-1}u^{d_K})$.

{To conclude that $1-\phi^*$ is an isomorphism it is now enough to notice that 
  \[
    (1-F^{-1}u^{d_K})(1+F^{-1}u^{d_K}+\dots+(F^{-1}u^{d_K})^{d_{N/K-1}})=1-U_N,
  \]
which is invertible since by Hypothesis (F) we always have $\det(U_N-1)\neq 0$.}
\end{proof}

\begin{lemma}\label{lemmaforUcris 2}
The endomorphism $1 - \phi$ of $\Dcris^N(V)$ is an isomorphism and we have
\[
\hat\partial_{\ZpG, \QpG}^1([\Dcris^N(V), 1 - \phi]) = 
\hat\partial_{\ZpG, \QpG}^1({}^*(\det(1-F(pu)^{-d_K})e_I))
\]
in $K_0(\ZpG, \QpG)$.
\end{lemma}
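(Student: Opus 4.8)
The statement is the exact analogue of Lemma \ref{lemmaforUcris 1} for the representation $V$ itself rather than $V^*(1)$, so the plan is to mirror that proof step by step, keeping track of the twist by $t$ (the period) that distinguishes $V = \Q_p^r(1)(\rhonr)$ from $V^*(1) = \Q_p^r((\rhonr)^{-1})$. First I would compute $\phi(e_i)$ for the basis $e_i = \sum_n t^{-1}\Tur_{n,i}\otimes v_n$ of $\Dcris^N(V)$ constructed in Lemma \ref{defei}. Using that $\phi$ is $\varphi$-semilinear, that $\varphi(t^{-1}) = p^{-1}t^{-1}$, and that $\varphi(\Tur) = u^{-1}\Tur$, one gets $\phi(e_i) = \sum_\ell (p^{-1}\Tur^{-1}u^{-1}\Tur\cdot(\text{correction}))_{?,?}\, e_\ell$; I expect the upshot to be that, with respect to a suitable $\Q_p[G/I]$-basis obtained from a normal basis element $\theta$ of $N_1/\Qp$ as in Lemma \ref{lemmaforUcris 1}, the matrix of $\phi$ is built from blocks $(pu)^{-1}$ (conjugated by $\Tur$), with a single $F$ appearing in the corner from the cyclic shift over the $d_K$ conjugates of $\theta$.

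Next I would apply Lemma \ref{detUcris} to this block matrix. With $A = -$ (the block $\Tur^{-1}(pu)^{-1}\Tur$, up to the placement of $F$) this yields $\det(1 - F(\Tur^{-1}(pu)^{-d_K}\Tur)) = \det(1 - F(pu)^{-d_K})$, the conjugation by $\Tur$ being harmless under the determinant. This is the reduced norm of the element $\det(1 - F(pu)^{-d_K})e_I \in Z(\QpG)$, and passing through $\hat\partial^1_{\ZpG,\QpG}$ and the starred-element convention gives the claimed formula; the factor $e_I$ reflects that $1-\phi$ acts $I$-invariantly (the basis lives over $\Q_p[G/I]$, viewed inside $\QpG$ via $e_I$). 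Finally, to see that $1 - \phi$ is an isomorphism, I would argue as in Lemma \ref{lemmaforUcris 1}: factor $1 - F(pu)^{-d_K}$ against a geometric-series partner so that the product is $1 - (pu)^{-d_N}$ up to a power of $F$, hence relate its invertibility to $\det((pu)^{d_N} - 1) \ne 0$. Since $(pu)^{d_N} - 1 \equiv -1 \pmod p$ (as $p \mid p^{d_N}$), this determinant is automatically a $p$-adic unit, so invertibility is immediate and does not even require Hypothesis (F) here — a pleasant contrast with the $V^*(1)$ case.

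**Main obstacle.** The routine linear algebra (computing $\phi(e_i)$, setting up the dual basis over $\Q_p[G/I]$, invoking Lemma \ref{detUcris}) will go through essentially verbatim as in the proof of Lemma \ref{lemmaforUcris 1}; the one point that needs genuine care is the bookkeeping of the $t^{-1}$ factors and the resulting power of $p$: one must be sure the shift produces $(pu)^{-d_K}$ rather than $p^{-1}u^{-d_K}$ or $p^{-d_K}u^{-1}$, i.e. that each of the $d_K$ applications of $\varphi$ along the cycle of conjugates of $\theta$ contributes one factor of $p^{-1}$ from $\varphi(t^{-1}) = p^{-1}t^{-1}$ and one factor of $u^{-1}$ from $\varphi(\Tur) = u^{-1}\Tur$, exactly matching the structure of the block matrix so that Lemma \ref{detUcris} outputs $\det(1 - F(pu)^{-d_K})$. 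I expect this to be the only step where the proof genuinely differs from the cited one, and it is a finite, deterministic check rather than a conceptual difficulty.
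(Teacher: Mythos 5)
Your proposal is correct and follows essentially the same route as the paper's proof, which simply declares the matrix computation ``analogous to the previous lemma'' (so your plan of computing $\phi(e_i) = \sum_h p^{-1}((\Tnr)^{-1}u^{-1}\Tnr)_{h,i}\,e_h$, passing to a $\Qp[G/I]$-basis via a normal basis element $\theta$ of $N_1/\Qp$, and invoking Lemma~\ref{detUcris} to extract $\det(1-F(pu)^{-d_K})$ is precisely what ``analogous'' is intended to mean; the same formula for $\phi(e_i)$ appears later in the proof of Proposition~\ref{eps coroll}). The only point where you deviate is the invertibility check: the paper inverts $1-F^{-1}(pu)^{d_K}$ directly via the convergent geometric series $\sum_{i\geq 0}(F^{-1}(pu)^{d_K})^i$ in $M_r(\Zp[G/I])$, whereas you multiply by the finite geometric-series partner over the cyclic group $G/I$ to reduce to $1-(pu)^{-d_N}$ and then observe $(pu)^{d_N}-1\equiv -1\pmod p$. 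Both arguments hinge on the same feature, namely the extra factor of $p$ that makes the relevant element automatically a unit; and your remark that this makes Hypothesis~(F) superfluous here, in contrast to Lemma~\ref{lemmaforUcris 1}, is correct and matches the spirit of the paper's shortcut.
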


\begin{proof}
  The proof is analogous to the proof of the previous lemma.
  
  To show that $\det(1-F(pu)^{-d_K})$ is invertible in $\Qp[G/I]$, it is enough to notice that
    $\sum_{i=0}^\infty (F^{-1}(pu)^{d_K})^i$ converges in the matrix ring $M_r(\Zp[G/I])$ and is the inverse of
    \[
      1-F^{-1}(pu)^{d_K}=-F^{-1}(pu)^{d_K}(1-F(pu)^{-d_K}).
    \]

\end{proof}

\begin{prop}\label{propUcris}
We have
\[
  \Ucris =\hat\partial_{\ZpG, \QpG}^1({}^*(\det(1-F(pu)^{-d_K})e_I))-\hat\partial_{\ZpG, \QpG}^1({}^*(\det(1-u^{d_K}F^{-1})e_I)).
\]
\end{prop}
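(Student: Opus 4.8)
The plan is straightforward: the proposition is nothing more than the combination of the two preceding lemmas. Recall from equation~(\ref{yar 4}) that
\[
\Ucris = \hat\partial_{\ZpG, \QpG}^1([\Dcris^N(V), 1 - \phi]) - \hat\partial_{\ZpG, \QpG}^1([\Dcris^N(V^*(1))^*, 1 - \phi^*]),
\]
so it suffices to substitute the explicit expressions for the two terms on the right hand side. First I would invoke Lemma~\ref{lemmaforUcris 2}, which identifies the first term with $\hat\partial_{\ZpG, \QpG}^1({}^*(\det(1-F(pu)^{-d_K})e_I))$ and simultaneously guarantees that $1-\phi$ is an isomorphism of $\Dcris^N(V)$, so that the class $[\Dcris^N(V), 1-\phi]$ is legitimately defined in $K_0(\ZpG, \QpG)$ and maps under $\hat\partial^1$ to the asserted element. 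Then I would invoke Lemma~\ref{lemmaforUcris 1}, which identifies the second term with $\hat\partial_{\ZpG, \QpG}^1({}^*(\det(1-u^{d_K}F^{-1})e_I))$, again with the accompanying isomorphism statement for $1-\phi^*$ on $\Dcris^N(V^*(1))^*$.

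Substituting these two identities into~(\ref{yar 4}) yields exactly
\[
\Ucris =\hat\partial_{\ZpG, \QpG}^1({}^*(\det(1-F(pu)^{-d_K})e_I))-\hat\partial_{\ZpG, \QpG}^1({}^*(\det(1-u^{d_K}F^{-1})e_I)),
\]
which is the claim. The only point worth a sentence of care is that equation~(\ref{yar 4}) itself was derived from the degeneration of the $7$-term exact sequence into the two short exact sequences displayed just before~(\ref{yar 4}); that degeneration rests on Lemma~\ref{degenerate} together with \cite[Coroll.~3.16]{Cobbe18} and \cite[(30)]{BC2}, and part~(c) of Lemma~\ref{degenerate} in turn relies on the isomorphism property of $1-\phi$ on $\Dcris(V)$ established in Lemma~\ref{lemmaforUcris 2}. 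Since all of these are available by the time the proposition is stated, there is no circularity and no genuine obstacle here: the proof is a one-line citation of the two lemmas combined with~(\ref{yar 4}). I would simply write: ``This follows immediately upon combining (\ref{yar 4}) with Lemma~\ref{lemmaforUcris 1} and Lemma~\ref{lemmaforUcris 2}.''
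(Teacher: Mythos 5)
Your proposal is correct and matches the paper's own proof exactly: the paper states that the proposition ``is easily achieved by combining (\ref{yar 4}), Lemma \ref{lemmaforUcris 1} and Lemma \ref{lemmaforUcris 2}.'' Your additional remark about the non-circularity of the dependence on Lemma~\ref{lemmaforUcris 2} via Lemma~\ref{degenerate}(c) is a reasonable sanity check, but the substance is the same one-line combination.
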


\begin{proof}
The proof is easily achieved by combining (\ref{yar 4}), Lemma \ref{lemmaforUcris 1} and Lemma \ref{lemmaforUcris 2}.
\end{proof}

We conclude this section proving some functorial properties for the term $\Ucris$.
To that end, we let $L$ be an intermediate field of $N/K$ and set 
$H := \Gal(N/L)$. Then restriction induces a homomorphism
\begin{equation}\label{K0 res}
\rho_H^G \colon K_0(\ZpG, \QpG) \lra K_0(\Zp[H], \Qp[H]),
\end{equation}
and if $H$ is normal in $G$, then we also have a 
 homomorphism
\begin{equation}\label{K0 quot}
q_{G/H}^G \colon K_0(\ZpG, \QpG) \lra K_0(\Zp[G/H], \Qp[G/H]).
\end{equation}
\begin{lemma}\label{functUcris}
Let $L$ be an intermediate field of $N/K$ and $H=\Gal(N/L)$. Then:
\begin{enumerate}[(a)]
\item $\rho^G_H(U_{\mathrm{cris},N/K})=U_{\mathrm{cris},N/L}$.
\item If $H$ is normal in $G$, then $q^G_{G/H}(U_{\mathrm{cris},N/K})=U_{\mathrm{cris},L/K}$.
\end{enumerate}
\end{lemma}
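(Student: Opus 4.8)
The plan is to prove both functoriality statements directly from the explicit formula for $\Ucris$ obtained in Proposition \ref{propUcris}, namely
\[
\Ucris = \hat\partial^1({}^*(\det(1-F(pu)^{-d_K})e_I)) - \hat\partial^1({}^*(\det(1-u^{d_K}F^{-1})e_I)),
\]
combined with the standard compatibility of the maps $\rho^G_H$ and $q^G_{G/H}$ with the reduced-norm-twisted boundary maps $\hat\partial^1$. The key observation is that everything on the right-hand side is built from three ingredients: the central idempotent $e_I$ attached to the inertia subgroup, the Frobenius element $F = F_K$, and the fixed matrix $u \in \Gl_r(\Zp)$; so I only need to track how each of these transforms under restriction and under passage to a quotient.

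For part (a), first I would recall that for the intermediate field $L$ with $H = \Gal(N/L)$, the restriction map $\rho^G_H$ is induced by the inclusion $\Qp[H] \hookrightarrow \QpG$ (extension of scalars in the opposite direction on $K$-theory) and is compatible with $\hat\partial^1$, so it suffices to compute the image of the central elements under the corresponding map $Z(\QpG)^\times \to Z(\Qp[H])^\times$. Now $I_{N/L} = I_{N/K} \cap H$, and the image of $e_{I_{N/K}}$ under restriction is exactly $e_{I_{N/L}}$ up to the usual bookkeeping; moreover $F_K$ restricts to $F_L = F_K^{d_{L/K}}$, and the matrix $u$ is unchanged since it was fixed once and for all independently of the field. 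Then a short computation using the telescoping identity of the type already used in Lemma \ref{lemmaforUcris 1} and Lemma \ref{lemmaforUcris 2}, namely
\[
1 - (F_K(pu)^{-d_K})^{d_{L/K}} = \Bigl(\textstyle\sum_{i=0}^{d_{L/K}-1}(F_K(pu)^{-d_K})^i\Bigr)\,(1 - F_K(pu)^{-d_K}),
\]
shows that $\det(1 - F_L(pu)^{-d_L})$ and $\det(1 - F_K(pu)^{-d_K})$ differ by the determinant of an invertible factor lying in the image of the ring map, which therefore lies in the kernel of $\hat\partial^1 \circ \rho^G_H$; the same argument handles the second determinant $\det(1 - u^{d_K}F^{-1})$. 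This identifies $\rho^G_H(\Ucris)$ with the corresponding expression for $N/L$, i.e.\ $U_{\mathrm{cris},N/L}$.

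For part (b), assuming $H \trianglelefteq G$, I would argue analogously with $q^G_{G/H}$ in place of $\rho^G_H$. Here the relevant field is $L/K$ with $\Gal(L/K) = G/H$. Since $L/K$ has inertia group $I_{L/K} = I_{N/K}H/H$, the image of $e_{I_{N/K}}$ under the projection $\QpG \to \Qp[G/H]$ is $e_{I_{L/K}}$; the Frobenius $F_K$ and the matrix $u$ are again unchanged (the Frobenius of $L$ over $K$ has the same degree datum $d_K = d_{K/\Qp}$ entering the formula, since $d_{L/K}$ enters only through $d_N$, not $d_K$). Thus the formula for $\Ucris$ is visibly stable under this projection, and $q^G_{G/H}(\Ucris) = U_{\mathrm{cris},L/K}$ follows immediately once one records the compatibility $q^G_{G/H} \circ \hat\partial^1_{\ZpG,\QpG} = \hat\partial^1_{\Zp[G/H],\Qp[G/H]} \circ (\text{central projection})$.

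I expect the main obstacle to be purely notational rather than conceptual: one has to be careful that the ``starred'' operation $x \mapsto {}^*x$, which replaces zero Wedderburn components by $1$, genuinely commutes with the ring maps induced by restriction and quotient — this requires checking that a Wedderburn component of $Z(\QpG)$ on which $\det(1-F(pu)^{-d_K})e_I$ vanishes maps into components of $Z(\Qp[H])$ (resp.\ $Z(\Qp[G/H])$) on which the corresponding element also vanishes, and vice versa, so that no spurious contributions to $\hat\partial^1$ are introduced or lost. Since $\det(1-F(pu)^{-d_K})e_I$ is supported exactly on the components coming from $G/I$ and is there given by an honest nonzero value (by Hypothesis (F) and the invertibility shown in Lemmas \ref{lemmaforUcris 1} and \ref{lemmaforUcris 2}), this compatibility holds, but it is the one point that should be spelled out rather than left to the reader.
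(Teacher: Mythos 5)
Your treatment of part (b) is essentially the paper's: both use the fact that $q^G_{G/H}$ on centers is induced by inflation of characters (so that $q^G_{G/H}(u_{\mathrm{cris},N/K})_\psi = (u_{\mathrm{cris},N/K})_{\mathrm{infl}(\psi)}$), together with $I_{L/K} = I_{N/K}H/H$ and the observation that $d_K$ and $u$ are the same for $N/K$ and $L/K$.

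Part (a), however, has a genuine gap. The restriction map $\rho^G_H$ on relative $K$-groups comes from restriction of scalars, and on $Z(\QpG)^\times$ (via the reduced norm) it is the multiplicative norm
\[
\rho^G_H(x)_\psi = \prod_{\chi \in \Irr(G)} x_\chi^{\langle \chi, \Ind_H^G \psi\rangle_G}, \qquad \psi \in \Irr(H),
\]
as given in \cite[Sec.~6.1]{BleyWilson}. You never invoke this formula, and your telescoping argument does not substitute for it. In fact your argument fails at three separate points. First, the two determinants you compare a priori live in different rings: $\det(1-F_K(pu)^{-d_K})e_{I_{N/K}}$ is a central element of $\QpG$, while $\det(1-F_L(pu)^{-d_L})e_{I_{N/L}}$ is a central element of $\Qp[H]$, so the claim that they ``differ by an invertible factor'' does not parse until $\rho^G_H$ has been applied — which is exactly the computation you are trying to avoid. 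Second, $\rho^G_H$ is not the ring map $\QpG \to \Qp[H]$ (there is no such ring map in that direction); it is the restriction of scalars, and central elements are not sent componentwise. Third, even granting the telescoping identity
\[
1 - (F_K(pu)^{-d_K})^{d_{L/K}} = \Bigl(\sum_{i=0}^{d_{L/K}-1}(F_K(pu)^{-d_K})^i\Bigr)(1-F_K(pu)^{-d_K}),
\]
the factor $\sum_i(F_K(pu)^{-d_K})^i$ has strictly negative $p$-valuation (because $(pu)^{-d_K}$ does), so its reduced norm is not a unit of $\ZpG$ and its image under $\hat\partial^1$ is certainly not zero; your appeal to ``lying in the kernel of $\hat\partial^1 \circ \rho^G_H$'' is therefore unjustified.

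What the paper actually does is apply the \cite{BleyWilson} formula and Frobenius reciprocity to reduce the $\psi$-component of $\rho^G_H(u_{\mathrm{cris},N/K})$ to the finite product $\prod_{\chi|_{\overline H} = \psi}$ over characters of the cyclic group $\overline G = G/I_{N/K} = \langle F_K\rangle$, then evaluates that product by the polynomial identity
\[
\prod_{\chi \in \Irr(\overline G),\; \chi|_{\overline H} = \psi} (X - \chi(F_K)) = X^{d_{L/K}} - \psi(F_L),
\]
substituted into numerator and denominator separately. Your ``telescoping'' intuition is a faint echo of this norm identity, but to make it rigorous you must actually compute $\rho^G_H$ via the character formula. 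Your final remark about whether the starred operation ${}^*(\cdot)$ commutes with the relevant maps is a legitimate point of care — the paper handles it implicitly through its case analysis on $\psi|_{I_{N/L}}$ — but this is secondary to the main gap.
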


\begin{proof}
  Let $u_{\mathrm{cris},N/K}\in Z(\Qp[G])^\times$  be such that $\hat\partial_{\ZpG, \QpG}^1(u_{\mathrm{cris},N/K})=U_{\mathrm{cris},N/K}$.
  We will use an analogous notation for all the other Galois extensions involved in the proof.

  Then, for any irreducible character $\chi$ of $G$ we have
  \[
    (u_{\mathrm{cris},N/K})_\chi=
    \begin{cases}
      \frac{\det(1-\chi(F)(pu)^{-d_K})}{\det(1-u^{d_K}\chi(F)^{-1})}&\text{if }\chi|_{I_{N/K}}=1,\\
       1&\text{if }\chi|_{I_{N/K}} \neq 1.
     \end{cases}
   \]

   For two (virtual) characters $\chi_1$ and $\chi_2$ of a finite group $J$ we write $\langle \chi_1, \chi_2\rangle_J$
   for the standard scalar product.

\begin{enumerate}[(a)] 
\item  By \cite[Sec.~6.1]{BleyWilson} we have
  \[
    \rho^G_H(u_{\mathrm{cris},N/K})=
    \left( \prod_{\chi\in\mathrm{Irr}(G)}(u_{\mathrm{cris},N/K})_\chi^{\langle\chi,\mathrm{Ind}^G_H\psi\rangle_G}
    \right)_{\psi\in \Irr({H})}.
  \]
  Since $\langle\chi,\mathrm{Ind}^G_H\psi\rangle_G = \langle \chi|_H,\psi\rangle_H$ by Frobenius reciprocity we obtain
  \[
    (u_{\mathrm{cris},N/K})_\chi^{\langle \chi,\mathrm{Ind}_H^G\psi\rangle_G}
    =\begin{cases}1&\text{if }\chi|_{I_{N/K}}\neq 1,\\
                 u_{\mathrm{cris},N/K}& \text{if }\chi|_{I_{N/K}}=1 \text{ and } \chi|_H = \psi,\\
                 1&\text{if }\chi|_{I_{N/K}}=1 \text{ and } \chi|_H \neq \psi.
               \end{cases}
  \]
  If $\psi|_{I_{N/L}} \ne 1$, then $\chi|_{I_{N/K}} \ne 1$ whenever $\langle \chi|_H, \psi \rangle_H \ne 0$.
  Thus $\rho^G_H(u_{\mathrm{cris},N/K})_\psi = 1$ for those characters $\psi$.

  On the other hand, if $\psi|_{I_{N/L}} = 1$, then $\psi$ is a character of the cyclic group $\overline{H} := H/I_{N/L} = \langle F_L \rangle$.
  Each character $\chi \in \Irr(G)$ with $(u_{\mathrm{cris},N/K})_\chi \ne 1$ is actually a character of
  $\overline{G} := G / I_{N/K} = \langle F_K \rangle$. Note that we can naturally identify $\overline{H}$ with a subgroup of
  $\overline{G}$ and recall that $|\overline{G} / \overline{H}| = d_{L/K}$.

  We therefore obtain
  \[
    \prod_{\chi\in\mathrm{Irr}(G)}(u_{\mathrm{cris},N/K})_\chi^{\langle\chi,\mathrm{Ind}^G_H\psi\rangle_G} =
    \prod_{\chi \in \Irr(\overline{G}), \chi|_{\overline{H}} = \psi} \frac{\det(1 - \chi(F_K)(pu)^{-d_K})}{\det(1 - u^{d_K}\chi(F_K)^{-1})}.
  \]
  We consider the numerator and denominator separately and use in each case the polynomial identity
  \[
    \prod_{\chi \in \Irr(\overline{G}), \chi|_{\overline{H}} = \psi} (X - \chi(F_K)) = X^{d_{L/K}} - \psi(F_L).
  \]
  For the numerator we compute
  \begin{eqnarray*}
    &&   \prod_{\chi \in \Irr(\overline{G}), \chi|_{\overline{H}} = \psi} \det(1 - \chi(F_K)(pu)^{-d_K}) \\
    &=& \det \left( \prod_{\chi \in \Irr(\overline{G}), \chi|_{\overline{H}} = \psi} (pu)^{-d_K} \left( (pu)^{d_K} - \chi(F_K) \right) \right) \\
    &=& \det \left( (pu)^{-d_K d_{L/K}} \left( (pu)^{d_K d_{L/K}} - \chi(F_L) \right) \right) \\
    &=& \det \left( 1 - (pu)^{-d_L} \psi(F_L)\right)
  \end{eqnarray*}
  and a similar computation for the denominator shows claim (a).

\item For any character $\psi$ of $G/H$ we write $\mathrm{infl}(\psi)$ for the inflated character of $G$. By \cite[Sec.~6.3]{BleyWilson},
$q^G_{G/H}(u_{\mathrm{cris},N/K})_\psi=(u_{\mathrm{cris},N/K})_{\mathrm{infl}(\psi)}$ for any $\psi \in \Irr(G/H)$.
This is equal to $(u_{\mathrm{cris},L/K})_\psi$ because $I_{L/K} = I_{N/K}H/H$ and $\mathrm{infl}(\psi)(F_K)=\psi(F_K)$.
\end{enumerate}
\end{proof}

\end{subsection}
\end{section}

\begin{section}{Computation of epsilon constants}

As in \cite[Sec.~2.3]{IV} we define
\[
\epsilon_D(N/K,V)=(\epsilon(D_\mathrm{pst}(\mathrm{Ind}_{K/\Q_p}(V\otimes\rho_\chi^*)), \psi_\xi, \mu_\Qp))_{\chi\in\mathrm{Irr}(G)}
\in\!\!\!\!\prod_{\chi\in\mathrm{Irr}(G)}\overline{\Qp}^\times \cong Z({\Qpc}[G])^\times.
\]
For all unexplained notation we refer the reader to \cite[Sec.~2.3]{IV}. If there is no danger of confusion we sometimes drop $\psi_\xi$ and $\mu_\Qp$ 
from our notation.
Still following \cite{IV} (see the proof of Lemma 4.1 of loc.cit.), we obtain
\begin{equation}\label{Dpst iso}
  D_\mathrm{pst}(\mathrm{Ind}_{K/\Qp}(V\otimes\rho_\chi^*))\cong
  D_\mathrm{pst}(V)\otimes_{\Q_p^\mathrm{nr}} D_\mathrm{pst}(\mathrm{Ind}_{K/\Qp}(\chi^*)).
\end{equation}

For an extension $L/K$ of $p$-adic fields we write $\frD_{L/K} = \pi_L^{s_{L/K}}\OL$
for the different of $L/K$; in the case $K=\Q_p$ we use the notations $\frD_L$ for $\frD_{L/\Q_p}$ and $s_L$ for $s_{L/\Q_p}$.
If  $M/L$ is a finite abelian extension and $\eta$ an irreducible character of $\Gal(M/L)$, then we let 
$\tau_L(\eta)$ denote the abelian local Galois Gau\ss\ sum defined, e.g., in  \cite[page~1184]{PickettVinatier}).
For the definition of Galois Gau\ss\ sums for a finite Galois extension
$M/L$ we refer the reader to \cite[I.\S 5]{Froehlich83}.

\begin{prop}\label{eps coroll}
For each $\chi \in \Irr(G)$ we have the equality
\[\begin{split}
&\hat\partial_{\ZpG, \Qpc[G]}^1\left( \epsilon_D(N/K, V)\right) \\
&\qquad= 
\hat\partial_{\ZpG, \Qpc[G]}^1\left( \left( \det(u)^{-d_K(s_K\chi(1) + m_\chi)} \cdot \tau_\Qp( \Ind_{K/\Qp}(\chi) )^{-r} \right)_{\chi \in \Irr(G)} \right).\end{split}
\]
where we write $\frf(\chi) = \pi_K^{m_\chi}\OK$ for the Artin conductor of $\chi$.
\end{prop}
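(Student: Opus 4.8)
## Proof Proposal

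The plan is to unwind the definition of $\epsilon_D(N/K,V)$ via the isomorphism (\ref{Dpst iso}) and compute the local $\epsilon$-factor of a twist. First I would fix $\chi \in \Irr(G)$ and use (\ref{Dpst iso}) to write $D_\mathrm{pst}(\Ind_{K/\Qp}(V \otimes \rho_\chi^*)) \cong D_\mathrm{pst}(V) \otimes_{\Qpnr} D_\mathrm{pst}(\Ind_{K/\Qp}(\chi^*))$. Since $V = \Qp^r(1)(\rhonr)$ is an unramified twist of $\Qp^r(1)$, the module $D_\mathrm{pst}(V)$ is the unramified $(\,\varphi$-semilinear$)$ module of rank $r$ whose Frobenius is governed by $u$ (this is exactly the content of Lemmas \ref{def v star}, \ref{defei}, \ref{basisV1}: the matrix $\Tur$ trivializes the Galois action and the Frobenius acts through $u$ up to the cyclotomic twist). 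The key computational input is the standard behaviour of $\epsilon$-constants under twisting by an unramified representation: if $W$ is any virtual representation of $G_K$ and $\alpha$ is unramified of dimension $1$, then $\epsilon(W \otimes \alpha) = \alpha(F_K)^{a(W) + \dim(W) s_K} \epsilon(W)$, where $a(W)$ is the Artin conductor exponent of $W$; the $r$-dimensional case is obtained by taking a $\varphi$-eigenbasis over $\Qpnr$ (or $\overline{\Qp}$) and multiplying, which turns the scalar $\alpha(F_K)^{\bullet}$ into $\det(u)^{\bullet}$.

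Concretely, I would apply this with $W = \Qp^r(1) \otimes \Ind_{K/\Qp}(\chi^*)$ viewed as a representation of $G_\Qp$, so that $V \otimes \Ind_{K/\Qp}(\chi^*)$ is its unramified twist by $\rhonr_\Qp$ (dimension $r$, hence the determinant $\det(u) = \det(\rhonr_\Qp(F_\Qp))$ appears). The Artin conductor of $\Qp(1) \otimes \Ind_{K/\Qp}(\chi^*)$ over $\Qp$ has exponent $s_K \chi(1) + m_\chi$ — this is the inductivity of the conductor together with the conductor-discriminant formula, with $s_K = v_K(\frD_K)$ accounting for the different of $K/\Qp$ appearing $\chi(1)$ times and $m_\chi = v_K(\frf(\chi))$ the Artin conductor of $\chi$ itself; and the Frobenius $F_\Qp$ acts on the rank-$r$ space by $u$ (not $u^{d_K}$), with the exponent scaled by the residue degree so that the net power is $d_K(s_K\chi(1)+m_\chi)$. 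The sign of the exponent ($-d_K(\cdots)$) comes from the fact that one is twisting $D_\mathrm{pst}(V)$ by the \emph{inverse} bookkeeping that enters the definition of $\epsilon_D$ through $\rho_\chi^*$ and the normalization of $\psi_\xi$; I would track this carefully against the $r=1$ computation in \cite{IV} and \cite[Lemma 5.3.x]{BC2}. The untwisted factor $\epsilon(D_\mathrm{pst}(\Qp(1) \otimes \Ind_{K/\Qp}(\chi^*)))$ is then identified with $\tau_\Qp(\Ind_{K/\Qp}(\chi))^{-1}$ per copy, i.e. the $(-r)$-th power after accounting for all $r$ eigendirections of $V$; this identification of the $\epsilon$-factor of a Tate-twisted induced character with the Galois Gauss sum is classical (Fröhlich--Deligne), and for $\Qp(1)$ the twist by the cyclotomic character introduces only the standard root-number/Gauss-sum reciprocity which matches the $r=1$ case in \cite{IV}.

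Finally, having computed each component $\chi$, I would observe that the formula is uniform in $\chi$ and apply $\hat\partial^1_{\ZpG, \Qpc[G]}$ to the resulting tuple in $Z(\Qpc[G])^\times$, which gives the stated equality in $K_0(\ZpG, \Qpc[G])$. The main obstacle I anticipate is \emph{bookkeeping the normalizations}: getting the exact exponents (the factor $d_K$, the role of $m_\chi$ versus the induced conductor, and above all the signs) consistent between Fontaine's $D_\mathrm{pst}$ conventions, the definition of $\epsilon_D$ from \cite[Sec.~2.3]{IV}, and the reduced-norm/$\hat\partial^1$ formalism. The actual twisting computation and the conductor formula are routine; the care lies entirely in matching these conventions against the established $r=1$ results so that the $r$-dimensional statement reduces, componentwise, to an $r$-fold product of the known one-dimensional identities with $\det(u)$ replacing $u$.
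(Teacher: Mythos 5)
Your approach matches the paper's: both decompose via (\ref{Dpst iso}), exploit that $D_{\mathrm{pst}}(V)$ is an unramified $\varphi$-module of rank $r$ to pull out the $\det(u)$-power through the conductor exponent $m(D_{\mathrm{pst}}(\Ind_{K/\Qp}(\chi^*))) = d_K(s_K\chi(1)+m_\chi)$ (the paper invokes the factorization formula \cite[2.3~(6)]{BenBer} where you propose to derive it via a $\varphi$-eigenbasis of $D_{\mathrm{pst}}(V)$; these are equivalent), and then identify the remaining factor with $\tau_{\Qp}(\Ind_{K/\Qp}(\chi))^{-r}$. One step you gesture at but should make explicit: passing from $\tau_{\Qp}(\Ind_{K/\Qp}(\chi^*))$ to $\tau_{\Qp}(\Ind_{K/\Qp}(\chi))^{-1}$ via \cite[Prop.~II.4.1~(ii)]{Martinet77} produces not only the power of $p$ (which cancels against the $p^{-r}$ in $\det(D_{\mathrm{pst}}(V))(p)$) but also a factor $\bigl(\det(\Ind_{K/\Qp}\chi)(-1)\bigr)^{r}$, which must be shown to vanish in $K_0(\ZpG,\Qpc[G])$ under $\hat\partial^1$ — the paper does this by citing \cite[Lemma~6.2.2]{BC2}. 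Without that step the computed representative does not yet match the stated one, though the discrepancy is harmless in the relative $K$-group.
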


\begin{proof}
  In the proof we will use the list of properties in \cite[Sec.~2.3]{BenBer}. The field $K$ of loc.cit.
  corresponds to $\Qp$ in our situation. Hence,
  if $\psi$ denotes the standard additive character, we have $n(\psi) = 0$ for its conductor.   
  
Since $V$ is cristalline the $N_1$-basis $\{e_i\}$ constructed in Lemma  \ref{defei} is also a $\Qpnr$-basis of $D_\mathrm{pst}(V)$.
A straightforward computation (see the proof of Lemma \ref{lemmaforUcris 1} for a similar computation) shows that 
\begin{equation}\label{phi on ei}
\phi(e_i) = \sum_{h=1}^r p^{-1} ((\Tnr)^{-1} u^{-1} \Tnr)_{h,i}e_h.
\end{equation}
As in the proof of Lemma \ref{defei}, any element $\sigma$ of the absolute inertia group acts trivially on the
basis elements $e_i$, whence $D_\mathrm{pst}(V)$ is unramified. 

Applying (\ref{Dpst iso}) and \cite[2.3 (6)]{BenBer} we obtain
\[\begin{split}
& \epsilon(D_\mathrm{pst}(\mathrm{Ind}_{K/\Q_p}(V\otimes\rho_\chi^*)),\psi_\xi) \\
&\qquad= \epsilon(D_\mathrm{pst}(\mathrm{Ind}_{K/\Qp}(\chi^*)),\psi_\xi)^{r}
\cdot \det(D_\mathrm{pst}(V))(p)^{m(D_\mathrm{pst}(\mathrm{Ind}_{K/\Qp}(\chi^*)))},
\end{split}\]
where $m(D_\mathrm{pst}(\mathrm{Ind}_{K/\Qp}(\chi^{{*}})))$ is the exponent of the Artin conductor.

We consider the first factor and note that
\[
D_\mathrm{pst}(\mathrm{Ind}_{K/\Qp}(\chi^*)) \cong \Qpnr \tensor_\Qp  \mathrm{Ind}_{K/\Qp}(\chi^*),
\]
so that we deduce from \cite[Prop.~6.1.3]{BC2} that
\[
\epsilon(D_\mathrm{pst}(\mathrm{Ind}_{K/\Qp}(\chi^*)),\psi_\xi) =  \tau_\Qp( \Ind_{K/\Qp}(\chi^*) ).
\]
As a consequence of \cite[Prop.~II.4.1 (ii)]{Martinet77} we get
\[
  \tau_\Qp( \Ind_{K/\Qp}(\chi^*) ) = \tau_\Qp( \Ind_{K/\Qp}(\chi) )^{-1} \cdot p^{m(D_\mathrm{pst}(\mathrm{Ind}_{K/\Qp}(\chi^*)))}
  \cdot (\det((\Ind_{K/\Qp}\chi)(-1))).
\]

For the second factor, we first note that
\[
  \det(D_\mathrm{pst}(V))(p) = \det(\varphi^{-1}, D_\mathrm{pst}(V)).
\]
By  \cite[page 625]{BenBer} the action of the Weil group $W_{\Qp}$ on $D_\mathrm{pst}(V)$ is defined so that the action of
the geometric Frobenius $\varphi^{-1}$ coincides with the usual action of $\varphi^{-1} \phi$ on $D_\mathrm{pst}(V)$.
We recall from (\ref{phi on ei}) that with respect to the
basis $\{e_i\}$ of $D_\mathrm{pst}(V)$ the element $\varphi^{-1}\phi$
acts as $\varphi^{-1}( p^{-1}(\Tnr)^{-1} u^{-1}\Tnr)$ on $D_\mathrm{pst}(V)$,
so that we derive $\det(\varphi^{-1},D_\mathrm{pst}(V)) = p^{-r}\det(u^{-1})$.

Finally, by \cite[VII.11.7]{Neukirch92}, we get
\[
  \frf( \Qpnr \tensor_\Qp \mathrm{Ind}_{K/\Qp}(\chi^*) ) =
  \frf( \mathrm{Ind}_{K/\Qp}(\chi^*) ) = \frd_K^{\chi(1)} N_{K/\Qp}(\frf(\chi^*)) = p^{d_K ( s_K\chi(1) + m_\chi) },
\]
so that 
\[
  m(D_\mathrm{pst}(\mathrm{Ind}_{K/\Qp}(\chi^*)))=
  m(\Qpnr \tensor_\Qp  \mathrm{Ind}_{K/\Qp}(\chi^*)) = d_K ( s_K\chi(1) + m_\chi).
\]

We conclude that
\[
\epsilon_D(N/K, V)_\chi =  \left(  \det(\Ind_{K/\Qp}(\chi))(-1)  \right)^{r} \cdot 
\det(u)^{-d_K(s_K\chi(1) + m_\chi)} \cdot \tau_\Qp( \Ind_{K/\Qp}(\chi) )^{-r}.
\]

The proposition is now immediate from \cite[Lemma~6.2.2]{BC2}, which shows that
\[ 
\hat\partial_{\ZpG, \Qp[G]}^1   \left( \left( \det(\Ind_{K/\Qp}\chi)(-1) \right)_{\chi\in \Irr(G)}\right) = 0.
\]
\end{proof}

Concerning functoriality with respect to change of fields we have the following lemma.

\begin{lemma}\label{functepsilon}
Let $L$ be an intermediate field of $N/K$ and $H=\Gal(N/L)$.
\begin{enumerate}[(a)]
\item $\rho^G_H(\hat\partial^1_{\ZpG, \QpcG}(\epsilon_D(N/K, V)))=\hat\partial^1_{\Zp[H], \Qpc[H]}(\epsilon_D(N/L, V))$.
\item If $H$ is normal in $G$, then 
\[q^G_{G/H}(\hat\partial^1_{\ZpG, \QpcG}(\epsilon_D(N/K, V)))=\hat\partial^1_{\Zp[G/H], \Qpc[G/H]}(\epsilon_D(L/K, V)).\]
\end{enumerate}
\end{lemma}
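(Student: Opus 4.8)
The plan is to deduce both functoriality statements from the explicit formula for $\epsilon_D(N/K,V)$ established in Proposition \ref{eps coroll}, combined with the standard behaviour of the maps $\rho^G_H$ and $q^G_{G/H}$ on the level of reduced norms, exactly as in the proof of Lemma \ref{functUcris}. Concretely, write $\epsilon_D(N/K,V) = \hat\partial^1_{\ZpG,\QpcG}(y_{N/K})$ with $y_{N/K} \in Z(\Qpc[G])^\times$ the element whose $\chi$-component is
\[
(y_{N/K})_\chi = \det(u)^{-d_K(s_K\chi(1)+m_\chi)} \cdot \tau_\Qp(\Ind_{K/\Qp}(\chi))^{-r},
\]
and use analogous notation for the extensions $N/L$ and $L/K$. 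By Proposition \ref{eps coroll} it suffices to compare the images of these central elements under the maps \eqref{K0 res} and \eqref{K0 quot}; since $\hat\partial^1$ is compatible with restriction and quotient, this reduces to a purely character-theoretic identity among the $(y_{?})$.

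For part (a): by \cite[Sec.~6.1]{BleyWilson}, $\rho^G_H$ acts on $Z(\Qpc[G])^\times$ by $\rho^G_H(z)_\psi = \prod_{\chi\in\Irr(G)} z_\chi^{\langle \chi|_H,\psi\rangle_H}$ for $\psi\in\Irr(H)$. So I need to check, for each $\psi\in\Irr(H)$,
\[
\prod_{\chi\in\Irr(G)} (y_{N/K})_\chi^{\langle\chi|_H,\psi\rangle_H} = (y_{N/L})_\psi.
\]
The Galois–Gauss-sum factor is handled by the inductivity of $\tau$: the identity $\Ind_{\Qp}^{} \circ \Ind_{L/\Qp}$ versus $\Ind_{K/\Qp}$ via $\Ind_{L/K}$ (transitivity of induction) together with the inductivity in degree $0$ of the Langlands constant / Galois Gauss sum gives $\tau_\Qp(\Ind_{K/\Qp}(\Ind_{L/K}\psi)) = \tau_\Qp(\Ind_{L/\Qp}\psi) \cdot (\text{a }\pm\text{-type root-of-unity correction})$; since the formula in Proposition \ref{eps coroll} is only well-defined up to the class of $(\det(\Ind\chi)(-1))_\chi$, which is trivial in $K_0$ by \cite[Lemma~6.2.2]{BC2}, such corrections may be absorbed. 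For the $\det(u)$-factor one uses the conductor–discriminant type identity $\sum_{\chi} \langle\chi|_H,\psi\rangle_H\, (s_K\chi(1)+m_\chi) \cdot d_K = d_L(s_L\psi(1)+n_\psi)$, which is exactly the computation of $\frf(\Ind_{K/\Qp}\Ind_{L/K}\psi)$ performed via \cite[VII.11.7]{Neukirch92} in the proof of Proposition \ref{eps coroll}. For part (b), one argues more directly: by \cite[Sec.~6.3]{BleyWilson}, $q^G_{G/H}(z)_\psi = z_{\mathrm{infl}(\psi)}$ for $\psi\in\Irr(G/H)$, and since $\Ind_{K/\Qp}(\mathrm{infl}_{G/H}^G\psi) = \Ind_{K/\Qp}(\psi)$ as a representation of $G_\Qp$ (inflation along $N/L$ does not change the induced representation of $G_\Qp$, as $L/K$ already carries all the relevant ramification data of $\psi$), both the Gauss-sum factor and the conductor exponent $m_\psi$ and the degree $\psi(1)$ are unchanged, so $(y_{N/K})_{\mathrm{infl}(\psi)} = (y_{L/K})_\psi$ immediately.

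The main obstacle I expect is bookkeeping of the root-of-unity/sign discrepancies that arise from the non-strict inductivity of local Galois Gauss sums (they are only inductive in degree zero, cf.\ \cite[I.\S 5]{Froehlich83}), and from the fact that Proposition \ref{eps coroll} gives $\hat\partial^1(\epsilon_D)$ rather than $\epsilon_D$ itself — so one must be careful that every identity is needed only modulo the kernel of $\hat\partial^1$, i.e.\ modulo $\Nrd$ of units in $\ZpG$ and modulo classes like $(\det(\Ind\chi)(-1))_\chi$ that are already known to vanish in $K_0(\ZpG,\Qp[G])$. Once one works systematically modulo these, the two statements follow from the transitivity of induction together with the conductor–discriminant formula, essentially as in the proof of Lemma \ref{functUcris}(a),(b). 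I would therefore structure the write-up as: (i) recall the formulas for $\rho^G_H$, $q^G_{G/H}$ on reduced norms; (ii) substitute the explicit $(y_?)_\chi$ from Proposition \ref{eps coroll}; (iii) handle the $\det(u)$-factor by the conductor computation; (iv) handle the $\tau_\Qp$-factor by inductivity, absorbing corrections into the vanishing class of \cite[Lemma~6.2.2]{BC2}; (v) conclude.
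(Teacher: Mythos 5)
Your proposal takes essentially the same route as the paper: substitute the explicit formula from Proposition~\ref{eps coroll}, apply the formulas for $\rho^G_H$ and $q^G_{G/H}$ on reduced norms from \cite[Sec.~6.1, 6.3]{BleyWilson}, and verify the resulting identity factor by factor. Your treatment of the $\det(u)$-factor (Frobenius reciprocity, $\frf(\Ind_{L/K}\psi) = \frd_{L/K}^{\psi(1)}N_{L/K}\frf(\psi)$ via \cite[VII.11.7]{Neukirch92}, multiplicativity of differents) and of part~(b) ($\Ind_{K/\Qp}(\mathrm{infl}\,\psi)=\Ind_{K/\Qp}(\psi)$, conductor and degree unchanged) agrees with the paper.

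The one place where you deviate, and also overcomplicate matters, is the Galois Gauss sum factor. The paper simply observes that $\hat\partial^1\bigl((\tau_\Qp(\Ind_{K/\Qp}\chi))_{\chi}\bigr)$ is $(-1)$ times Breuning's unramified-free term $T_{N/K}$ and cites \cite[Lemma~2.3]{Breuning04}, which already contains the functoriality you need. You instead try to re-derive it and raise the spectre of non-inductivity of Gauss sums. That concern is misplaced here: the only Gauss sums in play are $\tau_\Qp$ applied to characters \emph{induced to the fixed base $\Qp$}, and $\tau_\Qp$ is multiplicative on direct sums. Since $\Ind_H^G\psi = \sum_\chi\langle\chi|_H,\psi\rangle_H\,\chi$ and $\Ind_{K/\Qp}\Ind_{L/K}\psi = \Ind_{L/\Qp}\psi$ by transitivity of induction, one gets
\[
\prod_{\chi\in\Irr(G)}\tau_\Qp(\Ind_{K/\Qp}\chi)^{\langle\chi|_H,\psi\rangle_H} = \tau_\Qp\bigl(\Ind_{K/\Qp}\Ind_{L/K}\psi\bigr) = \tau_\Qp(\Ind_{L/\Qp}\psi)
\]
on the nose, with no root-of-unity discrepancy to absorb. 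Non-inductivity of Galois Gauss sums only bites when one changes the ground field of the Gauss sum (e.g.\ $\tau_K(\chi)$ versus $\tau_\Qp(\Ind_{K/\Qp}\chi)$), which never happens in this computation. So your proof is correct, but the claimed obstacle is not there; you could either carry through the direct computation as above, or, as the paper does, simply quote \cite[Lemma~2.3]{Breuning04}.
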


\begin{proof}
By Proposition \ref{eps coroll} we have 

\begin{eqnarray*}
  && \rho^G_H(\hat\partial^1_{\ZpG, \QpcG}(\epsilon_D(N/K, V))) \\
  &=& \rho^G_H\left(\hat\partial^1_{\ZpG, \QpcG} \left( \left( \det(u)^{-d_K(s_K\chi(1) + m_\chi)} 
      \cdot \tau_\Qp( \Ind_{K/\Qp}(\chi) )^{-r} \right)_{\chi \in \Irr(G)} \right) \right).
\end{eqnarray*}

By \cite[Lemma~2.3]{Breuning04} we have
\[
\rho^G_H(\hat\partial^1_{\ZpG, \QpcG} \left( \tau_\Qp( \Ind_{K/\Qp}(\chi) )^{-r} \right)_{\chi \in \Irr(G)} )\! =\!
\hat\partial^1_{\Zp[H], \Qpc[H]} \left( \tau_\Qp( \Ind_{L/\Qp}(\psi) )^{-r} \right)_{\psi \in \Irr(H)},
\]

whereas  \cite[Sec.~6.1]{BleyWilson} implies
 \[
\rho^G_H\left(\hat\partial^1_{\ZpG, \QpcG} \left( \det(u)^{-d_K(s_K\chi(1) + m_\chi)} \right) \right)
= \hat\partial^1_{\Zp[H], \Qpc[H]} \left((\alpha_\psi) _{\psi \in \Irr(H)}\right)
\]
with 
\[
\alpha_\psi = \prod_{\chi \in \Irr(G)}  \det(u)^{-d_K(s_K\chi(1) + m_\chi) \langle \chi, \Ind_H^G\psi \rangle_G}.
\]
From \cite[Thm.~VII.11.7]{Neukirch92} and the obvious relation
\[\Ind_H^G\psi = \sum_{\chi \in \Irr(G)} \langle \chi, \Ind_H^G\psi\rangle_G \chi\]
we derive
\[
 \prod_{\chi \in \Irr(G)} \frf(N/K, \chi)^{ \langle \chi, \Ind_H^G\psi\rangle_G} 
= \frf(N/K, \Ind_H^G\psi) = \frd_{L/K}^{\psi(1)} N_{L/K}(\frf(N/L, \psi)),
\]
{where $\frd_{L/K}$ denotes the discriminant of $L/K$.} This implies   
\[
\sum_{\chi \in \Irr(G)} m_\chi \langle \chi, \Ind_H^G\psi\rangle_G = d_{L/K}s_{L/K}\psi(1) + d_{L/K}m_\psi.
\]
Furthermore, we note
\[
\sum_{\chi \in \Irr(G)} \langle \chi, \Ind_H^G\psi\rangle_G \cdot \chi (1) = (\Ind_H^G\psi)(1) = [G:H]\psi(1) = e_{L/K}d_{L/K}{\psi(1)}.
\]
Hence we deduce from $d_L = d_Kd_{L/K}$
\[
  \begin{split}
    &\sum_{\chi\in\mathrm{Irr}(G)}d_K(s_K\chi(1) + m_\chi)\langle\chi,\mathrm{ind}^G_H\psi\rangle_G\\
&\qquad=d_K( s_Ke_{L/K}d_{L/K}\psi(1)+d_{L/K}s_{L/K}\psi(1)+d_{L/K}m_\psi)\\
&\qquad=d_L s_K e_{L/K}\psi(1)+d_Ls_{L/K}\psi(1)+d_{L}m_\psi\\
&\qquad=d_L \psi(1) (s_K e_{L/K}+s_{L/K})+d_{L}m_\psi\\
&\qquad=d_L \psi(1)s_L+d_{L}m_\psi.
\end{split}
\]
where the last equality follows from the multiplicativity of differents, see \cite[Thm.~III.2.2 (i)]{Neukirch92}.
The first functoriality property  is now obvious.

\smallskip

The second functoriality property follows easily from  \cite[Lemma~2.3]{Breuning04} and \cite[Lemma~VII.11.7~(ii)]{Neukirch92}.
\end{proof}

\end{section}

\begin{section}{Computation of the cohomological term}
  \begin{subsection}{Identifying cohomology}\label{Preliminary results}
    In the following we will take the opportunity to clarify some of the constructions of \cite[Sec.~7.1, page 356]{BC2}.
    This is necessary since in the definition of $C_{N/K}$ we use the identification of $H^1(N, T)$ with $\calF(\frp_N^{(r)})$ coming from
    continuous cochain cohomology combined
    Kummer theory whereas in the computations in \cite[Sec.~7.1]{BC2} we use the identification coming from
    \cite[Thm.~4.3.1]{BC2} combined with \cite[Lemma~4.1.1]{BC2}. In this manuscript we work  in the
    $r$-dimensional setting based on the results of
    \cite[Sec.~3]{Cobbe18}, where the special case $r=1$ covers the situation of \cite{BC2}.

    Let
    \[
      C_{N, \rhonr}^\bullet := [ \calI_{N/K}(\chinr) \lra \calI_{N/K}(\chinr) \lra \Q_p^r/(F_N-1)\cdot\Q_p^r(\rhonr) ],
    \]
     with non-trivial modules in degree $0$, $1$ and $2$, be the complex of \cite[Thm.~3.12]{Cobbe18} and let
    \[
      C_{N,T}^\bullet := [ \calI_{N/K}(\chinr) \lra \calI_{N/K}(\chinr) ],
    \]
    with non-trivial modules in degree $1$ and $2$, be the complex of \cite[Thm.~3.15]{Cobbe18}.
		We also deduce from \cite[Thm.~3.3]{Cobbe18} combined
    with \cite[Lemma~2.1]{Cobbe18} the short exact sequence
    \begin{equation}\label{fFN}
      0 \lra \calF(\frp_N^{(r)}) \xrightarrow{f_{\calF,N}} \calI_{N/K}(\chinr) \lra \calI_{N/K}(\chinr) \lra \calZ / (F_N - 1)\calZ \lra 0.
    \end{equation}
    
    In the sequel we will use dotted arrows for morphisms in the derived category and solid arrows for those which are
    actual morphisms of complexes.

    In the proof of \cite[Thm.3.12]{Cobbe18} we construct an isomorphism
    \[
      \xymatrix{\tau \colon C_{N, \rhonr}^\bullet \ar@{..>}[r] & R\Gamma(N,\calF)}
    \]
    in the derived category which induces the identity on $H^0$. In a second step, see \cite[Cor.~3.13]{Cobbe18},
    we produce quasi-isomorphisms
    \[
      \eta \colon P^\bullet \stackrel\sim\lra C_{N, \rhonr}^\bullet
\text{ and }
      \tilde\eta \colon \tilde P^\bullet \stackrel\sim\lra C_{N,T}^\bullet[1]
    \]
    where
\begin{eqnarray*}
  P^\bullet &:=& [P^{-1} \lra P^0 \lra P^1\lra \Q_p^r/(F_N-1)\cdot\Q_p^r(\rhonr)], \\
  \tilde P^\bullet &:=& [P^{-1} \lra P^0 \lra P^1].
\end{eqnarray*}
Here the $\Z_p[G]$-modules $P^{-1}, P^{0}, P^1$ are finitely generated and projective and the uniquely divisible module
$\Q_p^r/(F_N-1)\cdot\Q_p^r(\rhonr)$ is $G$-cohomologically trivial. Composing $\eta$ and $\tau$ we obtain an isomorphism
    $\xymatrix{P^\bullet \ar@{..>}[r] & R\Gamma(N, \calF)}$
    in the derived category. 
  Passing to the projective limit in  \cite[Lemma 3.14]{Cobbe18}  we obtain another quasi-isomorphism
  $\varphi \colon \tilde P^\bullet \stackrel\sim\lra R\Gamma(N, T)[1]$
  and thus obtain the following commutative diagram in the derived category
		\begin{equation}\label{clarification1}
   \xymatrix{
      C_{N, \rhonr}^\bullet \ar@{..>}@/_2pc/[dd]_{\tau}&& \ar[ll]_{\iota} C_{N, T}^\bullet[1]  \ar@{..>}@/^2pc/[dd]^-{\xi} \\
      P^\bullet \ar[u]^\sim_\eta \ar@{..>}[d]^{\tau\circ\eta}_\sim && \ar[ll]_{\iota} \tilde P^\bullet \ar[u]^\sim_{\tilde\eta} \ar[d]_\sim^\varphi \\
      R\Gamma(N, \calF)  && \ar@{..>}[ll]_-{\tau\circ\eta\circ\varphi^{-1}} R\Gamma(N, T)[1]
    }
  \end{equation}
  On $H^0$ we therefore obtain the commutative diagram
  \begin{equation}\label{clarification2}
    \xymatrix{
      \calF(\frp_N^{(r)}) \ar[rr]^{f_{\calF,N}} \ar[d]^{\id} && H^0(C_{N, \rhonr}^\bullet)  \ar[d]^{H^0(\tau)} \ar[rr]^{\id} && H^1(C_{N, T}^\bullet) \ar[d]^{H^0(\xi)} \\
      \calF(\frp_N^{(r)}) \ar[rr]^{\id}  && H^0(N,\calF) \ar[rr]^{H^0(\varphi\circ\eta^{-1}\circ\tau^{-1})}  && H^1(N, T)
    }
  \end{equation}
  By the proof of \cite[Thm.~4.3.1]{BC2} (which is used also for \cite[Thm.~3.15]{Cobbe18}) we know that the composite
  \[
    H^0(N, \calF) \xrightarrow{H^0(\varphi\circ\eta^{-1})} H^1(N, T) \lra H^1(N, \calF[p^n])
  \]
  is the Kummer map $\partial_{Ku,n}$ resulting from the distinguished triangle
  \[
    R\Gamma(N, \calF) \stackrel{p^n}\lra R\Gamma(N, \calF) \lra R\Gamma(N, \calF[p^n])[1] \lra .
  \]
  By the universal property of projective limits we obtain
  \[
    H^0(\varphi\circ\eta^{-1}) = \partial_{Ku}, \text{ respectively } H^0(\xi) = \partial_{Ku}^{}.
  \]
\end{subsection}

\begin{subsection}{Definition of the twist invariant}\label{Utwist def}

  In this subsection we define an invariant $\Utwist$ in the relative algebraic $K$-group $K_0(\ZpG, \overline\Qpnr[G])$.
  We recall that $\Tnr \in \Gl_r(\overline{\Zpnr})$ satisfies the matrix equality
  \begin{equation}\label{Tnr property}
    \varphi(\Tnr) = u^{-1} \Tnr.    
  \end{equation}
  This equality determines $\Tnr$ up to right multiplication by a matrix $S \in \Gl_r(\Zp)$, explicitly, if $\tilde{T}^\mathrm{nr}$ is a
  second matrix satisfying (\ref{Tnr property}), then $\Tnr = \tilde{T}^\mathrm{nr}S$.
  It is thus immediate that the element
  \begin{equation}\label{Utwist}
    \Utwist := \hat\partial_{\ZpG, \overline\Qpnr[G]}^1(\det(\Tnr))
  \end{equation}
  does not depend on the specific choice of $\Tnr$ satisfying (\ref{Tnr property}).
    \begin{remark}\label{Utwist remark}
      The element $\Utwist$ clearly becomes trivial under the canonical map
    $K_0(\ZpG,  \overline\Qpnr[G]) \lra  K_0(\overline{\Zpnr}[G],  \overline\Qpnr[G])$.
    \end{remark}
  
\end{subsection}

\begin{subsection}{\texorpdfstring{The cohomological term $C_{N/K}$}{The cohomological term}}
  In this subsection we clarify and correct the computation of the cohomological term $C_{N/K}$ of \cite[Sec.~7.1]{BC2}.
    In particular, we produce a detailed proof of \cite[Lemma 7.1.2]{BC2}, which in loc.cit. was quoted from \cite[Lemma 6.1]{IV}.
    It is this part of the computation where the new term $\Utwist$ emerges.

  We recall that we throughout assume Hypothesis (F), in particular, $\rhonr |_{G_N} \ne 1$.
  Then, by \cite[(15), (16)]{BC2} the cohomological term $C_{N/K}$ is
  defined by 
\begin{equation}\label{CNK}
C_{N/K}=-\chi_{\ZpG,\BdRG}(M^\bullet,\exp_V\circ\comp_V^{-1})
\end{equation}
where 
\begin{equation}\label{Mbullet def}
M^\bullet=R\Gamma(N,T)\oplus \Ind_{N/\Qp}T[0].
\end{equation}
We fix a $\Zp[G]$-projective sublattice $\mathcal L\subseteq \ON$ such that the exponential map $\exp_\calF$ of Lemma \ref{expconvergence}
converges on $\calL^{(r)}$. We set $X(\calL):=\exp_\calF(\calL^{(r)})$ and note that $X(\calL) \sseq \calF(\frp_N^{(r)})$.

The embedding $X(\calL) \xhookrightarrow{f_{\calF,N}} H^1(C_{N, T}^\bullet)$, where $f_{\calF,N}$ is the first map in the exact
sequence (\ref{fFN}), induces an injective map of complexes
$X(\calL)[-1] \lra C_{N,T}^\bullet$. We set
\begin{eqnarray}
  K^\bullet(\calL) &:=& \Ind_{N/\Qp}(T)[0] \oplus X(\calL)[-1], \nonumber \\
  M^\bullet(\calL) &:=& [ \calI_{N/K}(\rhonr) / f_{\calF,N}(X(\calL)) \lra \calI_{N/K}(\rhonr) ] \label{Mbullet}
\end{eqnarray}
with modules in degree $1$ and $2$ and have thus constructed an exact sequence of complexes
\[
  0 \lra K^\bullet(\calL) \lra C_{N, T}^\bullet \oplus \Ind_{N/\Qp}(T)[0] \lra  M^\bullet(\calL) \lra 0.
\]
We first rewrite $C_{N/K}$ in terms of the middle complex and obtain
\[
  C_{N/K}=-\chi_{\ZpG,\BdRG}( C_{N, T}^\bullet \oplus \Ind_{N/\Qp}(T)[0],\exp_V\circ\comp_V^{-1}\circ H^0(\xi))
\]
with $\xi$ as in (\ref{clarification1}). We then use additivity of refined Euler characteristics in distinguished triangles
and derive
\begin{equation}\label{CNK summands}
  C_{N/K} = [X(\calL), \lambda, \Ind_{N/\Qp}(T)] + \chi_{\ZpG,\BdRG}( M^\bullet(\calL), 0),
\end{equation}
where $\lambda$ is the following composite map 
\begin{eqnarray*}
  && X(\calL)_{\BdR} = \calF(\frp_N^{(r)})_{\BdR} \xrightarrow{f_{\calF,N}}
      H^1(C_{N,T}^\bullet)_{\BdR} \\
  && \xrightarrow{H^0(\xi)} H^1(N, T)_{\BdR} \xrightarrow{comp_V\circ\exp_V^{-1}} (\Ind_{N/\Qp}(T))_{\BdR}.
\end{eqnarray*}
Then the term $\chi_{\ZpG,\BdRG}( M^\bullet(\calL), 0)$ is precisely the term which is computed in \cite[Sec.~7.2]{BC2} in the
one-dimensional weakly ramified case.
We will compute this term in arbitrary dimension $r \ge 1$ in Section \ref{tame case} in the tame case and in
Section \ref{weakly ramified case} in the weakly ramified case.

For the first term we obtain
\begin{equation}\label{two summands}
  [X(\calL), \lambda, \Ind_{N/\Qp}(T)] =
  \left[X(\mathcal L),\lambda_2,\bigoplus_{i=1}^r\mathcal L e_i\right]+
  \left[\bigoplus_{i=1}^r\mathcal L e_i,\comp_V,\mathrm{Ind}_{N/\Q_p}T\right]
\end{equation}
where the elements $e_1, \ldots, e_r$ are defined in Lemma \ref{defei} and $\lambda_2$ is the composite map
\begin{equation}\label{def lambda2}
  X(\calL)_{\BdRG} \xrightarrow{H^0(\xi)\circ f_{\calF,N}}  H^1(N, T)_{\BdRG} \xrightarrow{\exp_V^{-1}} t_V(N)_{\BdRG}.
\end{equation}
Note that $e_1, \ldots, e_r$ constitute an $N$-basis of $\DdRN(V) = t_V(N)$.

In the  next three lemmas we will compute the summands in (\ref{two summands}).
  
\begin{lemma}
With $\lambda_2$ denoting the composite map defined in (\ref{def lambda2}) we have
\[
  \left[X(\mathcal L),\lambda_2,\bigoplus_{i=1}^r\mathcal L e_i\right]=0.
\]
\end{lemma}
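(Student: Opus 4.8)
The plan is to show that the refined Euler characteristic $\left[X(\mathcal L),\lambda_2,\bigoplus_{i=1}^r\mathcal L e_i\right]$ vanishes by exhibiting an explicit, well-chosen $\Zp[G]$-isomorphism between the two lattices that becomes compatible with the map $\lambda_2$ after base change to $\BdR[G]$. Recall that a term of the shape $[\Xi, \mu, \Xi']$ with $\Xi, \Xi'$ finitely generated projective $\Zp[G]$-modules and $\mu \colon \Xi_{\BdR[G]} \xrightarrow{\sim} \Xi'_{\BdR[G]}$ is trivial in $K_0(\ZpG,\BdR[G])$ precisely when $\mu$ is induced by an isomorphism of $\Zp[G]$-lattices; so the content is to identify $\lambda_2$ on the nose.

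First I would unwind the definition of $\lambda_2$ from (\ref{def lambda2}): it is the composite of $H^0(\xi)\circ f_{\calF,N}$, which by the analysis in Section \ref{Preliminary results} (in particular the identification $H^0(\xi) = \partial_{Ku}$ and diagram (\ref{clarification2})) is the Kummer map $X(\calL) \hookrightarrow \calF(\frp_N^{(r)}) \cong H^1(N,T)$, followed by $\exp_V^{-1} \colon H^1(N,T) \to t_V(N)$, the inverse of the Bloch--Kato exponential. The key point is that on the formal group side the Bloch--Kato exponential is computed by the formal-group exponential: via the isomorphism $\theta \colon \calF \to \Gm^r$ and the comparison with Kummer theory, $\exp_V^{-1}$ applied to the Kummer image of $x \in X(\calL)$ should equal $\log_\calF(x)$, read in $t_V(N) = \DdR^N(V)$ with respect to the basis $e_1,\dots,e_r$ of Lemma \ref{defei}. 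This is the $r$-dimensional analogue of the one-dimensional identification used in \cite{BC2}, and Proposition \ref{log exp iso} together with Lemma \ref{logar} guarantees that $\log_\calF$ converges on $X(\calL) = \exp_\calF(\calL^{(r)})$ and that $\log_\calF(X(\calL)) = \calL^{(r)}$.

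The heart of the argument is therefore: (i) trace through the comparison isomorphisms to see that $\lambda_2$ is, up to the chosen bases, literally $\log_\calF \colon X(\calL) \to \calL^{(r)} = \bigoplus_{i=1}^r \mathcal L e_i$; (ii) observe that $\log_\calF$ restricts to a bijection $X(\calL) \to \calL^{(r)}$ which is $\Zp[G]$-linear because $\exp_\calF$ and hence $\log_\calF$ commute with the Galois action on $\calL^{(r)} \subseteq \ON^{(r)}$ and $\mathcal L$ was chosen to be a $\Zp[G]$-sublattice of $\ON$. Since $\log_\calF$ is a group isomorphism (not merely a bijection of sets) by Lemma \ref{logar}, this is an isomorphism of $\Zp[G]$-modules inducing $\lambda_2$ after extension of scalars, whence the Euler characteristic term is zero.

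The main obstacle I anticipate is step (i): carefully verifying that the Bloch--Kato exponential $\exp_V$ for $V = \Q_p^r(1)(\rhonr)$, when transported through the identifications of Section \ref{Preliminary results} and the Kummer-theoretic description of $H^1(N,T)$, really does correspond to the naive formal logarithm $\log_\calF$ with respect to the basis $\{e_i\}$ — rather than to $\log_\calF$ composed with some matrix correction coming from $\Tnr$ or $\epsilon$. In the one-dimensional case this is \cite[Lemma 7.1.1]{BC2} or the analogous statement; here one must check that the twisting matrices $\Tnr$ enter the \emph{definition} of the $e_i$ in exactly the way needed for the comparison $\comp_V$ and the Kummer map to be compatible, so that no residual determinant survives in this particular term (the twist contribution is instead isolated in the separate term $\Utwist$, as the surrounding discussion makes clear). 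Once that compatibility is pinned down, the vanishing is formal.
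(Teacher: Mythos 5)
Your proposal is correct and follows essentially the same route as the paper: the paper proves the vanishing by establishing a $G_N$-equivariant Bloch--Kato-type commutative diagram (the $r$-dimensional, $\rhonr$-twisted version of \cite[(3.4)]{IV}) which shows that $\lambda_2 = s \circ \log_\calF$ with $s(v_i) = e_i$, and then uses $\log_\calF(X(\calL)) = \calL^{(r)}$. You correctly identify both the reduction to $\log_\calF$ and the one subtle point — that the twist matrix $\Tnr$ is absorbed into the definition of the basis $\{e_i\}$ so that no determinant correction appears in this term (it is isolated in $\Utwist$ via Lemma~\ref{error lemma}) — which is exactly the compatibility the paper's explicit diagram verifies.
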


\begin{proof}
  This proof is an expanded version of the arguments of \cite[p.~509]{IV}.

  Similarly to \cite[p.~360]{BK} we can construct a commutative diagram of exact sequences

\[\xymatrix{0\ar[r]&T\ar[r]^-{\theta_\epsilon^{-1}}\ar[d]^{=}&\varprojlim \calF(\p_{\oo_{\C_p}}^{(r)})\ar[d]^{\theta_\epsilon}\ar[r]&\calF(\p_{\oo_{\C_p}}^{(r)})\ar[d]^{\epsilon\circ\theta_\epsilon}\ar[r]&0\\
0\ar[r]&T\ar[r]\ar[d]^{\mathrm{incl}}&\varprojlim (\oo_{\C_p}^\times)^r(\rhonr)=(R^\times)^r(\rhonr)\ar[r]^-{x\mapsto \epsilon x_0}\ar[d]&(\oo_{\C_p}^\times)^r\ar[r]\ar[d]^{\log_p}&0\\
0\ar[r]&V\ar[r]\ar[d]^=& (\Bcris^{\varphi=p}\cap\BdR^+)^r(\rhonr)\ar[r]^-{\epsilon\circ\theta}\ar[d]^{\mathrm{incl}}&\C_p^r\ar[r]\ar[d]^{v\mapsto t^{-1}\Tnr v}& 0\\
0\ar[r]& V\ar[r]&\Bcris^{\varphi=1}\otimes V\ar[r]& (\BdR/\BdR^+)\otimes V\ar[r]& 0}\]

Note that (differently from \cite{BK}) some of the objects are twisted by $\rhonr$ in order to make all maps $G_N$-invariant;
$\epsilon$ always denotes multiplication by the element $\epsilon\in\overline{\Qpnr}$ which occurs in (\ref{theta iso}).

Taking $G_N$-fixed elements and cohomology we obtain
\[\xymatrix{(\calF(\p_{\oo_{\C_p}}^{(r)}))^{G_N}=\calF(\p_N^{(r)})\ar[r]^-{\partial_{Ku}}\ar[d]^{\log_\calF}&H^1(N,T)\ar[d]^{\mathrm{incl}}\\
(\C_p)^{G_N}=N^r\ar[d]^{s}&H^1(N,V)\ar[d]^{=}\\
((\BdR/\BdR^+)\otimes V)^{G_N}=t_V(N)=\bigoplus_{i=1}^r Ne_i\ar[r]^-{\exp_V}& H^1(N,V),}\]
where the map $s$ is such that $s(v_i)=e_i$ for all $i$.
{Note that this diagram is the higher dimensional version of \cite[(3.4)]{IV}. It makes the identification $s$ of $N^r$ and
  the tangent space $t_V(N)$ explicit.}

We rewrite $\lambda_2$ in terms of the maps in the last diagram and get
\[
  \xymatrix{
    X(\calL)_\BdRG = \calF(\frp_N^{(r)})_\BdRG \ar[rr]^-{H^0(\xi)\circ f_{\calF,N}} \ar[rrrdd]^{\lambda_2}  \ar@/_2pc/[rrr]^\theta && H^1(N, T)_\BdRG \ar[r]^{\partial_{Ku}^{-1}} &
    \calF(\frp_N^{(r)})_\BdRG \ar[d]^{\log_\calF} \\
    & && \BdR^{(r)} \ar[d]^s\\
    & && t_V(N)_\BdRG
  }
\]
By diagram (\ref{clarification2}) we see that $\theta(X(\calL)) = X(\calL)$, so that it remains to show
\[
  [X(\calL), s \circ \log_\calF, \bigoplus_{i=1}^r \calL e_i] = 0,
\]
which is immediate from $\log_\calF(X(\calL)) = \calL^{(r)}$ and $s(v_i)=e_i$ for  $i=1, \ldots,r$.
\end{proof}

\begin{lemma}\label{lemma_diag_6.1IV}
With notation as in (\ref{two summands}) and $m := [K : \Qp]$ we have
\begin{eqnarray*}
  && \left[\bigoplus_{i=1}^r\mathcal L e_i,\comp_V,\mathrm{Ind}_{N/\Qp}T\right] \\
  &=&   -r m  \hat\partial_{\ZpG, \BdRG}^1(t)+
        \left[\bigoplus_{i=1}^r\mathcal L \tilde e_i,\comp_{V(-1)},\mathrm{Ind}_{N/\Qp}T(-1)\right],
\end{eqnarray*}
where the elements $\tilde e_i$ are defined in Lemma \ref{basisV1}.
\end{lemma}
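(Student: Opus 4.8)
The plan is to compare the two comparison isomorphisms $\comp_V$ and $\comp_{V(-1)}$ via the natural identification $V = V(-1)(1)$, keeping careful track of how the period $t$ enters. Recall that the elements $e_i = \sum_n t^{-1}\Tnr_{n,i} \otimes v_n$ of Lemma \ref{defei} form an $N$-basis of $t_V(N) = \DdRN(V)$, while the elements $\tilde e_i = \sum_n \Tnr_{n,i} \otimes \tilde v_n$ of Lemma \ref{basisV1} form an $N$-basis of $\DdRN(V(-1))$. The relation between these bases is, component for component, multiplication by $t^{-1}$ together with the identification of the underlying $\Qp$-lattices $T$ and $T(-1)$. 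Thus the first step is to spell out the commutative diagram relating the de Rham comparison maps for $V$ and $V(-1)$, namely that $\comp_V$ is obtained from $\comp_{V(-1)}$ by tensoring with the $\BdR$-line generated by $t^{-1}$ (equivalently $t$, depending on the direction), and that under the chosen bases $\{e_i\}$, $\{\tilde e_i\}$ this twist is realized by the diagonal scalar $t^{-1}$ on each of the $r$ copies.

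The second step is to translate this into an identity of refined Euler characteristics in $K_0(\ZpG, \BdRG)$. The induced module $\Ind_{N/\Qp}T$ and $\Ind_{N/\Qp}T(-1)$ are the same underlying $\ZpG$-lattice (the Tate twist only changes the Galois action, which has already been absorbed into the comparison map), so the difference of the two Euler characteristic classes $[\bigoplus_i \calL e_i, \comp_V, \Ind_{N/\Qp}T]$ and $[\bigoplus_i \calL \tilde e_i, \comp_{V(-1)}, \Ind_{N/\Qp}T(-1)]$ is exactly the class of the automorphism of $\BdRG^{rm}$ given by multiplication by $t^{-1}$ on each of the $rm$ basis vectors (there are $r$ basis elements $e_i$ and $[N:\Qp] = [K:\Qp]e_{N/K}d_{N/K}$, but after inducing and using that $t$ is $G$-fixed in the relevant sense the scalar contributes with total multiplicity $rm$ — one should verify the exponent is $rm$ and not $r[N:\Qp]$ by noting that $\Ind_{N/\Qp}$ contributes the factor $[N:K]$ already encoded in $\calL$ being a $\ZpG$-lattice while $m = [K:\Qp]$ accounts for the remaining $\Qp$-degree). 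Hence the difference equals $-rm\,\hat\partial^1_{\ZpG,\BdRG}(t)$, the sign reflecting that passing from $e_i$ to $\tilde e_i$ multiplies by $t$ (not $t^{-1}$), i.e. $\det$ of the change-of-basis contributes $t^{-rm}$.

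The main obstacle will be bookkeeping the exponent and the sign correctly: one must pin down whether the scalar relating $e_i$ and $\tilde e_i$ under $\comp$ is $t$ or $t^{-1}$, and whether the multiplicity is $rm$ or $r[N:\Qp]$. This is precisely the kind of place where \cite{BC2} had an error (cf. Remark \ref{Remark Error}), so I would be careful to derive the diagram from the definitions of $\comp_V$ and the period element $t$ rather than quote it. Concretely, I would write $v_i = t \otimes \tilde v_i$ under $V \cong V(-1)(1)$, push this through the explicit formulas $e_i = \sum_n t^{-1}\Tnr_{n,i}\otimes v_n$ and $\tilde e_i = \sum_n \Tnr_{n,i}\otimes \tilde v_n$, observe the $t^{-1}$ and the $t$ cancel at the level of de Rham periods but leave a net factor when one compares the $\BdR$-structures used in the two Euler characteristics, and then invoke the standard fact (as in the computation of $\hat\partial^1_{\ZpG,\BdRG}(t)$ appearing already in \eqref{defRNK}) that multiplication by $t$ on a single induced copy of $T$ contributes $m\cdot\hat\partial^1_{\ZpG,\BdRG}(t)$, so that $r$ copies give $rm\cdot\hat\partial^1_{\ZpG,\BdRG}(t)$. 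Additivity of refined Euler characteristics in the evident short exact sequence of complexes then yields the claimed decomposition.
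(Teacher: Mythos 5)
Your approach is essentially the paper's: the paper sets up the $r$-dimensional analogue of the commutative square \cite[(6.1)]{IV} with vertical maps $f_1\colon\bigoplus_i\calL e_i\to\bigoplus_i\calL\tilde e_i$ sending $e_i\mapsto\tilde e_i$ (hence contributing $0$) and $f_2\colon\Ind_{N/\Qp}T\to\Ind_{N/\Qp}T(-1)$ corresponding to multiplication by $t$ on $\Zp[G]^{rm}$ (contributing $rm\,\hat\partial^1_{\ZpG,\BdRG}(t)$), then applies additivity of refined Euler characteristics, which is exactly your change-of-basis-plus-twist bookkeeping with the correct exponent $rm$. One imprecision worth fixing: $\Ind_{N/\Qp}T$ and $\Ind_{N/\Qp}T(-1)$ are isomorphic free $\Zp[G]$-modules of rank $rm$ but are \emph{not} the same underlying lattice, and the entire $-rm\,\hat\partial^1(t)$ arises from the $\BdR$-isomorphism $f_2$ relating them, not from the basis change $e_i\leftrightarrow\tilde e_i$ on the de Rham side, which (as $f_1$ shows) contributes trivially.
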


\begin{proof}
  It is easy to see that in the $r$-dimensional setting we also have a diagram as in \cite[(6.1)]{IV}. If we denote the vertical
  maps in this diagram by $f_1$ and $f_2$, then 
  \[
    \begin{split}&\left[\bigoplus_{i=1}^r\mathcal L e_i,\comp_V,\mathrm{Ind}_{N/\Qp}T\right]+
      [\mathrm{Ind}_{N/\Qp}T,f_2,\mathrm{Ind}_{N/\Qp}T(-1)]\\
      &\qquad\qquad=\left[\bigoplus_{i=1}^r\mathcal L e_i,f_1,\bigoplus_{i=1}^r\mathcal L \tilde e_i\right]+
      \left[\bigoplus_{i=1}^r\mathcal L \tilde e_i,\comp_{V(-1)},\mathrm{Ind}_{N/\Qp}T(-1)\right].
\end{split}
\]
Since $f_1$ sends each basis element $e_i$ to $\tilde e_i$, the first summand on the right hand side is trivial.
Both $\mathrm{Ind}_{N/\Qp}T$ and $\mathrm{Ind}_{N/\Qp}T(-1)$ are isomorphic to $\Zp[G]^{rm}$ as $\Zp[G]$-modules.
Via these isomorphisms the map $f_2$ corresponds to multiplication by $t$ and so we obtain
\[
  [\mathrm{Ind}_{N/\Qp}T,f_2,\mathrm{Ind}_{N/\Qp}T(-1)]=
  \left[\Zp[G]^{rm},t,\Zp[G]^{rm}\right]=rm\hat\partial_{\ZpG, \BdRG}^1(t).
\]
\end{proof}

Let $b \in N$ be a normal basis element of $N/K$, i.e. $N = K[G]b$. Let
\[{\rho_b} = \left(\rho_{b, \chi}\right)_{\chi \in \Irr(G)} \in Z(\Qpc[G])^\times = \prod_{\chi \in \Irr(G)} (\Qpc)^\times\]
be defined by 
\[
 \rho_{b, \chi} = \frd_K^{\chi(1)} \calN_{K/\Qp}(b | \chi),
\]
where $\frd_K$ denotes the discriminant of $K/\Qp$ and $\calN_{K/\Qp}(b | \chi)$ the usual norm resolvent, see e.g. \cite[Sec.~2.2]{PickettVinatier}.

We also recall the definition of the twist invariant $\Utwist$ in Section \ref{Utwist def}. The next lemma corrects an error in \cite[Lemma 7.1.2]{BC2} where we just quoted the proof of \cite[Lemma 6.1]{IV}.
  However, whereas we work in the relative group $K_0(\ZpG, \BdRG)$, the authors of loc.cit. work in $K_0(\overline{\Zpnr}[G], \BdRG)$
  where $\Utwist$ vanishes by Remark \ref{Utwist remark}.

\begin{lemma}\label{error lemma}
With $m = [K:\Qp]$ and $\hat\partial^1 = \hat\partial^1_{\ZpG, \BdRG}$ we have
\[
  \left[\bigoplus_{i=1}^r\mathcal L \tilde e_i,\comp_{V(-1)},\mathrm{Ind}_{N/\Qp}T(-1)\right] =
    r\left[\calL, \id, \OKG b \right] +     m \Utwist + r\hat\partial^1(\rho_b).
  \]
\end{lemma}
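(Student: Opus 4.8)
The goal is to compute the refined Euler characteristic
$\left[\bigoplus_{i=1}^r\mathcal L \tilde e_i,\comp_{V(-1)},\mathrm{Ind}_{N/\Qp}T(-1)\right]$
in $K_0(\ZpG, \BdRG)$. The strategy is to factor the comparison isomorphism $\comp_{V(-1)}$ through an intermediate lattice and to keep careful track of the matrix $\Tnr$, which is the source of the new term $m\Utwist$ that was missing in \cite[Lemma~7.1.2]{BC2}. First I would recall the explicit description of $\comp_{V(-1)}$ coming from the period map: on the basis $\tilde e_i = \sum_n \Tnr_{n,i}\otimes\tilde v_n$ of $\DdRN(V(-1))$ from Lemma~\ref{basisV1}, the comparison isomorphism $\BdR\tensor_N \DdRN(V(-1)) \cong \BdR\tensor_\Qp V(-1)$ identifies $\tilde e_i$ with $\sum_n \Tnr_{n,i}\tilde v_n$, so that relative to the $N$-basis $\{\tilde v_n\}$ of the target the map is given by the matrix $\Tnr$ together with the change-of-basis matrix relating the $\Qp[G]$-structure of $\Ind_{N/\Qp}T(-1)$ (via a normal basis element $b$) to the "standard" $N$-structure.

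The key step is to insert the $\OKG$-lattice $\OKG b$ (using the normal basis element $b\in N$) between $\mathcal L$ and the induced module, and to use additivity of refined Euler characteristics:
\[
\left[\bigoplus_{i=1}^r\mathcal L \tilde e_i,\comp_{V(-1)},\mathrm{Ind}_{N/\Qp}T(-1)\right]
= r\left[\calL, \id, \OKG b\right] + \left[\textstyle\bigoplus_{i=1}^r (\OKG b)\tilde e_i, \comp', \mathrm{Ind}_{N/\Qp}T(-1)\right],
\]
where $\comp'$ is the induced comparison map on the $\OKG b$-lattice. The first summand accounts for the change of lattice $\calL \rightsquigarrow \OKG b$ in each of the $r$ coordinates. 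For the remaining term, I would diagonalize the situation character by character: on the $\chi$-component the map $\comp'$ factors as multiplication by the norm resolvent $\calN_{K/\Qp}(b|\chi)$ (rescaled by $\frd_K^{\chi(1)}$, giving the invariant $\rho_b$) composed with the matrix $\Tnr$ acting on the $r$ copies. The determinant of the $\Tnr$-part over all of $\Ind_{N/\Qp}$ contributes $\det(\Tnr)$ raised to the power $[K:\Qp] = m$ (one factor of $\det(\Tnr)$ for each of the $m$ embeddings being induced up), which is precisely $m\Utwist = m\hat\partial^1_{\ZpG,\overline\Qpnr[G]}(\det(\Tnr))$ by the definition in Section~\ref{Utwist def}. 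The resolvent part contributes $r\hat\partial^1(\rho_b)$, one factor of $\rho_b$ for each of the $r$ coordinates.

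The main obstacle — and the reason this lemma corrects an error — is correctly handling the fact that $\Tnr$ has coefficients in $\overline{\Zpnr}$ rather than in $\Zp$: the class $\hat\partial^1(\det(\Tnr))$ is genuinely nontrivial in $K_0(\ZpG, \BdRG)$ and only dies after extending scalars to $\overline{\Zpnr}[G]$ (Remark~\ref{Utwist remark}), which is exactly why \cite{IV} did not see it while working in $K_0(\overline{\Zpnr}[G], \BdRG)$. So the delicate bookkeeping is to resist absorbing $\Tnr$ into a change of $\Zp$-basis (which is illegitimate) and instead to record its determinant as the separate invariant $\Utwist$. I would make this precise by writing down the explicit matrix of $\comp_{V(-1)}$ with respect to the chosen integral bases, splitting it as a product of (i) an $\OKG$-linear isomorphism realizing $[\calL,\id,\OKG b]$, (ii) the block-diagonal matrix $\mathrm{diag}(\Tnr,\dots,\Tnr)$ ($m$ blocks) whose reduced norm is $\det(\Tnr)^m$, and (iii) the resolvent matrix whose reduced norm over $Z(\Qpc[G])$ is $(\rho_{b,\chi}/\frd_K^{\chi(1)})_\chi$, then applying multiplicativity of $\hat\partial^1$ on the product; the one subtlety to check is that the independence of $\Utwist$ from the choice of $\Tnr$ (established in Section~\ref{Utwist def}) is compatible with the independence of the Euler characteristic from the choices made, which follows since any two choices of $\Tnr$ differ by right multiplication by $S\in\Gl_r(\Zp)$ and $\hat\partial^1(\det S) = 0$.
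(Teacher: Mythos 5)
Your proposal matches the paper's own argument in all essentials: the paper likewise inserts the intermediate lattice $\OKG b$, separates the resolvent contribution $r\hat\partial^1(\rho_b)$ (quoting the computation from \cite[pp.~512--513]{IV}) from the $\Tnr$ contribution, and identifies the latter with $m\Utwist$; the paper's organizational device is to route the comparison through the trivial twist $\Vtriv=\Q_p^{(r)}$ with its canonical lattice $\Ttriv$ and a commutative square between $\comp_{\Vtriv}$ and $\comp_{V(-1)}$, which is just a tidier packaging of your direct matrix factorization. One precision worth adding to your sketch: in the block-diagonal transition matrix the $m$ blocks are not literally $\Tnr$ but its Galois conjugates $\sigma_i^{-1}(\Tnr)=u^{\alpha_i}\Tnr$ for coset representatives $\sigma_i$ of $G_K\backslash G_\Qp$ with $\sigma_i|_{\Qpnr}=\varphi^{\alpha_i}$, so the determinant is $\det(u)^{\sum_i\alpha_i}\det(\Tnr)^m$; one must then observe that $\det(u)^{\sum_i\alpha_i}\in\Zp^\times\sseq\ZpG^\times$ dies under $\hat\partial^1$ before concluding the class equals $m\Utwist$.
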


\begin{proof}
  We let $\Ttriv = \Z_p^{(r)}$ and $\Vtriv=\Q_p^{(r)}$ denote the trivial representations. Let $z_1, \ldots, z_r$ denote
  the canonical $\Zp$-basis of $\Ttriv$.

  In the following we choose to use for each $G_N$-representation $W$
  \[
    \Ind_{N/\Qp}(W) = \{ x\colon G_\Qp \lra W \mid x(\tau\sigma) = \tau x(\sigma)
    \text{ for all } \tau \in G_N, \sigma \in G_\Qp \}
  \]
  as the definition for the induction. Note that if $L/\Qp$ is any field extension (e.g., $L = \BdR$)
  which carries an action of $G_\Qp$
  and $W$ is an $L$-space, then $ \Ind_{N/\Qp}(W)$ is also an $L$-space with $(\alpha x)(\sigma) = \sigma(\alpha) x(\sigma)$
  for all $\alpha \in L$ and $\sigma \in G_\Qp$. We also note that
  \begin{eqnarray*}
    \Ind_{N/\Qp}(W) \lra L[G_\Qp] \tensor_{L[G_N]} W, \quad x \mapsto \sum_{\sigma \in G_\Qp / G_N} \sigma \tensor x(\sigma^{-1}),
  \end{eqnarray*}
  is a well-defined isomorphism of $L[G]$-modules. For the comparison isomorphism $\comp_W$ we then obtain the following simple
  description
  \begin{eqnarray*}
    \comp_W \colon L \tensorQp \left( L \tensorQp W \right)^{G_N} &\lra& \Ind_{N/\Qp}(L \tensorQp W), \\
    l \tensor z &\mapsto& l y_z ,
  \end{eqnarray*}
  where  $l \in L$, $z \in  \left( L \tensorQp W \right)^{G_N}$ and $y_z(\sigma) := z$ for all $\sigma \in G_\Qp$
  (and hence, $(ly_z)(\sigma) = \sigma(l)z$).

  We define a $G$-equivariant isomorphism
  \[
    \tilde h \colon \left( \BdR \tensorQp \Vtriv \right)^{G_N} \lra \left( \BdR \tensorQp V(-1) \right)^{G_N}, \quad
    1 \tensor z_i \mapsto \tilde e_i,
  \]
  and
  \[
    h \colon \Ind_{N/\Qp}\left( \BdR \tensorQp \Vtriv \right) \lra \Ind_{N/\Qp}\left( \BdR \tensorQp V(-1) \right), \quad
    x \mapsto \tilde h \circ x.
  \]
  Then, similar as in the proof of \cite[Lemma 6.1]{IV}, we obtain a commutative diagram
  \[
    \xymatrix{
      \BdR \otimes_\Qp (\BdR \otimes_\Qp \Vtriv)^{G_N} \ar[rr]^-{\comp_{\Vtriv}} \ar[d]^{\BdR \tensor \tilde h} &&
      \mathrm{Ind}_{N/\Qp}(\BdR \otimes_\Qp V_\mathrm{triv}) \ar[d]^{h}\\
      \BdR \otimes_\Qp (\BdR \otimes_\Qp V(-1))^{G_N} \ar[rr]^-{\comp_{V(-1)}} && \mathrm{Ind}_{N/\Qp}(\BdR \otimes_\Qp V(-1)),
    }
  \]
  As a a consequence we derive	
\[\begin{split}
    & \left[\bigoplus_{i=1}^r\mathcal L \tilde e_i,\comp_{V(-1)},\mathrm{Ind}_{N/\Qp}T(-1)\right] \\
    &\quad = \left[\bigoplus_{i=1}^r\mathcal L z_i,\BdR \tensor \tilde h,  \bigoplus_{i=1}^r\mathcal L \tilde e_i \right]
        + \left[\bigoplus_{i=1}^r\mathcal L z_i, \comp_{\Vtriv}, \Ind_{N/\Qp}\Ttriv \right] \\
       &\qquad\qquad + \left[  \Ind_{N/\Qp}\Ttriv , h,  \Ind_{N/\Qp}T(-1) \right] \\
    &\quad =  \left[\bigoplus_{i=1}^r\mathcal L z_i, \comp_{\Vtriv}, \Ind_{N/\Qp}\Ttriv \right]
        + \left[  \Ind_{N/\Qp}\Ttriv , h,  \Ind_{N/\Qp}T(-1) \right] \\
    &\quad =  r \left[ \calL, \id, \OKG b \right] + \left[\bigoplus_{i=1}^r(\OKG b z_i), \comp_{\Vtriv}, \Ind_{N/\Qp}\Ttriv \right] \\
       &\qquad\qquad + \left[  \Ind_{N/\Qp}\Ttriv , h,  \Ind_{N/\Qp}T(-1) \right] 
  \end{split}\]

  The computations in \cite[pages~512-513]{IV} show that 
  \[
    \left[\bigoplus_{i=1}^r\mathcal L z_i, \comp_{\Vtriv}, \Ind_{N/\Qp}\Ttriv \right] =
    r \hat\partial_{\ZpG, \BdRG}^1(\rho).
  \]
  It finally remains to prove that
  \[
    \left[  \Ind_{N/\Qp}\Ttriv , h,  \Ind_{N/\Qp}T(-1) \right] = m \Utwist.
  \]
  To that end we write
  \[
    G_\Qp = \bigcup_{\bar\rho\in G} \bigcup_{\sigma_i\in G_K\backslash G_\Qp} G_N\rho\sigma_i
  \]
  and define elements $x_{ij} \in \Ind_{N/\Qp}(\Ttriv)$ and  $y_{ij} \in \Ind_{N/\Qp}(T(-1))$ for
  $i = 1, \ldots,m$ and $j = 1, \ldots, r$ by
  \[
    x_{ij}(\rho\sigma_k) =
    \begin{cases}
      z_j, & \text{ if } i=k \text{ and } \bar\rho = 1, \\
      0, & \text{ otherwise},
    \end{cases}
  \]
  and
  \[
    y_{ij}(\rho\sigma_k) =
    \begin{cases}
      \tilde v_j, & \text{ if } i=k \text{ and } \bar\rho = 1, \\
      0, & \text{ otherwise}.
    \end{cases}
  \]
  Then the $x_{ij}$, respectively, the $y_{ij}$, constitute a $\ZpG$-basis of $\Ind_{N/\Qp}(\Ttriv)$, respectively,
  $\Ind_{N/\Qp}(T(-1))$.

  For $1 \le i,k \le m$ and $1 \le j \le r$ we compute
  \[
    \left( h(x_{ij}) \right) (\rho\sigma_k) =
    \begin{cases}
      \tilde e_j, & \text{ if } i=k \text{ and } \bar\rho = 1, \\
      0, & \text{ otherwise}.
    \end{cases}
  \]
  For $1 \le s \le m$ and $1 \le t \le r$ let $\xi_{st}$ be indeterminates ( with values in $\BdR$). Then
  \[
    \left( \sum_{s,t} \xi_{st} y_{st} \right) (\rho\sigma_k) =
    \begin{cases}
      \sum_t (\rho\sigma_k)(\xi_{kt}) \tilde v_t, & \text{ if } \bar\rho = 1, \\
      0, & \text{ otherwise}.
    \end{cases}
  \]
  Since $\tilde e_j = \sum_t \Tnr_{tj} \tensor \tilde v_t$ we obtain
  \[
   \sum_t (\rho\sigma_k)(\xi_{kt}) \tilde v_t = 
   \begin{cases}
     \sum_t \Tnr_{tj} \tensor \tilde v_t, &  \text{ if } i=k, \\
     0, &  \text{ if } i \ne k,
   \end{cases}
 \]
 and hence
 \[
   \xi_{it} = \sigma_i^{-1}(\Tnr_{tj}), \quad \xi_{it} = 0 \text{ for } i \ne k.
 \]
 We conclude that the transition matrix is a block matrix of the form
 \[
   C = \left(
     \begin{array}{ccc}
       \sigma_1^{-1}(\Tnr) & & \\ & \ddots & \\ & & \sigma_m^{-1}(\Tnr) 
     \end{array}
   \right).
 \]
 We recall that $\varphi(\Tnr) = u^{-1}\Tnr$ and fix $\alpha_i \in \hat\Z$
 such that $\sigma_i |_\Qpnr = \varphi^{\alpha_i}$. Note that for all $n\in\Z$, $\varphi^{-n}(\Tnr)\cdot (\Tnr)^{-1}=u^n$, which coincides with $\rhonr(\varphi^n)$. By a continuity $\varphi^{-\alpha_i}(\Tnr)\cdot (\Tnr)^{-1}=u^{\alpha_i}$, where $u^{\alpha_i}$
   is well-defined by \cite[Lemma~1.5]{Cobbe18}. Then
 \[
   \det(C) = \prod_{i=1}^m \det\left( u^{\alpha_i}\Tnr \right) =
   \det(u)^\alpha \det(\Tnr)^m
 \]
 with  $\alpha = \sum_i \alpha_i$. Since $\det(u)^\alpha \in \Z_p^\times \sseq \ZpG^\times$ the result follows
 from the definition of $\Utwist$.

\end{proof}
  
We summarize  the results of the previous lemmas in the following proposition.

\begin{prop}\label{CNKgeneral}
With $\hat\partial^1 = \hat\partial_{\ZpG, \BdRG}^1$ and $\chi = \chi_{\ZpG, \BdRG}$ we have
\[
  C_{N/K} =  -rm\hat\partial^1(t)+ r\left[\calL, \id, \OKG b\right] + r\hat\partial^1(\rho_b)+
    m \Utwist - \chi(M^\bullet(\calL), 0).
\]
\end{prop}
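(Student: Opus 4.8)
The statement is a bookkeeping identity assembling the computations of the three preceding lemmas together with the decomposition \eqref{CNK summands}--\eqref{two summands}. The plan is simply to substitute and cancel, being careful about signs. First I would recall from \eqref{CNK summands} that
\[
  C_{N/K} = [X(\calL), \lambda, \Ind_{N/\Qp}(T)] + \chi_{\ZpG,\BdRG}(M^\bullet(\calL), 0),
\]
so the entire task reduces to evaluating the first summand, since the second summand is by definition $\chi(M^\bullet(\calL),0)$, which appears on the right hand side of the proposition with a minus sign — wait, here one must pay attention: the proposition writes $-\chi(M^\bullet(\calL),0)$, so I would double-check the sign convention in \eqref{CNK} (the cohomological term carries an overall minus). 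Tracing through: $C_{N/K} = -\chi_{\ZpG,\BdRG}(M^\bullet, \exp_V \circ \comp_V^{-1})$ and the distinguished triangle rewriting turns this into $[X(\calL),\lambda,\Ind_{N/\Qp}T] - \chi(M^\bullet(\calL),0)$ with the first bracket now coming from the correct orientation of the Euler characteristic additivity; this is exactly the content of \eqref{CNK summands} once one is careful that the displayed form there already incorporates the sign. I will state this explicitly to avoid confusion.

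Next I would invoke \eqref{two summands} to split the first term as
\[
  [X(\calL),\lambda,\Ind_{N/\Qp}(T)] = \Bigl[X(\calL),\lambda_2,\bigoplus_{i=1}^r \calL e_i\Bigr] + \Bigl[\bigoplus_{i=1}^r \calL e_i, \comp_V, \Ind_{N/\Qp}T\Bigr].
\]
By the first of the three lemmas the term $[X(\calL),\lambda_2,\bigoplus \calL e_i]$ vanishes. By Lemma \ref{lemma_diag_6.1IV} the second term equals
\[
  -rm\,\hat\partial^1(t) + \Bigl[\bigoplus_{i=1}^r \calL \tilde e_i, \comp_{V(-1)}, \Ind_{N/\Qp}T(-1)\Bigr],
\]
and by Lemma \ref{error lemma} the remaining bracket equals $r[\calL,\id,\OKG b] + m\Utwist + r\hat\partial^1(\rho_b)$. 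Adding these up gives
\[
  [X(\calL),\lambda,\Ind_{N/\Qp}(T)] = -rm\,\hat\partial^1(t) + r[\calL,\id,\OKG b] + r\hat\partial^1(\rho_b) + m\Utwist,
\]
and combining with the $\chi(M^\bullet(\calL),0)$ term yields the claimed formula.

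I expect no genuine obstacle here — the proposition is a summary — but the one point requiring care is the sign and orientation conventions: making sure that the overall minus in \eqref{CNK} has been correctly absorbed into the statement of \eqref{CNK summands}, and that in each of the three lemmas the refined Euler characteristic/metrized-complex brackets are oriented consistently (source vs.\ target of the trivialization) so that the additivity in distinguished triangles composes without a stray sign. Once those conventions are pinned down, the proof is a one-line substitution, which I would present as such.
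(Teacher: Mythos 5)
Your plan is correct and is essentially the paper's own proof: the proposition is an explicit summary statement, and the paper introduces it with ``We summarize the results of the previous lemmas in the following proposition'' without a separate argument, because the content is precisely the substitution you perform. You split $[X(\calL),\lambda,\Ind_{N/\Qp}T]$ via \eqref{two summands}, use the first (unnumbered) lemma to kill $[X(\calL),\lambda_2,\bigoplus_i\calL e_i]$, Lemma~\ref{lemma_diag_6.1IV} to produce $-rm\hat\partial^1(t)$ and pass to the $V(-1)$ bracket, and Lemma~\ref{error lemma} to evaluate that bracket as $r[\calL,\id,\OKG b]+m\Utwist+r\hat\partial^1(\rho_b)$; adding back the $M^\bullet(\calL)$ term gives the claim. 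You were right to pause on the sign of the $\chi(M^\bullet(\calL),0)$ term: as printed, equation \eqref{CNK summands} shows $+\chi(M^\bullet(\calL),0)$, which is inconsistent both with the minus sign in \eqref{CNK} (after applying additivity for the exact sequence $0\to K^\bullet(\calL)\to C^\bullet_{N,T}\oplus\Ind_{N/\Qp}(T)[0]\to M^\bullet(\calL)\to 0$) and with the downstream use in \eqref{generic} and Proposition~\ref{tameH1H2}; the correct reading is $C_{N/K}=[X(\calL),\lambda,\Ind_{N/\Qp}T]-\chi(M^\bullet(\calL),0)$, exactly as you conclude, so \eqref{CNK summands} as displayed carries a sign typo that your derivation silently corrects.
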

  \begin{remark}\label{Remark Error}
    To compare Proposition \ref{CNKgeneral} and \cite[(55)]{BC2} we first note that in loc.cit. we have $\calL = \OKG b$.
    The additional new term $\Utwist$ emerges from the computations in Lemma \ref{error lemma}.
    The error does not affect the validity of any of the arguments in \cite{BC2}, it just forces us to adapt our definition of
    $R_{N/K}$ and $\tilde{R}_{N/K}$.
  \end{remark}

To finish the proof of the conjecture it is necessary to compute explicitly the
term $\chi_{\Z_p[G], \BdR[G]}(M^\bullet(\calL), 0)$. For this we will consider the tame and the weakly ramified case separately.

\end{subsection}

\begin{subsection}{The tame case}\label{tame case}
  In this subsection we let  $N/K$ be tame und compute the term $\chi_{\ZpG, \BdRG}(M^\bullet(\calL), 0)$ from
  (\ref{CNK summands}).
  In the tame case, by results of Ullom, we can and will use $\calL = \frp_N^\nu$ for a large enough
  positive integer $\nu$ and we also fix $b \in \ON$ such that $\ON=\OKG b$.

 \begin{prop}\label{tameH1H2}
We have
\[
  \begin{split}&\chi_{\Z_p[G], \BdR[G]}(M^\bullet(\calL), 0)=
    r[\mathfrak p_N^{\nu},\id,\mathfrak p_N]-\hat\partial_{\ZpG, \BdRG}^1({}^*(\det(1-u^{d_K}F^{-1})e_I)).
  \end{split}
\]
\end{prop}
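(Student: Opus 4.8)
The plan is to compute the refined Euler characteristic of the two-term complex
$M^\bullet(\calL) = [\calI_{N/K}(\rhonr)/f_{\calF,N}(X(\calL)) \lra \calI_{N/K}(\rhonr)]$
(in degrees $1$ and $2$) directly from the four-term exact sequence (\ref{fFN}), by splitting it into the ``$H^1$-part'' and the ``$H^2$-part''. First I would record that, by construction, $M^\bullet(\calL)$ has $H^1 = \calF(\frp_N^{(r)})/X(\calL)$ and $H^2 = \calZ/(F_N-1)\calZ = \Z_p^r/(U_N-1)\Z_p^r$, both finite. Since the trivialization in $\chi_{\ZpG,\BdRG}(M^\bullet(\calL),0)$ is the zero map, the refined Euler characteristic is the sum of the classes coming from these two finite cohomology modules, computed after extending scalars to $\BdRG$ where both become acyclic.

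For the $H^2$-part: the quotient $\Z_p^r/(U_N-1)\Z_p^r$ is exactly the module handled in Lemma \ref{lemmaforUcris 1}, where the class of multiplication by $1-\phi^*$ on $\Dcris^N(V^*(1))^*$ was identified with $\hat\partial^1({}^*(\det(1-u^{d_K}F^{-1})e_I))$; since in the tame case $N/K$ this module is $G$-cohomologically trivial by Proposition \ref{tame cohomology}, its contribution to $\chi_{\ZpG,\BdRG}(M^\bullet(\calL),0)$ is $-\hat\partial^1({}^*(\det(1-u^{d_K}F^{-1})e_I))$ (the sign reflecting that $H^2$ sits in even... i.e.\ the Euler-characteristic sign convention of \cite[Sec.~2.8]{IV} / \cite{BC2}). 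Concretely I would produce a short exact sequence of complexes relating $M^\bullet(\calL)$ to $[\calI_{N/K}(\rhonr) \xrightarrow{F_N-1} \calI_{N/K}(\rhonr)]$ and to $X(\calL)[-1] \hookrightarrow \calF(\frp_N^{(r)})[-1]$, then invoke additivity of refined Euler characteristics and the computation already carried out in the proof of Lemma \ref{lemmaforUcris 1}.

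For the $H^1$-part: the class $[\calF(\frp_N^{(r)})/X(\calL)]$ should be identified, via $\log_\calF$ (Proposition \ref{log exp iso}, which gives $\log_\calF(X(\calL)) = \calL^{(r)}$) and the choice $\calL = \frp_N^\nu$, with $r$ copies of the class $[\frp_N^\nu/\frp_N]$; passing to $K$-theory this reads $r[\frp_N^\nu,\id,\frp_N]$. Here one uses that $\log_\calF$ identifies $\calF(\frp_N^{(r)})/\exp_\calF(\calL^{(r)})$ with $(\frp_N/\frp_N^\nu)^{(r)}$ as $\Z_p[G]$-modules, so the only $K$-theoretic content is the index of the lattice $\calL = \frp_N^\nu$ inside $\frp_N$, taken $r$ times. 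In the tame case both $\frp_N$ and $\frp_N^\nu$ are locally free over $\OKG$ (by Ullom/Noether), so these classes make sense and behave additively. Adding the two contributions yields the stated formula.

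The main obstacle I anticipate is bookkeeping the signs and the precise interplay of the two trivializations: one must make sure that the map $f_{\calF,N}$ and the identification $H^0(\xi) = \partial_{Ku}$ from Section \ref{Preliminary results} are used consistently, so that the class attached to $H^1$ genuinely comes out as $r[\frp_N^\nu,\id,\frp_N]$ rather than its negative, and that the $H^2$-contribution matches the sign in Lemma \ref{lemmaforUcris 1} (where the Euler characteristic of a complex concentrated in degree one higher flips the sign). The computation itself — reducing everything to the index of $\frp_N^\nu$ in $\frp_N$ and to the determinant $\det(1-u^{d_K}F^{-1})$ already evaluated — is routine once the diagram chase in (\ref{clarification1})--(\ref{clarification2}) is in place.
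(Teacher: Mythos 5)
Your overall strategy --- decompose $M^\bullet(\calL)$ into an $H^1$-piece and an $H^2$-piece, use additivity of refined Euler characteristics, handle the $H^1$-piece via $\log_\calF$ and the index of $\frp_N^\nu$ in $\frp_N$ --- agrees with the paper's proof, and your treatment of the $H^1$-contribution is essentially the paper's argument verbatim (filtration isomorphism (\ref{fil iso}) plus $\log_\calF(X(\calL))=\calL^{(r)}$). However, the $H^2$-contribution is where your plan has a genuine gap.

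You propose to ``invoke the computation already carried out in the proof of Lemma \ref{lemmaforUcris 1}'' to evaluate $\chi_{\ZpG,\BdRG}(\calZ/(F_N-1)\calZ,0)$. But Lemma \ref{lemmaforUcris 1} computes the class $[\Dcris^N(V^*(1))^*,1-\phi^*]$, where $\Dcris^N(V^*(1))^*$ is a free $\Qp[G/I]$-module of rank $r$ and $1-\phi^*$ is an automorphism of it. That is a statement about a $\Qp[G]$-vector space, not a $\Zp[G]$-projective resolution of the finite torsion module $\calZ/(F_N-1)\calZ$. The fact that the determinant $\det(1-u^{d_K}F^{-1})$ shows up in both places is not a logical implication; you still need to explicitly produce a two-term $\Zp[G]$-projective resolution of $\calZ/(F_N-1)\calZ$ whose connecting map has that determinant. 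The paper does exactly this: it constructs and proves exactness of
\[
0 \to \Z_p^r[G/I] \xrightarrow{\,F^{-1}u^{d_K}-1\,} \Z_p^r[G/I] \xrightarrow{\,\pi\,} \calZ / (F_N -1)\calZ \to 0,
\]
using Hypothesis (F) for injectivity, and it is the tameness hypothesis that makes $\Z_p^r[G/I]$ a projective $\ZpG$-module and hence makes this a perfect resolution. This step is the real content of the $H^2$-part and is absent from your sketch.

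A second, related point: your suggestion to relate $M^\bullet(\calL)$ to $[\calI_{N/K}(\rhonr) \xrightarrow{F_N-1} \calI_{N/K}(\rhonr)]$ and invoke additivity is not directly usable, because $\calI_{N/K}(\rhonr)$ is not a perfect $\ZpG$-module, so one cannot assign it a refined Euler characteristic with a zero trivialization. The paper sidesteps this by instead exploiting the perfection of the \emph{cohomology} modules (Proposition \ref{tame cohomology}) and the additivity theorem of Breuning--Burns applied to the mapping cone of $\calF(\frp_N^{(r)})/X(\calL)[1] \to M^\bullet(\calL)$, which is quasi-isomorphic to $H^2(N,T)[2]$. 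So once you fill in the projective resolution of $\calZ/(F_N-1)\calZ$, your argument becomes the paper's argument; without it, the $H^2$-step is not justified.
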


\begin{proof}
  The key point in the proof is that by Proposition \ref{tame cohomology}
  the cohomology modules of $M^\bullet(\calL)$ are perfect,
  so that we can compute the refined Euler characteristic of $M^\bullet(\calL)$ in terms of cohomology
  without explicitly using the complex. In a little more detail, we note that the mapping cone of
  \[
    \calF(\frp_N^{(r)}) / X(\mathcal L)[1] \to M^\bullet(\calL),
  \]
  where the map in degree $1$ is induced by $f_{\calF,N}$, is isomorphic to $H^2(N, T)[2]$.
  We also recall from Section \ref{sec cohomology} that we identify $H^2(N, T)$ with $\calZ / (F_N -1)\calZ$.
  Hence we conclude from \cite[Thm.~5.7]{BrBuAdditivity} that
  \[
    \begin{split}
      &\chi_{\Z_p[G], \BdR[G]}(M^\bullet(\calL), 0)\\
      &\qquad=\chi_{\Z_p[G], \BdR[G]}\left( \calF(\frp_N^{(r)})/X(\mathcal L)[1],0 \right) +
              \chi_{\Z_p[G], \BdR[G]}(\calZ / (F_N -1)\calZ, 0).
    \end{split}
  \]
  To compute the first summand we observe that by Proposition \ref{log exp iso}
  we have $X(\calL) = \calF\left( (\frp_N^\nu)^{(r)}\right)$. Since for each integer $i \ge 0$
  \begin{equation}\label{fil iso}
    \calF\left( (\frp_N^i)^{(r)}\right) / \calF\left( (\frp_N^{i+1})^{(r)}\right) \cong
    (\frp_N^i)^{(r)} / (\frp_N^{i+1})^{(r)}
  \end{equation}
  a standard argument shows that
  \[
    \chi_{\Z_p[G], \BdR[G]}\left( \calF(\frp_N^{(r)})/X(\mathcal L)[1],0 \right) =  
    \chi_{\Z_p[G], \BdR[G]}\left( \frp_N^{(r)} / (\frp_N^\nu)^{(r)} [1],0 \right) = r [\frp_N^\nu, \id, \frp_N].
  \]
  For the computation of the second term we consider the short exact sequence of $G$-modules
  \[
    0 \to \Z_p^r[G/I] \xrightarrow{F^{-1}u^{d_K}-1} \Z_p^r[G/I] \xrightarrow{\pi} \calZ / (F_N -1)\calZ \to 0,
  \]
{where $\pi(z(\bar g))=g\cdot (z+(F_N-1)\calZ)$ for all $z\in\Z_p^r$ and $g\in G$ and where $G$ acts on $\calZ / (F_N -1)\calZ$ through any lift of its elements to $G_K$ (which is well-defined since elements of $G_N$ act trivially).}

Let $x=\sum_{i=0}^{d-1}\alpha_i F^{-i}\in \Z_p^r[G/I]$ be an element in the kernel of the map on the left. Then $u^{d_K}\alpha_{i-1}-\alpha_i=0$ for all $i$. Hence $(u^{dd_K}-1)\alpha_i=0$ and by the assumption $\mathcal Z^{G_N}=1$, it follows that $\alpha_i=0$ for all $i$. Hence the map on the left is injective.

Next we see that $\pi((F^{-1}u^{d_K}-1)e_i)=(\rhonr(F^{-1})u^{d_K}-1)e_i=0$.

Conversely let $x=\sum_{i=0}^{d-1}\alpha_i F^{-i}\in\Z_p^r[G/I]$ be such that $\pi(x)=0$. Modulo the image of $F^{-1}u^{d_K}-1$, $x$ has a representative $y\in\Z_p^r$. We must show that $y\in \mathrm{im}(F^{-1}u^{d_K}-1)$. Since $\pi(y)=0$, there exists $z\in\Z_p^r$ such that $y=(u^{dd_K}-1)z=((F^{-1}u^{d_K})^d-1)z$, which is in the image of $F^{-1}u^{d_K}-1$. Hence we have exactness in the middle term.

To prove the exactness of the sequence, it remains to check the surjectivity of the map on the right, which is obvious.

Since we are considering the case of a tame extension, $\Z_p^r[G/I]$ is a projective $\Z_p[G]$-module and we have:
\[\chi_{\Z_p[G], \BdR[G]}(H^2(N,T)[2],0)=-[\Z_p^r[G/I],F^{-1}u^{d_K}-1,\Z_p^r[G/I]].\]
The results follows.

\end{proof}

\end{subsection}

\begin{subsection}{The weakly ramified case}\label{weakly ramified case}

  In this subsection we let $p$ be an odd prime. Let $K/\Qp$ be the unramified extension of degree $m$. We let $N/K$ be a weakly and
  wildly ramified finite abelian extension with cyclic ramification group. We let $d = d_{N/K}$ be  the inertia degree of $N/K$ and assume
  that $m$ and $d$ are relatively prime.

  The aim of this subsection is to compute the term $\chi_{\ZpG, \BdRG}(M^\bullet(\lambda), 0)$ from (\ref{CNK summands})
  in this weakly and wildly ramified situation. For that purpose we aim to generalize the methods of \cite{BC2}, however,
  this forces us to introduce a further technical condition which might be either\\

  \noindent{\bf Hypothesis (T):} $U_N \equiv 1 \pmod{p}$ \\

or \\

  \noindent{\bf Hypothesis (I):} $U_N - 1$ is invertible modulo $p$ \\

  Here (T) stands for trivial reduction modulo $p$ and (I) for invertible modulo $p$.
    If we set $\omega := v_p(\det(U_N-1))$ then we have the following equivalences
  \[
    (I) \text{ holds } \iff U_N - 1 \in \Gl_r(\Zp) \iff \omega = 0
  \]
  Note also that Hypothesis (T) immediately implies $\omega > 0$. However, in the higher dimensional setting
  there are mixed cases, where none of our hypotheses holds.

As in \cite{BC2} we have a diagram of fields as follows.
\[\xymatrix{
&&\Nur\ar@{-}[dr]\ar@{-}[dl]\\
&N\ar@{-}[dr]\ar@{-}[dl]&&\Kur\ar@{-}[dl]\\
M\ar@{-}[dr]^p&&K'\ar@{-}[dl]_d\ar@{-}[dr]\\
&K\ar@{-}[dr]^m&&\tilde K'\ar@{-}[dl]\\&&\Q_p.}\]
Here $K'/K$ is the maximal unramified subextension of $N/K$,  $M/K$ is a weakly and wildly ramified cyclic extension of degree $p$
and $N = MK'$. Since $\gcd(m,d)=1$, there exists $\tilde K'/\Q_p$ of degree $d$ such that $K'=K\tilde K'$.

The following lemma generalizes \cite[Lemma~7.2.1]{BC2}.

\begin{lemma}\label{projectivity}
For $n \ge 2$ one has
\[\calF((\p_N^n)^{(r)}) \text{ is }\ZpG \text{-projective}\iff n\equiv 1\pmod{p}.\]
Moreover,
\[\calF(\p_N^{(r)}) \text{ is }\ZpG \text{-projective} \iff \text{Hypothesis (I) holds}.\]
\end{lemma}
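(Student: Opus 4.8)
The plan is to reduce the projectivity of the modules $\calF((\p_N^n)^{(r)})$ to the case $r=1$ treated in \cite[Lemma~7.2.1]{BC2}, exploiting the filtration $\calF((\p_N^i)^{(r)})/\calF((\p_N^{i+1})^{(r)}) \cong (\p_N^i)^{(r)}/(\p_N^{i+1})^{(r)}$ from (\ref{fil iso}). First I would recall the cohomological criterion for projectivity of a $\ZpG$-module which is $\Zp$-free of finite rank: such a module $X$ is $\ZpG$-projective if and only if it is $G$-cohomologically trivial, which since $p$ is odd and the only wild part is a cyclic $p$-group $P$ in the ramification, reduces to checking $\hat H^i(P,X)=0$ for all $i$ together with $\hat H^i(G/P, X^P)=0$; in the weakly and wildly ramified abelian situation at hand, the relevant obstruction concentrates on the subgroup $P$ of order $p$ inside the inertia group, exactly as in the analysis of \cite{BC2}. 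So the claim becomes a statement about $\hat H^*(P, \calF((\p_N^n)^{(r)}))$.

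Next I would observe that $\calF((\p_N^n)^{(r)})$ and the additive module $(\p_N^n)^{(r)}$ have the same Herbrand-type behaviour under $P$: since $n\ge 2$ and the extension is weakly ramified, the higher unit filtration steps are uniquely divisible up to prime-to-$p$ torsion, so Tate cohomology of $P$ on $\calF((\p_N^n)^{(r)})$ agrees with that on $\bigoplus_r \p_N^n$ — this is precisely the mechanism used in \cite[Lemma~7.2.1]{BC2} for $r=1$, and taking an $r$-fold direct sum does not change cohomological triviality. Hence $\calF((\p_N^n)^{(r)})$ is $G$-cohomologically trivial if and only if $\bigoplus_r\p_N^n$ is, if and only if $\p_N^n$ is (a direct sum is cohomologically trivial iff each summand is), if and only if $n\equiv 1\pmod p$ by the $r=1$ result. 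This settles the first equivalence.

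For the second equivalence, concerning $\calF(\p_N^{(r)})$ (the case $n=1$, where the filtration argument breaks down because $\calF(\p_N^{(r)})/\calF((\p_N^2)^{(r)})$ no longer captures everything and $H^2(N,T)$ enters), I would use the identification $H^1(N,T)\cong\calF(\p_N^{(r)})$ from Hypothesis (F) together with Lemma \ref{ct coh}: $H^1(N,T)$ is cohomologically trivial if and only if $U_N-1\in\Gl_r(\Zp)$, i.e. Hypothesis (I). More concretely, one has the exact sequence $0\to\calF((\p_N^2)^{(r)})\to\calF(\p_N^{(r)})\to \p_N/\p_N^2\otimes(\text{something})\to 0$ relating the $n=1$ module to the already-understood $n=2$ module; since $2\not\equiv 1\pmod p$ for $p$ odd unless $p=\ldots$ — actually for $p\ge 3$, $n=2$ satisfies $n\equiv 1\pmod p$ only when $p\mid 1$, never, so $\calF((\p_N^2)^{(r)})$ is \emph{not} projective in general, and the right comparison is instead directly through cohomology of $T$. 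Thus I would argue: $\calF(\p_N^{(r)})$ is $\ZpG$-projective $\iff$ it is $G$-cohomologically trivial $\iff H^1(N,T)$ is cohomologically trivial $\iff$ (by Lemma \ref{ct coh}) $U_N-1\in\Gl_r(\Zp)$ $\iff$ Hypothesis (I).

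The main obstacle I anticipate is the $n=1$ case: one must be careful that cohomological triviality of $\calF(\p_N^{(r)})$ really is equivalent to $\ZpG$-projectivity (not merely implied by it), which requires knowing that $\calF(\p_N^{(r)})$ is $\Zp$-finitely generated and that the relevant group cohomology vanishing over the cyclic $p$-subgroup $P$ propagates to all of $G$ — here the hypothesis $\gcd(m,d)=1$ and the abelian, cyclic-ramification structure are used exactly as in \cite{BC2} to control the prime-to-$p$ quotient $G/P$. A secondary technical point is verifying that the isomorphism (\ref{fil iso}) is $G$-equivariant with the correct action, so that the filtration argument for $n\ge 2$ is legitimate; this follows from the construction of $\log_\calF$ and $\exp_\calF$ in Section \ref{sectlog}, in particular from $J_{\log_\calF}(X)\in M_r(\Zp[[X]])$ which guarantees the graded pieces of the formal-group filtration match those of the additive filtration as $\ZpG$-modules.
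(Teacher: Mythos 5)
Your proposal identifies the right two ingredients — Proposition \ref{log exp iso} for $n\ge 2$ and Lemma \ref{ct coh} for $n=1$ — but in both halves the argument as written has problems.

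For $n\ge 2$ the paper simply observes that Proposition \ref{log exp iso} gives a $\ZpG$-isomorphism $\calF((\p_N^n)^{(r)})\cong(\p_N^n)^{(r)}$ (note $n\ge 2>\tfrac{v_N(p)}{p-1}$ since $e_{N/\Qp}=p$ here and $p$ is odd), and then cites K\"ock's theorem directly. You instead try to deduce equality of Tate cohomology from the filtration (\ref{fil iso}); this is not sound as stated, since matching graded pieces of a filtration do not determine the cohomology of the filtered module, and the claim that the ``higher unit filtration steps are uniquely divisible up to prime-to-$p$ torsion'' is false (the quotients $\p_N^i/\p_N^{i+1}$ are $p$-torsion). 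You should invoke the global isomorphism $\log_\calF$ from Proposition \ref{log exp iso} rather than a graded-piece argument.

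The more serious gap is in the $n=1$ case. You argue $\calF(\p_N^{(r)})$ is $\ZpG$-projective $\iff$ $G$-cohomologically trivial, quoting Lemma \ref{ct coh}, but cohomological triviality of a finitely generated $\ZpG$-module implies projectivity only when the module is $\Zp$-torsion-free, and you never establish this. This is precisely the point the paper spends effort on: via the identification $\calF(\p_N^{(r)})\cong\bigl(\prod_r\widehat{N_0^\times}(\rhonr)\bigr)^{G_N}$ from \cite[Lemma~2.1]{Cobbe18} one sees that any torsion element is a tuple of $p$-power roots of unity lying in $N$ (since $N_0/N$ is unramified), on which $G_N$ acts through $F_N$, i.e.\ through $U_N$; Hypothesis (I) then forces this tuple to be trivial. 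Without this torsion-freeness argument the final equivalence does not follow; and the argument itself uses Hypothesis (I), so it cannot be deferred as a routine technicality. You flag ``finite generation'' as a concern but the real issue is torsion-freeness, and you give no means of handling it.
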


\begin{proof}
  For $n \ge 2$ the formal logarithm induces an isomorphism $\calF((\p_N^n)^{(r)}) \cong (\p_N^n)^{(r)}$ of $\ZpG$-modules by
  Proposition \ref{log exp iso}. Hence the first assertion follows from \cite[Th.~1.1 and Prop.~1.3]{Koeck}.

  We henceforth assume $n=1$. By Lemma \ref{ct coh} we know that $\calF(\p_N^{(r)})$ is cohomologically trivial,
    if and only if Hypothesis (I) holds.
    Hence it suffices to prove that $\calF(\p_N^{(r)})$ is torsion-free.
    By \cite[Lemma 2.1]{Cobbe18} the module $\calF(\p_N^{(r)})$ is isomorphic to $\left( \prod_r \widehat{N_0^\times} (\rhonr)\right)^{G_N}$
    which is torsion-free. Indeed, any tuple $(\zeta_1, \ldots, \zeta_r)$ of $p$-th roots of unity $\zeta_1, \ldots, \zeta_r \in
    \widehat{N_0^\times} \cong \Zp \times U_{N_0}^{(1)}$ must be contained in $N$ (because $N_0/N$ is unramified).
    Hence $(\zeta_1, \ldots, \zeta_r)$ is fixed by $G_N$, if and only if it is fixed by $F_N$, if and only if $\zeta_1 = \ldots = \zeta_r = 1$ (using Hypothesis (I)).

\end{proof}

By Lemma \ref{projectivity}  we can and will take $\calL = \frp_N^{p+1}$ and thus obtain
  $$X(\calL) = \calF\left( (\frp_N^{p+1})^{(r)} \right).$$
  
We recall some of the notations from \cite{BC2}. We put $q=p^m$, $b=F^{-1}$ and consider an element
$a\in \Gal(\Nur/K)$ such that $\Gal(M/K)=\langle a|_M\rangle$, $a|_{\Kur}=1$.
Since there will be no ambiguity, we will denote by the same letters $a,b$ their restrictions to $N$.
Then $\Gal(N/K) = \langle a,b \rangle$ and $\mathrm{ord}(a)=p, \mathrm{ord}(b) = d$. We also define $\T_a := \sum_{i=0}^{p-1}a^i$.

Let $\theta_1\in M$ be such that $\T_{M/K}\theta_1=p$ and $\oo_K[\Gal(M/K)]\theta_1=\p_M$.
Let $\theta_2$ (resp. $A$) be a normal integral basis generator of trace one for the extension
$\tilde K'/\Q_p$ (resp. $K/\Q_p$). Let $\alpha_1\in\oo_K^\times$ be such that $\theta_1^{a-1}\equiv 1-\alpha_1\theta_1\pmod{\p_M^2}$.
If we set
\begin{equation}\label{choice of alpha}
\alpha_1,\alpha_2=\alpha_1A,\alpha_3=\alpha_1A^{\varphi},\dots, \alpha_m=\alpha_1A^{\varphi^{m-2}},
\end{equation}
then these elements form a $\Zp$-basis of $\oo_K$ (see \cite[(60)]{BC2}).

Furthermore, we use \cite[Lemma~2.4]{Cobbe18} to find for $i=1, \ldots,r$ an element  $\gamma_i\in \prod_r U_{N_0}^{(1)}$ such that
\[
  (F_N-1)\cdot\gamma_i =  (\theta_1)_i,
\]
where
\[
  (\theta_1)_i := (1,\dots,1,\theta_1^{a-1},1,\dots,1)
\]
with the non-trivial entry is the $i$-th component.

Let
\[
  W'=\Z_p[G]^r z_1\oplus \Z_p[G]^r z_2,
\]
\[
  W_{\geq n}=\bigoplus_{j=n}^{p-1}\bigoplus_{k=1}^{m}\Z_p[G]^r v_{k,j} \cong
  \bigoplus_{j=n}^{p-1}\bigoplus_{k=1}^m\Z_p[G]^r\alpha_kw_j =
  \bigoplus_{j=n}^{p-1}\OKG^r w_j
\]
and put 
\[
  W=W_{\geq 0}.
\]
If we write $e_1, \ldots, e_r$ for the standard $\Zp$-basis of $\Zp^{(r)}$, then a general element of $W$ is of the form
\[
  \sum_{i=1}^r \sum_{j=1}^{p-1} \sum_{k=1}^m \lambda_{i,j,k}e_i v_{k,j} = \sum_{i=1}^r \sum_{j=1}^{p-1} \mu_{i,j,k}e_i w_{j}
\]
with $\lambda_{i,j,k} \in \ZpG$ or $\mu_{i,j} \in \OKG$. We will apply this convention analogously for the modules $W'$ and
$ W_{\geq n}$.

We define a matrix $E \in M_r(\calO_{K'})$ by
\begin{equation}\label{E def}
E=  \begin{cases}1&\text{under Hypothesis (I)}\\
\sum_{i=0}^{dm-1}(A\theta_2)^{\varphi^i}u^{-i}&\text{under Hypothesis (T)}
\end{cases}
\end{equation}
and a matrix
\begin{equation}\label{tilde u def}
\tilde u=  \begin{cases}1&\text{under Hypothesis (I)}\\
u&\text{under Hypothesis (T)}
\end{cases}
\end{equation}

We also recall that in \cite[Lemma~1.9]{Cobbe18} we constructed an element
$\epsilon\in \Gl_r(\overline{\Zpnr})$ such that $u=\epsilon^{-1}\cdot\varphi(\epsilon)$.

\begin{lemma}\label{E lemma}
  The following assertions hold:
\begin{enumerate}[(a)]
\item $E\in \Gl_r(\oo_{K'})$.
\item $\varphi(E)\equiv \tilde u E \equiv E\tilde u \pmod{p\oo_{K'}}$.
\item $\varphi(\epsilon^{-1} E)\equiv u^{-1}\tilde u \epsilon^{-1} E \pmod{p}$.
\end{enumerate}
\end{lemma}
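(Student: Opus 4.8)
The plan is to prove the three congruences in Lemma~\ref{E lemma} essentially by direct computation from the defining formula \eqref{E def}, treating the two hypotheses separately. Under Hypothesis~(I) there is nothing to prove: $E = 1$, $\tilde u = 1$, and assertion (a) is trivial, while (b) reads $\varphi(1) \equiv 1 \equiv 1 \pmod{p\oo_{K'}}$ and (c) reads $\varphi(\epsilon^{-1}) \equiv u^{-1}\epsilon^{-1} \pmod p$, which is exactly the defining relation $u = \epsilon^{-1}\varphi(\epsilon)$ rewritten (multiply $u = \epsilon^{-1}\varphi(\epsilon)$ on the left by $u^{-1}\epsilon^{-1}$ and use $\varphi(\epsilon^{-1}) = \varphi(\epsilon)^{-1}$). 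So the whole content is under Hypothesis~(T), where $E = \sum_{i=0}^{dm-1}(A\theta_2)^{\varphi^i}u^{-i}$ and $\tilde u = u$. Recall here that $A$ and $\theta_2$ are normal integral basis generators of trace one for $K/\Qp$ and $\tilde K'/\Qp$ respectively, and that $dm = [K':\Qp]$, so $\varphi^{dm}$ fixes $K'$ and $(A\theta_2)^{\varphi^i}$ cycles with period $dm$ (as $A\theta_2$ generates a normal basis of $K'/\Qp$, its conjugates being the $(A\theta_2)^{\varphi^i}$ for $0 \le i < dm$).

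First I would prove (b). Compute
\[
  \varphi(E) = \sum_{i=0}^{dm-1}(A\theta_2)^{\varphi^{i+1}}\varphi(u^{-i}) = \sum_{i=0}^{dm-1}(A\theta_2)^{\varphi^{i+1}}u^{-i},
\]
using that $u \in \Gl_r(\Zp)$ is fixed by $\varphi$. Reindex $j = i+1$ and use $(A\theta_2)^{\varphi^{dm}} = A\theta_2$ and $u^{-dm} \equiv 1 \pmod p$ — the latter holds because Hypothesis~(T) says $u = U_K^{?}$... more precisely $U_N \equiv 1 \pmod p$ and $U_N = u^{d_N}$, but one needs $u \equiv 1$ here; this is where I must be careful, see the obstacle paragraph. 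Granting $u \equiv 1 \pmod p$, we get $\varphi(E) = \sum_{j=1}^{dm}(A\theta_2)^{\varphi^j}u^{-j+1} \equiv \sum_{j=0}^{dm-1}(A\theta_2)^{\varphi^j}u^{-j} \cdot u = Eu \pmod{p\oo_{K'}}$ after the reindexing and using $u^{-dm+1} \equiv u \pmod p$; similarly one factors $u$ out on the left to get $uE$, giving $\varphi(E) \equiv \tilde u E \equiv E \tilde u \pmod{p\oo_{K'}}$. Actually since $u \equiv 1 \pmod p$, the cleanest route is: modulo $p$ we have $u^{-i} \equiv 1$, so $E \equiv \sum_i (A\theta_2)^{\varphi^i} = \mathrm{Tr}_{K'/\Qp}(A\theta_2)\cdot 1$; but $A\theta_2$ has trace one (product of trace-one normal basis generators over linearly disjoint $K/\Qp$ and $\tilde K'/\Qp$ with $K' = K\tilde K'$), so $E \equiv 1 \pmod p$, whence (a) $E \in \Gl_r(\oo_{K'})$ follows from $E$ being a unit mod the maximal ideal, and (b), (c) all reduce to statements mod $p$ with $E \equiv \tilde u \equiv 1$, i.e. to $\varphi(1) \equiv 1$ and $\varphi(\epsilon^{-1}) \equiv u^{-1}\epsilon^{-1}$, the last again being the defining relation for $\epsilon$.

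For (c), write $\varphi(\epsilon^{-1}E) = \varphi(\epsilon^{-1})\varphi(E)$. Use $\varphi(\epsilon^{-1}) = \varphi(\epsilon)^{-1} = (u\epsilon^{-1})^{-1}\cdot$... rather, from $u = \epsilon^{-1}\varphi(\epsilon)$ we get $\varphi(\epsilon) = \epsilon u$, so $\varphi(\epsilon^{-1}) = u^{-1}\epsilon^{-1}$. Then $\varphi(\epsilon^{-1}E) = u^{-1}\epsilon^{-1}\varphi(E) \equiv u^{-1}\epsilon^{-1}\tilde u E \pmod p$ by (b); since $\tilde u = u$ is central (it lies in $\Gl_r(\Zp)$, but $\epsilon^{-1}$ need not commute with it) — here I note $u$ commutes with $\epsilon^{-1}$ only modulo $p$ if $u \equiv 1$, which is again the point — so modulo $p$, $u^{-1}\epsilon^{-1}\tilde u E \equiv u^{-1}\tilde u \epsilon^{-1}E$, giving (c). The main obstacle I anticipate is pinning down exactly why $u \equiv 1 \pmod p$ under Hypothesis~(T): the hypothesis is stated as $U_N = u^{d_N} \equiv 1 \pmod p$, and one must deduce $u \equiv 1 \pmod p$ from this together with the running arithmetic (e.g. that $K/\Qp$ is unramified of degree $m$, that $\gcd(m,d)=1$, and $\tilde d$ being the order of $u \bmod p$); most likely the setup of Section~\ref{weakly ramified case} has already arranged, via the reduction to the extension of degree $\tilde d$, that one works in a situation where $U_N \equiv 1 \pmod p$ forces $U_N = u^{d_N}$ with $d_N$ coprime to the order of $\bar u$, hence $\bar u = 1$. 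I would make this dependence explicit at the start of the proof, citing the relevant hypothesis, and otherwise the three assertions follow from the trace-one normalization of $A\theta_2$ collapsing $E$ to $1 \bmod p$ and from the defining relation $u = \epsilon^{-1}\varphi(\epsilon)$.
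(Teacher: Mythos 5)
Your concern about whether $u\equiv 1\pmod p$ is well founded, but the resolution you hope for does not hold: Hypothesis (T) asserts only that $U_N = u^{md}\equiv 1\pmod p$, and this does \emph{not} imply $u\equiv 1\pmod p$ (the order $\tilde d$ of $u^{md}$ mod $p$ is $1$, but the order of $u$ mod $p$ can still be a nontrivial divisor of $md$). Nothing in the setup of Section~\ref{weakly ramified case} forces $\bar u = 1$; the reduction in case~(c) of the theorem is only to restore Hypothesis~(T), i.e.\ to make $U_N\equiv 1\pmod p$, not to trivialize $u$ itself. Once this is acknowledged, the ``collapse $E\equiv 1\pmod p$'' route for part (a) breaks down, and that is the part of the lemma that actually carries content.

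The paper's proof of (a) is a genuinely different and more robust argument: it shows that the group-ring element $\sum_{i=0}^{dm-1}(A\theta_2)^{\varphi^{i}}\varphi^{-i}$ is a unit in $\calO_{K'}[H]$ with $H=\Gal(K'/\Qp)$, by applying Washington's resolvent determinant identity (\cite[Lemma~5.26(a)]{Washington}) together with the fact that $A\theta_2$ is an integral normal basis generator of the unramified extension $K'/\Qp$ (so the resolvent determinant is a unit). One then applies the representation $\rhoQpnr$ (sending $\varphi^{-1}\mapsto u^{-1}$) to deduce $E\in\Gl_r(\calO_{K'})$. This works for arbitrary $u$ and is the essential point missing from your proposal. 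Your first computation for (b) --- shift the index, use $(A\theta_2)^{\varphi^{dm}}=A\theta_2$ and $u^{md}\equiv 1\pmod p$, and note that the coefficients $(A\theta_2)^{\varphi^i}$ are scalars so $Eu=uE$ --- is exactly the paper's argument and is correct as stated; you should discard the second ``$E\equiv 1$'' route. For (c), your commutation worry is real: one cannot in general move $u$ past $\epsilon^{-1}$ modulo $p$. The paper sidesteps this by citing \cite[Lemma~1.9]{Cobbe18}, where the specific construction of $\epsilon$ provides the needed compatibility; simply invoking $u\equiv 1\pmod p$ is not available.
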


\begin{proof}
  Let $H=\mathrm{Gal}(K'/\Q_p)$ and let $f \colon H\to K'$ be defined by $f(\sigma)=(A\theta_2)^{\sigma^{-1}}$.
  Then, applying \cite[Lemma 5.26 (a)]{Washington}, we obtain
  \[
    \det((A\theta_2)^{\tau\sigma^{-1}})_{\sigma,\tau\in H}=\prod_{\chi\in \hat H}\sum_{\sigma\in H}(A\theta_2)^{\sigma^{-1}}\chi(\sigma).
  \]
  Since $A\theta_2$ is an integral normal basis generator and $K'/\Q_p$ is unramified,
  the left hand side $\det(\tau\sigma^{-1}(A\theta_2))_{\sigma,\tau\in H}$ is a unit (whose square is the discriminant of $K'/\Q_p$).
  Therefore, for each character $\chi\in \hat H$ the factor 
  \[
    \sum_{\sigma\in H}(A\theta_2)^{\sigma^{-1}}\chi(\sigma)=\sum_{i=0}^{dm-1}(A\theta_2)^{\varphi^{i}}\chi(\varphi^{-i})
  \]
  is a unit and hence $\sum_{i=0}^{dm-1}(A\theta_2)^{\varphi^{i}}\varphi^{-i}$ is a unit
  in the maximal order $\calM$ of $K'[H]$. Since $\sum_{i=0}^{dm-1}(A\theta_2)^{\varphi^{i}}\varphi^{-i}\in \mathcal O_{K'}[H]$,
  we deduce from the well-known fact
  \[
    \calM^\times \cap \calO_{K'}[H] = \calO_{K'}[H]^\times
  \]
  that $\sum_{i=0}^{dm-1}(A\theta_2)^{\varphi^{i}}\varphi^{-i}\in \mathcal O_{K'}[H]^\times$. We now apply the character $\rhonr_\Qp$ and easily derive (a).

For the proof of (b) we can assume Hypothesis (T) and we compute
\[
  \varphi(E) = \sum_{i=0}^{dm-1} (A\theta_2)^{\varphi^{i+1}}u^{-(i+1)} u \equiv Eu \equiv uE \pmod{p\oo_{K'}}
\]
where the congruences hold because we have $u^{md} \equiv 1\pmod{p}$ by hypothesis (T).

If Hypothesis (T) holds, then part (c) is then an immediate  consequence of \cite[Lemma~1.9]{Cobbe18}. Unter Hypothesis (I) it follows from the definitions.
\end{proof}

Generalizing the approach of \cite[Sec.~6.3]{BC2} we define a $\ZpG$-module homomorphism
\[
  \tfW\colon W \lra \calF(\frp_N^{(r)})
\]
by
\[
  \tfW(e_iv_{k,j})=E\alpha_k(a-1)^j\theta e_i = E \left(
    \begin{array}{c}
      0 \\ \vdots \\ 0 \\ \alpha_k (a-1)^j\theta \\ 0 \\ \vdots \\ 0 
    \end{array}
  \right)
\]
for all $i,j,k$, where $\theta=\theta_1\theta_2$.
We denote by $\fW$ the composition of $\tfW$ with the projection to $\calF(\p_N^{(r)})/\calF((\p_N^{p+1})^{(r)})$.

In order to generalize \cite[Lemma~7.2.4]{BC2}, we first need a higher dimensional version of \cite[Lemma~4.1.7]{BC}.
\begin{lemma}
For $\nu\in \oo_K[G]$, $i=1,\dots,r$, $k=1,\dots,m$ and $j=0,\dots,p-1$ we have
\[\fW(\nu e_i\alpha_k w_j)\equiv \nu E\alpha_k (a-1)^j\theta e_i\pmod{\p_N^{j+2}}.\]
\end{lemma}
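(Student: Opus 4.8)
The plan is to unwind the definition of $\fW$ and to reduce the claimed congruence to a statement purely about the one-dimensional weakly ramified computation already carried out in \cite{BC}. Recall that $\fW$ is the composite of $\tfW$ with the projection $\calF(\frp_N^{(r)}) \lra \calF(\frp_N^{(r)})/\calF((\frp_N^{p+1})^{(r)})$, and that for a basis element $\tfW(e_iv_{k,j}) = E\alpha_k(a-1)^j\theta e_i$, where the right-hand side denotes the tuple in $\prod_r \frp_N$ whose nonzero entries are obtained by multiplying the $i$-th standard basis vector $e_i$ by $\alpha_k(a-1)^j\theta$ and then applying the matrix $E \in \Gl_r(\calO_{K'})$ from the left (via the $\calF$-group structure, which by Lemma~\ref{logar}/Proposition~\ref{log exp iso} agrees with the additive structure modulo higher-order terms in the relevant range). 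First I would note that $\nu e_i\alpha_k w_j = \sum_{k'} \mu_{k'} e_i \alpha_{k'} v_{k',j}$ for appropriate $\mu_{k'} \in \ZpG$ when one re-expresses $\nu \alpha_k \in \OKG$ in the $\Zp$-basis $\alpha_1, \ldots, \alpha_m$ of $\OK$; applying $\tfW$ linearly then gives $\tfW(\nu e_i\alpha_k w_j) = \nu E \alpha_k (a-1)^j\theta e_i$ exactly (before projection), since the $\alpha_{k'}$-coordinates recombine, so the only content is the congruence modulo $\frp_N^{j+2}$ after replacing the formal-group operations by ordinary addition/multiplication.

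The key step is therefore the following: for $x \in \frp_N$ and $\nu \in \OKG$, the formal-group expression $\nu \star_{\calF} (\text{stuff})$ agrees with the naive $\OKG$-module expression modulo $\frp_N^{j+2}$, provided the "stuff" lies in $\frp_N^{j+1}$. This is exactly the one-dimensional statement \cite[Lemma~4.1.7]{BC}, applied componentwise: each of the $r$ components of $\tfW(\nu e_i\alpha_k w_j)$ is (up to the $\calO_{K'}$-linear action of the fixed matrix $E$, which has unit determinant and integral entries, hence preserves the filtration by powers of $\frp_N$) an element of the form handled in loc.\ cit. The point is that $(a-1)^j\theta \in \frp_M^{j} \cdot \frp_M \sseq \frp_N^{j+1}$ — here one uses weak ramification and the defining property $\theta_1^{a-1} \equiv 1 - \alpha_1\theta_1 \pmod{\frp_M^2}$, which forces $(a-1)\theta_1 \in \frp_M^2$ and inductively $(a-1)^j\theta_1 \in \frp_M^{j+1}$ — so that the difference between the $\calF$-action and the additive action of $\nu$ is quadratic in a quantity of valuation $\ge j+1$, hence lies in $\frp_N^{2(j+1)} \sseq \frp_N^{j+2}$ for $j \ge 0$, and similarly the difference between $\calF$-multiplication by elements of $\OK$ and ordinary multiplication contributes only higher-order error.

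I expect the main obstacle to be bookkeeping rather than conceptual: one must carefully track that the matrix $E$ (and, under Hypothesis (T), the extra factors $u^{-i}$ appearing in its definition \eqref{E def}) do not disturb the filtration estimates, i.e.\ that left multiplication by $E \in \Gl_r(\calO_{K'})$ maps $(\frp_N^{\ell})^{(r)}$ into itself and is compatible modulo $\frp_N^{j+2}$ with the formal-group structure on $\calF(\frp_N^{(r)})$ — this uses that $\calO_{K'} \sseq \calO_N$ and that $N_0/N$ is unramified so $v_N$ restricts correctly. The remaining work is to invoke \cite[Lemma~4.1.7]{BC} in each coordinate and to observe that the error terms, being governed by the quadratic part of the formal group law $\calF(X,Y) = X + Y + (\text{higher order})$ and by Lemma~\ref{logar} (integrality of the relevant power series), all lie in $\frp_N^{j+2}$. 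Once these filtration facts are in place the congruence follows by direct substitution, exactly as in the one-dimensional case treated in \cite[Lemma~7.2.4]{BC2}.
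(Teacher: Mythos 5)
Your proposal is correct and follows essentially the same route as the paper: expand $\nu e_i\alpha_k w_j$ in a $\Zp[G]$-basis, apply $\fW$ by $\ZpG$-linearity, and observe that $\calF$-group operations on $(\frp_N^{j+1})^{(r)}$ agree with ordinary addition modulo $\frp_N^{j+2}$, which the paper cites directly as the isomorphism (\ref{fil iso}). One caveat: your phrase that this is ``\cite[Lemma~4.1.7]{BC} applied componentwise'' is not quite accurate, since the $r$-dimensional formal group $\calF$ does not decompose as a product of one-dimensional formal groups; however, your fallback argument that the discrepancy between $\calF$-addition and naive addition is controlled by the quadratic part of the formal group law evaluated at elements of valuation $\geq j+1$, hence lies in $\frp_N^{2(j+1)} \subseteq \frp_N^{j+2}$, is exactly the content of (\ref{fil iso}) and suffices.
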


\begin{proof}
We write $\nu=\sum_{h,l,i,k}\nu_{h,l,i,k}a^hb^le_i\alpha_k w_j$ for some $\nu_{h,l,i,k}\in\Z_p$. Then
\[\fW\left(\sum_{h,l,i,k}\nu_{h,l,i,k}a^hb^le_i\alpha_k w_j\right)=\sum_{h,l,i,k}\nu_{h,l,i,k}a^hb^l \cdot E \alpha_k (a-1)^j \theta e_i.\]
Using the isomorphism in (\ref{fil iso}), this is congruent to
\[\sum_{h,l,i,k}\nu_{h,l,i,k}a^hb^l E \alpha_k (a-1)^j \theta e_i \pmod{\p_N^{j+2}}.\]
\end{proof}

\begin{lemma}\label{f3surj}
The map $\fW$ is surjective. More precisely, for $j\geq 0$, 
\[
  \fW(W_{\geq j})=\calF((\p_N^{j+1})^{(r)})/\calF((\p_N^{p+1})^{(r)}).
\]
\end{lemma}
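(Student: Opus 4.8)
The plan is to prove, by downward induction on $j\ge 0$, the sharper statement $\fW(W_{\ge j})=\calF((\p_N^{j+1})^{(r)})/\calF((\p_N^{p+1})^{(r)})$; for $j\ge p$ both sides vanish (since $W_{\ge p}=0$), and the case $j=0$ is exactly the asserted surjectivity, the codomain of $\fW$ being $\calF(\p_N^{(r)})/\calF((\p_N^{p+1})^{(r)})=\calF((\p_N^{1})^{(r)})/\calF((\p_N^{p+1})^{(r)})$. The two ingredients I would use at every step are the preceding lemma, which gives $\fW(\nu e_i\alpha_k w_l)\equiv\nu E\alpha_k(a-1)^l\theta e_i\pmod{\p_N^{l+2}}$ for $\nu\in\OKG$, and the weakly ramified structure of $N/K$: from $\OK[\Gal(M/K)]\theta_1=\p_M$ one gets $v_N(\theta)=v_N(\theta_1)=1$, and then, using $\theta_1^{a-1}\equiv 1-\alpha_1\theta_1\pmod{\p_M^2}$ and that $a-1$ raises valuations by exactly $1$ on $\p_N^n/\p_N^{n+1}$ for $n\le p-1$, one obtains $v_N((a-1)^l\theta)=l+1$ for $0\le l\le p-1$. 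The latter gives at once the inclusion $\fW(W_{\ge j})\subseteq\calF((\p_N^{j+1})^{(r)})/\calF((\p_N^{p+1})^{(r)})$, because $E\alpha_k(a-1)^l\theta e_i\in(\p_N^{l+1})^{(r)}\subseteq(\p_N^{j+1})^{(r)}$ for $l\ge j$ (the matrices $E$ and $\alpha_k$ being integral); so only the reverse inclusion needs proof.

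For the inductive step fix $j$ with $0\le j\le p-1$ and assume $\fW(W_{\ge j+1})=\calF((\p_N^{j+2})^{(r)})/\calF((\p_N^{p+1})^{(r)})$ (for $j=p-1$ this is the trivial statement $W_{\ge p}=0$). Since $W_{\ge j}=\OKG^r w_j\oplus W_{\ge j+1}$, it suffices to prove that the composite
\[
\OKG^r w_j\xrightarrow{\ \fW\ }\calF((\p_N^{j+1})^{(r)})/\calF((\p_N^{p+1})^{(r)})\twoheadrightarrow\calF((\p_N^{j+1})^{(r)})/\calF((\p_N^{j+2})^{(r)})
\]
is surjective, for together with the inclusion recorded above this forces $\fW(W_{\ge j})=\calF((\p_N^{j+1})^{(r)})/\calF((\p_N^{p+1})^{(r)})$ and completes the induction. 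By (\ref{fil iso}) the target of this composite is identified with $(\p_N^{j+1}/\p_N^{j+2})^{(r)}$, a free module over the residue field $\ON/\p_N$, and by the preceding lemma the image of $\fW$ in it is the $\OKG$-submodule generated by the classes of the $rm$ elements $E\alpha_k(a-1)^j\theta e_i$.

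Thus everything reduces to the following residual statement, which I expect to be the only genuinely laborious point. Writing $(a-1)^j\theta=c\,\pi_N^{j+1}$ with $c\in\ON^\times$, one must show that the classes of the $E\alpha_k c\,e_i$ generate $(\ON/\p_N)^{(r)}$ over $\OKG=\OK[a,b]$, where $a$ acts trivially on the residue field and $b=F^{-1}$ acts as the Frobenius generator of $\Gal\big((\ON/\p_N)/(\OK/\p_K)\big)$. This is the $r$-dimensional version of the computation carried out in \cite{BC2} for $r=1$ (itself based on \cite[Lemma~4.1.7]{BC}); it rests on $A$ being a normal integral basis generator of $K/\Qp$, $\theta_2$ one of $\tilde K'/\Qp$, the relation $K'=K\tilde K'$ with $\gcd(m,d)=1$, and $\theta_1$ generating $\p_M$ over $\OK[\Gal(M/K)]$. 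The new feature in dimension $r$ is that one has to carry the invertible matrix $E\in\Gl_r(\oo_{K'})$ through the argument; for this one uses Lemma~\ref{E lemma}, in particular the congruence $\varphi(\epsilon^{-1}E)\equiv u^{-1}\tilde u\,\epsilon^{-1}E\pmod p$, which under Hypothesis~(T) is precisely what makes the $u$-twisted module inherit the normal basis property of $A\theta_2$ for $K'/\Qp$. Under Hypothesis~(I) one has $E=1$ and $\tilde u=1$, the matrix bookkeeping disappears, and the claim is the $r=1$ statement applied coordinate by coordinate. Keeping precise track of this interplay between $E$, the twist by $u$, and the normal integral basis generators of the three subextensions is the main obstacle; the rest of the proof is the purely formal induction above.
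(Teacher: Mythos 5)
Your plan is structurally identical to the paper's: descending induction on $j$ starting at $j=p$ (where both sides vanish), and at each step reducing via the filtration isomorphism (\ref{fil iso}) to surjectivity onto the graded piece $\calF((\p_N^{j+1})^{(r)})/\calF((\p_N^{j+2})^{(r)})$. Your reduction to a generating statement in the residue quotient is a valid reformulation of what the paper establishes directly: it takes $x\in\calF((\p_N^{j+1})^{(r)})$, produces coefficients $\nu_{h,\ell,i,k}$ as in \cite[Lemma~7.2.4]{BC2}, and then performs the chain of congruences
\[
  x \equiv E\sum\nu_{h,\ell,i,k}a^hb^\ell\alpha_k(a-1)^j\theta e_i
    \equiv \sum\tilde u^{m\ell}\nu_{h,\ell,i,k}a^hb^\ell E\alpha_k(a-1)^j\theta e_i
    \equiv \tfW(\cdots) \pmod{\p_N^{j+2}},
\]
so that $\pi(x)$ differs from an element of $\fW(W_{\geq j})$ by something in $\fW(W_{\geq j+1})$ via the inductive hypothesis. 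The one place where you are slightly off is in pinpointing the ingredient from Lemma~\ref{E lemma}: the step that carries $E$ past the $b^\ell$-action uses part~(b), namely $\varphi(E)\equiv\tilde u E\pmod{p\oo_{K'}}$ (whence $E\equiv\tilde u^{m\ell}E^{\varphi^{-m\ell}}$), not part~(c); part~(c) enters only later, in Lemma~\ref{defrk}. The residual computation you defer as ``the only genuinely laborious point'' is precisely this commutation of $E$ with the twisted Frobenius action, and it is the content of the displayed congruence chain rather than a separate normal-basis computation, so your sketch identifies the right ingredients but stops short of the step that is actually carried out in the paper.
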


\begin{proof}
For $j=p$, $W_{\geq p}=\{0\}$ and $\calF((\p_N^{p+1})^{(r)})/\calF((\p_N^{p+1})^{(r)})=\{0\}$, so the  result is trivial.

We assume the result for $j+1$ and proceed by descending induction. Let $x\in \calF((\p_N^{j+1})^{(r)})$. As in the proof of \cite[Lemma~7.2.4]{BC2}, there exist $\nu_{h,\ell,i,k}\in\Z_p$ such that
\[\begin{split}x&\equiv E\sum_{h,\ell,i,k} \nu_{h,\ell,i,k}a^hb^\ell\alpha_k(a-1)^j\theta e_i\pmod{\p_N^{j+2}}\\
&\equiv \sum_{h,\ell,i,k}\tilde u^{m\ell} E^{\varphi^{-m\ell}} \nu_{h,\ell,i,k}a^hb^\ell\alpha_k(a-1)^j\theta e_i\pmod{\p_N^{j+2}}\\
&\equiv \sum_{h,\ell,i,k}\tilde u^{m\ell} \nu_{h,\ell,i,k}a^hb^\ell E \alpha_k(a-1)^j\theta e_i\pmod{\p_N^{j+2}}\\
&\equiv \tfW\left(\sum_{h,\ell,i,k}\tilde u^{ml} \nu_{h,\ell,i,k}a^hb^\ell e_i\alpha_k w_j\right)\pmod{\p_N^{j+2}}
.\end{split}\]
This means that the class $\pi(x)$ of $x$ in $\calF(\p_N^{(r)})/\calF((\p_N^{p+1})^{(r)})$ is the sum of an element in the
image of $W_{\geq j}$ and an element in $\calF((\p_N^{j+2})^{(r)})/\calF((\p_N^{p+1})^{(r)})$,
which is by assumption in the image of $W_{\geq j+1}\subseteq W_{\geq j}$.
\end{proof}

\begin{lemma}\label{defsjk}
Let $1\leq i\leq r$, $0\leq j\leq p-1$, $1\leq k\leq m$. Then there exists $\mu_{i,j,k}\in W_{\geq j+2}$ such that the element
\[s_{i,j,k}=\alpha_k(a-1) e_iw_j-\alpha_k e_i w_{j+1}+\mu_{i,j,k}\]
is in the kernel of $\fW$. Here $w_p$ should be interpreted as $0$.
\end{lemma}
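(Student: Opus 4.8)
The plan is to construct $s_{i,j,k}$ by descending induction on $j$, running the induction simultaneously over all $i$ and $k$. The key input is the surjectivity of $\fW$ together with the sharpened statement $\fW(W_{\geq j}) = \calF((\p_N^{j+1})^{(r)})/\calF((\p_N^{p+1})^{(r)})$ from Lemma \ref{f3surj}, and the congruence $\fW(\nu e_i \alpha_k w_j) \equiv \nu E\alpha_k(a-1)^j\theta e_i \pmod{\p_N^{j+2}}$ from the lemma preceding Lemma \ref{f3surj}.

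First I would compute $\fW(\alpha_k(a-1) e_i w_j - \alpha_k e_i w_{j+1})$ modulo $\p_N^{j+3}$ (or at least modulo $\p_N^{j+2}$ to start). Applying the congruence lemma, the first term contributes $\alpha_k(a-1)\cdot E\alpha_k(a-1)^j\theta e_i$ up to higher order, which (using that $(a-1)$ and $E\alpha_k$ essentially commute modulo higher powers of $\p_N$, and that $(a-1)$ raises the $\p_N$-valuation by one because $M/K$ is weakly and wildly ramified so $\theta_1^{a-1} \equiv 1 - \alpha_1\theta_1$) equals $E\alpha_k(a-1)^{j+1}\theta e_i$ up to $\p_N^{j+3}$; and the second term contributes $E\alpha_k(a-1)^{j+1}\theta e_i$ up to $\p_N^{j+2}$. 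Hence the difference lies in $\calF((\p_N^{j+2})^{(r)})/\calF((\p_N^{p+1})^{(r)})$. This is exactly the kind of computation already carried out in \cite[Lemma~7.2.5 / its analogue]{BC2} in the one-dimensional case, and the matrix factor $E$ and the basis vector $e_i$ go along for the ride without interaction. By Lemma \ref{f3surj} applied at level $j+2$, this difference is $\fW(-\mu_{i,j,k})$ for some $\mu_{i,j,k} \in W_{\geq j+2}$, and then $s_{i,j,k} := \alpha_k(a-1)e_i w_j - \alpha_k e_i w_{j+1} + \mu_{i,j,k}$ lies in $\ker(\fW)$ by construction. The base case $j = p-1$ is the same argument with $w_p$ interpreted as $0$: one checks $\fW(\alpha_k(a-1)e_i w_{p-1}) \in \calF((\p_N^{p+1})^{(r)})$, i.e. it vanishes in the quotient, which follows since $(a-1)^p$ raises valuation past $p$ (weak ramification) — this is where the hypothesis that $N/K$ is weakly ramified is used decisively, and where one must be a little careful with the precise valuation estimate for $(a-1)^p\theta$.

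The main obstacle I expect is bookkeeping the $\p_N$-adic valuations precisely enough: one needs that $(a-1)$ strictly increases the $\p_N$-valuation on $\calF$-coordinates (so that the products above land in the claimed filtration steps) and, for the base case, that the relevant element is killed modulo $\p_N^{p+1}$ rather than merely $\p_N^p$. These estimates are exactly the weakly-ramified estimates underlying \cite[Lemma~7.2.4, Lemma~7.2.5]{BC2}; the only new feature here is the presence of the matrix $E \in \Gl_r(\oo_{K'})$ and the vector $e_i$, but since $E$ has unit determinant and integral entries, multiplication by $E$ preserves every filtration step $\calF((\p_N^n)^{(r)})$, so it does not disturb the valuation computations. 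Once the valuation estimates are in place, the descending induction closes immediately.
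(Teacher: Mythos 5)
Your outline is in the right spirit — reduce to the formal-group computation from \cite[Lemma~7.2.5]{BC2} and invoke Lemma~\ref{f3surj} — but there is a genuine gap in the valuation estimate, and it matters for the conclusion $\mu_{i,j,k}\in W_{\geq j+2}$.

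Since $\fW(W_{\geq j+2})=\calF((\p_N^{j+3})^{(r)})/\calF((\p_N^{p+1})^{(r)})$ by Lemma~\ref{f3surj}, placing $\mu_{i,j,k}$ in $W_{\geq j+2}$ requires showing $\tfW(\alpha_k(a-1)e_iw_j-\alpha_ke_iw_{j+1})\equiv 0\pmod{\calF((\p_N^{j+3})^{(r)})}$, i.e.\ a congruence modulo $\p_N^{j+3}$. The congruence lemma you cite (the analogue of \cite[Lemma~4.1.7]{BC}) only gives precision modulo $\p_N^{j+2}$. Indeed you yourself conclude that the difference lies in $\calF((\p_N^{j+2})^{(r)})/\calF((\p_N^{p+1})^{(r)})$, and then invoke Lemma~\ref{f3surj} ``at level $j+2$'' to produce $\mu_{i,j,k}\in W_{\geq j+2}$ --- a mismatch: that level of Lemma~\ref{f3surj} needs the sharper $\p_N^{j+3}$ bound, while the $\p_N^{j+2}$ bound only yields $\mu_{i,j,k}\in W_{\geq j+1}$.

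The extra order of precision does not come from the congruence lemma; it comes from expanding the formal group law directly, as the paper does. Writing $x:=E\alpha_k(a-1)^ja\theta e_i$, $y:=E\alpha_k(a-1)^j\theta e_i$, $z:=x-y=E\alpha_k(a-1)^{j+1}\theta e_i$ and using
$X-_\calF Y=X-Y+(X^tA_hY)_h-(Y^tA_hY)_h+(\deg\ge 3)$,
the crucial observation you are missing is that the quadratic part of $x-_\calF y$ factors through $x-y$, i.e.\ is $((x-y)^tA_hy)_h$. Since $x-y=z$ has valuation $\ge j+2$ by weak ramification, the quadratic part has valuation $\ge (j+2)+(j+1)=2j+3\ge j+3$ for \emph{every} $j\ge 0$. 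Without this factorisation, the naive estimate $2(j+1)=2j+2$ falls one short of $j+3$ precisely when $j=0$, which is not a case you can discard. This gives $x-_\calF y-_\calF z\equiv 0\pmod{\calF((\p_N^{j+3})^{(r)})}$, hence $\tfW(\alpha_k(a-1)e_iw_j-\alpha_ke_iw_{j+1})\equiv 0$ there, and Lemma~\ref{f3surj} then produces $\mu_{i,j,k}\in W_{\geq j+2}$. Two smaller corrections: $\tfW(\alpha_ke_iw_{j+1})=E\alpha_k(a-1)^{j+1}\theta e_i$ holds exactly by the definition of $\tfW$ on $e_iv_{k,j+1}$, not merely modulo $\p_N^{j+2}$; and the proof of this lemma is a direct computation, not a descending induction on $j$ --- the induction is already packaged inside Lemma~\ref{f3surj}.
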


\begin{proof}
As in the proof of \cite[Lemma~7.2.5]{BC2}, we see that the formal subtraction of $X$ and $Y$ takes the form
\[X-_\calF Y=X-Y+(X^tA_hY)_h-(Y^tA_hY)_h+(\deg \ge 3).\]
with $A_h\in M_r(\Z_p)$ for $h=1,\dots,r$. In analogy to the proof of \cite[Lemma~7.2.5]{BC2} we set
\[x := E\alpha_k (a-1)^ja\theta e_i, \quad y := E\alpha_k(a-1)^j\theta e_i, \quad z := x-y=E\alpha_k(a-1)^{j+1}\theta e_i\]
and we obtain that $x -_\calF y -_\calF z \equiv  0\pmod{\calF((\p_N^{j+3})^{(r)})}$. Therefore
\[\tfW(\alpha_k(a-1) e_i w_j-\alpha_k e_i w_{j+1})\equiv 0\pmod{\calF((\p_{N}^{j+3})^{(r)})}\]
and we conclude the proof of the lemma using Lemma \ref{f3surj}.
\end{proof}

\begin{lemma}\label{defrk}
The elements
\[r_{i,1}=\T_a e_i \alpha_1 w_0+\epsilon u^{-1}\tilde u^{1-m\tilde m}\epsilon^{-1}b^{-\tilde m} e_i \alpha_{1} w_{p-1} -e_i \alpha_1 w_{p-1},\]
\[r_{i,k}=\T_a e_i \alpha_k w_0+\epsilon u^{-1}\tilde u^{1-m\tilde m}\epsilon^{-1}b^{-\tilde m} e_i \alpha_{k+1} w_{p-1} -e_i \alpha_k w_{p-1},\]
\[r_{i,m}=\T_a e_i \alpha_m w_0+\epsilon u^{-1}\tilde u^{1-m\tilde m}\epsilon^{-1}b^{-\tilde m} e_i \left(\alpha_1-\sum_{i=2}^m\alpha_i\right)w_{p-1}-e_i \alpha_m w_{p-1}\]
for $1\leq i\leq r$ and $1< k<m$ are in the kernel of $f_4$. Note that $\epsilon u^{-1}\tilde u^{1-m\tilde m}\epsilon^{-1}$ has coefficients in $\Zpnr$ and is fixed by $\varphi$, hence it has coefficients in $\Zp$.
\end{lemma}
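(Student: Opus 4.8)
The plan is to argue as in Lemma~\ref{defsjk}: unfold the definition of $f_4$ through $\tfW$ and reduce the assertion to showing that $\tfW$ sends each of $r_{i,1},r_{i,k},r_{i,m}$ into $\calF((\frp_N^{p+1})^{(r)})$. This is the $r$-dimensional counterpart of the computations in \cite[Sec.~7.2]{BC2}; the genuinely new ingredient is the presence of the matrices $E$, $\epsilon$, $\tilde u$ and the need to transport the Frobenius action through them using Lemma~\ref{E lemma}. A preliminary point --- this is the remark appended to the statement --- is that $C:=\epsilon u^{-1}\tilde u^{1-m\tilde m}\epsilon^{-1}$ is $\varphi$-fixed, from $\varphi(\epsilon)=\epsilon u$, $\varphi(\tilde u)=\tilde u$ and the commutativity of $u$ and $\tilde u$, and therefore, having entries in $\overline{\Zpnr}$, actually lies in $\Gl_r(\Zp)$; hence $r_{i,k}$ is a well-defined element of $W$.

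I would then compute $\tfW$ on the three summands of $r_{i,k}$. Since $\tfW(e_iv_{k,j})=E\alpha_k(a-1)^j\theta e_i$ with $\theta=\theta_1\theta_2$, and since $a$ fixes $E,\alpha_k\in\calO_{K'}$ and $\theta_2\in\tilde K'$ and moves only $\theta_1$, the identity $\T_{M/K}\theta_1=p$, the congruence $(a-1)^{p-1}\equiv\T_a\pmod{p}$ in $\Zp[\langle a\rangle]$, and the formal-group estimates for weakly and wildly ramified cyclic $p$-extensions established in \cite[Sec.~7.2]{BC2} (which use $p\ne 2$) allow one to write $\tfW(\T_a e_i\alpha_k w_0)$ and $\tfW(e_i\alpha_k w_{p-1})$ explicitly modulo $\calF((\frp_N^{p+1})^{(r)})$; both are essentially $p\,E\alpha_k\theta_2 e_i$, up to correction terms arising from $(a-1)^{p-1}\theta_1-\T_{M/K}\theta_1\in p\calO_M$ and from the quadratic part of the formal group law. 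The remaining task is to show that the image of the middle summand $C\,b^{-\tilde m}e_i\alpha'w_{p-1}$ --- where $\alpha'$ is $\alpha_1$, $\alpha_{k+1}$ or $\alpha_1-\sum_{j\ge 2}\alpha_j$ according as $k=1$, $1<k<m$ or $k=m$ --- cancels the residual contribution of the other two modulo $\calF((\frp_N^{p+1})^{(r)})$.

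For the middle summand I would use that $b=F^{-1}$ fixes $M\ni\theta_1$, commutes with $a$ (since $N/K$ is abelian), and acts on $\calO_{K'}$ and on $\overline{\Zpnr}$ as a power $\varphi^{m\tilde m}$ of $\varphi$, where $\tilde m$ is chosen so that $b^{-\tilde m}$ restricts to $\varphi$ on $\tilde K'$, equivalently $m\tilde m\equiv 1\bmod d$. Thus $\tfW(C\,b^{-\tilde m}e_i\alpha'w_{p-1})=C\,b^{-\tilde m}\!\bigl(E\alpha'(a-1)^{p-1}\theta e_i\bigr)$. Writing $E=\epsilon(\epsilon^{-1}E)$ and combining $\varphi^{j}(\epsilon)=\epsilon u^{j}$ with the iterated form $\varphi^{j}(\epsilon^{-1}E)\equiv(u^{-1}\tilde u)^{j}\epsilon^{-1}E\pmod{p}$ of Lemma~\ref{E lemma}(c) makes the commuting powers of $u$ and $\tilde u$ collapse, leaving $C\,b^{-\tilde m}(E)\equiv\epsilon u^{-1}\tilde u\,\epsilon^{-1}E\pmod p$; under Hypothesis~(T) this reduces to $E$, and under Hypothesis~(I) (where $E=1=\tilde u$) to $\epsilon u^{-1}\epsilon^{-1}$. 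Feeding this, together with $b^{-\tilde m}(\theta_2)=\theta_2^{\varphi}$ and the reduction of $(a-1)^{p-1}\theta_1$ modulo $p$, into the defining relations (\ref{choice of alpha}) of the $\alpha_k$ --- and using $\T_{K/\Qp}A=1$, i.e.\ $A^{\varphi^{m-1}}=1-\sum_{i=0}^{m-2}A^{\varphi^i}$, which identifies $\alpha_1-\sum_{j\ge2}\alpha_j$ with the ``next'' element $\alpha_1A^{\varphi^{m-1}}$ of the cyclic pattern and so explains the special shape of $r_{i,m}$ --- one checks that the middle summand exactly compensates the contribution of the other two, so that $\tfW(r_{i,k})\in\calF((\frp_N^{p+1})^{(r)})$.

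The hard part is this last coordination. One must simultaneously control the $\frp_N$-valuations of all the formal-group error terms (where, exactly as in \cite[Sec.~7.2]{BC2}, the weak ramification of $N/K$ and $p\ne 2$ are essential) and the mod-$p$ congruences supplied by Lemma~\ref{E lemma}, and fit them together coordinate by coordinate so that the total lands in $\calF((\frp_N^{p+1})^{(r)})$. The non-commutativity of $\epsilon$ with $u$ and $\tilde u$ is precisely why $C$ carries the conjugated shape $\epsilon u^{-1}\tilde u^{1-m\tilde m}\epsilon^{-1}$, and verifying that all these conjugations cancel as claimed is the delicate computational heart of the argument.
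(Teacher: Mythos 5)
Your proposal is correct and follows essentially the same route as the paper: compute the image of each summand via the $p$-th power/Frobenius congruence, transport $\varphi$ through $\epsilon^{-1}E$ using the iterated form of Lemma~\ref{E lemma}(c), and verify the mod-$p$ cancellation using the cyclic pattern of the $\alpha_k$ and $\T_{K/\Qp}A=1$. The paper casts this computation in $\calI_{N/K}(\rhonr)$ via $\norm_{N_0/K_0}$ and $\trace_{N_0/K_0}$ congruences, adapting \cite[Lemma~3.2.2]{BC}, \cite[Lemma~4.2.6]{BC} and \cite[Lemma~7.2.6]{BC2}, which is the same content as your formal-group formulation.
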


\begin{proof}

We denote by $v_j$ the $j$-th component of a vector $v$. Using \cite[Lemma~3.2.2]{BC} we calculate
\[\begin{split}(\norm_{N_0/K_0}(\epsilon^{-1}E\alpha_k\theta e_i))_j&=\norm_{N_0/K_0}(\theta_1){(\epsilon^{-1}E\alpha_k\theta_2 e_i)_j}^p\\
&\equiv -\alpha_k^p\theta_2^p\alpha_1^{1-p}p{(\epsilon^{-1}E e_i)_j}^p\pmod{\p_{N_0}^{p+1}}\\
&\equiv -\alpha_k^p\theta_2^p\alpha_1^{1-p}p(\varphi(\epsilon^{-1}E) e_i)_j\pmod{\p_{N_0}^{p+1}}\\
&\equiv -\alpha_k^p\theta_2^p\alpha_1^{1-p}p \left(u^{-1}\tilde u\epsilon^{-1}E e_i\right)_j\pmod{\p_{N_0}^{p+1}}.\end{split}\]
We also compute
\[\trace_{N_0/K_0}(\epsilon^{-1}E\alpha_k\theta e_i)=\trace_{N_0/K_0}(\theta_1)\epsilon^{-1}E\alpha_k\theta_2 e_i\equiv p\alpha_k\theta_2 \epsilon^{-1}E  e_i\pmod{\p_{N_0}^{p+1}}.\]
With this in mind, we can do analogous calculations to those in \cite[Lemma~4.2.6]{BC} and \cite[Lemma~7.2.6]{BC2} and obtain
\[f_4(\T_a e_i \alpha_k w_0)\equiv 1+\left(\alpha_k\theta_2-\alpha_{1}\left(\frac{\alpha_k}{\alpha_1}\right)^p\theta_2^{b^{-\tilde m}}u^{-1}\tilde u\right)p\epsilon^{-1}E e_i\pmod{\p_{N_0}^{p+1}}.\]

Furthermore,
\[\begin{split}&f_4(\epsilon u^{-1}\tilde u^{1-m\tilde m}\epsilon^{-1}b^{-\tilde m}e_i\alpha_{k+1}w_{p-1})\\
&\qquad\equiv 1+u^{-1}\tilde u^{1-m\tilde m}\epsilon^{-1}E^{\varphi^{m\tilde m}}\alpha_{k+1}p\theta_2^{b^{-\tilde m}} e_i\pmod{\p_{N_0}^{p+1}}\\
&\qquad\equiv 1+u^{-1}\tilde u^{1-m\tilde m}u^{m\tilde m}(\epsilon^{-1}E)^{\varphi^{m\tilde m}}\alpha_{k+1}p\theta_2^{b^{-\tilde m}} e_i\pmod{\p_{N_0}^{p+1}}\\
&\qquad\equiv 1+u^{-1}\tilde u \epsilon^{-1}E\alpha_{k+1}p\theta_2^{b^{-\tilde m}} e_i\pmod{\p_{N_0}^{p+1}}.
\end{split}\]
It is straightforward to adapt the remaining calculations from \cite[Lemma~7.2.6]{BC2} and conclude that $r_{i,k}\in \ker f_4$ for for $1\leq i\leq r$ and $1< k<m$. The proof that $r_{i,m}\in \ker f_4$ is analogous.
\end{proof}

From now on we have to distinguish the cases of Hypothesis (T) and (I).

We start assuming Hypothesis (I).

Following the computations in \cite[Sect.~7.3]{BC2}, we have
\[\chi_{\ZpG, \BdR[G]}(M^\bullet(\calL), 0) = [\calF((\p_N^{p+1})^{(r)}), \id, \calF(\p_N^{(r)})]=[\ker(\fW), \id, W].\]

\begin{lemma}\label{genkerh}
The $pmr$ elements $r_{k},s_{j,k}$ for $0\leq j\leq p-2$, $1\leq k\leq m$ constitute a $\ZpG$-basis of $\ker \fW$.
\end{lemma}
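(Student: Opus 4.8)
The module $W=\bigoplus_{j=0}^{p-1}\bigoplus_{k=1}^{m}\ZpG^{r}v_{k,j}$ is free over $\ZpG$ of rank $pmr$ on the basis $\{e_{i}\alpha_{k}w_{j}\}$, and by Lemmas~\ref{defsjk} and~\ref{defrk} the $pmr$ elements $r_{i,k}$ ($1\le i\le r$, $1\le k\le m$) and $s_{i,j,k}$ ($1\le i\le r$, $0\le j\le p-2$, $1\le k\le m$) all lie in $\ker\fW$. Let $M\subseteq\ker\fW$ be the $\ZpG$-submodule they generate and let $\Phi\in M_{pmr}(\ZpG)$ be the transition matrix writing these elements in the basis $\{e_{i}\alpha_{k}w_{j}\}$. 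I would first show that $\Phi$ becomes invertible over $\QpcG$; this gives that the $pmr$ elements are $\ZpG$-linearly independent, hence that $M$ is free of rank $pmr=\rk_{\ZpG}W$, so that $M$, and a fortiori $\ker\fW$, has finite index in $W$. It then suffices to compute $[W:M]$ and $[W:\ker\fW]$ and to check that they agree, which forces $M=\ker\fW$.

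For the invertibility I would order the target basis by the $w$-index and the generators as $s_{i,0,k},\dots,s_{i,p-2,k}$ followed by $r_{i,k}$, so that $\Phi$ has a block ``staircase'' shape: each $s_{i,j,k}$ contributes $a-1$ in block $j$, $-1$ in block $j+1$ and the correction $\mu_{i,j,k}\in W_{\ge j+2}$ in blocks of higher index, while each $r_{i,k}$ contributes $\T_a$ in block $0$, $0$ in blocks $1,\dots,p-2$, and an $(mr)\times(mr)$ matrix $C$ in block $p-1$, the matrix $C$ being of circulant type in the $\alpha_{k}$-index, twisted by $\epsilon u^{-1}\epsilon^{-1}b^{-\tilde m}$ and built from the relations~(\ref{choice of alpha}). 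Since $G$ is abelian one evaluates $\Phi$ at each character $\chi$ of $G$. If $\chi(a)\neq 1$, then $\chi(\T_a)=0$ and $\Phi_\chi$ is block lower triangular with diagonal blocks $(\chi(a)-1)I_{mr}$ (occurring $p-1$ times) and $C_\chi$; one checks with~(\ref{choice of alpha}) that the ``product around the cycle'' underlying $C$ is conjugate to $\chi(b)^{-m\tilde m}\epsilon u^{-m}\epsilon^{-1}$, so that $\det C_\chi=\pm\det\bigl(u^{m}-\chi(b)^{-m\tilde m}I\bigr)$ up to a $p$-adic unit, and this is non-zero by Hypothesis~(F): an eigenvalue $\lambda$ of $u$ with $\lambda^{m}=\chi(b)^{-m\tilde m}$ would satisfy $\lambda^{md}=1$, contradicting the invertibility of $U_N-1=u^{md}-1$ over $\Qp$. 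If $\chi(a)=1$, then $\chi(a-1)=0$ and $\chi(\T_a)=p$, and after a permutation of blocks $\Phi_\chi$ is triangular with one diagonal block $pI_{mr}$ and $p-1$ diagonal blocks $-I_{mr}$, so that $\det\Phi_\chi=\pm p^{mr}$. In every case $\det\Phi_\chi\neq 0$.

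For the two indices: by surjectivity of $\fW$ (Lemma~\ref{f3surj}) and the filtration isomorphisms~(\ref{fil iso}) for $i=1,\dots,p$, together with $\#(\p_N^{i}/\p_N^{i+1})=p^{md}$, one obtains
\[
[W:\ker\fW]=\#\bigl(\calF(\p_N^{(r)})/\calF((\p_N^{p+1})^{(r)})\bigr)=p^{pmdr}.
\]
On the other hand $W$ is free over $\Zp$ and $\Phi$ is injective with finite cokernel, so $[W:M]=\#\coker\Phi=p^{v}$ with $v=\sum_{\chi\in\widehat G}v_p(\det\Phi_\chi)$, where the $\Phi_\chi$ are the components of $\Phi$ under $\Qpc\otimes_{\Qp}\QpG\cong\prod_{\chi\in\widehat G}\Qpc$. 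By the previous paragraph the $d$ characters with $\chi(a)=1$ contribute $d\cdot mr$, and each of the $(p-1)d$ characters with $\chi(a)\neq 1$ contributes $v_p\bigl((\chi(a)-1)^{(p-1)mr}\bigr)+v_p(\det C_\chi)=mr+v_p\bigl(\det(u^{m}-\chi(b)^{-m\tilde m}I)\bigr)$; under Hypothesis~(I) the matrix $u^{m}-\chi(b)^{-m\tilde m}I$ is invertible over $\Zp$ (the same eigenvalue argument modulo the maximal ideal, now using that $U_N-1$ is a unit in $M_r(\Zp)$), so this last term vanishes. Hence $v=dmr+(p-1)d\cdot mr=pmdr$, so $[W:M]=p^{pmdr}=[W:\ker\fW]$, and therefore $M=\ker\fW$, which is the assertion of the lemma.

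The obstacle is not conceptual but is the bookkeeping in the second step: one has to determine the precise shape of the corner block $C$ coming from the $r_{i,k}$, using the relations~(\ref{choice of alpha}) among $\alpha_1,\dots,\alpha_m$ and the roles of $\tilde m$ and of $\epsilon$, so as to identify $\det C_\chi$ (up to $p$-adic units) with $\det(u^{m}-\chi(b)^{-m\tilde m}I)$, and one has to verify that all normalizations are such that $\#(\p_N^{i}/\p_N^{i+1})=p^{md}$ and $\omega=v_p(\det(U_N-1))$ enter with exactly the multiplicities above. Under Hypothesis~(T) the same outline applies with $E$ and $\tilde u$ from~(\ref{E def}) and~(\ref{tilde u def}) in place of $1$, Lemma~\ref{E lemma} providing the congruences needed to handle $C$.
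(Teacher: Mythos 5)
Your proof is correct and follows the same strategy as the paper, which itself delegates the bulk of the argument to the one-dimensional proof in \cite[Lemma~7.3.1]{BC2}: one shows the candidate generators lie in $\ker\fW$ (Lemmas~\ref{defsjk}, \ref{defrk}), exhibits the transition matrix with the block-staircase shape you describe, identifies the corner block $\calM$ of $w_{p-1}$-components of the $r_{i,k}$, and then compares $\#\coker\Phi$ with $\#(W/\ker\fW)$. Your evaluation at characters and the index bookkeeping (contributions $mr$ per character, $pdmr$ total, matching $\#\calF(\p_N^{(r)})/\calF((\p_N^{p+1})^{(r)})=p^{pdmr}$) reproduce the adaptation the paper makes of \cite{BC2}, with Hypothesis~(I) playing the role of $\omega=0$ exactly as the paper notes, and your eigenvalue argument via Hypothesis~(F) for non-vanishing and via~(I) for unit-ness is the right reason the corner block behaves as needed.

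One small correction: your closing remark that ``under Hypothesis~(T) the same outline applies with $E$ and $\tilde u$ in place of $1$'' is not accurate. Under Hypothesis~(T) the kernel of $f_4$ has $\ZpG$-rank $r(pm+1)$ rather than $pmr$, a different generating set (the $t_{i,1},t_{i,2}$ and modified $r_{i,k}$) is used, and a different corner block $\calM$ arises; the present lemma is specific to Hypothesis~(I). This does not affect the validity of your argument for the statement as given.
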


\begin{proof}
We adapt the proof of \cite[Lemmas~7.3.1]{BC2}. We write the coefficients of the  $e_i\alpha_kw_{p-1}$-components, $i=1,\dots,r$, $k=1,\dots,m$, of the elements $r_{i,j}$, $j=1,\dots,m$, into the columns of an $mr \times mr$  matrix which we call $\calM$ and whose entries are $r\times r$ blocks,
\[\calM=\begin{pmatrix}
\epsilon u^{-1}\epsilon^{-1} b^{-\tilde m}\!-\!1\!\!\!\!\!\!&0&\cdots&0&0&\epsilon u^{-1}\epsilon^{-1} b^{-\tilde m}\\
0&-1&\cdots&0&0&-\epsilon u^{-1}\epsilon^{-1}b^{-\tilde m}\\
0&\!\!\!\epsilon u^{-1}\epsilon^{-1}b^{-\tilde m}\!\!\!&\cdots&0&0&-\epsilon u^{-1}\epsilon^{-1}b^{-\tilde m}\\
\vdots&\vdots&\ddots&\vdots&\vdots&\vdots\\
0&0&\cdots&\!\!\epsilon u^{-1}\epsilon^{-1}b^{-\tilde m}\!\!\!\!\!\!\!&-1&-\epsilon u^{-1}\epsilon^{-1}b^{-\tilde m}\\
0&0&\cdots&0&\epsilon u^{-1}\epsilon^{-1}b^{-\tilde m}&-1\!-\!\epsilon u^{-1}\epsilon^{-1} b^{-\tilde m}
\end{pmatrix}\!.\]
By Lemma \ref{detUcris} and analogous computations as in \cite{BC2} we obtain 
\[\det\calM=(-1)^{r(m-1)}\det(u^{-m}b^{-1}-1).\]
The rest of the proof works exactly as in \cite{BC2}; note that Hypothesis (I) plays the role of the assumption $\omega=0$ in the one-dimensional setting.
\end{proof}

\begin{prop}\label{prop 117}
Assume Hypothesis (I). For $\calL = \p_N^{p+1}$ the element 
\[\chi_{\ZpG, \BdR[G]}(M^\bullet(\calL), \lambda_2^{-1}) \in K_0(\ZpG, \BdR[G])\]
is contained in $K_0(\ZpG, \QpG)$ and represented by $\epsilon \in \QpG^\times$ where
\[\begin{split}
\epsilon_{\chi\phi}^{}&=\begin{cases}p^{mr}&\text{if $\chi=\chi_0$}\\
(-1)^{r(m-1)} \det(u^{-m}\phi(b)^{-1} - 1) (\chi(a)-1)^{mr(p-1)}&\text{if $\chi\neq\chi_0$.}\end{cases}\\
\end{split}\]
\end{prop}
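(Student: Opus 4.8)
The plan is to exploit the explicit $\ZpG$-basis of $\ker\fW$ obtained in Lemma \ref{genkerh}, namely the $pmr$ elements $r_{i,k}$ and $s_{i,j,k}$ ($0\le j\le p-2$, $1\le k\le m$, $1\le i\le r$), to write down the transition matrix from this basis to the standard $\ZpG$-basis $\{e_iv_{k,j}\}$ of $W$, and then to feed the resulting reduced norm through $\hat\partial^1$. Concretely, by Lemma \ref{genkerh} and the discussion preceding it under Hypothesis (I) we have $\chi_{\ZpG,\BdRG}(M^\bullet(\calL),\id) = [\ker(\fW),\id,W]$; incorporating the twist map $\lambda_2^{-1}$ only changes this by the refined Euler characteristic term already analyzed (it is trivial, by the first lemma of Section \ref{weakly ramified case}, since $\lambda_2$ was shown to give a zero contribution there via $s\circ\log_\calF$). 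So the task reduces to computing the class $[\ker(\fW),\id,W]$ in $K_0(\ZpG,\QpG)$, i.e. to computing the reduced norm (evaluated character by character) of the base-change matrix.

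First I would organize the basis of $\ker\fW$ so the transition matrix is block upper-triangular: the elements $s_{i,j,k}$ for $0\le j\le p-2$ express $\alpha_k e_i w_{j+1}$ in terms of lower-index generators modulo $W_{\ge j+2}$ (this is exactly the content of Lemma \ref{defsjk}), so after reordering they contribute an identity block up to unipotent error, hence reduced norm $1$ on every $\chi$-component. The only genuine contribution comes from the $w_{p-1}$-components, where the $r_{i,k}$ live; here the relevant $mr\times mr$ block is precisely the matrix $\calM$ of Lemma \ref{genkerh}, with $\det\calM = (-1)^{r(m-1)}\det(u^{-m}b^{-1}-1)$. Passing to the $\chi$-component for a nontrivial character $\chi$ of $G$ means specializing $b=F^{-1}\mapsto\phi(b)^{-1}$ — note $\chi$ nontrivial on $G$ but we are evaluating at the field automorphism $\phi$ on the unramified part, hence the appearance of $\phi(b)$ — together with the factor $(\chi(a)-1)^{m(p-1)}$ coming from the $r$-fold tensor of the $(a-1)^j$-scaling across the $p-1$ "rungs" $j=0,\dots,p-2$ of the $s$-relations once one computes $\Nrd$ of the full transition matrix (this is the $r$-dimensional version of the corresponding computation in \cite[Sect.~7.3]{BC2}, where for $r=1$ one gets $(\chi(a)-1)^{m(p-1)}$). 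For the trivial character $\chi_0$ one instead uses $\T_a e_i\alpha_1 w_0$-type generators which collapse $\T_a$ to $p$, producing the factor $p^{mr}$.

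The main obstacle will be the careful bookkeeping of the $\chi$-component of the full $pmr\times pmr$ transition matrix: one must verify that the $s_{i,j,k}$ contribute exactly $(\chi(a)-1)^{mr(p-1)}$ and not some twisted variant, and that the interaction between the $a$-action (captured by $\chi(a)$) and the $b=F^{-1}$-action (captured by $\phi(b)$ through the unramified twist $\epsilon u^{-1}\tilde u^{1-m\tilde m}\epsilon^{-1}b^{-\tilde m}$, which under Hypothesis (I) reduces to $\epsilon u^{-1}\epsilon^{-1}b^{-\tilde m}$) factors cleanly so that $\det\calM$ specializes to $\det(u^{-m}\phi(b)^{-1}-1)$ on the $\chi$-component. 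This is done by exactly the same reduction as in \cite[Sect.~7.3]{BC2}: the relevant Wedderburn component of $\QpG$ is a matrix algebra over a field, the reduced norm is the determinant of the $r\times r$-blocked matrix after applying $\chi$ to the group-ring entries, and Lemma \ref{detUcris} handles the block-companion shape of $\calM$. Once the transition matrix is assembled in block-triangular form, the reduced norm is multiplicative over the diagonal blocks, the $s$-blocks give $(\chi(a)-1)^{mr(p-1)}$ (or $p^{mr}$ at $\chi_0$), the $\calM$-block gives $\det(u^{-m}\phi(b)^{-1}-1)$ (trivially $1$ at $\chi_0$ since $\phi(b)$ acts as $1$ there and the $\T_a$-term already absorbed the $p$), and multiplying out yields the asserted formula for $\epsilon_{\chi\phi}$.
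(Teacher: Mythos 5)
Your overall strategy is the same as the paper's: take the explicit $\ZpG$-basis $\{r_{i,k}, s_{i,j,k}\}$ of $\ker(\fW)$ from Lemma \ref{genkerh}, form the transition matrix $\mm$ to the standard basis of $W$, and evaluate $\det(\chi\phi(\mm))$ on each Wedderburn component, using Lemma \ref{detUcris} for the companion-type block $\calM$. The final formula you land on is the paper's, so the outline is sound.

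Two of your intermediate claims are wrong, however, and would derail a literal execution. First, you state that the $s$-blocks ``contribute an identity block up to unipotent error, hence reduced norm $1$ on every $\chi$-component.'' This cannot be right and in fact contradicts your own next sentence: each $s_{i,j,k}$ carries a factor $(a-1)$ on its $w_j$-component, so after specializing at $\chi\ne\chi_0$ the corresponding superdiagonal blocks become $(\chi(a)-1)I$, which is neither the identity nor nilpotent; these blocks are precisely what produce the $(\chi(a)-1)^{mr(p-1)}$ you later (correctly) claim. Second, you justify the disappearance of the $\calM$-block at $\chi=\chi_0$ by saying ``$\phi(b)$ acts as $1$ there.'' That is false: $\chi$ and $\phi$ are the independent ramified and unramified factors of a character of $G$, and imposing $\chi=\chi_0$ places no constraint whatsoever on $\phi$. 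The correct mechanism is structural: at $\chi_0$ one has $\chi_0(a)-1=0$, so the superdiagonal $(a-1)I$-blocks of $\mm$ vanish and $\chi_0\phi(\mm)$ becomes block lower triangular; its determinant is then the product of the diagonal blocks $pI,-I,\dots,-I$, giving $p^{mr}(-1)^{mr(p-1)}=p^{mr}$ since $p$ is odd, and $\calM$, which sits strictly below the diagonal, never enters at all. Once these two points are repaired your argument coincides with the paper's computation.
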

\begin{proof}
We choose the elements $r_{i,k}$ and $s_{i,j,k}$ of Lemma \ref{genkerh} as a $\ZpG$-basis of $\ker(\fW)$ and fix the canonical 
$\ZpG$-basis of $W$. Then
\[\chi_{\ZpG, \BdR[G]}(M^\bullet(\calL), \lambda_2^{-1}) = [\ker(\fW), \id, W]\] 
is represented by the determinant of
\[\mm=\begin{pmatrix}
\T_aI&(a-1)I&0&\cdots&0&0\\
0&-I&(a-1)I&\cdots&0&0\\
0&*&-I&\cdots&0&0\\
\vdots&\vdots&\vdots&\ddots&\vdots&\vdots\\
0&*&*&\cdots&-I&(a-1)I\\
\calM&*&*&\cdots&*&-I
\end{pmatrix},\]
where in the above $pmr\times pmr$ all the entries are $mr\times mr$-blocks. Recalling that $p$ is odd, we get:
\[\begin{split}\det(\chi\phi(\mm))&=\begin{cases}p^{mr}(-1)^{mr(p-1)}&\text{if $\chi=\chi_0$}\\(-1)^{m^2r^2(p-1)}\det(\chi\phi(\calM))(\chi(a)-1)^{mr(p-1)}&\text{if $\chi\neq\chi_0$}\end{cases}\\
&=\begin{cases}p^{mr}&\text{if $\chi=\chi_0$}\\ (-1)^{r(m-1)}\det(u^{-m}\phi(b)^{-1}-1)(\chi(a)-1)^{mr(p-1)}&\text{if $\chi\neq\chi_0$.}\end{cases}\end{split}\]
\end{proof}

It remains to be considered the case of Hypothesis (T).

We now proceed as in \cite[Section~7.4]{BC2}.
\begin{lemma}\label{deff4}
There is a commutative diagram of $\Z_p[G]$-modules with exact rows
\[
  \xymatrix@C-=0.5cm{
    0\ar[r]& X(2)\oplus W\ar[r]\ar[d]^-{\tilde f_4}&W'\oplus W\ar[rr]^-{\delta_2}\ar[d]^-{\tilde f_3}&&\Z_p[G]^r z_0
    \ar[r]^-{\pi}\ar[d]^-{\tilde f_2}&\zz/(F_N-1)\zz\ar[r]\ar[d]^-{=}&0\\
    0\ar[r]& \calF(\frp_N^{(r)}) \ar[r]^-{\fFN}&\Nnr\ar[rr]^-{(F-1)\times 1}&&\Nnr\ar[r]&\zz/(F_N-1)\zz\ar[r]&0
  }
\]
where
\[\begin{split}
&\delta_2(e_iz_1)=(u^mb-1)e_iz_0,\\
&\delta_2(e_iz_2)=(a-1)e_iz_0,\\
&\delta_2(e_iv_{j,k})=0,\\
&\tilde f_2(e_iz_0)=[(\theta_1)_i,1,\dots,1],\\
&\tilde f_3(e_iz_1)=[(\theta_1)_i,1,\dots,1],\\
&\tilde f_3(e_iz_2)=[\gamma_i,\dots,\gamma_i],\\
&\tilde f_3(e_iv_{k,j})=\fFN(\tfW(e_iv_{k,j})),\\
&\pi(e_iz_0)=e_i,
\end{split}\]
for all $i$, $j$ and $k$. Further, $X(2)=\ker(\delta_2|_{W'})$ and $\tilde f_4$ is the restriction of $\tilde f_3$ to $X(2) \oplus W$.
\end{lemma}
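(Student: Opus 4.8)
The plan is to carry out the $r$-dimensional version of the construction in \cite[Section~7.4]{BC2}, which treats the case $r=1$. First, observe that the bottom row is precisely the exact sequence (\ref{fFN}) (with $\Nnr=\calI_{N/K}(\rhonr)$ and the middle differential of $C_{N,T}^\bullet$ written here as $(F-1)\times 1$), so its exactness is already available. The maps $\delta_2$, $\tilde f_2$, $\tilde f_3$, $\pi$ are defined on the displayed $\ZpG$-bases of the free modules $W'\oplus W$ and $\ZpG^r z_0$ and extended $\ZpG$-linearly, the top left arrow is the inclusion $X(2)\oplus W\hookrightarrow W'\oplus W$, and $\tilde f_4$ will be produced only at the end. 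It then remains to prove exactness of the top row and commutativity of the middle and right-hand squares.

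For the top row I would first note that $\pi$ is surjective, since $\pi(e_iz_0)=e_i$ and the $e_i$ generate $\zz/(F_N-1)\zz$, and that $\pi\circ\delta_2=0$ on generators: trivially on $e_iv_{k,j}$; on $e_iz_2$ because $a$ lies in the inertia group and so acts trivially on $\zz/(F_N-1)\zz$; on $e_iz_1$ because $b=F^{-1}$ acts on $\zz/(F_N-1)\zz$ as $\rhonr(F_K)^{-1}=u^{-m}$, so that $\pi\bigl((u^mb-1)e_iz_0\bigr)=u^mu^{-m}e_i-e_i=0$. The essential point is $\ker\pi=\mathrm{im}(\delta_2)$, for which I would identify the cokernel of $\delta_2$ with $\zz/(F_N-1)\zz$ compatibly with $\pi$. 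Since $N=MK'$ with $M\cap K'=K$ one has $G=\langle a\rangle\times\langle b\rangle$; killing the relations $(a-1)e_iz_0$ turns $\ZpG^r z_0$ into $\Z_p[\langle b\rangle]^r$, and one is left to factor out $(u^mb-1)\Z_p[\langle b\rangle]^r$. Using the identity $(u^mb-1)\sum_{k=0}^{d-1}(u^mb)^k=(u^mb)^d-1=u^{md}-1$ (valid since $b^d=1$) together with the fact that modulo $(u^mb-1)\Z_p[\langle b\rangle]^r$ every element reduces to its degree-zero part, a short computation yields an isomorphism of $\ZpG$-modules
\[
  \Z_p[\langle b\rangle]^r\big/(u^mb-1)\Z_p[\langle b\rangle]^r\ \cong\ \Z_p^r\big/(u^{md}-1)\Z_p^r\ =\ \zz/(F_N-1)\zz
\]
(recall $U_N=u^{d_N}=u^{md}$) sending the class of $e_iz_0$ to $e_i$, which is exactly $\pi$. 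Finally $\delta_2$ vanishes on $W$, so $\ker(\delta_2)=X(2)\oplus W$ is the image of the inclusion, and the top row is exact.

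Next I would treat commutativity. The right-hand square (verticals $\tilde f_2$ and the identity) amounts to the statement that $[(\theta_1)_i,1,\dots,1]\in\Nnr$ is sent to $e_i$ by the cokernel map $\Nnr\to\zz/(F_N-1)\zz$, which I would read off from the explicit model of $\calI_{N/K}(\rhonr)$ in \cite[Sec.~3]{Cobbe18}, as in the case $r=1$. For the middle square $((F-1)\times 1)\circ\tilde f_3=\tilde f_2\circ\delta_2$ I would check the three kinds of generator. On $e_iv_{k,j}$ the element $\tilde f_3(e_iv_{k,j})=\fFN(\tfW(e_iv_{k,j}))$ lies in $\mathrm{im}(\fFN)=\ker((F-1)\times 1)$ by exactness of the bottom row, and $\delta_2$ annihilates $e_iv_{k,j}$, so both sides vanish. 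On $e_iz_2$ the defining property $(F_N-1)\gamma_i=(\theta_1)_i$ (from \cite[Lemma~2.4]{Cobbe18}) together with the description of $(F-1)\times 1$ on $\calI_{N/K}(\rhonr)$ gives $((F-1)\times 1)([\gamma_i,\dots,\gamma_i])=(a-1)[(\theta_1)_i,1,\dots,1]=\tilde f_2(\delta_2(e_iz_2))$, and on $e_iz_1$ one obtains in the same way $((F-1)\times 1)([(\theta_1)_i,1,\dots,1])=(u^mb-1)[(\theta_1)_i,1,\dots,1]=\tilde f_2(\delta_2(e_iz_1))$. Once the middle square commutes, $\tilde f_2\circ\delta_2$ vanishes on $X(2)\oplus W=\ker(\delta_2)$, so $\tilde f_3$ maps $X(2)\oplus W$ into $\ker((F-1)\times 1)=\mathrm{im}(\fFN)$; since $\fFN$ is injective this defines the unique $\tilde f_4$ with $\fFN\circ\tilde f_4=\tilde f_3|_{X(2)\oplus W}$, and the left-hand square commutes by construction.

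The hardest part will be the verification of the middle square on the generators $e_iz_1$ and $e_iz_2$: this is not a formal diagram chase but relies on the explicit model of $\calI_{N/K}(\rhonr)$ and of its differential from \cite[Sec.~3]{Cobbe18} and on the precise normalisations of $\theta_1$ and $\gamma_i$, so that it amounts to the $r$-dimensional rewriting of the computations behind \cite[Lemma~7.2.6 and Section~7.4]{BC2}. The identification of $\coker(\delta_2)$, though elementary, is the other place requiring genuine input, namely that $u^{md}$ is invertible over $\Z_p$.
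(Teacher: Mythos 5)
Your proposal is correct and follows essentially the same route as the paper's own proof, which recalls the explicit $\Gal(K_0/K)\times G$-action on $\calI_{N/K}(\rhonr)$ and the $G$-action on $\calZ/(U_N-1)\calZ$, takes the exactness of the bottom row from \cite[Thm.~3.3 and Lemma~2.1]{Cobbe18}, defers the exactness of the top row and the commutativity to the one-dimensional template \cite[Lemma~7.4.1]{BC2}, and spells out only the $e_iz_1$-square; you carry out the same verifications (surjectivity of $\pi$, $\pi\circ\delta_2=0$, identification of $\coker\delta_2$, $\ker\delta_2=X(2)\oplus W$, square-by-square commutativity, construction of $\tilde f_4$ by injectivity of $\fFN$) in a slightly more systematic order. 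The only point worth a careful second look is the $e_iz_2$-square: your asserted identity $((F-1)\times 1)[\gamma_i,\dots,\gamma_i]=(a-1)[(\theta_1)_i,1,\dots,1]$ is consistent only if $\tilde f_2(e_iz_0)$ is read as the class of $\theta_1$ rather than of $\theta_1^{a-1}$ in the $i$-th slot (as the displayed chain in the paper's own proof and the right-hand square both implicitly require), since the paper's earlier definition $(\theta_1)_i:=(1,\dots,\theta_1^{a-1},\dots,1)$ and the defining relation $(F_N-1)\gamma_i=(\theta_1)_i$ would otherwise produce a spurious extra factor of $(a-1)$.
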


\begin{proof}
  We recall from \cite[Sec.~2]{Cobbe18} that the action of $\Gal(K_0/K) \times G$  on $\calI_{N/K}(\rhonr)$ is
  characterized by 
  \begin{eqnarray*}
    (F \times 1) [x_1, \ldots, x_d] &=& [U_Nx_d^{F_N}, x_2, \ldots, x_{d-1}], \\
    (F^{-n} \times \sigma) [x_1, \ldots, x_d] &=&
               [\rhonr(\tilde\sigma) x_1^{\tilde\sigma}, \ldots,  \rhonr(\tilde\sigma) x_d^{\tilde\sigma}], 
  \end{eqnarray*}
  where $x_i \in \prod_r \widehat{N_0^\times}$, the elements $F^{-n}$ and $\sigma \in G$ have the same restriction to $N \cap K_0$ and
  $\tilde\sigma \in \Gal(N_0/K)$ is uniquely defined by $\tilde\sigma|_{K_0} = F^{-n}$ and $\tilde\sigma|_{N} = \sigma$.
  Furthermore, we remark that the action of $G$ on $\calZ / (U_N - 1)\calZ$ is induced by
  \[
    a \cdot e_i = e_i, \quad b \cdot e_i = \rhonr(F^{-1}) e_i = U_K^{-1}e_i.
  \]
  The bottom sequence is exact by \cite[Thm.~3.3 and Lemma~2.1]{Cobbe18}. The proofs of the exactness of the top sequence
  as well as the proof of commutativity follow along the lines of proof in
  the one-dimensional case, see \cite[Lemma~7.4.1]{BC2}.
  For example, if we denote the coefficients of $U_K = u^m$ by $u_{ij}$, then
  \[
    \begin{split}\tilde f_2\circ\delta_2(e_iz_1)&=\tilde f_2((u^mb-1)e_i)=\tilde f_2\left(\sum_{j=1}^r bu_{j,i}e_j-e_i\right)\\
      &=\sum_{j=1}^r (1\times b)u_{j,i}\cdot [(\theta_1)_j,1,\dots,1]-[(\theta_1)_i,1,\dots,1] \\
      &=(1\times b)\cdot [(\theta_1^{u_{1,i}},\theta_1^{u_{2,i}},\dots,\theta_1^{u_{r,i}}),1,\dots,1]-[(\theta_1)_i,1,\dots,1]\\
      &=(1\times b)[u(\theta_1)_i,1,\dots,1]-[(\theta_1)_i,1,\dots,1]\\
      &=(F\times 1)(F^{-1}\times b)[u(\theta_1)_i,1,\dots,1]-[(\theta_1)_i,1,\dots,1]\\
      &=(F\times 1)[u^{-1}u(\theta_1)_i,1,\dots,1]-[(\theta_1)_i,1,\dots,1]\\
      &=((F-1)\times 1)\circ \tilde f_3(u_iz_1)
    \end{split}
  \]
\end{proof}

We will need  an explicit description of $X(2)$, which generalizes the one given in \cite[Lemma~7.4.3]{BC2}.

\begin{lemma}\label{X2generators}
We have
\[
    X(2) =\langle (a-1)e_iz_1-(u^mb-1)e_iz_2, \trace_a e_iz_2 \rangle_\ZpG.
\]
\end{lemma}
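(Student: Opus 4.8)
The plan is to identify $X(2) = \ker(\delta_2|_{W'})$ explicitly, where $W' = \Z_p[G]^r z_1 \oplus \Z_p[G]^r z_2$ and $\delta_2$ sends $e_iz_1 \mapsto (u^mb-1)e_iz_0$, $e_iz_2 \mapsto (a-1)e_iz_0$ into $\Z_p[G]^r z_0$. First I would check that the two families of proposed generators indeed lie in the kernel: for the element $(a-1)e_iz_1 - (u^mb-1)e_iz_2$ one computes $\delta_2$ of it to be $(a-1)(u^mb-1)e_iz_0 - (u^mb-1)(a-1)e_iz_0$, which vanishes because $a$ and $b$ commute in the abelian group $G$ and $u^m$ is a scalar matrix acting componentwise (so $(a-1)$ and $(u^mb-1)$ commute as operators on $\Z_p[G]^r$); for $\trace_a e_iz_2$ one uses $\T_a(a-1) = a^p - 1 = 0$ in $\Z_p[G]$ since $\mathrm{ord}(a) = p$, so $\delta_2(\T_a e_iz_2) = \T_a(a-1)e_iz_0 = 0$.

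Next I would prove that these generators span the whole kernel. The key structural input is that $\Z_p[G]^r z_0$ with the quotient map $\pi$ onto $\zz/(F_N-1)\zz$ gives, by the exact top row of the diagram in Lemma \ref{deff4}, the identification $\mathrm{im}(\delta_2) = \ker(\pi)$. So an element $x = \sum_i \mu_i e_iz_1 + \sum_i \nu_i e_iz_2 \in W'$ (with $\mu_i, \nu_i \in \Z_p[G]$, discarding the $W_{\geq 0}$ part which $\delta_2$ kills) lies in $X(2)$ iff $\sum_i \mu_i(u^mb-1)e_iz_0 + \sum_i \nu_i(a-1)e_iz_0 = 0$ in $\Z_p[G]^r z_0$. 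This is a linear-algebra statement over the commutative ring $\Z_p[G] = \Z_p[a,b]/(a^p-1, b^d-1)$: one must describe all $\Z_p[G]$-linear relations among the rows of the $2r \times r$ "matrix" with blocks $(u^mb-1)$ and $(a-1)$ (each an $r\times r$ matrix over $\Z_p[b]$ resp. $\Z_p[a]$). The submodule generated by the proposed elements is the "obvious" relation module: the Koszul-type syzygy $(a-1)\cdot(\text{row }z_1) - (u^mb-1)\cdot(\text{row }z_2)$ together with the relation $\T_a \cdot (\text{row }z_2)$ coming from $\T_a(a-1)=0$. I would argue that these generate all relations by a direct computation modulo the structure of $\Z_p[G]$: decompose $\Z_p[G]^r z_0 \hookrightarrow \Q_p[G]^r z_0 = \bigoplus_{\chi} F_\chi^r$ over the Wedderburn components, on each of which $b$ acts as $\phi(b)$ (for a suitable embedding $\phi$) and $a$ acts as $\chi(a)$ (a $p$-th root of unity); the relation module on each component is then visibly generated by the images of the two families, and one checks the integral statement by the torsion-freeness already established (Lemma \ref{projectivity} in the relevant case, or directly since $W'$ is $\Z_p[G]$-projective). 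This is entirely parallel to the proof of \cite[Lemma~7.4.3]{BC2}, the only new feature being that the scalar $u^m$ is now an $r\times r$ matrix commuting with the group action, so none of the commutation identities used there are affected.

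The main obstacle I anticipate is the bookkeeping in the last step: verifying that the integral relation module is generated by exactly the listed elements (and not some larger submodule) requires care because $u^m b - 1$ need not be invertible over $\Z_p[G]$ — indeed under Hypothesis (T) it is highly non-invertible, which is precisely why $X(2)$ is interesting. I would handle this by mimicking the one-dimensional argument of \cite[Lemma~7.4.3]{BC2} closely: first reduce to the component algebra $\Z_p[a] = \Z_p[a]/(a^p-1)$ and its unramified extensions via the $b$-action, use that $(a-1)$ and $\T_a$ generate the augmentation-type relations in $\Z_p[a]$ (more precisely, the kernel of multiplication by $(a-1)$ on $\Z_p[a]^r$ is $\T_a \cdot \Z_p[a]^r$, and $\mathrm{im}(a-1) \cap \mathrm{im}(u^mb-1)$ is controlled by the Koszul relation), and then reassemble. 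Throughout, commutativity of $G$ and the fact that $u$ acts as a fixed matrix commuting with $a,b$ make every formal manipulation go through verbatim from the $r=1$ case.
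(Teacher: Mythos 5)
Your check that the proposed elements lie in $X(2)=\ker(\delta_2|_{W'})$ is correct, and the overall strategy of reducing to the module theory of $(a-1)$ and $\T_a$ on $\Z_p[G]$ matches the paper's approach, which simply defers to the $r=1$ proof of \cite[Lemma~7.4.3]{BC2}. The difficulty is with the converse inclusion $X(2)\subseteq M:=\langle(a-1)e_iz_1-(u^mb-1)e_iz_2,\ \T_ae_iz_2\rangle_{\Z_p[G]}$. The mechanism you primarily propose --- Wedderburn decomposition to verify that the generators span over $\Q_p[G]$, then "checking the integral statement by torsion-freeness" --- does not close the argument: the rational computation only shows that $X(2)/M$ is $\Z_p$-torsion, and neither the $\Z_p[G]$-projectivity of $W'$ nor Lemma~\ref{projectivity} makes this torsion vanish. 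Cokernels of maps between free $\Z_p[G]$-modules routinely have torsion, so some further input is required.

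The missing input is the injectivity of $u^mb-1$ on $\Z_p[G]^r$. Under Hypothesis (F) one has $\det(u^{md}-1)=\prod_{\zeta^d=1}\det(u^m-\zeta)\neq 0$, hence $\det(u^m\chi(b)-1)\neq 0$ for every character $\chi$ of $G$, so $\det(u^mb-1)$ is a non-zero-divisor of $\Z_p[G]$. With this in hand the integral argument is direct: given $(\mu,\nu)\in X(2)$, apply $\T_a$ to $(u^mb-1)\mu+(a-1)\nu=0$ and use $\T_a(a-1)=0$ to get $(u^mb-1)\T_a\mu=0$, hence $\T_a\mu=0$, hence $\mu=(a-1)\alpha$ by $\ker\T_a=\mathrm{im}(a-1)$; then $(a-1)\bigl((u^mb-1)\alpha+\nu\bigr)=0$ forces $(u^mb-1)\alpha+\nu=\T_a\beta$ by $\ker(a-1)=\mathrm{im}(\T_a)$, and $(\mu,\nu)=\alpha\bigl((a-1)z_1-(u^mb-1)z_2\bigr)+\beta\T_az_2$. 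This is precisely the part of \cite[Lemma~7.4.3]{BC2} you invoke without spelling out, and the injectivity of the now matrix-valued $u^mb-1$ is the one place the $r>1$ setting requires a genuinely new (if easy) verification, which your torsion-freeness shortcut silently elides. Your remark that $u^m$ commutes with the $G$-action is correct and handles the only other place where higher rank could have caused trouble.
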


\begin{proof}
  The proof is just the $r$-dimensional analogue of that of \cite[Lemma~7.4.3]{BC2}.
\end{proof}

We let
\[
  f_4 \colon X(2) \oplus W \lra \calF(\frp_N^{(r)}) / \calF((\p_N^{p+1})^{(r)}).
\]
denote the composite of $\tilde f_4$ with the canonical projection. By Lemma \ref{f3surj} the  homomorphism $f_4$ is surjective.

In the next proposition, which is the analogue of \cite[Lemma~7.4.2]{BC2}, we will obtain an explicit
representative for the complex $M^\bullet(\calL)$, which we will use to compute the Euler characteristic
$\chi_{\Z_p[G], \BdR[G]}(M^\bullet(\calL), 0)$.

\begin{prop}\label{nicerepresentative}
The complex
\[
F^\bullet := [\ker f_4\lra W'\oplus W\lra\Z_p[G]^r z_0]
\]
with modules in degrees $0,1$ and $2$ is a representative of $M^\bullet(\calL)$ for $\calL = \p_N^{p+1}$.
\end{prop}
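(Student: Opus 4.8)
The plan is to deduce this from the big commutative diagram of Lemma \ref{deff4} together with the explicit description of $M^\bullet(\calL)$ as
\[
M^\bullet(\calL) = [ \calI_{N/K}(\rhonr) / f_{\calF,N}(X(\calL)) \lra \calI_{N/K}(\rhonr) ]
\]
from (\ref{Mbullet}), where $X(\calL) = \calF((\frp_N^{p+1})^{(r)})$ by Lemma \ref{projectivity}. The key observation is that the diagram in Lemma \ref{deff4} exhibits the top three-term complex $[\,W' \oplus W \lra \Z_p[G]^r z_0\,]$ (together with $\tilde f_4$ on the kernel $X(2)\oplus W$) as a ``resolution'' of the bottom row, and the bottom row is precisely the data defining $\calI_{N/K}(\rhonr)$, its cohomology $\calZ/(F_N-1)\calZ$, and the sublattice $\calF(\frp_N^{(r)})$ via $f_{\calF,N}$. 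So I would first recall that the bottom row of Lemma \ref{deff4} realizes $C_{N,T}^\bullet = [\calI_{N/K}(\rhonr) \xrightarrow{F-1} \calI_{N/K}(\rhonr)]$ up to the identifications of (\ref{fFN}), and that $M^\bullet(\calL)$ is obtained from $C_{N,T}^\bullet$ by quotienting the degree-$1$ term by the image of $X(\calL)$ under $f_{\calF,N}$.

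Next I would form, from the vertical maps $\tilde f_2, \tilde f_3, \tilde f_4$ of Lemma \ref{deff4}, a morphism of complexes and take its mapping cone, or more directly argue as in the one-dimensional case \cite[Lemma~7.4.2]{BC2}: the quasi-isomorphism class of $M^\bullet(\calL)$ is computed by replacing $\calI_{N/K}(\rhonr)/f_{\calF,N}(X(\calL))$ in degree $1$ and $\calI_{N/K}(\rhonr)$ in degree $2$ by the projective modules mapping onto them. Concretely, the right-hand square of the diagram, together with the surjectivity of $f_4$ onto $\calF(\frp_N^{(r)})/\calF((\p_N^{p+1})^{(r)})$ established in Lemma \ref{f3surj} (and noted just before this proposition), shows that
\[
F^\bullet = [\ker f_4 \lra W' \oplus W \lra \Z_p[G]^r z_0]
\]
maps quasi-isomorphically to $M^\bullet(\calL)$: in degree $2$ one uses $\pi\colon \Z_p[G]^r z_0 \twoheadrightarrow \calZ/(F_N-1)\calZ$ and the identification $H^2(C_{N,T}^\bullet) = \calZ/(F_N-1)\calZ$; in degree $1$ one uses $\delta_2$ and $\tilde f_3$; and the point is that $\ker f_4 = \ker(\tilde f_4 \bmod \calF((\p_N^{p+1})^{(r)}))$ surjects onto the appropriate kernel in the bottom row with the error controlled by $X(\calL)$.

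The main obstacle I anticipate is bookkeeping the two ways the sublattice $X(\calL) = \calF((\p_N^{p+1})^{(r)})$ enters: once through $f_{\calF,N}$ in the definition of $M^\bullet(\calL)$, and once through passing from $\tilde f_4$ (with target $\calF(\frp_N^{(r)})$) to $f_4$ (with target the quotient by $\calF((\p_N^{p+1})^{(r)})$). One must check that these are compatible, i.e. that $\ker f_4$ is indeed the correct degree-$0$ term so that the cohomology of $F^\bullet$ in degrees $0,1,2$ matches $H^1(C_{N,T}^\bullet)/X(\calL)$-corrected data and $H^2$. This is exactly the content that makes the one-dimensional \cite[Lemma~7.4.2]{BC2} work, and since all the vertical maps, the exactness of both rows, and the surjectivity statements have already been established (Lemma \ref{deff4}, Lemma \ref{f3surj}, Lemma \ref{X2generators}), the verification is a diagram chase plus the observation that $W'$, $W$ and $\Z_p[G]^r z_0$ are finitely generated projective $\Z_p[G]$-modules while $X(2)\oplus W$ need not be, which is precisely why one keeps $\ker f_4$ abstract in degree $0$. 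I would therefore present the argument as: (i) recall the identifications, (ii) exhibit the morphism $F^\bullet \to M^\bullet(\calL)$ built from Lemma \ref{deff4}, (iii) check it is a quasi-isomorphism degree by degree using the stated exactness and surjectivity, concluding that $F^\bullet$ represents $M^\bullet(\calL)$.
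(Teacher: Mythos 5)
Your proposal is correct and follows essentially the same route as the paper's own (very terse) proof, which simply cites Lemma \ref{deff4} and Lemma \ref{f3surj} and asserts the quasi-isomorphism $F^\bullet \to M^\bullet(\calL)$ follows readily; you have correctly identified that the morphism is built from the vertical maps $\tilde f_2,\tilde f_3,\tilde f_4$ of Lemma \ref{deff4} (with the degree-$0$ replacement of $X(2)\oplus W$ by $\ker f_4$ to absorb $X(\calL)=\calF((\p_N^{p+1})^{(r)})$), and that the exactness of both rows plus surjectivity of $f_4$ from Lemma \ref{f3surj} give the degree-wise cohomology comparison $H^0(F^\bullet)=0$, $H^1(F^\bullet)\cong\calF(\p_N^{(r)})/X(\calL)\cong H^1(M^\bullet(\calL))$, $H^2(F^\bullet)\cong\calZ/(F_N-1)\calZ\cong H^2(M^\bullet(\calL))$.
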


\begin{proof}
  If we recall the definition of $M^\bullet(\calL)$ from (\ref{Mbullet}), then it follows readily from
  Lemma \ref{deff4} and Lemma \ref{f3surj} that we have a quasi-isomorphism of complexes
  \[
    F^\bullet \lra M^\bullet(\calL).
  \]
\end{proof}

For the computation of the Euler characteristic of $M^\bullet(\calL)$ we continue to closely follow the approach of
\cite[Sec.~7.4]{BC2}. On these grounds we will only sketch the proofs, pointing out the parts which are specific
for the higher dimensional setting.

The next result is an analogue of \cite[Lemma~7.4.5]{BC2} and \cite[Lemma~7.4.6]{BC2}.
Recall that by assumption $(m,d) = 1$ and let $\tilde m$ denote an integer such that
\[
  m\tilde m \equiv 1 \pmod d.
\]

\begin{lemma}\label{deft1}
There exist $y_{i,1}\in W_{\geq 1}$ such that
\[\begin{split}t_{i,1} &:= (a-1)e_iz_1-(u^mb-1)e_iz_2+\\
&\qquad+\left(\sum_{j=2}^{m}\alpha_{j}(u^mb)^{1-(j-2)\tilde m}+\left(\alpha_1-\sum_{j=2}^{m}\alpha_{j}\right)(u^mb)^{\tilde m}\right)e_iw_0+ y_{i,1}\end{split}\]
and
\[t_{i,2}:=\T_ae_iz_2-\beta e_i w_{p-1}\text{ with }\beta=\begin{cases}\alpha_1&\text{if }m=1\\ \alpha_2&\text{if }m>1\end{cases}\]
are in the kernel of $f_4$. 
\end{lemma}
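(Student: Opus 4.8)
The plan is to follow the proofs of \cite[Lemma~7.4.5 and Lemma~7.4.6]{BC2} almost verbatim, replacing scalars by the matrices $u^m$, $E$ and $\epsilon$, and feeding in the $r$-dimensional congruences already obtained in the proof of Lemma~\ref{defrk}. First note that both candidate elements lie in the domain $X(2)\oplus W$ of $f_4$: by Lemma~\ref{X2generators} the elements $(a-1)e_iz_1-(u^mb-1)e_iz_2$ and $\T_ae_iz_2$ generate $X(2)$ as a $\ZpG$-module, whereas the $e_iw_0$-term of $t_{i,1}$, the element $\beta e_iw_{p-1}$, and any $y_{i,1}\in W_{\geq 1}$ lie in $W$. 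Since $\tilde f_4$ is $\tilde f_3|_{X(2)\oplus W}$ followed by the inverse of $\fFN$, the commutative diagram of Lemma~\ref{deff4} shows that $\tilde f_3$ indeed maps $X(2)\oplus W$ into $\ker\big((F-1)\times 1\big)=\mathrm{im}(\fFN)$, that on $W$ it is $\fFN\circ\tfW$, and consequently that $f_4$ restricted to $W$ coincides, through $\fFN$, with $\fW$. Hence in each case $f_4$ of the displayed element is computed by evaluating $\tilde f_3$ on the $X(2)$-part (using $\tilde f_3(e_iz_1)=[(\theta_1)_i,1,\dots,1]$ and $\tilde f_3(e_iz_2)=[\gamma_i,\dots,\gamma_i]$, where $(F_N-1)\gamma_i=(\theta_1)_i$), writing the result as $\fFN(x)$ with $x\in\calF(\p_N^{(r)})$, and comparing $x$ modulo $\calF((\p_N^{p+1})^{(r)})$ with $\fW$ of the $W$-part.

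For $t_{i,2}$ this is a direct calculation. Since $a$ acts on $\prod_r\widehat{N_0^\times}$ componentwise through an element of the inertia group, on which $\rhonr$ is trivial, one obtains $\big((F-1)\times 1\big)\,\T_a[\gamma_i,\dots,\gamma_i]=\T_a[(\theta_1)_i,1,\dots,1]=[1,\dots,1]$, the last equality because $\theta_1^{(a-1)\T_a}=\theta_1^{a^p-1}=1$. Thus $\T_a[\gamma_i,\dots,\gamma_i]$ lies in $\mathrm{im}(\fFN)$, and the corresponding $x:=\tilde f_4(\T_ae_iz_2)\in\calF(\p_N^{(r)})$, expanded modulo $\p_N^{p+1}$ by means of the trace and norm congruences in $N_0$ from the proof of Lemma~\ref{defrk} (this is where weak ramification and $p$ odd enter), is identified with $\tfW(\beta e_iw_{p-1})=E\beta(a-1)^{p-1}\theta e_i$ modulo $\calF((\p_N^{p+1})^{(r)})$; the case distinction $\beta=\alpha_1$ for $m=1$ versus $\beta=\alpha_2$ for $m>1$ matches the two cases of that norm computation exactly as in \cite[Lemma~7.4.6]{BC2}. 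Hence $t_{i,2}\in\ker f_4$.

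For $t_{i,1}$ one evaluates $\tilde f_3\big((a-1)e_iz_1-(u^mb-1)e_iz_2\big)=(a-1)[(\theta_1)_i,1,\dots,1]-(u^mb-1)[\gamma_i,\dots,\gamma_i]$, which by the diagram of Lemma~\ref{deff4} lies in $\mathrm{im}(\fFN)$ and hence corresponds to a well-defined element of $\calF(\p_N^{(r)})$. Expanding this element modulo $\p_N^2$, using $\theta_1^{a-1}\equiv 1-\alpha_1\theta_1\pmod{\p_M^2}$, the formulas for the $\Gal(K_0/K)\times G$-action recalled in the proof of Lemma~\ref{deff4}, and the choice (\ref{choice of alpha}) of the basis $\alpha_1,\dots,\alpha_m$ of $\oo_K$, one checks that its degree-one part is cancelled by $\fW$ of the term $\big(\sum_{j=2}^m\alpha_j(u^mb)^{1-(j-2)\tilde m}+(\alpha_1-\sum_{j=2}^m\alpha_j)(u^mb)^{\tilde m}\big)e_iw_0$; here the exponent bookkeeping with $\tilde m$ (where $m\tilde m\equiv 1\pmod d$) records the difference between the action of $b^{\tilde m}$ and that of the Frobenius of $K'/K$, exactly as in \cite[Lemma~7.4.5]{BC2}. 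Therefore $f_4$ of the sum $(a-1)e_iz_1-(u^mb-1)e_iz_2+(\cdots)e_iw_0$ lies in $\calF((\p_N^{2})^{(r)})/\calF((\p_N^{p+1})^{(r)})$, which by Lemma~\ref{f3surj} equals $\fW(W_{\geq 1})$; choosing $y_{i,1}\in W_{\geq 1}$ with $\fW(y_{i,1})$ equal to the negative of this value yields $t_{i,1}\in\ker f_4$.

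The main obstacle is the computation in the last two paragraphs: the congruences must be pushed not merely to first order but modulo $\p_N^{p+1}$ --- which is precisely where weak ramification and the oddness of $p$ are needed, as in Lemma~\ref{defrk} --- while at the same time one must keep careful track of the non-commuting matrices $u^m$, $E$, $\epsilon$ and of the conjugates $(\cdot)^{\varphi^{j}}$, $(\cdot)^{b^{\tilde m}}$ they pick up under the Galois action, so that the matrix analogues of the scalar identities of \cite{BC2} still close up. Once those congruences are in place, the existence of the correction term $y_{i,1}$ follows immediately from the surjectivity statement of Lemma~\ref{f3surj}.
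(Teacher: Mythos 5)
Your proposal correctly identifies the overall strategy (follow \cite[Lemma~7.4.5 and Lemma~7.4.6]{BC2}), correctly locates the elements in the domain $X(2)\oplus W$, and correctly observes that $(F-1)\T_a[\gamma_i,\dots,\gamma_i]=\T_a[(\theta_1)_i,1,\dots,1]=[1,\dots,1]$. But there is a genuine gap in the step where you expand $x=\tilde f_4(\T_ae_iz_2)$ modulo $\p_N^{p+1}$, and it is precisely the step the paper's proof is devoted to.

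You appeal to ``the trace and norm congruences in $N_0$ from the proof of Lemma~\ref{defrk}'', but those congruences compute $\norm_{N_0/K_0}$ and $\trace_{N_0/K_0}$ of the elements $\epsilon^{-1}E\alpha_k\theta e_i$; they say nothing about $\T_a\gamma_i$ (equivalently, about $\norm_{N_0/K_0}$ applied to $\gamma_i$). The element $\gamma_i$ is only pinned down by the cohomological equation $(F_N-1)\gamma_i=(\theta_1)_i$ coming from \cite[Lemma~2.4]{Cobbe18}, which does not give any concrete first-order expansion, and without such an expansion you cannot compare $x$ with $\tfW(\beta e_iw_{p-1})=E\beta(a-1)^{p-1}\theta e_i$. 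The same issue affects the $t_{i,1}$ calculation, where $(u^mb-1)[\gamma_i,\dots,\gamma_i]$ appears. The paper's proof of this lemma consists essentially of supplying that missing ingredient: it fixes $x_2\in\oo_{K^{\mathrm{nr}}}$ with $x_2/\alpha_1\pmod{\p_{K'}}$ a root of $X^p-X+A\theta_2$, shows (by the argument of \cite[Lemma~7.2.2]{BC2}) that $(1+x_2\theta_1)_i^{\rhonr(F_N)F_N-1}\equiv(1-\alpha_1\theta_1)_i\pmod{\p_{N_0}^2}$, and then invokes \cite[Lemma~2.4]{Cobbe18} to conclude that one may choose $\gamma_i\equiv(1+x_2\theta_1)_i\pmod{\p_{N_0}^2}$. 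It is only with this explicit normalization of $\gamma_i$ in hand that the computations of \cite[Lemmas~7.4.5 and 7.4.6]{BC2} can be transplanted, with the minor bookkeeping of matrices and their Frobenius conjugates that you describe. You should add this construction of $\gamma_i$ before invoking the BC2 computations.
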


\begin{proof}
  Let $x_2\in\oo_{\Kur}$ be such that $x_2/\alpha_1\pmod{\p_{K'}}$ is a root of $X^p-X+A\theta_2$ and
  let $(1+x_2\theta_1)_i$ be the element in $\prod_r U_{N_0}^{(1)}$ whose $i$-th component is $1+x_2\theta_1$
  and all the other components are $1$. By the same proof as in \cite[Lemma~7.2.2]{BC2} we obtain
\[(1+x_2\theta_1)_i^{\rhonr(F_N)F_N-1}\equiv (1-\alpha_1\theta_1)_i\pmod{\p_{N_0}^2}.\]
Therefore, by \cite[Lemma~2.4]{Cobbe18} we may assume $\gamma_i\equiv (1+x_2\theta_1)_i\pmod{\p_{N_0}^2}$.

With this in mind, the proof of the lemma is the same as in \cite[Lemma~7.4.5]{BC2} and \cite[Lemma~7.4.6]{BC2}.
\end{proof}

\begin{lemma}
The elements
\[r_{i,1}=\T_a t_{i,1} + (u^mb-1)t_{i,2}\]
belong to $\ker f_4\cap W$ and their $\alpha_1w_0$-components are $(u^mb)^{\tilde m} \T_a e_i$.
\end{lemma}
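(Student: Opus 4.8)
The assertion is a bookkeeping computation, and the plan is to track the components of $r_{i,1}$ first with respect to the direct sum decomposition $X(2)\oplus W$ (the domain of $f_4$), and then with respect to the $\ZpG$-basis $\{e_\ell v_{k,j}\}$ of $W$.

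Since $f_4$ is $\ZpG$-linear and $t_{i,1},t_{i,2}\in\ker f_4$ by Lemma \ref{deft1}, the element $r_{i,1}=\T_a t_{i,1}+(u^mb-1)t_{i,2}$ lies in $\ker f_4$ without any work, so the content of the first half of the statement is that the $X(2)$-component of $r_{i,1}$ vanishes, i.e.\ that $r_{i,1}\in W$. By Lemma \ref{X2generators} the $X(2)$-components of $t_{i,1}$ and $t_{i,2}$ are the two generators $(a-1)e_iz_1-(u^mb-1)e_iz_2$ and $\T_a e_iz_2$. Applying $\T_a$ to the first and using $\T_a(a-1)=\T_a a-\T_a=0$ (because $\T_a a=\T_a$ as $a^p=1$), together with the fact that $\T_a$ commutes with $u^mb$ ($G$ is abelian and the entries of $u^m$ lie in the centre $\Zp$ of $\ZpG$), the $X(2)$-component of $\T_a t_{i,1}$ equals $-(u^mb-1)\T_a e_iz_2$; this exactly cancels the $X(2)$-component $(u^mb-1)\T_a e_iz_2$ of $(u^mb-1)t_{i,2}$. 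Hence $r_{i,1}\in\ker f_4\cap W$.

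For the $\alpha_1w_0$-component I would first observe that only $\T_a t_{i,1}$ can contribute at the ``$w_0$-level'': the $W$-part of $t_{i,2}$ is $-\beta e_iw_{p-1}\in W_{\ge p-1}$, so $(u^mb-1)t_{i,2}$ contributes nothing in degree $w_0$, and the error term $y_{i,1}$ of $t_{i,1}$ lies in $W_{\ge 1}$, so $\T_a y_{i,1}$ contributes nothing there either. It then remains to read off the $\alpha_1w_0$-part (that is, the $v_{1,0}$-part) of $\T_a$ applied to the explicit $w_0$-term $\bigl(\sum_{j=2}^{m}\alpha_{j}(u^mb)^{1-(j-2)\tilde m}+(\alpha_1-\sum_{j=2}^{m}\alpha_{j})(u^mb)^{\tilde m}\bigr)e_iw_0$ of $t_{i,1}$: since each $\alpha_j$ is central in $\OKG$ and $\alpha_jw_0=v_{j,0}$, the summand $\alpha_j(u^mb)^{\bullet}e_iw_0$ equals $(u^mb)^{\bullet}e_iv_{j,0}$, so the unique summand carrying $v_{1,0}$ is $\alpha_1(u^mb)^{\tilde m}e_iw_0=(u^mb)^{\tilde m}e_iv_{1,0}$. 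Applying $\T_a$ and commuting it past $(u^mb)^{\tilde m}$ yields $(u^mb)^{\tilde m}\T_a e_i$ as the $\alpha_1w_0$-component of $r_{i,1}$, as claimed. There is no genuine obstacle here; the only subtlety is to use the identification $v_{k,j}\leftrightarrow\alpha_kw_j$ correctly, which together with the centrality of $\OK$ in $\OKG$ and of $\Zp$ in $\ZpG$ is precisely what legitimises the $\T_a$-commutations and the relabelling $\alpha_j(\cdots)w_0=(\cdots)v_{j,0}$.
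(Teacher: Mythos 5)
Your proof is correct and is essentially the direct bookkeeping computation that the paper delegates to the cited result \cite[Lemma~4.2.5]{BC}: $r_{i,1}\in\ker f_4$ is automatic by $\ZpG$-linearity of $f_4$, the $X(2)$-parts cancel because $\T_a(a-1)=0$ while $\T_a$ commutes with $u^mb-1$, and the $\alpha_1 w_0$-component is read off from the explicit $w_0$-term of $t_{i,1}$ after noting that $y_{i,1}\in W_{\ge 1}$ and $-\beta e_iw_{p-1}$ contribute nothing in degree $0$. The centrality observations you invoke (for $\oo_K$ in $\OKG$, for $\Zp$-matrix entries in $\ZpG$, and abelianness of $G$) are exactly what legitimise the $\T_a$-commutations and the regrouping $\alpha_1(u^mb)^{\tilde m}+\sum_{j\ge 2}\alpha_j\bigl[(u^mb)^{1-(j-2)\tilde m}-(u^mb)^{\tilde m}\bigr]$ that isolates the $\alpha_1$-coefficient.
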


\begin{proof}
Straightforward by the same calculations as in \cite[Lemma~4.2.5]{BC}.
\end{proof}

We redefine the matrix $\calM$ considered in the case of Hypothesis (I) taking into account these new modified elements $r_{i,1}$:

\[\calM=\begin{pmatrix}
0&0&\cdots&0&0&\epsilon u^{-m\tilde m}\epsilon^{-1} b^{-\tilde m}\\
-1&-1&\cdots&0&0&-\epsilon u^{-m\tilde m}\epsilon^{-1}b^{-\tilde m}\\
0&\!\!\!\epsilon u^{-m\tilde m}\epsilon^{-1}b^{-\tilde m}\!\!\!&\cdots&0&0&-\epsilon u^{-m\tilde m}\epsilon^{-1}b^{-\tilde m}\\
\vdots&\vdots&\ddots&\vdots&\vdots&\vdots\\
0&0&\cdots&\!\!\epsilon u^{-m\tilde m}\epsilon^{-1}b^{-\tilde m}\!\!\!\!\!\!\!&-1&-\epsilon u^{-m\tilde m}\epsilon^{-1}b^{-\tilde m}\\
0&0&\cdots&0&\epsilon u^{-m\tilde m}\epsilon^{-1}b^{-\tilde m}&-1\!-\!\epsilon u^{-m\tilde m}\epsilon^{-1} b^{-\tilde m}
\end{pmatrix}\!.\]

By an easy calculation $\det(\calM)=(-1)^{mr}(\det(u)^mb)^{-\tilde m(m-1)}$.

\begin{lemma}
The $r(pm+1)$ elements $t_{1,i}, t_{2,i}$, for $i=1,\dots,r$, $r_{i,k}$, for $i=1,\dots,r$, $k=2,\dots,m$ and $s_{i,j,k}$, for $i=1,\dots,r$, $j=0,\dots,p-2$, $k=1,\dots,m$ constitute a $\Z_p[G]$-basis of $\ker(f_4)$.
\end{lemma}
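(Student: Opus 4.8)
Following \cite[Sec.~7.4]{BC2}, the plan is to run the $r$-dimensional version of the one-dimensional argument: first check that each listed element lies in $\ker(f_4)$, then that they span $\ker(f_4)$ over $\ZpG$, and finally that there are no relations among them, so that they constitute a basis. Membership is already available from the preceding lemmas: $s_{i,j,k}\in\ker(f_4)$ by Lemma~\ref{defsjk}, $t_{i,1},t_{i,2}\in\ker(f_4)$ by Lemma~\ref{deft1}, and $r_{i,k}\in\ker(f_4)$ for $2\le k\le m$ by Lemma~\ref{defrk} specialised to Hypothesis~(T): there $\tilde u=u$, so the coefficient $\epsilon u^{-1}\tilde u^{1-m\tilde m}\epsilon^{-1}$ becomes $\epsilon u^{-m\tilde m}\epsilon^{-1}$, matching the entries of the matrix $\calM$ used below. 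Counting, there are $r$ each of the $t_{i,1}$ and $t_{i,2}$, $r(m-1)$ of the $r_{i,k}$ and $rm(p-1)$ of the $s_{i,j,k}$, hence $r(pm+1)$ in all, as claimed.

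For the spanning step I would first peel off the part of $\ker(f_4)$ coming from $X(2)$. By Lemma~\ref{X2generators}, $X(2)$ is generated over $\ZpG$ by $(a-1)e_iz_1-(u^mb-1)e_iz_2$ and $\T_a e_iz_2$, and by their shapes in Lemma~\ref{deft1} the elements $t_{i,1}$ and $t_{i,2}$ reduce modulo $W$ precisely to these two generators. Hence every element of $\ker(f_4)$ agrees, modulo the $\ZpG$-span of the $t_{i,1},t_{i,2}$, with an element of $W\cap\ker(f_4)=\ker(\fW)$, using $f_4|_W=\fW$ from Lemma~\ref{deff4}. Adjoining the $r$ elements $r_{i,1}=\T_a t_{i,1}+(u^mb-1)t_{i,2}$, which lie in $\ker(f_4)\cap W=\ker(\fW)$ as observed just above, it therefore suffices to prove that the $r_{i,1}$, the $r_{i,k}$ ($2\le k\le m$) and the $s_{i,j,k}$ generate $\ker(\fW)$. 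For this one uses the increasing filtration $W_{\ge j}$ and descending induction on $j$: by Lemma~\ref{defsjk} the element $s_{i,j,k}$ has leading term $\alpha_k(a-1)e_iw_j-\alpha_k e_iw_{j+1}$ with error in $W_{\ge j+2}$, and by Lemma~\ref{f3surj} the maps $\fW(W_{\ge j})=\calF((\p_N^{j+1})^{(r)})/\calF((\p_N^{p+1})^{(r)})$ are surjective; so subtracting suitable $\ZpG$-multiples of the $s_{i,j,k}$ pushes any element of $\ker(\fW)$ down to one supported on the $w_0$- and $w_{p-1}$-parts. That remaining piece is handled by the $r_{i,1}$ and the $r_{i,k}$: writing the $e_i\alpha_k w_{p-1}$-coordinates of the $r_{i,k}$ into the block matrix $\calM$ one obtains $\det(\calM)=(-1)^{mr}(\det(u)^m b)^{-\tilde m(m-1)}$, which is a \emph{unit} in $\ZpG$ since $b=F^{-1}\in G$ and $\det(u)\in\Zp^\times$; thus the $w_{p-1}$-block is invertible over $\ZpG$, and after removing the $w_{p-1}$-coordinates the $\T_a e_i\alpha_k w_0$-terms take care of the $w_0$-part.

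It remains to see that the $r(pm+1)$ elements are $\ZpG$-linearly independent. Assembling them into the change-of-basis matrix $\mm$ relative to the canonical $\ZpG$-generators of $X(2)\oplus W$, exactly as in Proposition~\ref{prop 117}, the same block computations (together with Lemma~\ref{detUcris}) show that $\det(\chi\phi(\mm))\ne 0$ for every $\chi\in\Irr(G)$ and every embedding $\phi$, i.e. $\mm$ is invertible over $\QpG$; hence there is no nontrivial $\ZpG$-relation. Combined with the spanning step this yields the asserted $\ZpG$-basis of $\ker(f_4)$.

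The genuinely laborious point is the spanning step: one has to track the matrices $u$ and $\epsilon$ through the congruences of Lemmas~\ref{defrk} and~\ref{deft1}, confirm that all error terms really lie in the filtration pieces $W_{\ge j+2}$, and check that the reductions by the $t_{i,1},t_{i,2}$ and by the $r_{i,1}$ are mutually compatible. The matrix identities of Lemma~\ref{detUcris} and the unit $\epsilon u^{-m\tilde m}\epsilon^{-1}\in\Gl_r(\Zp)$ replace the scalar manipulations of \cite[Sec.~7.4]{BC2}, but otherwise the computation is the faithful $r$-dimensional analogue of the one-dimensional case, so I expect no conceptual obstacle beyond this bookkeeping.
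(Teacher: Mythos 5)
Your proof is correct and takes essentially the same approach as the paper, which simply defers to \cite[Lemma~7.4.9]{BC2} with the observation that $\calM$ carries over with $r\times r$ blocks; your sketch faithfully reconstructs that argument, correctly using the redefined $r_{i,1}=\T_a t_{i,1}+(u^mb-1)t_{i,2}$, the filtration via the $s_{i,j,k}$, and the unit $\det(\calM)=(-1)^{mr}(\det(u)^m b)^{-\tilde m(m-1)}$. One cosmetic remark: the relevant matrix $\mm$ for the Hypothesis~(T) setting is the one displayed just before Proposition~\ref{prop 118} (with the different top-left block), rather than the Hypothesis~(I) matrix of Proposition~\ref{prop 117}, and $W_{\ge j}$ is a decreasing rather than increasing filtration.
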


\begin{proof}
  It is enough to follow the proof of \cite[Lemma~7.4.9]{BC2}.
  Note that we can construct the matrix $\calM$ as in \cite{BC2} and that it also takes
  exactly the same shape up to the fact that all the entries are $r\times r$-blocks.
\end{proof}

By the same proof as in \cite{BC2} we can now reduce the computation of the term $\chi_{\Z_p[G], \BdR[G]}(M^\bullet(\calL), 0)$ to the determinant of a matrix $(w,\mm)$, which looks exactly as in \cite{BC2}, with the convention the elements in $\ZpG$ must be thought as diagonal $r\times r$ matrices. So in particular $m\times m$ blocks become $mr \times mr$ blocks and so on. Of course for the block $\calM$ we have to take the one defined above and not that of \cite{BC2}. With this in mind the matrix looks as follows:
\[\begin{pmatrix}
\frac{\sum_{i=0}^{d-1}(u^{m}b)^i}{u^{dm}-1}&(a-1)I_r&0&0&0&0&\cdots&0&0\\
0& \!\!\!1-u^mb\!\!\!&\T_a I_r&0&0&0&\cdots&0&0\\
0&v&0&\T_a\tilde I&(a-1)I_{rm}&0&\cdots&0&0\\
0&*&0&0&-I_{rm}&\!\!(a-1)I_{rm}\!\!&\cdots&0&0\\
0&*&0&0&*&-I_{rm}&\cdots&0&0\\
\vdots&\vdots&\vdots&\vdots&\vdots&\vdots&\ddots&\vdots&\vdots\\
0&*&0&0&*&*&\cdots&-I_{rm}&(a-1)I_{rm}\\
0&*&\calM_1&\tilde \calM&*&*&\cdots&*&-I_{rm}
\end{pmatrix}.
\]

Here $I_{rm}$ (resp. $I_r$) is the $rm\times rm$ (resp. $r\times r$) identity matrix, $\tilde I$ is obtained by $I_{rm}$ by removing the first $r$ columns, $v$ is an $r \times rm$ matrix, whose first $r$ rows coincide with the matrix $(u^mb)^{\tilde m}$ and $\frac{1}{u^{dm}-1}$ is the inverse of the matrix $u^{dm}-1$.

We are ready to record the result of the computation of the refined Euler characteristic of $M^\bullet(\calL)$. 

\begin{prop}\label{prop 118}
We assume the hypotheses (F) and (T) and let $\calL = \p_N^{p+1}$. Then the element 
$\chi_{\Z_p[G], \BdR[G]}(M^\bullet(\calL), 0)$ is contained in $K_0(\Z_p[G], \Q_p[G])$ and represented by
$\epsilon \in \Q_p[G]^\times$ where
\[
  \epsilon_{\chi\phi}^{}=
     \begin{cases}(-1)^{r}\frac{\det(u)^{m\tilde m}\phi(b)^{r\tilde m}p^{r m}}{\det(u^m\phi(b)-1)}&\text{if $\chi=\chi_0$}\\
                  (-1)^{(m-1)r}(\det(u)^m\phi(b)^r)^{-\tilde m(m-1)}(\chi(a)-1)^{r m(p-1)}&\text{if $\chi\neq\chi_0$}
                \end{cases}
\]
and $\chi\phi$ is the decomposition of a character in a ramified and an unramified component.
\end{prop}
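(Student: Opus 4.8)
By Proposition~\ref{nicerepresentative} the complex $F^\bullet = [\ker f_4\lra W'\oplus W\xrightarrow{\delta_2}\Z_p[G]^r z_0]$, with terms in degrees $0,1,2$, is a representative of $M^\bullet(\calL)$ for $\calL=\p_N^{p+1}$, and each of its three terms is a free $\Z_p[G]$-module: $\Z_p[G]^r z_0$ and $W'\oplus W$ by their canonical bases, and $\ker f_4$ by the basis $t_{i,1},t_{i,2}$ ($1\le i\le r$), $r_{i,k}$ ($1\le i\le r$, $2\le k\le m$), $s_{i,j,k}$ ($1\le i\le r$, $0\le j\le p-2$, $1\le k\le m$) exhibited above. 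Since $H^1(F^\bullet)\cong\calF(\p_N^{(r)})/\calF((\p_N^{p+1})^{(r)})$ and $H^2(F^\bullet)\cong\zz/(F_N-1)\zz$ are both finite (the latter by Hypothesis~(F)), the complex $F^\bullet\otimes_{\Z_p}\Q_p$ is acyclic, so $\chi_{\Z_p[G],\BdR[G]}(M^\bullet(\calL),0)$ is defined over $\Q_p[G]$, i.e.\ it lies in the image of $K_0(\Z_p[G],\Q_p[G])$ in $K_0(\Z_p[G],\BdR[G])$.

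To compute this class we use the standard description of a refined Euler characteristic attached to a rationally acyclic complex of free modules. Since $(u^m b)^d = u^{dm} = U_N$ and $U_N-1$ is invertible by Hypothesis~(F), the element $u^m b-1$ is invertible in $M_r(\Q_p[G])$ with inverse $\bigl(\sum_{i=0}^{d-1}(u^m b)^i\bigr)(u^{dm}-1)^{-1}$, so $\delta_2$ splits over $\Q_p[G]$ via $e_i z_0\mapsto\bigl(\sum_{i=0}^{d-1}(u^m b)^i\bigr)(u^{dm}-1)^{-1}e_i z_1$. Writing these $r$ lifts of the basis of $\Z_p[G]^r z_0$ together with the chosen $\Z_p[G]$-basis of $\ker f_4$ in terms of the canonical basis of $W'\oplus W$ produces precisely the square matrix $(w,\mm)$ displayed above, and one has $\chi_{\Z_p[G],\BdR[G]}(M^\bullet(\calL),0)=\hat\partial^1_{\Z_p[G],\Q_p[G]}(\Nrd(w,\mm))$ (up to a sign fixed by the standard conventions for the ordering of even and odd degrees). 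As $G$ is abelian, $\Nrd(w,\mm)$ is computed character by character through $\det(\chi\phi(w,\mm))$, where $\psi=\chi\phi$ runs over the characters of $G=\langle a\rangle\times\langle b\rangle$ with $\chi$ a character of the ramification group $\langle a\rangle$ and $\phi$ an unramified character of $\langle b\rangle=\langle F_K\rangle$.

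The evaluation of $\det(\chi\phi(w,\mm))$ now follows exactly the reduction of \cite[Sect.~7.4]{BC2}, with every scalar entry replaced by an $r\times r$ block. If $\chi\ne\chi_0$, then $\chi(\T_a)=0$ while $\chi(a)-1\ne 0$; the $\T_a$-blocks vanish, and clearing the columns of the $s_{i,j,k}$ along their $(a-1)$- and $-I$-entries telescopes the determinant into $(\chi(a)-1)^{rm(p-1)}$ times (up to sign) $\det(\chi\phi(\calM))$, for which we have recorded $\det(\calM)=(-1)^{mr}(\det(u)^m b)^{-\tilde m(m-1)}$. Collecting the block sizes, the permutation sign, and using that $p$ is odd (so $(-1)^{rm(p-1)}=1$), this yields $(-1)^{(m-1)r}(\det(u)^m\phi(b)^r)^{-\tilde m(m-1)}(\chi(a)-1)^{rm(p-1)}$. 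If $\chi=\chi_0$, then $\chi(a)-1=0$ kills the $(a-1)$-blocks while $\chi(\T_a)=p$, so the determinant becomes the product of: the $rm$ factors of $p$ coming from the $\T_a$-entries (in the $t_{i,2}$- and $r_{i,k}$-columns), giving $p^{rm}$; the block $\bigl(\sum_{i=0}^{d-1}(u^m\phi(b))^i\bigr)(u^{dm}-1)^{-1}=(u^m\phi(b)-1)^{-1}$ from the section of $\delta_2$ (using $\phi(b)^d=1$), giving $\det(u^m\phi(b)-1)^{-1}$; the block $1-u^m\phi(b)$ from the $t_{i,1}$-column; and the contribution $\det(u)^{m\tilde m}\phi(b)^{r\tilde m}$ of the $v$- and $\calM$-blocks; after the cancellation of $\det(1-u^m\phi(b))$ and collecting signs this is $(-1)^r\det(u)^{m\tilde m}\phi(b)^{r\tilde m}p^{rm}/\det(u^m\phi(b)-1)$.

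The main work, and essentially the only new point with respect to \cite{BC2}, is to verify that the auxiliary blocks ($\calM$ in its present form, and $v$, $\tilde\calM$, $\calM_1$) really have the asserted shapes. This amounts to re-expanding the generators $t_{i,1},t_{i,2},r_{i,k}$ via the preceding lemmas and using repeatedly the identities $u=\epsilon^{-1}\varphi(\epsilon)$ and $\varphi(E)\equiv\tilde u E\pmod{p}$, which force conjugates such as $\epsilon u^{-m\tilde m}\epsilon^{-1}$ to have $\Z_p$-coefficients and to collapse to powers of $\det(u)$ once one passes to determinants; after that, the tracking of signs, of exponents of $\det(u)$ and of powers of $\phi(b)$ through the block reduction is entirely parallel to the one-dimensional case.
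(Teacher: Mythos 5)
Your overall strategy is the right one, and matches what the paper intends by its one-line reference to \cite[Prop.~7.4.10]{BC2}: one represents $M^\bullet(\calL)$ by $F^\bullet$, observes that $F^\bullet\otimes\Q_p$ is acyclic because $H^1\cong\calF(\p_N^{(r)})/\calF((\p_N^{p+1})^{(r)})$ and $H^2\cong\zz/(F_N-1)\zz$ are finite, lifts the basis $e_iz_0$ through the rational splitting of $\delta_2$ via $(u^mb-1)^{-1}$, and evaluates the block determinant $(w,\mm)$ character by character. Your account of the $\chi\ne\chi_0$ case (the $\T_a$ blocks die, telescoping leaves $(\chi(a)-1)^{rm(p-1)}$ times the determinant of $\calM$) is consistent with the formula.

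The $\chi=\chi_0$ bookkeeping is wrong, however, and the argument you give does not produce the stated answer. Once the $(a-1)$-blocks are killed, the pairing between row blocks and column blocks in any nonzero term of the Leibniz expansion is forced. The column $s_{p-2}$ has a unique nonzero block ($-I_{rm}$ in row $w_{p-1}$), then $s_{p-3}$ is forced onto $w_{p-2}$, and so on; so the $s$-columns are paired with $w_1,\dots,w_{p-1}$, which removes $w_{p-1}$ and therefore removes $\calM_1$ and $\tilde\calM$ from the picture entirely. After that, row $z_2$ can only meet the $t_{i,2}$-column (contributing $p^r$), the $\alpha_1$-part of row $w_0$ meets the $t_{i,1}$-column (contributing $\det((u^m\phi(b))^{\tilde m})=\det(u)^{m\tilde m}\phi(b)^{r\tilde m}$), and the $\alpha_2,\dots,\alpha_m$ parts of $w_0$ meet the $r_{i,k}$-columns (contributing $p^{r(m-1)}$); row $z_1$ meets the $z_0$-lift column, contributing $\det(u^m\phi(b)-1)^{-1}$. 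In particular the block $1-u^m\phi(b)$ at $(z_2,t_{i,1})$ never enters, and there is no cancellation of $\det(1-u^m\phi(b))$ against $\det(u^m\phi(b)-1)^{-1}$: the factor $\det(u^m\phi(b)-1)^{-1}$ survives uncancelled, as the proposition requires. If one actually multiplies out the four contributions you list, one gets $(-1)^r\det(u)^{m\tilde m}\phi(b)^{r\tilde m}p^{rm}$ with \emph{no} denominator, which is not the statement. The $(-1)^r$ in the formula comes from the sign of the block permutation (the pairing transposes the $t_{i,1}$ and $t_{i,2}$ column blocks, a product of $r$ transpositions, while $(-1)^{rm(p-1)}=1$ because $p$ is odd), not from a cancellation of determinants.

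So the shape of your argument is right, the $\chi\ne\chi_0$ evaluation is right, and the rationality argument is right; but in the $\chi=\chi_0$ case you have misidentified which blocks enter the determinant and where the $(-1)^r$ comes from, and the reasoning you wrote would in fact lead to a formula different from the one you (correctly) copy down at the end.
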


\begin{proof}
The proof is a straightforward adaption of the computations in the proof of \cite[Prop.~7.4.10]{BC2}.
\end{proof}
\end{subsection}
\end{section}

\begin{section}{Rationality and functoriality}
From now on we set $\hat\partial^1 = \hat\partial_{\ZpG, \BdRG}^1$. As in \cite{BC2}, we consider the term
\[
  R_{N/K}=C_{N/K}+\Ucris+rm\hat\partial^1(t) - m \Utwist   -r U_{N/K}+\hat\partial^1(\epsilon_D(N/K,V)),\]
where $U_{N/K}$ is the unramified term defined by Breuning in \cite[Prop.~2.12]{Breuning04}. Recall that $R_{N/K}$ differs
  from \cite[(17)]{BC2} since we have to adapt the definition of $R_{N/K}$ as explained in Remark \ref{Remark Error}. 

\begin{prop}\label{RNK rational}
The element $R_{N/K}$ is rational, i.e. $R_{N/K}\in K_0(\ZpG, \Qp[G])$.
\end{prop}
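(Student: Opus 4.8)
The plan is to show that each of the six summands constituting $R_{N/K}$ either lies in $K_0(\ZpG,\QpG)$ outright, or that its image in the quotient group $K_0(\ZpG,\BdRG)/K_0(\ZpG,\QpG)$ cancels against the image of another summand. Recall that $K_0(\ZpG,\QpG)$ sits inside $K_0(\ZpG,\BdRG)$ as the subgroup generated by $\hat\partial^1$ of elements of $\QpG^\times \subseteq \BdRG^\times$, so "rationality" of a term $\hat\partial^1(x)$ amounts to saying that $x$ can be chosen (modulo $\ZpG^\times$ and modulo the kernel of $\hat\partial^1$) to have entries in $\Qp$ rather than in $\BdR$. I would go through the terms in the order in which they were computed earlier in the paper.

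First, the term $rm\,\hat\partial^1(t)$ is manifestly \emph{not} rational on its own, since $t \in \BdR^\times$ is a uniformizer-type period; likewise $-m\Utwist = -m\,\hat\partial^1(\det(\Tnr))$ is not rational, as $\Tnr \in \Gl_r(\overline{\Zpnr})$. The key is that these two non-rational contributions are exactly the ones that reappear, with opposite sign, inside $C_{N/K}$: by Proposition~\ref{CNKgeneral} we have
\[
  C_{N/K} = -rm\,\hat\partial^1(t) + r[\calL,\id,\OKG b] + r\,\hat\partial^1(\rho_b) + m\,\Utwist - \chi(M^\bullet(\calL),0).
\]
So in $R_{N/K}$ the terms $-rm\,\hat\partial^1(t)$ and $+m\,\Utwist$ from $C_{N/K}$ cancel against the explicit $+rm\,\hat\partial^1(t)$ and $-m\,\Utwist$ in the definition of $R_{N/K}$. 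After this cancellation, what remains from $C_{N/K}$ is $r[\calL,\id,\OKG b] + r\,\hat\partial^1(\rho_b) - \chi(M^\bullet(\calL),0)$. Here $[\calL,\id,\OKG b]$ lives in $K_0(\ZpG,\QpG)$ by definition (it is an Euler characteristic of a trivial-differential complex of $\Zp$-lattices spanning the same $\QpG$-module), and $\hat\partial^1(\rho_b)$ is rational because $\rho_b = (\frd_K^{\chi(1)}\calN_{K/\Qp}(b\mid\chi))_\chi$ lies in $Z(\Qpc[G])^\times$ with values that are actually in $\Qp$-extensions — indeed norm resolvents of a normal basis element lie in the appropriate number-field completions, and the standard argument (as in Breuning's work, cf.\ \cite{Breuning04}) shows $\hat\partial^1_{\ZpG,\QpcG}(\rho_b)$ descends to $K_0(\ZpG,\QpG)$; the term $\chi(M^\bullet(\calL),0)$ is rational by Proposition~\ref{tameH1H2} in the tame case and by Propositions~\ref{prop 117} and~\ref{prop 118} in the weakly ramified case, where it was explicitly shown to be represented by an element of $\QpG^\times$.

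Next, $\Ucris$ is rational by Proposition~\ref{propUcris}, since it is expressed as $\hat\partial^1$ of ${}^*(\det(1-F(pu)^{-d_K})e_I))$ minus $\hat\partial^1$ of ${}^*(\det(1-u^{d_K}F^{-1})e_I))$, and both $\det(1-F(pu)^{-d_K})$ and $\det(1-u^{d_K}F^{-1})$ lie in $\Qp[G/I]^\times \subseteq \QpG^\times$ (the matrices $u$ and $F$ have rational, indeed $p$-integral, entries). The unramified term $U_{N/K}$ lies in $K_0(\ZpG,\QpG)$ by construction in \cite[Prop.~2.12]{Breuning04}. Finally, $\hat\partial^1(\epsilon_D(N/K,V))$, \emph{a priori} only in $K_0(\ZpG,\Qpc[G])$, is rational by Proposition~\ref{eps coroll}: there it was shown equal to $\hat\partial^1$ of $\left(\det(u)^{-d_K(s_K\chi(1)+m_\chi)}\tau_\Qp(\Ind_{K/\Qp}\chi)^{-r}\right)_\chi$, and the Galois Gauss sum term, combined with the determinant-of-$u$ factor (which is central and rational), is known to give a class in $K_0(\ZpG,\QpG)$ by the rationality statement for Galois Gauss sums in Breuning's paper (\cite[Prop.~2.12 and the surrounding discussion]{Breuning04}), since $\det(u)\in\Zp^\times$ acts as a rational central unit and the $\tau_\Qp(\Ind_{K/\Qp}\chi)^{-r}$ part matches the unramified/Gauss-sum term there. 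Assembling all these, $R_{N/K}$ is a sum of classes in $K_0(\ZpG,\QpG)$, hence rational. The main obstacle in writing this cleanly is bookkeeping: one must be careful that the cancellation of the non-rational $t$- and $\Utwist$-pieces is exact (which is precisely why the definition of $R_{N/K}$ was adjusted by $-m\,\Utwist$, see Remark~\ref{Remark Error}), and that the descent of $\hat\partial^1_{\ZpG,\QpcG}$ to $\hat\partial^1_{\ZpG,\QpG}$ for the norm-resolvent and Gauss-sum terms is justified by the known rationality results rather than re-proved.
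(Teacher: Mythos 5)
Your accounting of the cancellation of $rm\,\hat\partial^1(t)$ against $-rm\,\hat\partial^1(t)$ and $m\Utwist$ against $-m\Utwist$ is correct and captures exactly why the definition of $R_{N/K}$ was adjusted by $-m\Utwist$ (Remark~\ref{Remark Error}). The identifications of $[\calL,\id,\OKG b]$, $\Ucris$, and $-rU_{N/K}$ as individually rational are also fine. However, the argument has a genuine gap in its treatment of the norm-resolvent and Gauss-sum contributions. You claim that $\hat\partial^1(\rho_b)$ is rational on its own because ``norm resolvents lie in the appropriate number-field completions'' and that the $\tau_\Qp(\Ind_{K/\Qp}\chi)^{-r}$ piece of $\hat\partial^1(\epsilon_D(N/K,V))$ is rational by ``the rationality statement for Galois Gauss sums in Breuning's paper.'' Neither of these individual elements lies in $K_0(\ZpG,\QpG)$: both $\rho_b$ and $(\tau_\Qp(\Ind_{K/\Qp}\chi))_\chi$ take values in $Z(\Qpc[G])^\times$ and genuinely use $\Qpc$. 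What Fr\"ohlich--Taylor theory (and Breuning's \cite[Prop.~2.12]{Breuning04} / \cite[Prop.~7.1.3]{BC2}) actually gives is that the \emph{combination} $\hat\partial^1(\rho_b) - U_{N/K} - T_{N/K}$ descends to $K_0(\ZpG,\QpG)$; the proposition must be proved by assembling exactly this combination, which is what the paper does: after the cancellations you describe (plus the automatically rational pieces), $R_{N/K}$ is congruent mod $K_0(\ZpG,\QpG)$ to $r\bigl(\hat\partial^1(\rho_b) - U_{N/K} - T_{N/K}\bigr)$, and then one quotes \cite[Prop.~7.1.3]{BC2} in one stroke.

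A secondary issue is the treatment of $\chi_{\ZpG,\BdRG}(M^\bullet(\calL),0)$. You invoke Propositions~\ref{tameH1H2}, \ref{prop 117}, \ref{prop 118}, but these are explicit case-by-case computations under tameness or the weakly ramified Hypotheses (I)/(T), whereas Proposition~\ref{RNK rational} is stated unconditionally (under Hypothesis (F) only). The correct reason this term is rational is elementary and needs no such case analysis: under Hypothesis (F) the complex $M^\bullet(\calL)$ is perfect with finite cohomology, hence is acyclic after tensoring with $\Qp$, and therefore $\chi_{\ZpG,\BdRG}(M^\bullet(\calL),0)$ already lives in $K_0(\ZpG,\QpG)$ before any explicit evaluation. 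Invoking the case-specific computations here is both unnecessary and, in the general setting where the proposition is stated, circular.
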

\begin{proof}
    For elements $x,y \in K_0(\ZpG, \Qpc[G])$ we use the notation $x \equiv y$ when $x-y \in K_0(\ZpG, \QpG)$.
  By Propositions \ref{CNKgeneral}, \ref{eps coroll} and \ref{propUcris}  we get
  \[
    R_{N/K} \equiv r\hat\partial^1({\rho}) - rU_{N/K} - rT_{N/K},
  \]
  where $T_{N/K} := \hat\partial\left( \tau_\Qp(\Ind_{K/\Qp}(\chi))_{\chi \in \Irr(G)}\right)$
  is precisely the element defined by Breuning in \cite[Sec.~2.3]{Breuning04}.
  The result now follows, since this is exactly $r$ times the element obtained in \cite[Prop.~7.1.3]{BC2}.
\end{proof}

\begin{prop}\label{functoriality}
  Let $L$ be an intermediate field of $N/K$ and $H=\Gal(N/L)$. Let $\rho_H^G$ and $q_{G/H}^G$ be the restriction and quotient
  map from (\ref{K0 res}) and (\ref{K0 quot}), respectively. Then
\begin{enumerate}[(a)]
\item $\rho^G_H(R_{N/K})=R_{N/L}$.
\item If $H$ is normal in $G$, then $q^G_{G/H}(R_{N/K})=R_{L/K}$.
\end{enumerate}
\end{prop}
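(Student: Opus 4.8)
The plan is to prove both parts by verifying the relevant compatibility for each of the six summands of
\[
 R_{N/K}=C_{N/K}+\Ucris+rm\hat\partial^1(t)-m\Utwist-rU_{N/K}+\hat\partial^1(\epsilon_D(N/K,V)),
\]
which is legitimate because $\rho^G_H$ and $q^G_{G/H}$ are homomorphisms of relative $K$-groups. Three of the terms are immediately accounted for: $\rho^G_H(\Ucris)=U_{\mathrm{cris},N/L}$ and $q^G_{G/H}(\Ucris)=U_{\mathrm{cris},L/K}$ by Lemma~\ref{functUcris}; the analogous identities for $\hat\partial^1(\epsilon_D(N/K,V))$ by Lemma~\ref{functepsilon}; and those for the unramified term $U_{N/K}$ by the functoriality of Breuning's local invariants established in \cite{Breuning04}.

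Next I would treat the two ``scalar'' terms $rm\hat\partial^1(t)$ and $m\Utwist$. In each case the argument of $\hat\partial^1$ is the central element of a group ring coming from a scalar --- the period $t\in\BdR$, respectively $\det(\Tnr)\in\overline{\Zpnr}^{\times}$ (which depends only on $u$, not on the fields involved) --- and by the description of restriction and deflation on reduced norms in \cite[Sec.~6.1 and 6.3]{BleyWilson} (or directly, by writing the group ring over a coset basis), restriction of scalars from $\ZpG$ to $\Zp[H]$ multiplies the $K_1$-class of such a central scalar by $[G:H]$, while the projection to $\Zp[G/H]$ fixes it. Since $m=[K:\Qp]$, $[L:\Qp]=[G:H]\,m$, and the base field of $R_{L/K}$ is again $K$, this precisely reproduces the coefficients of $\hat\partial^1(t)$ and $\Utwist$ in $R_{N/L}$ and in $R_{L/K}$.

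This leaves the cohomological term $C_{N/K}$, which is the heart of the matter. I would insert the explicit formula of Proposition~\ref{CNKgeneral}; its $\hat\partial^1(t)$- and $\Utwist$-contributions cancel the ones just treated, so that, for a fixed $\ZpG$-projective lattice $\calL\subseteq\ON$ and normal basis element $b$ of $N/K$,
\[
 R_{N/K}=r[\calL,\id,\OKG b]+r\hat\partial^1(\rho_b)-\chi_{\ZpG,\BdRG}(M^\bullet(\calL),0)+\Ucris-rU_{N/K}+\hat\partial^1(\epsilon_D(N/K,V)),
\]
and the combination $r[\calL,\id,\OKG b]+r\hat\partial^1(\rho_b)-\chi_{\ZpG,\BdRG}(M^\bullet(\calL),0)$ --- being the difference of $R_{N/K}$ with terms not involving the auxiliary data --- does not depend on the choice of $\calL$ or $b$. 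For the restriction map one keeps the same $\calL$ (a $\ZpG$-projective lattice is $\Zp[H]$-projective and $\exp_\calF$ still converges on $\calL^{(r)}$): restriction of scalars is compatible with the formation of refined Euler characteristics and, by \cite[Sec.~6.1]{BleyWilson}, with the module and resolvent terms, while the complex $C_{N,T}^\bullet$ of \cite[Thm.~3.15]{Cobbe18}, hence $M^\bullet(\calL)$, is literally the same datum whether regarded over $\ZpG$ or over $\Zp[H]$; comparing with the corresponding decomposition of $C_{N/L}$ yields part (a).

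For the quotient map one must pass from $\calI_{N/K}(\rhonr)$ to $\calI_{L/K}(\rhonr)$. Since $M^\bullet(\calL)$ has an explicit representative by free $\ZpG$-modules (Proposition~\ref{nicerepresentative} in the weakly ramified case, and the proof of Proposition~\ref{tameH1H2} in the tame case), hence is perfect, $q^G_{G/H}$ of its refined Euler characteristic equals that of $\Zp[G/H]\otimes^{\mathbb{L}}_{\ZpG}M^\bullet(\calL)$; the task is then to identify this derived base change with $M^\bullet(\calL')$ for a suitable lattice $\calL'$ over $L/K$ --- equivalently, to establish the Hochschild--Serre-type compatibility $\Zp[G/H]\otimes^{\mathbb{L}}_{\ZpG}R\Gamma(N,T)\simeq R\Gamma(L,T)$ of the complexes of \cite[Sec.~3]{Cobbe18} with deflation (in the tame case this is immediate from Proposition~\ref{tame cohomology}, which makes $R\Gamma(N,T)$ perfect over $\ZpG$, so that derived invariants and coinvariants for $H$ coincide). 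This matching of the cohomology complexes of $N/K$ and $L/K$ under deflation, together with the corresponding bookkeeping for Artin conductors, norm resolvents and characters under inflation, is the step I expect to require the real work; everything else is either formal or already available in the excerpt.
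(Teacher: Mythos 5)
Your overall scheme is right, and your treatment of the five ``easy'' summands agrees with the paper (which invokes Lemma~\ref{functUcris}, Lemma~\ref{functepsilon}, the general formulas of \cite[Sec.~6.1 and 6.3]{BleyWilson} for the scalar terms $rm\hat\partial^1(t)$ and $m\Utwist$, and \cite[Lemma~4.5]{BreuPhd} for $U_{N/K}$). Where you genuinely diverge from the paper is in the cohomological term, and this is worth pointing out. You insert the explicit formula of Proposition~\ref{CNKgeneral}, which forces you to carry the auxiliary lattice $\calL$, the normal basis element $b$, and the explicit complex $M^\bullet(\calL)$ across the change of fields, and then to prove (i) a resolvent/module-term compatibility matching $\rho^G_H\bigl(r[\calL,\id,\OKG b]+r\hat\partial^1(\rho_b)\bigr)$ with its $N/L$-analogue (not automatic: $b$ is a normal basis element for $N/K$, not $N/L$, so one must compare $\rho_b$ with some $\rho_{b'}$ and absorb the discrepancy in the module term), and (ii) the Hochschild--Serre-type identification for the deflation. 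The paper instead works directly from the definition $C_{N/K}=-\chi_{\ZpG,\BdRG}(M^\bullet,\exp_{V}\circ\comp_V^{-1})$ with $M^\bullet=R\Gamma(N,T)\oplus\Ind_{N/\Qp}T[0]$, i.e.\ before choosing any $\calL$, $b$, or explicit representative. This reduces the whole question to showing that the functor $R\Hom_{\Z_p[H]}(\Z_p,-)$ sends $R\Gamma(N,T)$ to $R\Gamma(L,T)$, that $(\Ind_{N/\Qp}T)^H\cong\Ind_{L/\Qp}T$, and that $\exp_V$ and $\comp_V$ descend, and these are precisely the statements (\ref{funct a}) and (\ref{funct b}), which are handled by following \cite[Lemma~4.14]{BreuPhd}. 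So the paper's route is conceptually the one you flagged as the ``real work'' (the compatibility $R\Gamma$ with deflation), but it eliminates all the bookkeeping that your insertion of Proposition~\ref{CNKgeneral} drags in. One small caution about your deflation argument: you write the base change as a derived tensor $\Z_p[G/H]\otimes^{\mathbb L}_{\ZpG}R\Gamma(N,T)$, whereas the canonical Hochschild--Serre statement is a derived \emph{Hom}, $R\Gamma(L,T)\cong R\Hom_{\Z_p[H]}(\Z_p,R\Gamma(N,T))$. These do coincide here because the relevant complexes are perfect (hence each projective term has trivial Tate cohomology for $H$, so invariants and coinvariants agree term by term), but this point deserves to be said explicitly rather than silently absorbed into ``derived base change''; the paper avoids the subtlety by working with $R\Hom$ from the start. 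Finally, note that the paper proves (\ref{funct b}) without assuming $H^2$ is cohomologically trivial --- after tensoring with $\Qpc$ the $H^2$-contribution vanishes by Hypothesis~(F), which is the argument you would also need in the wild case, where Proposition~\ref{tame cohomology} is unavailable.
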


\begin{proof}
 By definition of the cohomological term  we have 
  \begin{eqnarray*}
    R_{N/K} &=& -\chi_{\ZpG, \BdRG}(M^\bullet, \exp_{V, N} \circ \comp_{V, N}^{-1}) +rm\hat\partial^1(t) - m \Utwist\\
    &\ &
    +U_{\mathrm{cris}, N/K} - r U_{N/K} + \hat\partial_{\ZpG, \BdRG}^1(\epsilon_D(N/K,V))
  \end{eqnarray*}
  with $M^\bullet = R\Gamma(N, T) \oplus \Ind_{N/\Qp}T[0]$. The functoriality properties of $rm\hat\partial^1(t)$ and $m\Utwist$ follow easily using the general formulas in \cite[Sec.~6.1 and 6.3]{BleyWilson}. Recalling also Lemma \ref{functUcris}, Lemma \ref{functepsilon} and \cite[Lemma~4.5]{BreuPhd}, 
  it remains to show
  \begin{equation}\label{funct a}
    \rho_H^G\left( \chi_{\ZpG, \BdRG}(M^\bullet, \exp_{V, N} \circ \comp_{V, N}^{-1}) \right)
    = \chi_{\Z_p[H], \BdR[H]}(M^\bullet, \exp_{V, N} \circ \comp_{V, N}^{-1})
  \end{equation}
  and
  \begin{equation}\label{funct b}
    q^G_{G/H}\left( \chi_{\ZpG, \BdRG}(M^\bullet, \exp_{V, N} \circ \comp_{V, N}^{-1}) \right)
    = \chi_{\Z_p[G/H], \BdR[G/H]}(M^\bullet, \exp_{V, L} \circ \comp_{V, L}^{-1}).
  \end{equation}
  The proof of (\ref{funct a}) follows along the same line of argument as the proof of \cite[Lemma 4.14 (1)]{BreuPhd}.
  We just have to replace $A = \mu_{p^n}$ in loc.cit. by $\calF[p^n]$.

  Since (\ref{funct b}) is essentially proved in the same way as part (2) of \cite[Lemma 4.14]{Breuning04}
  we only give a brief sketch.
  As in (17) of loc.cit. we obtain a canonical isomorphism
  \begin{equation}\label{funct eq 1}
    R\Gamma(L, T) \cong R\Hom_{\Z_p[H]}(\Zp, R\Gamma(N, T))
  \end{equation}
  in the derived category. For each $i$ the induced map from
  $H^i(\Qpc \tensor R\Gamma(L, T)) \cong \Qpc \tensor H^i(L, T)$
  to $H^i(\Qpc \tensor R\Hom_{\Z_p[H]}(\Zp, R\Gamma(N, T))) \cong \Qpc \tensor H^i(N, T)^H$
  is the map induced by the restriction $H^i(L, T) \lra H^i(N, T)$.

  For $i=1$ this restriction map is clearly given by the inclusion $\calF(\frp_L^{(r)}) \sseq \calF(\frp_N^{(r)})$ and for
  $i=2$ cohomology vanishes after tensoring with $\Qpc$ by our hypothesis (F).

  We further note that we have a canonical isomorphism
   \begin{equation}\label{funct eq 2}
     \left( \Ind_{N/\Qp}T \right)^H \cong  \Ind_{L/\Qp}T
   \end{equation}
   which is given by $x \mapsto x$, if we identify $ \Ind_{N/\Qp}T$ with
   the set of maps $x \colon G_\Qp \lra T$ satisfying $x(\tau\sigma) = \tau x(\sigma)$ for all $\tau \in G_N$ and
   $\sigma \in G_\Qp$.

   From (\ref{funct eq 1}) and (\ref{funct eq 2}) we derive a canonical isomorphism
   $M_L^\bullet \cong R\Hom_{\Z_p[H]}(\Zp, M_N^\bullet)$ and it finally remains to show that the induced maps (drawn as dotted
   arrows below) in the following diagram coincide with $\comp_{V,L}^{-1}$ and $\exp_{V, L}$, respectively.
   \[\xymatrix{
       \left( \Ind_{N/\Qp}(V)_{\BdR} \right)^H \ar[d]^\cong \ar[rr]^-{\comp_{V,N}^{-1}} &&
       \left( \DdRN(V)_\BdR \right)^H \ar[d]^\cong \ar[r]^-{\exp_{V,N}} &
       \left( H^1(N, V)_\BdR \right)^H =  \calF(\frp_N^{(r)})_\BdR^H  \ar[d]^\cong \\
       \Ind_{L/\Qp}(V)_\BdR  \ar@{..>}[rr]^-{\comp_{V,L}^{-1}} &&
       \DdR^L(V)_\BdR \ar@{..>}[r]^-{\exp_{V,L}} &
       H^1(L, V)_\BdR  =  \calF(\frp_L^{(r)})_\BdR
     }
   \]
   For $\exp_V$ this is immediate since it is defined as a connecting homomorphism (see, for example, \cite[p.~612]{BenBer})
   and thus compatible with restriction. Also for $\comp_V$ it follows from the definitions, see \cite[p.~625]{BenBer}.
 \end{proof}
\end{section} 

\begin{section}{Proof of the main results}
As in \cite{BC2} we also define
\[
  \tilde R_{N/K}=C_{N/K}+\Ucris+rm\hat\partial^1(t) - m\Utwist +\hat\partial^1(\epsilon_D(N/K,V)),
\]
so that $R_{N/K} =  \tilde R_{N/K} -  rU_{N/K}$.

We now argue as in the proof of \cite[Prop.~3.2.6]{BC2} (see page 359 of loc.cit.).
By Taylor's fixed point theorem together
with \cite[Prop.~2.12]{Breuning04} it can be shown that
$R_{N/K}=0$ in $K_0(\ZpG,\QpG)$ if and only if
$\tilde R_{N/K}=0$ in $K_0(\overline{\Z_p^{nr}}[G], \overline{\Q_p^{nr}}[G])$.

From Propositions \ref{CNKgeneral}, \ref{eps coroll} and \ref{propUcris} we conclude that
\begin{equation}\label{generic}
  \begin{split}\tilde R_{N/K}=&
    r\hat\partial^1(\rho_b)+r[\mathcal L,\id,\mathcal O_K[G]\cdot b]-
    \chi_{}(M^\bullet(\calL), 0) \\
    &+\hat\partial^1({}^*(\det(1-Fp^{-d_K}u^{-d_K})e_I))-\hat\partial^1({}^*(\det(1-u^{d_K}F^{-1})e_I))\\
    &+\hat\partial^1\left( \det(u)^{-d_K(s_K\chi(1) + m_\chi)}\right)_{\chi \in \Irr(G)} -
    r\hat\partial^1\left(\tau_\Qp( \Ind_{K/\Qp}(\chi) ) \right)_{\chi \in \Irr(G)}
  \end{split}
\end{equation}

  As in the one-dimensional case,
  see \cite[Proposition 3.2.6]{BC2}, the following three statement are equivalent in our present
  setting:
  \begin{itemize}
  \item[(a)] $\CEP$ is valid.
  \item[(b)] $R_{N/K} = 0$ in $K_0(\ZpG, \QpG)$.
  \item[(c)] $\tilde R_{N/K} = 0$ in $K_0(\overline{\Z_p^{nr}}[G], \overline{\Q_p^{nr}}[G])$.  
  \end{itemize}

\begin{proof}[{\bf Proof of Theorem \ref{intro thm tame}}]
  It suffices to show that $\tilde R_{N/K}=0$.
  From (\ref{generic}) together with Proposition \ref{tameH1H2} we obtain
  \begin{eqnarray*}
  \tilde R_{N/K} &=& 
                     r\hat\partial^1(\rho)+r[\frp_N, \id, \ON] + \hat\partial^1({}^*(\det(1-Fp^{-d_K}u^{-d_K})e_I)) \\
    &+&\hat\partial^1\left( \det(u)^{-d_K(s_K\chi(1) + m_\chi)}\right)_{\chi \in \Irr(G)} -
    r\hat\partial^1\left(\tau_\Qp( \Ind_{K/\Qp}(\chi) ) \right)_{\chi \in \Irr(G)}
\end{eqnarray*}
The term
\[
  \hat\partial^1\left(\left(\det(u)^{-d_K(s_K\chi(1) + m_\chi)}\right)_\chi\right)
\]
vanishes by \cite[Lemma~6.2]{IV}. Since $N/K$ is tame, the group ring idempotent $e_I$ is contained in $\ZpG$ and it is then easy to show that
\[
  {}^*(\det(p^{d_K}-Fu^{-d_K})e_I)\in\Zp[G]^\times.
\]
We conclude that
\[
  \begin{split}
    \hat\partial^1({}^*(\det(1-Fp^{-d_K}u^{-d_K})e_I))&=-r\hat\partial^1({}^*(p^{d_K}e_I)) \\
    &                                                   =-r[\mathfrak p_N,\id,\oo_N].
  \end{split}
\]
where the last equality follows from  \cite[(6.7)]{IV}. In conclusion, we have shown that
\begin{eqnarray*}
  \tilde R_{N/K} &=& 
                     r \left( \hat\partial^1(\rho) - \hat\partial^1\left(\tau_\Qp( \Ind_{K/\Qp}(\chi) ) \right)_{\chi \in \Irr(G)} \right).
\end{eqnarray*}
We finally conclude as in the proof of \cite[Thm.~3.6]{Breuning04} or as in \cite[page 517]{IV} to show that
\[
   \hat\partial^1(\rho) - \hat\partial^1\left(\tau_\Qp( \Ind_{K/\Qp}(\chi) ) \right)_{\chi \in \Irr(G)}  = 0.
\]
We recall that these proofs crucially use a fundamental result of M.~Taylor (see \cite[Thm.~31]{Froehlich83})
which computes the quotient of norm resolvents and Galois Gauss sums. We also note that the so-called non-ramified characteristic which
occurs in these results is an integral unit itself.
\end{proof}

\begin{proof}[{\bf Proof of Theorem \ref{intro thm weak}}]
  The crucial input in our proof is a result of Picket and Vinatier in \cite{PickettVinatier}.
  In \cite[Sec.~5.1]{BC} we used this
  result to construct an integral normal basis generator $b = p^2\alpha_M\theta_2$ of $\calL = \frp_N^{p+1}$. Hence the
  expression in (\ref{generic}) simplifies to
  \begin{equation}\label{generic 2}
  \begin{split}\tilde R_{N/K}=&
    r\hat\partial^1(\rho)-
    \chi_{\Z_p[G], \BdR[G]}(M^\bullet(\calL), 0) \\
    &+\hat\partial^1({}^*(\det(1-Fp^{-d_K}u^{-d_K})e_I))-\hat\partial^1({}^*(\det(1-u^{d_K}F^{-1})e_I))\\
    &+\hat\partial^1\left( \det(u)^{-d_K(s_K\chi(1) + m_\chi)}\right)_{\chi \in \Irr(G)} -
    r\hat\partial^1\left(\tau_\Qp( \Ind_{K/\Qp}(\chi) ) \right)_{\chi \in \Irr(G)}
  \end{split}
\end{equation}
By \cite[Prop.~5.2.1]{BC} the element
$\hat\partial^1(\rho) - \hat\partial^1\left(\tau_\Qp( \Ind_{K/\Qp}(\chi) ) \right)_{\chi \in \Irr(G)}$ is represented by
{$\hat\partial^1(\eta^{-1})$} with $\eta$ as in \cite[Prop.~5.2.1]{BC}. Furthermore,
\[
  \hat\partial^1\left( \det(u)^{-d_K(s_K\chi(1) + m_\chi)}\right)_{\chi \in \Irr(G)} =
  \hat\partial^1\left( \det(u)^{-mm_\chi)}\right)_{\chi \in \Irr(G)}.
\]

{From now on, we have to distinguish the three conditions in the statement of the Theorem. Let us start with (a), i.e. Hypothesis (I). In this case} $\tilde R_{N/K}$ is represented by
\[\begin{split}
\tilde r_{\chi\phi}&=
\begin{cases}\frac{\frd_K^{r/2}\norm_{K/\Qp}(\theta_2|\phi)^r p^{2rm}\det(1-p^{-m}u^{-m}\phi(b^{-1}))}{p^{rm}\det(1-u^m\phi(b))}&\text{if $\chi=\chi_0$}\\
\frac{\frd_K^{r/2}\norm_{K/\Qp}(\theta_2|\phi)^r p^{rm}\chi(4)^r\phi(b)^{-2r} \det(u)^{-2m}}{(-1)^{r(m-1)}(u^{-m}\phi(b)^{-1}-1)(\chi(a)-1)^{rm(p-1)}}&\text{if $\chi\neq\chi_0$}\end{cases}\\
&=\begin{cases}(-1)^r\frac{\frd_K^{r/2}\norm_{K/\Qp}(\theta_2|\phi)^r}{\det(1-u^m\phi(b))}\det(u)^{-m}\phi(b)^{-r}\det(1-p^mu^{m}\phi(b))&\text{if $\chi=\chi_0$}\\
(-1)^r\frac{\frd_K^{r/2}\norm_{K/\Qp}(\theta_2|\phi)^r}{\det(1-u^m\phi(b))}\det(u)^{-m}\phi(b)^{-r}\left(\frac{-p}{(\chi(a)-1)^{p-1}}\right)^{rm}\chi(4)^r&\text{if $\chi\neq\chi_0$.}\end{cases}
\end{split}\]

Let $W_{\theta_2}\in\mathcal O_p^t[G]\subset \overline{\Zpnr}[G]^\times$ be defined by
\[\chi\phi(W_{\theta_2})=(-1)^{r}\frd_K^{r/2}\norm_{K/\Qp}(\theta_2|\phi)^r.\]

So
\[\tilde r=\frac{W_{\theta_2}\det(u)^{-m}b^{-r}}{\det(1-u^m b)} \left(\det(1-p^mu^mb)e_a+(-\tilde u)^m\sigma_4(1-e_a)\right),\]
where $\tilde u\in\Z_p[a]$ is a unit whose augmentation is congruent to $1$ modulo $p$, as in \cite[Sect.~8]{BC2}. Then the proof works as in \cite{BC2}.

Let us now consider case (b), i.e. Hypothesis (T). In this case $\tilde R_{N/K}$ is represented by

\[\begin{split}
\tilde r_{\chi\phi}&=
\begin{cases}(-1)^{r}\frac{\frd_K^{r/2}\norm_{K/\Qp}(\theta_2|\phi)^r p^{2rm}\det(u^m\phi(b)-1)\det(1-p^{-m}u^{-m}\phi(b^{-1}))}{\det(u)^{m\tilde m}\phi(b)^{r\tilde m}p^{rm}\det(1-u^m\phi(b))}&\text{if $\chi=\chi_0$}\\
(-1)^{r(m+1)}\frac{\frd_K^{r/2}\norm_{K/\Qp}(\theta_2|\phi)^r p^{rm}\chi(4)^r\phi(b)^{-2r} \det(u)^{-2m}}{(\det(u)^m\phi(b)^r)^{-\tilde m(m-1)}(\chi(a)-1)^{rm(p-1)}}&\text{if $\chi\neq\chi_0$}\end{cases}\\
&=\begin{cases}\chi\phi(W_{\theta_2})\phi(b)^{-r-r\tilde m}\det(u)^{-m-m\tilde m}\det(1-p^m u^{m}\phi(b))&\!\text{if $\chi=\chi_0$}\\
\chi\phi(W_{\theta_2})\phi(b)^{-r-r\tilde m}\det(u)^{-m-m\tilde m}\left(\frac{-p}{(\chi(a)-1)^{p-1}}\right)^{rm}\chi(4)\det(u)^{m^2\tilde m-m}&\!\text{if $\chi\neq\chi_0$,}
\end{cases}
\end{split}\]

So
\[\tilde r=W_{\theta_2}b^{r-r\tilde m}\det(u)^{-m-m\tilde m}\left(\det(1-p^mu^mb)e_a+(-\tilde u)^m\sigma_4 \det(u)^{m^2\tilde m-m}(1-e_a)\right).\]
As in \cite{BC2}, Hypothesis (T) implies that $\det(u)^{m^2\tilde m-m}\equiv 1\pmod{1-\zeta_p}$. Then the proof works as in \cite{BC2}, using the computations of the case of Hypothesis (I).

{It remains to consider the case (c). }
  Let $E/K'$ be the unramified extension of degree $\tilde d$. By the functoriality result of Proposition \ref{functoriality} (a) it is enough
  to show $R_{NE/K}=0$, which is true since for $NE/K$ we can apply part (b) of the theorem.
\end{proof}

We finally prove Theorems \ref{intro thm tame geom} and \ref{intro thm weak geom}. For that purpose we recall the following lemma.

\begin{lemma}\label{Mazur lemma}
  Let $A$ be an $r$-dimensional abelian variety defined over the $p$-adic number field $L$ with good ordinary reduction and
  let $U$ be the twist matrix of $A$. Then $\det(U - 1) \ne 0$.
\end{lemma}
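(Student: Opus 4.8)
The plan is to deduce this from Mazur's theorem on the divisibility of $p-1$ for abelian varieties with good ordinary reduction, in the form stated as \cite[Cor.~4.38]{Mazur72}. Recall that for an abelian variety $A/L$ with good ordinary reduction, the $p$-adic Tate module $T_pA$ sits in an exact sequence of $G_L$-modules
\[
0 \lra T_p\hat A \lra T_pA \lra T_p\tilde A \lra 0,
\]
where $\hat A$ is the formal group and $\tilde A$ the reduction; here $T_p\hat A$ is the ``multiplicative'' part, unramified twist of $\Z_p^r(1)$, and $T_p\tilde A$ is unramified. The twist matrix $U$ in the statement is by definition the image $\rhonr_{\Qp}(F_L)$ acting on the unramified representation $\rhonr$ attached (via \cite[Prop.~1.6]{Cobbe18}) to $T_p\hat A(-1)$, equivalently $U = \rhonr(F_L)$ acts on the Tate twist $T_p\hat A(-1)$, which is the unramified representation giving the action of Frobenius on the \emph{cotangent} (or the toric part's character group).

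The key step is then to identify $\det(U - 1)$, up to a $p$-adic unit that does not affect whether it vanishes, with a value related to the characteristic polynomial of Frobenius on the reduction. Concretely, Weil's theorem tells us the eigenvalues of the geometric Frobenius on $T_p\tilde A$ are Weil numbers of absolute value $q^{1/2}$ (with $q$ the size of the residue field), hence none of them equals $1$, so $\det(\mathrm{Frob} - 1)$ on $T_p\tilde A$ is nonzero; the point is that the relevant statement for $T_p\hat A$ is the ``dual'' one. The cleanest route is: the eigenvalues of Frobenius on $T_p\hat A$ are $p$-adic units (this is exactly the ordinarity condition — the Newton polygon of $\hat A$ is flat at slope $0$), while on $T_p\tilde A$ they have positive valuation; and Mazur's result \cite[Cor.~4.38]{Mazur72} says precisely that none of the unit eigenvalues can be a root of unity congruent to $1$, or more simply that $1$ is not an eigenvalue. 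Indeed, if $1$ were an eigenvalue of $U$ on the $\Q_p$-representation, then $\left(\Q_p^r(\rhonr)\right)^{G_L} \ne 0$, which by the exact sequence above and a standard argument would force a nonzero $G_L$-invariant in $V_pA$, contradicting the known structure (the Galois invariants of the Tate module of an abelian variety over a local field are trivial, since $A(L)$ has no $p$-divisible part of corank $r$... ).

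More carefully, I would argue as follows. Suppose for contradiction that $\det(U-1) = 0$, so $U$ has eigenvalue $1$ on $V := \Q_p\otimes T_p\hat A(-1)$ over $\overline{\Q_p}$, equivalently $V^{G_L}$ (computed over $\overline{\Q_p}$, then descended) is nonzero, equivalently $\left(\Q_p^r(1)\otimes\overline{\Q_p}\right)^{G_L}$ in the relevant twist is nonzero. Then the Tate module $V_p\hat A = \Q_p\otimes T_p\hat A$ would contain a subrepresentation isomorphic to $\overline{\Q_p}(1)$ after base change; dually, $V_p\hat A^*$ would have a quotient isomorphic to the trivial representation, hence $H^0(L, V_p\hat A^*(1)) \ne 0$, i.e. $V_p\hat A^*(1)$ has trivial subquotient. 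But $V_p\hat A^* (1)$ is the étale part $V_p\tilde A$ of $V_pA$ (Cartier/Weil duality for the $p$-divisible group of an abelian variety with good reduction interchanges the formal and étale parts up to a Tate twist), and by the Weil bounds the Frobenius eigenvalues on $V_p\tilde A$ have absolute value $q^{1/2} \ne 1$, so it has no trivial subquotient — contradiction. This is exactly the content of Mazur's \cite[Cor.~4.38]{Mazur72}, which I would simply cite. The main obstacle is bookkeeping: getting the Tate twists and the direction of duality right, and making sure the ``twist matrix'' $U$ in the present paper's normalization is the Frobenius on the piece whose eigenvalues are units (the formal group part), so that Mazur's statement applies to it directly rather than to its dual. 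Once that identification is pinned down the result is immediate.

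\begin{proof}
By \cite[Cor.~4.38]{Mazur72}, the arithmetic Frobenius acting on the Tate module $T_p\hat A$ of the formal group of an abelian variety with good ordinary reduction has no eigenvalue equal to a root of unity; in particular $1$ is not an eigenvalue. Since $U$ is, by definition of the twist matrix, a conjugate of the Frobenius action on $T_p\hat A(-1)$, and Tate twisting multiplies each eigenvalue by a $p$-adic unit (the value of the cyclotomic character at Frobenius) which is itself not a root of unity times the excluded value, the eigenvalues of $U$ are likewise never equal to $1$. Hence $\det(U-1)\neq 0$.
\end{proof}
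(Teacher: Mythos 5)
Your final proof block coincides with the paper's own, which is nothing more than the one-line citation: the lemma ``is shown in \cite[Cor.~4.38]{Mazur72} or in the proof of \cite[Thm.~2]{LubinRosen}.'' So you have located the correct reference. However, the surrounding sketch has the ordinary decomposition running backwards, and several of the intermediate assertions would fail if taken literally. In the ordinary case it is the \emph{\'etale} quotient $T_p\tilde A = T_p(A[p^\infty]^{\mathrm{et}})$ that is unramified of slope zero --- the unit roots live there --- while the connected part $\hat A = A[p^\infty]^\circ$ is of multiplicative type (slope one). Your sentence ``the eigenvalues of Frobenius on $T_p\hat A$ are $p$-adic units $\dots$ while on $T_p\tilde A$ they have positive valuation'' therefore has the two sides reversed. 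More fundamentally, $T_p\hat A \cong \Zp^r(1)(\rhonr)$ is a \emph{ramified} Galois module, so the arithmetic Frobenius has no well-defined eigenvalues on $T_p\hat A$ at all; only the unramified twist $T_p\hat A(-1)$ carries a Frobenius matrix, and that matrix is exactly $U$. For the same reason the phrase in your proof block ``Tate twisting multiplies each eigenvalue by the value of the cyclotomic character at Frobenius'' is not meaningful: $\chi_{\mathrm{cyc}}$ is ramified, so it has no value at Frobenius.

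If you wish to make the informal sketch precise rather than just citing Mazur, the clean route is: Cartier duality identifies $T_p\hat A$ with $\Hom(T_p\widetilde{A^\vee},\Zp(1))$, hence $T_p\hat A(-1) \cong \Hom(T_p\widetilde{A^\vee},\Zp)$, so the eigenvalues of $U$ are the inverses of the unit Weil numbers of the reduction of $A^\vee$. By the Weil bounds these have complex absolute value $q^{-1/2}\neq 1$ (where $q$ is the residue field size), so none of them equals $1$ and $\det(U-1)\neq 0$. That is the content the paper (and your boxed proof) delegates to \cite{Mazur72} and \cite{LubinRosen}; if you narrate it, you should get the slopes and the direction of the duality right, and be careful that $V_p\hat A^*(1)$ is $V_p\widetilde{A^\vee}$ rather than $V_p\tilde A$ (the two agree only after choosing a polarization).
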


\begin{proof}
  This is shown in  \cite[Cor.~4.38]{Mazur72} or in the proof of \cite[Thm.~2]{LubinRosen}.
\end{proof}

\begin{proof}[\bf Proof of Theorems \ref{intro thm tame geom} and \ref{intro thm weak geom}]
The results are immediate from Theorems \ref{intro thm tame} and \ref{intro thm weak}. Note that Hypothesis (F) and the condition $\det(\rhonr(F_N)^{\tilde d} - 1) \ne 0$ are automatically satisfied by Lemma \ref{Mazur lemma}.
\end{proof}

\end{section}

\addcontentsline{toc}{section}{Bibliography}


\begin{thebibliography}{{Neu}92}

\bibitem[BB05]{BrBuAdditivity}
Manuel Breuning and David Burns.
\newblock Additivity of {E}uler characteristics in relative algebraic
  {$K$}-groups.
\newblock {\em Homology, Homotopy Appl.}, 7(3):11--36, 2005.

\bibitem[BB08]{BenBer}
Denis Benois and Laurent Berger.
\newblock Th\'eorie d'{I}wasawa des repr\'esentations cristallines. {II}.
\newblock {\em Comment. Math. Helv.}, 83(3):603--677, 2008.

\bibitem[BC16]{BC}
Werner Bley and Alessandro Cobbe.
\newblock Equivariant epsilon constant conjectures for weakly ramified
  extensions.
\newblock {\em Math. Z.}, 283(3-4):1217--1244, 2016.

\bibitem[BC17]{BC2}
Werner Bley and Alessandro Cobbe.
\newblock The equivariant local {$\varepsilon$}-constant conjecture for
  unramified twists of {$\mathbb{Z}_p(1)$}.
\newblock {\em Acta Arith.}, 178(4):313--383, 2017.

\bibitem[BK90]{BK}
Spencer Bloch and Kazuya Kato.
\newblock {$L$}-functions and {T}amagawa numbers of motives.
\newblock In {\em The {G}rothendieck {F}estschrift, {V}ol.\ {I}}, volume~86 of
  {\em Progr. Math.}, pages 333--400. Birkh\"auser Boston, Boston, MA, 1990.

\bibitem[BN20]{BuNi}
David Burns and Andreas Nickel.
\newblock Equivariant local epsilon constants and Iwasawa theory.
\newblock {\em Preprint}, 2020.

\bibitem[Bre04a]{BreuPhd}
Manuel Breuning.
\newblock {\em Equivariant Epsilon Constants for Galois Extensions of Number
  Fields and $p$-adic Fields}.
\newblock PhD thesis, King's College London, 2004.

\bibitem[Bre04b]{Breuning04}
Manuel Breuning.
\newblock Equivariant local epsilon constants and \'etale cohomology.
\newblock {\em J. London Math. Soc. (2)}, 70(2):289--306, 2004.

\bibitem[BW09]{BleyWilson}
Werner Bley and Stephen M.~J. Wilson.
\newblock Computations in relative algebraic {$K$}-groups.
\newblock {\em LMS J. Comput. Math.}, 12:166--194, 2009.

\bibitem[Cob18]{Cobbe18}
Alessandro Cobbe.
\newblock {A representative of $R\Gamma(N,T)$ for some unramified twists of
  $\mathbb{Z}_p^r$}.
\newblock {arXiv:1804.06132}, April 2018.

\bibitem[EN18]{EllerbrockNickel18}
Nils Ellerbrock and Andreas Nickel.
\newblock On formal groups and {T}ate cohomology in local fields.
\newblock {\em Acta Arith.}, 182(3):285--299, 2018.

\bibitem[Fr{\"o}68]{Froehlich68}
A.~Fr{\"o}hlich.
\newblock {\em Formal groups}.
\newblock Lecture Notes in Mathematics, No. 74. Springer-Verlag, Berlin-New
  York, 1968.

\bibitem[Fr{\"o}83]{Froehlich83}
Albrecht Fr{\"o}hlich.
\newblock {\em Galois module structure of algebraic integers}, volume~1 of {\em
  Ergebnisse der Mathematik und ihrer Grenzgebiete (3) [Results in Mathematics
  and Related Areas (3)]}.
\newblock Springer-Verlag, Berlin, 1983.

\bibitem[Haz78]{Hazewinkel78}
Michiel Hazewinkel.
\newblock {\em Formal groups and applications}, volume~78 of {\em Pure and
  Applied Mathematics}.
\newblock Academic Press, Inc. [Harcourt Brace Jovanovich, Publishers], New
  York-London, 1978.

\bibitem[IV16]{IV}
Dmitriy Izychev and Otmar Venjakob.
\newblock Equivariant epsilon conjecture for 1-dimensional {L}ubin-{T}ate
  groups.
\newblock {\em J. Th\'eor. Nombres Bordeaux}, 28(2):485--521, 2016.

\bibitem[K{\"o}c04]{Koeck}
Bernhard K{\"o}ck.
\newblock Galois structure of {Z}ariski cohomology for weakly ramified covers
  of curves.
\newblock {\em Amer. J. Math.}, 126(5):1085--1107, 2004.

\bibitem[LR78]{LubinRosen}
Jonathan Lubin and Michael~I. Rosen.
\newblock The norm map for ordinary abelian varieties.
\newblock {\em J. Algebra}, 52(1):236--240, 1978.

\bibitem[Mar77]{Martinet77}
J.~Martinet.
\newblock Character theory and {A}rtin {$L$}-functions.
\newblock In {\em Algebraic number fields: {$L$}-functions and {G}alois
  properties ({P}roc. {S}ympos., {U}niv. {D}urham, {D}urham, 1975)}, pages
  1--87. Academic Press, London, 1977.

\bibitem[Maz72]{Mazur72}
Barry Mazur.
\newblock Rational points of abelian varieties with values in towers of number
  fields.
\newblock {\em Invent. Math.}, 18:183--266, 1972.

\bibitem[{Neu}92]{Neukirch92}
J\"urgen {Neukirch}.
\newblock {\em {Algebraic number theory. (Algebraische Zahlentheorie.)}}.
\newblock Berlin etc.: Springer-Verlag, 1992.

\bibitem[Nic20]{NickelPreprint}
Andreas Nickel.
\newblock An equivariant Iwasawa main conjecture for local fields.
\newblock {\em Preprint}, 2020.

\bibitem[PV13]{PickettVinatier}
Erik~Jarl Pickett and St{\'e}phane Vinatier.
\newblock Self-dual integral normal bases and {G}alois module structure.
\newblock {\em Compos. Math.}, 149(7):1175--1202, 2013.

\bibitem[Sil09]{Sil}
Joseph~H. Silverman.
\newblock {\em The arithmetic of elliptic curves}, volume 106 of {\em Graduate
  Texts in Mathematics}.
\newblock Springer, Dordrecht, second edition, 2009.

\bibitem[Swa70]{Swan}
Richard~G. Swan.
\newblock {\em {$K$}-theory of finite groups and orders}.
\newblock Lecture Notes in Mathematics, Vol. 149. Springer-Verlag, Berlin-New
  York, 1970.

\bibitem[Was97]{Washington}
Lawrence~C. Washington.
\newblock {\em Introduction to cyclotomic fields}, volume~83 of {\em Graduate
  Texts in Mathematics}.
\newblock Springer-Verlag, New York, second edition, 1997.

\end{thebibliography}
\end{document}